   \newif\ifpdf
\newcommand{\R}{\mathbb{R}}
\newcommand{\N}{\mathbb{N}}
\newcommand{\Z}{\mathbb{Z}}
\newcommand{\C}{\mathbb{C}}
\newcommand{\D}{\mathbb{D}}
\newcommand{\f}{\mathrm{f}}
\newtheorem{thm}{Theorem}[section]
\newtheorem{prop}[thm]{Proposition}
\newtheorem{coro}[thm]{Corollary}
\newtheorem{defi}[thm]{Definition}
\newtheorem{lem}[thm]{Lemma}
\newtheorem{rem}[thm]{Remark}
\newtheorem{claim}{Claim}
\newcommand{\rs}{\mathbb{P}^1}
\newcommand{\limn}{\lim_{n \rightarrow \infty}}
\newcommand{\bcal}{\mathcal{B}}
\newcommand{\lcal}{\mathcal{L}}
\newcommand{\re}{\mathrm{Re}}
\newcommand{\im}{\mathrm{Im}}
\newcommand{\fcal}{\mathcal{F}}
\renewcommand{\H}{\mathbb{H}}
\newcommand{\eps}{\epsilon}
\newcommand{\teps}{\tilde{\epsilon}}
\newcommand{\ecal}{\mathcal{E}}
\newcommand{\tbold}{\mathbf{t}}
\newcommand{\psev}{\mathcal{P}_7}
\author{Matthieu Astorg}
\thanks{This research was partially supported by the ANR grant Fatou ANR-17-CE40-0002-01. }
\author{Luka Boc Thaler$^{\dag}$}
\author{Han Peters}
\address{M. Astorg: Institut Denis Poisson, Collegium Sciences et Techniques, Université d'Orléans
	Rue de Chartres B.P. 6759
	45067 Orléans cedex 2 France.}
\email{matthieu.astorg@univ-orleans.fr}
\address{L. Boc Thaler: Faculty of Education, University of Ljubljana, SI--1000 Ljubljana, Slovenia. Institute of Mathematics, Physics and Mechanics, Jadranska 19, 1000 Ljubljana, Slovenia.} \email{luka.boc@pef.uni-lj.si}
\address{ H. Peters: Korteweg de Vries Institute for Mathematics\\
University of Amsterdam\\
the Netherlands} \email{hanpeters77@gmail.com}
\thanks{$^{\dag}$  Supported by the SIR grant ``NEWHOLITE - New methods in holomorphic iteration'' no. RBSI14CFME and by the research program P1-0291 from ARRS, Republic of Slovenia}
\title{Wandering domains arising from Lavaurs maps with Siegel disks}
\begin{document}

\maketitle
\selectlanguage{english}
\begin{abstract}
The first example of polynomial maps with wandering domains were constructed in 2016 by the first and last authors, together with Buff, Dujardin and Raissy.
In this paper, we construct a second example with different dynamics, using a Lavaurs map with a Siegel disk
instead of an attracting fixed point.
We prove a general necessary and sufficient condition for the existence of a trapping domain for non-autonomous compositions of maps converging parabolically towards a Siegel type limit map.
Constructing a skew-product satisfying this condition requires precise estimates on the convergence to the Lavaurs map, which we obtain
by a new approach.
We also give a self-contained construction of parabolic curves, which are integral to this new method.
\end{abstract}

\section{Introduction}
Rational functions do not have wandering domains, a classical result due to Sullivan \cite{Sullivan}. Recently in \cite{ABDPR} it was shown that there do exist polynomial maps in two complex variables with wandering Fatou components. The maps constructed in \cite{ABDPR} are polynomial skew products of the form
$$
(z,w) \mapsto (f_w(z), g(w)),
$$
where $g(w)$ and $f_w(z) = f(z,w)$ are polynomials in respectively one and two variables. While the construction holds for families of maps with arbitrarily many parameters, the constructed examples are essentially unique: they all arise from similar behavior and cannot easily be distinguished in terms of the geometry of the components or qualitative behavior of the orbits in the components. The goal in this paper is to modify the construction in \cite{ABDPR} to obtain quite different examples of wandering Fatou components. Our construction requires much more precise convergence estimates, forcing us to revisit and clarify the original proof, obtaining a better understanding of the methodology.

The maps considered in \cite{ABDPR} are of the specific form
\begin{equation}\label{equation:form of P}
P: (z,w) \mapsto (f(z) + \frac{\pi^2}{4} w, g(w)),
\end{equation}
where $f(z) = z + z^2 + O(z^3)$ and $g(w) = w - w^2 + O(w^3)$. Recall that the constant $\frac{\pi^4}{4}$ is essential to guarantee the following key result in \cite{ABDPR}:

\medskip

\noindent {\bf Proposition A.} \emph{
As $n\to +\infty$, the sequence of maps
$$
(z,w) \mapsto P^{\circ 2n+1}\left(z,g^{\circ n^2}(w)\right)
$$
converges locally uniformly in $\mathcal{B}_f \times \mathcal{B}_g$ to the map
$$
(z,w)\mapsto \left( \mathcal{L}_f(z),0\right).
$$}

\medskip

Here and later $\mathcal{B}_f$ and $\mathcal{B}_g$ refer to the \emph{parabolic basins} of respectively $f$ and $g$, and $\mathcal{L}_f$ refers to the \emph{Lavaurs map} of $f$ with phase $0$, see for example \cite{Lavaurs, Shishikura}. By carefully choosing the higher order terms of $f$, one can select Lavaurs maps with desired dynamical behavior.

In Proposition B of \cite{ABDPR} it was shown that $\mathcal{L}_f$ can have an attracting fixed point. The fact that $P$ has a wandering Fatou component is then a quick corollary of Proposition A. It seems very likely that one can similarly construct wandering domains when $\mathcal{L}_f$ has a parabolic fixed point, using the refinement of Proposition A presented here.

In this paper we will construct wandering domains arising when $\mathcal{L}_f$ has a Siegel fixed point: an irrationally indifferent fixed point with Diophantine rotation number. Compositions of small perturbations of $\mathcal{L}_f$ behave so subtly  that it is far from clear that Lavaurs maps with Siegel disks can produce wandering domains.

In order to control the behavior of successive perturbations, we prove a refinement of Proposition A with precise convergence estimates, showing that the convergence towards the Lavaurs map is ``parabolic''. Moreover, we study the behavior of non-autonomous systems given by maps converging parabolically to a limit map with a Siegel fixed point. We introduce an easily computable index characterizing the behavior of the non-autonomous systems.

In the next section we give more precise statements of our results, and prove how the combination of these results provides a new construction of wandering domains.


\section{Background and overview of results}

\subsection{Polynomial skew products and Fatou components} There is more than one possible interpretation of Fatou and Julia sets for polynomial skew products, see for example the paper \cite{Jonsson} for a thorough discussion. When we discuss Fatou components of skew products here, we consider open connected sets in $\mathbb C^2$ whose orbits are uniformly bounded, which of course implies equicontinuity. Since the degrees of $f$ and $g$ in \eqref{equation:form of P} are at least $2$, the complement of a sufficiently large bidisk is contained in the escape locus, which is connected, all other Fatou components are therefore bounded and have bounded orbits.

Given a Fatou component $U$ of $P$, normality implies that its projection onto the second coordinate $\pi_w(U)$ is contained in a Fatou component of $g$, which must therefore be periodic or preperiodic. Without loss of generality we may assume that this component of $g$ is invariant, and thus either an attracting basin, a parabolic basin or a Siegel disk.

The behavior of $P$ inside a Siegel disk of $g$ may be very complicated and has received little attention in the literature, but see \cite{PetersRaissy} for the treatment of a special case.

There have been a number of results proving the non-existence of wandering domains inside attracting basins of $g$. The non-existence of wandering domains in the super-attracting case was proved by Lilov in \cite{Lilov}, but it was shown in \cite{PetersVivas} that the arguments from Lilov cannot hold in the geometrically attracting case. The non-existence of wandering domains under progressively weaker conditions were proved in \cite{PetersSmit, Ji1}.

Here, as in \cite{ABDPR}, we will consider components $U$ for which $\pi_w(U)$ is contained in a parabolic basin of $g$. We assume that the fixed point of $g$ lies at the origin, and that $g$ is of the form $g(w) = w - w^2 + h.o.t.$, so that orbits approach $0$ tangent to the positive real axis. We will in fact make the stronger assumption $g(w) = w - w^2 + w^3 + h.o.t.$.

\subsection{Fatou coordinates and Lavaurs Theorem}

Consider a polynomial $f(z) = z - z^2 + a z^3 + h.o.t.$. For $r>0$ small enough we define incoming and outgoing petals
$$
P_f^\iota = \{|z+r| < r\} \; \; \mathrm{and} \; \; P_f^o = \{|z-r| < r\}.
$$
The incoming petal $P_f^\iota$ is forward invariant, and all orbits in $P_f^\iota$ converge to $0$. Moreover, any orbit which converges to $0$ but never lands at $0$ must eventually be contained in $P_f^\iota$. Therefore we can define the parabolic basin as
$$
\mathcal{B}_f = \bigcup f^{-n} P_f^\iota.
$$
The outgoing petal $P_f^o$ is backwards invariant, with backwards orbits converging to $0$.

On $P_f^\iota$ and $P_f^o$ one can define incoming and outgoing Fatou coordinates $\phi_f^\iota: P_f^\iota \rightarrow \mathbb C$ and $\phi_f^o: P_f^o \rightarrow \mathbb C$, solving the functional equations
$$
\phi_f^\iota \circ f(z) = \phi_f^\iota(z) + 1 \; \; \mathrm{and} \; \;  \phi_f^o \circ f(z) = \phi_f^o(z) + 1,
$$
where $\phi_f^\iota (P_f^\iota)$ contains a right half plane and $\phi_f^o (P_f^o)$ contains a left half plane. By the first functional equation the incoming Fatou coordinates can be uniquely extended to the attracting basin $\mathcal{B}_f$. On the other hand, the inverse of $\phi_f^o$, denoted by $\psi_f^o$, can be extended to the entire complex plane, still satisfying the functional equation
$$
f \circ \psi_f^o (Z) = \psi_f^o(z + 1).
$$
The fact that the exceptional set of $f$ is empty implies that $\psi_f^o : \mathbb C \rightarrow \mathbb C$ is surjective. We note that both incoming and outgoing Fatou coordinates are (on the corresponding petals) of the form $Z = -\frac{1}{z} + b \log(z) + o(1)$, where the coefficient $b$ vanishes when $a = 1$. This is one reason for working with maps $f$ of the form $f(z) = z + z^2 + z^3 + h.o.t.$.

Let us now consider small perturbations of the map $f$. For $\epsilon \in \mathbb C$ we write $f_\epsilon(z) = f(z) + \epsilon^2$, and consider the behavior as $\epsilon \rightarrow 0$. The most interesting behavior occurs when $\epsilon$ approaches $0$ tangent to the positive real axis.

\medskip

\noindent {\bf Lavaurs Theorem \cite{Lavaurs}.} \emph{Let $\epsilon_j \rightarrow 0$, $n_j \in \mathbb N$ and $\alpha \in \mathbb C$ satisfy
$$
n_j - \frac{\pi}{\epsilon_j} \rightarrow \alpha \; \; \mathrm{as} \; \; j \rightarrow \infty.
$$
Then
$$
f_{\epsilon_j}^{n_j} \rightarrow \mathcal{L}_f(\alpha) = \psi_f^o \circ \tau_{\alpha} \circ \phi_f^\iota,
$$
where $\tau_\alpha(Z) = Z+ \alpha$.
}

\medskip

The map $\mathcal{L}_f(\alpha)$ is called the \emph{Lavaurs map}, and $\alpha$ is called the \emph{phase}. In this paper we will only consider phase $\alpha = 0$, and write $\mathcal{L}_f$ instead of $\mathcal{L}_f(0)$.

\subsection{Propositions A and B}

	The construction of wandering domains in \cite{ABDPR} follows quickly from two key propositions, the aforementioned Propositions A and B. In this paper we will prove a variation to Proposition B, and a refinement to Proposition A, which we will both state here.

	Our main technical result is the following refinement of Proposition A. As before we write $P(z,w) = (f(z) + \frac{\pi^2}{4}w, g(w))$, with $f(z) = z + z^2 + z^3 + bz^4 + h.o.t.$, and $g(w) = w - w^2 + w^3 + h.o.t.$.

	\medskip
	
	\noindent {\bf Proposition A'} \emph{
		There exists a holomorphic function $h:\mathcal{B}_f \times \mathcal{B}_g \rightarrow \mathbb C$ such that
		$$
		P^{2n+1}(z, g^{n^2}(w)) = (\mathcal{L}_f(z), 0) + \left(\frac{h(z,w)}{n},0\right) + O\left(\frac{\log n}{n^2}\right),
		$$
		uniformly on compact subsets of $\mathcal{B}_f \times \mathcal{B}_g$. The function $h(z,w)$ is given by
		$$
		h(z,w) = \frac{\mathcal{L}_f^\prime(z)}{(\phi^\iota_f)^\prime(z)} \cdot \left(C + \phi_f^\iota(z) - \phi_g^\iota(w)\right),
		$$
		where the constant $C \in  \mathbb C$ depends on $b$.
	}	
	\medskip

Proposition A' will be proved in section \ref{section:ConvergenceEstimates}, {\black see  Theorem \ref{thm:lav}.}

	Proposition B in \cite{ABDPR} states that the Lavaurs map $\mathcal{L}_f$ of a polynomial $f(z) = z + z^2 + az^3 + O(z^4)$ has an attracting fixed point for suitable choices of the constant $a \in \mathbb C$. We recall very briefly the main idea in the proof of Proposition B: For $a = 1$ the ``horn map'' has a parabolic fixed point at infinity. By perturbing $a \simeq 1$, the parabolic fixed point bifurcates, and for appropriate perturbations this guarantees the existence of an attracting fixed point for the horn map, and thus also for the Lavaurs map.
	
	In this paper we will consider a more restrictive family of polynomials of the form $f(z) = z + z^2 + z^3 + O(z^4)$, which means that we cannot use the above bifurcation argument. Using a different line of reasoning, using small perturbations of a suitably chosen degree $7$ real polynomial, {\black we will prove a variation to Proposition B, namely Proposition B' below}. The proof of Proposition B' will be given in section \ref{section: proposition B'}.	

\medskip

{\black Before stating the proposition we recall that a fixed point $z_0 = \mathcal{L}_f(z_0)$ is said to be of Siegel type if $\lambda = \mathcal{L}_f^\prime(z_0) =e^{2\pi i \zeta}$, where $\zeta\in \mathbb{R}\backslash\mathbb{Q}$ is Diophantine, i.e. if there exist $c,r>0$ such that $|\lambda^n-1|\geq cn^{-r}$ for all integers $n>0$. Recall that neutral fixed points with Diophantine rotation numbers are always locally linearizable:}
	
	\begin{thm}[Siegel, \cite{Siegel}] Let $p(z) = e^{2\pi i\zeta} z + O(z^2)$ be a holomorphic germ. If $\zeta$ is Diophantine
		then there exist a neighborhood of the origin $\Omega_p$ and a biholomorphic map $\varphi:\Omega_p\rightarrow D_r(0)$ of the form $\varphi(z)=z+a_2z^2+O(z^3)$ satisfying
		$$
		\varphi(p(z))= e^{2\pi i \zeta} \varphi(z).
		$$
	\end{thm}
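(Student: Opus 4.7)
The plan is to construct $\varphi$ by a KAM/Newton iteration. Write $p(z)=\lambda z+\hat p(z)$ with $\hat p(z)=O(z^2)$, and seek $\varphi(z)=z+u(z)$ conjugating $p$ to multiplication by $\lambda$. The conjugacy equation $\varphi\circ p=\lambda\varphi$ becomes
\[
u(p(z))-\lambda u(z)=-\hat p(z).
\]
Its linearization around $p(z)=\lambda z$ is the cohomological equation $u(\lambda z)-\lambda u(z)=-\hat p(z)$, which can be solved formally coefficient by coefficient: if $\hat p(z)=\sum_{n\ge 2}b_nz^n$, then $u(z)=\sum_{n\ge2}\frac{-b_n}{\lambda^n-\lambda}z^n$. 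The denominators $\lambda^n-\lambda=\lambda(\lambda^{n-1}-1)$ are nonzero since $\zeta\notin\mathbb Q$, and the Diophantine condition gives $|\lambda^n-\lambda|\ge c(n-1)^{-r}$, so one loses only a polynomial factor in $n$. Using a Cauchy estimate on a slightly shrunk disk $D_{\rho'}$ with $\rho'<\rho$, this translates into
\[
\|u\|_{\rho'}\le \frac{K}{(\rho-\rho')^{r+1}}\,\|\hat p\|_\rho,
\]
for some constant $K=K(c,r)$.

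Next I would run a Newton scheme. Given an approximate linearization $\varphi_0=\mathrm{id}$, apply the cohomological solver to obtain $u_1$ and set $\varphi_1=\mathrm{id}+u_1$; then $p_1:=\varphi_1\circ p\circ \varphi_1^{-1}$ has the form $\lambda z+\hat p_1(z)$ where $\hat p_1$ is \emph{quadratic} in $\hat p$. Iterate: at step $k$, solve the cohomological equation associated to $p_k$, obtaining $\varphi_{k+1}$ and a new error $\hat p_{k+1}$ satisfying an estimate of the schematic form
\[
\|\hat p_{k+1}\|_{\rho_{k+1}}\le \frac{K'}{(\rho_k-\rho_{k+1})^{2r+2}}\,\|\hat p_k\|_{\rho_k}^2.
\]
Choose a sequence of radii $\rho_k\searrow\rho_\infty>0$ decreasing slowly enough (e.g., $\rho_k-\rho_{k+1}\sim 2^{-k}$) so that the quadratic convergence $\|\hat p_k\|_{\rho_k}\le \delta^{2^k}$ (for $\delta$ small enough that the initial error fits the induction) absorbs the polynomial loss from the small divisors. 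This ensures simultaneously that the errors $\hat p_k$ tend to zero super-exponentially on $D_{\rho_\infty}$ and that the infinite composition $\varphi:=\lim \varphi_k\circ\cdots\circ\varphi_1$ converges to a univalent map on some disk $D_{r}\subset D_{\rho_\infty}$, with $\varphi(z)=z+a_2z^2+O(z^3)$ as required. Taking $\Omega_p:=\varphi^{-1}(D_r)$ yields the claimed biholomorphism satisfying $\varphi\circ p=\lambda\varphi$.

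The main obstacle is the delicate balance in the inductive estimates: each Newton step costs a factor $(\rho_k-\rho_{k+1})^{-(2r+2)}$ coming from the Diophantine loss combined with Cauchy estimates, so the sequence of radii and the size of the initial perturbation must be tuned so that quadratic convergence beats this polynomial blow-up. Organizing this tuning so that one lands inside a disk of positive radius in the limit (rather than shrinking to $\{0\}$) is the technical heart of the proof; once it is set up, univalence of $\varphi$ near $0$ follows from $\varphi'(0)=1$ and uniform convergence, and the functional equation $\varphi\circ p=\lambda\varphi$ follows by passing to the limit in $\varphi_k\circ p\circ\varphi_k^{-1}=\lambda z+\hat p_k$.
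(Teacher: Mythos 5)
The paper does not actually prove this theorem: it states it with a citation to Siegel's original 1942 paper and uses it as a black box, so there is no in-paper proof to compare against. What you have written is a correct outline of the \emph{KAM/Newton-scheme} proof of Siegel's theorem (in the spirit of Kolmogorov, Arnold, and Moser), which is a genuinely different route from Siegel's own argument: Siegel linearized directly by constructing the formal solution $\varphi(z)=z+\sum a_nz^n$ and proving convergence via a Cauchy majorant series, avoiding Newton iteration entirely. The majorant approach is more elementary (no iterated conjugacies, no shrinking chain of radii) but is notoriously hard to motivate; the Newton-scheme approach you describe is conceptually cleaner, generalizes to higher dimension and to the Brjuno condition, and is the one in most modern textbooks. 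Your estimates for the cohomological equation, the quadratic error, and the tuning of $\rho_k-\rho_{k+1}\sim 2^{-k}$ against the polynomial small-divisor loss are all in the right form.

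One point that should be made explicit rather than tacit: the Newton scheme only converges once the initial error $\|\hat p\|_{\rho_0}$ is sufficiently small, and for a given germ $p(z)=\lambda z+O(z^2)$ the nonlinear part is not small a priori. The standard fix is to observe that $\|\hat p\|_\rho=O(\rho^2)$ as $\rho\to 0$, so one first shrinks to a disk $D_{\rho_0}$ with $\rho_0$ small enough for the induction to start (equivalently, conjugates by $z\mapsto \rho_0 z$). You allude to this with "for $\delta$ small enough that the initial error fits the induction," but without saying where the smallness comes from the argument is not self-contained. With that observation added, the sketch is complete in outline, and the univalence and functional-equation claims at the end follow as you say.
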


	\medskip

\noindent {\bf Proposition B'} \emph{There exist polynomials of the form $f(z) = z + z^2 + z^3 + O(z^4)$ for which the Lavaurs map $\mathcal{L}_f$ has a \emph{Siegel fixed point} {\black $z_0$}, with $\lambda = \mathcal{L}_f^\prime(z_0)$. Moreover we can guarantee that
\begin{equation}\label{non-degenerate index}
\frac{\mathcal{L}_f''(z_0)(\phi_f^\iota)'(z_0)}{\lambda(1-\lambda)}-(\phi_f^\iota)''(z_0) \neq 0.
\end{equation}
}	
	\medskip
	
	Condition \eqref{non-degenerate index} is necessary to guarantee the existence of wandering domains, see the discussion of the index $\kappa$ later in this section, and the discussion in subsection \ref{subsection:choice of index}.

	A more precise description of the derivatives $\lambda$ for which $p$ is locally linearizable was given by Brjuno \cite{Brjuno} and Yoccoz \cite{Yoccoz}. As we are only concerned with constructing examples of maps with wandering Fatou components, we find it convenient to work with the stronger Diophantine condition. Proposition B' will be proved in section \ref{section: proposition B'}.

\subsection{Perturbations of Siegel disks}

A key element in our study is the following question:

\medskip

\emph{Let  $f_1, f_2, \ldots $ be a sequence of holomorphic germs, converging locally uniformly to a holomorphic function $f$ having a Siegel fixed point at $0$. Under which conditions does there exist a trapping region?}

\medskip

By a trapping region we mean the existence of arbitrarily small neighborhoods $U, V$ of $0$ and $n_0 \in \mathbb N$ such that
$$
f_m \circ \cdots \circ f_n (z) \in V
$$
for all $z \in U$ and ${\black m \ge n} \ge n_0$. In other words, any orbit $(z_n)_{n\ge 0}$ that intersects $U$ for sufficiently large $n$ will afterwards be contained in a small neighborhood of the origin. Note that this in particular guarantees normality of the sequence of compositions $f_m \circ \ldots \circ f_0$ in a neighborhood of $z_0$, which is the reason for our interest in trapping regions.

We are particularly interested in the case where the differences $f_n - f$ are not absolutely summable, i.e. when
$$
\sum_{n \ge n_0} \|f_n - f\|_U = \infty
$$
for any $n_0$ and $U$. In this situation one generally does not expect a trapping region. However, motivated by Proposition A', we will assume that $f_n - f$ is roughly of size $\frac{1}{n}$, and converges to zero along some real direction. More precisely, we assume that
\begin{equation}\label{function f_n}
f_n(z) - f(z) = \frac{h(z)}{n} + O(\frac{1}{z^{1+\epsilon}}),
\end{equation}

where $h$ is a holomorphic germ, defined in a neighborhood of the origin.

\begin{thm}\label{thm:siegel-simple} There exists an \emph{index} $\kappa$, a rational expression in the coefficients of $f$ and $h$, such that the following holds:
\begin{enumerate}
\item If $\mathrm{Re}(\kappa)= 0$, then there is a trapping region, and all limit maps have rank $1$.
\item If $\mathrm{Re}(\kappa)< 0$, then there is a trapping region, and all orbits
 converge uniformly to the origin.
\item If $\mathrm{Re}(\kappa)> 0$, then there is no trapping region. In fact, there can be at most one orbit that remains in a sufficiently small neighborhood of the origin.

\end{enumerate}
\end{thm}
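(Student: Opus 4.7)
My plan is to reduce the non-autonomous system $(f_n)$ near the Siegel fixed point of $f$, via Siegel linearization and a translation to a slowly moving fixed point, to a scalar linear system of the form $v_{n+1} = (\lambda + \mu/n) v_n + \text{small}$. The index $\kappa = \mu/\lambda$ then governs the trichotomy through the Gamma-function asymptotics $\prod_{k\le n}(1 + \kappa/k) \sim n^\kappa/\Gamma(1+\kappa)$.

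First I fix a Siegel linearization $\varphi$ of $f$, so that $\varphi \circ f = \lambda \cdot \varphi$ on a neighborhood of $0$, and set $F_n := \varphi \circ f_n \circ \varphi^{-1}$. A direct Taylor expansion using~\eqref{function f_n} gives on a fixed neighborhood $U$ of $0$
\[
F_n(w) = \lambda w + \frac{H(w)}{n} + O(n^{-1-\epsilon}), \qquad H(w) := \varphi'(f(\varphi^{-1}(w)))\, h(\varphi^{-1}(w)).
\]
For $n$ large the implicit function theorem produces a unique fixed point $\xi_n \in U$ of $F_n$ satisfying $\xi_n = H(0)/(n(1-\lambda)) + O(n^{-1-\epsilon})$ with $|\xi_{n+1} - \xi_n| = O(n^{-2}) + O(n^{-1-\epsilon})$.

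Next I translate $v_n := w_n - \xi_n$ and Taylor-expand $F_n$ about $\xi_n$, obtaining
\[
v_{n+1} = \Bigl(\lambda + \frac{\mu}{n} + O(n^{-1-\epsilon})\Bigr) v_n + O(v_n^2) + O(n^{-2}) + O(n^{-1-\epsilon}), \qquad \mu := H'(0),
\]
which motivates the definition $\kappa := \mu/\lambda$. Computing $\varphi(z) = z + \alpha_2 z^2 + \cdots$ with $\alpha_2 = a_2/(\lambda(1-\lambda))$ from the Siegel equation (where $a_2 = f''(0)/2$) shows that $\kappa = \frac{2 a_2 h(0)}{\lambda(1-\lambda)} + \frac{h'(0)}{\lambda}$, a rational expression in the coefficients of $f$ and $h$, as claimed. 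The linearized iteration $V_{n+1} = (\lambda + \mu/n) V_n$ has the explicit solution
\[
V_n = V_{n_0}\, \lambda^{n-n_0}\, \frac{\Gamma(n+\kappa)\Gamma(n_0)}{\Gamma(n)\Gamma(n_0+\kappa)} \sim C \cdot V_{n_0}\, \lambda^n\, n^\kappa,
\]
so $|V_n| \asymp n^{\re \kappa}$ by Stirling's formula. A variation-of-constants estimate shows that for $|v_{n_0}|$ small enough, the full iteration $v_n$ tracks $V_n$ to leading order.

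The trichotomy then follows: if $\re \kappa < 0$ then $v_n \to 0$, giving a trapping region on which orbits converge uniformly to $0$; if $\re \kappa = 0$ then $v_n$ stays bounded for $|v_{n_0}|$ small, and the asymptotics $v_n \sim V_n$ show that along any subsequence where $\lambda^n n^{i\im \kappa}$ converges, the limits of compositions are non-constant multiples of $w_0 - \xi_0$, hence of rank $1$; if $\re \kappa > 0$ then $|V_n| \to \infty$, and the unique orbit staying near $0$ is constructed as the limit $N \to \infty$ of the backward iterates $F_n^{-1}\circ\cdots\circ F_{N-1}^{-1}(\xi_N)$, yielding the ``at most one orbit'' statement. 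The main obstacle will be the borderline case $\re \kappa = 0$: there the nonlinear correction $O(v_n^2)$ and the drift $O(n^{-2})$ are not a priori negligible compared to the linear flow and cumulate to $O(|v_{n_0}|^2\log n)$. Controlling them requires either a higher-order normal form (improving the expansion of $\xi_n$ to absorb the $O(n^{-2})$ drift) combined with a Banach-space fixed-point argument for the rescaled variable $v_n/V_n$, or an iterative bootstrap verifying that shrinking the initial neighborhood keeps the cumulative error uniformly small. A similar stable-manifold-type argument underpins the uniqueness statement in case (3).
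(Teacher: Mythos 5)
Your reduction to the normal form $v_{n+1}=\lambda(1+\kappa/n)v_n+O(v_n^2)+(\text{summable})$ via Siegel linearization plus a translation to a slowly moving approximate fixed point is essentially the paper's own reduction (the paper uses $\theta_n(z)=z+\tfrac{c_0}{n(1-\lambda)}$ followed by the linearizer $\phi$, arriving at $\f_n=\lambda e^{\kappa/n}z+\tfrac1n\sum_{k\ge2}d_kz^k+\xi_n$), and your formula for $\kappa$ agrees with \eqref{kappa}. The arguments for $\re(\kappa)<0$ (uniform contraction by $e^{\re(\kappa)/2n}$, whose product vanishes) and $\re(\kappa)>0$ (uniform expansion, hence at most one surviving orbit) are also the paper's, and are fine.

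The genuine gap is case (1), $\re(\kappa)=0$, which you flag as ``the main obstacle'' but do not actually resolve, and your two suggested fixes do not work as stated. The quadratic terms after the reduction are $\tfrac1n\sum_{\ell\ge2}d_\ell z^\ell$; they are \emph{not} absolutely summable, and your own bound $O(|v_{n_0}|^2\log n)$ on their cumulative effect diverges as $n\to\infty$ no matter how small $|v_{n_0}|$ is, so ``shrinking the initial neighborhood'' cannot make the error uniformly small. Killing these terms requires solving the cohomological equation $\hat S_{n+1}(\lambda z)=\lambda\hat S_n(z)+\hat g_n(z)$ for a coordinate change $S_n=\mathrm{id}+O(1/n)$, and the coefficients of $S_n$ involve the tail sums $\sum_{k\ge n}\tfrac{1}{k}\lambda^{k(\ell-1)}e^{\kappa(\ell-1)\log k}$, whose control (Lemmas \ref{lemma:lambda}--\ref{crucial-lemma} in the paper, via $|\lambda^m-1|\ge cm^{-r}$ and summation by parts) is exactly where the Diophantine hypothesis on $\lambda$ enters. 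Your proposal never invokes the Diophantine condition in this case, yet without it the statement can fail (resonances between $\lambda^{(\ell-1)n}$ and $1/n$ produce unbounded drift). So the borderline case needs the small-divisor construction, not merely a ``higher-order normal form'' or ``bootstrap''; once $f_n$ is conjugated to $\lambda z+O(n^{-2})$ the absolutely summable perturbation lemma (Lemma \ref{lemma:reduction}) gives rotational linearizability and hence the rank-$1$ conclusion.
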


Theorem \ref{thm:siegel-simple} holds under more general assumptions regarding the convergence towards the limit map, but the above statement is sufficient for our purposes. An example of a more general statement is given in Remark \ref{remark:generalstatement}. An explicit formula for the index $\kappa$ is given in section \ref{section:perturbations}, which contains the proof of Theorem \ref{thm:siegel-simple}.

\begin{rem}{\black The non-autonomous dynamics of the functions $f_n$ satisfying \eqref{function f_n} is closely related to the autonomous dynamics of the quasi-parabolic map
$$
F(z,w)=(f(z)+wh(z)+O(w^2),w-w^2+O(w^2)).
$$
The case $\mathrm{Re}(\kappa) < 0$ in Theorem \ref{thm:siegel-simple}  corresponds to $F$ being dynamically separating and parabolically attracting, using the terminology of \cite{BZ}, hence by \cite[Corollary 6.3]{BZ} the map $F$ has a connected basin of attraction at the origin. In particular this implies the existence of a trapping region for the sequence $(f_n)$.}
\end{rem}

\subsection{Parabolic Curves}
An important idea in the proof of Lavaurs Theorem is that in a sufficiently small neighborhood of the origin, the function $f_{\epsilon} = f + \epsilon^2$ can be interpreted as a near-translation in the ``almost Fatou coordinates'': functions that converge to the ingoing and outgoing Fatou coordinates as $\epsilon \rightarrow \infty$. This idea is especially apparent in the treatment given in \cite{BSU}. The almost Fatou coordinates are defined using the pair of fixed points $\zeta_{\pm}(\epsilon)$ ``splitting'' from the parabolic fixed point.

When iterating two-dimensional skew products $P(z,w) = (f_w(z), g(w))$ it does not make sense to base the almost Fatou coordinates on the pair of fixed points of the maps $f_w(z) = f(z) + \frac{\pi^2}{4}w$, as the parameter $w$ changes after every iteration of $P$. Instead, the natural idea would be to base these coordinates on a pair of invariant curves $\{z = \zeta_{\pm}(w)\}$, so-called \emph{parabolic curves}, defined over a forward invariant parabolic petal in the $w$-plane. The invariance of these parabolic curves is equivalent to the functional equations
$$
\zeta_{\pm}(g(w)) = f_w(\zeta_{\pm}(w)).
$$

In  \cite{ABDPR}, it is asked whether such parabolic curves exists. Instead, in \cite{ABDPR} it was shown that there exist \emph{almost parabolic curves}, approximate solutions to the above functional equation with explicit error estimates. The proof of Proposition A relies to a great extent on these almost parabolic curves, and the fact that these are not exact solutions causes significant extra work.

In the recent paper \cite{LH-R} by Lopez-Hernanz and Rosas it is shown that the parabolic curves indeed exist, in fact, the authors prove the existence of parabolic curves for any characteristic direction for diffeomorphisms in two complex dimensions. However, to be used in the proof of Proposition A, it is necessary to also obtain control over the domain of definition of the two parabolic curves. The result from \cite{LH-R} does not give the needed control.

In section \ref{section:parabolic}, Proposition \ref{prop:curve}, we give an alternative proof of the existence of parabolic curves, with control over the domains of definition. The availability of these parabolic curves forms an important ingredient in the proof of Proposition A'. The method of proof is a variation to the well known graph transform method, and can likely be used to prove the existence of parabolic curves in greater generality.

\subsection{Wandering domains} Let us conclude this section by proving how Propositions A' and B' together imply the existence of wandering Fatou components. As before we let
$$
P(z,w) = (f(z) + \frac{\pi^2}{4} w, g(w)),
$$
where $g(w) = w - w^2 + w^3 + h.o.t.$ and the function $f(z) = z + z^2 + z^3 + h.o.t.$ is chosen such that $\mathcal{L}_f$ has a neutral fixed point $z_0$ with Diophantine rotation number. The existence of such $f$ is given by Proposition B'.

Proposition A' states that
$$
P^{2n+1}(z, g^{n^2}(w)) = (\mathcal{L}_f(z), 0) + \left(\frac{h(z,w)}{n},0\right) + O\left(\frac{\log n}{n^2}\right),
$$
uniformly on compact subsets of $\mathcal{B}_f \times \mathcal{B}_g$.

Recall from Proposition A' that the function $h(z,w)$ is given by
$$
h(z,w) = \frac{\mathcal{L}_{\black f}^\prime(z)}{(\phi^\iota_f)^\prime(z)} \cdot \left(C + \phi_f^\iota(z) - \phi_g^\iota(w)\right),
$$
from which it follows directly that the index $\kappa$ depends affinely on $\phi_g^\iota(w)$, although it is conceivable that the multiplicative constant in this dependence vanishes.

As will be explained in detail in subsection \ref{subsection:choice of index}, the index $\kappa$ is independent from $w$ if and only if, {\black denoting the fixed point of $\mathcal{L}_f$ again by $z_0$}, writing
\begin{equation}\label{equation: repeatedfromlater}
{\black \frac{\mathcal{L}_f''(z_0)(\phi^\iota_f)'(z_0)}{\lambda(1-\lambda)}-(\phi^\iota_f)''(z_0) = 0,}
\end{equation}
in which case $\kappa$ is constantly equal to $+1$. The second statement in Proposition B' therefore implies that $f$ can be chosen in order to obtain an inequality in \eqref{equation: repeatedfromlater}, which implies that the affine dependence of $\kappa$ on $\phi_g^\iota(w)$ is non-constant.

It follows that there exists an open subset of $\mathcal{B}_g$ where the $w$-values are such that $\mathrm{Re}(\kappa)$ is strictly negative. Let $D_2 \subset \mathcal{B}_g$ be a small disk centered contained in this open subset, so that $\mathrm{Re}(\kappa)$ is negative for all $w \in D_2$.

Let $D_1$ be a small disk centered at ${\black z_0}$, the Siegel type fixed point of $\mathcal{L}_f$. We claim that for $n \in \mathbb N$ large enough, the open set $D_1 \times g^{n^2}(D_2)$ is contained in a wandering Fatou component.

Indeed, it follows from Proposition A' that the non-autonomous one-dimensional system given by compositions of the maps $z \mapsto \pi_z \circ P^{2n+1}(z, g^{n^2}(w))$ satisfy case (2) of Theorem \ref{thm:siegel-simple}, where $\pi_z$ is the projection onto the $z$-coordinate. Thus Theorem \ref{thm:siegel-simple} implies that
$$
P^{m^2 -n^2}(z, w) \rightarrow ({\black z_0}, 0),
$$
uniformly for all $(z,w) \in D_1 \times g^{n^2}(D_2)$. {\black The remainder of the proof follows the argument from \cite{ABDPR}}. Since the complement of the escape locus of $P$ is bounded, it follows that the entire orbits $P^m(z,w)$ must remain uniformly bounded, which implies normality of $(P^m)$ on $D_1 \times g^{n^2}(D_2)$, which is therefore contained in a Fatou component, say $U$. The fact that on an open subset of $U$ the subsequence $P^{m^2 -n^2}$ converges to the constant $({\black z_0},0)$ implies convergence of this subsequence to $({\black z_0},0)$ on all of $U$, since limit maps of convergent subsequences are holomorphic. But if $U$ was periodic or preperiodic, the limit set would have been periodic. The point $({\black z_0},0)$ is however not periodic: its orbit converges to $(0,0)$. Thus $U$ is wandering, which completes the proof.

\begin{rem}
From the above discussion we can conclude that all possible limit maps of convergent subsequence $P^{n_j}|_U$ are points. In fact these points form (the closure of) a bi-infinite orbit of $({\black z_0},0)$, converging to $(0,0)$ in both backward in forward time.

We note however that there are fibers $\{w=w_0\}$, with $w_0 \in \mathcal{B}_g$ for which $\mathrm{Re}(\kappa) = 0$. Let $D_1$ again be a sufficiently small disk centered at ${\black z_0}$, the Siegel type fixed point of $\mathcal{L}_f$. Theorem A' together with case (1) of Theorem \ref{thm:siegel-simple} implies that for sufficiently large $n$ the disk $D_1 \times \{g^{n^2}(w_0)\}$ is a Fatou disk for $P$, i.e. the restriction of the iterates $P^n$ to the disk form a normal family. For this Fatou disk the sequence of iterates $P^{m^2 - n^2}$ converges to a rank $1$ limit map, whose image is a holomorphic disk containing $({\black z_0},0)$. All the limit sets together form (the closure of) a bi-infinite sequence of disks, converging in backward and forward time to the point $(0,0)$.
\end{rem}

\section{Perturbations of Siegel disks}\label{section:perturbations}

\subsection{Notation} The following conventions will be used throughout this section:
\begin{enumerate}[label=(\roman*)]
\item Given a holomorphic function $f$, we will write $\hat{f}$ for the non-linear part of $f$.

\item  For a sequence of constants $\lambda_n \in \mathbb C$ we will write
$$
\lambda_{n,m}  = \prod_{j=m+1}^n \lambda_j \quad \mathrm{and} \quad \lambda(n) = \lambda_{n,0} = \prod_{j=1}^n \lambda_j,
$$
and similarly for a sequence of functions $(f_n)$
$$
f_{n,m} = f_n \circ \cdots \circ f_{m+1}.
$$
\item Given two sequences of holomorphic functions $(f_n)$ and $(g_n)$ defined on some uniform neighborhood of the origin, we will write $f_n \asymp g_n$ if the norms of the sequence of differences $(f_n-g_n)$ is summable on some uniform neighborhood of the origin.

\end{enumerate}
\medskip

\subsection{Preparation}

{\black In this section we introduce non-autonomous analogies of attracting, repelling, and locally linearizable indifferent fixed points, and make a few initial observations. In the next subsection we introduce the index $\kappa$, and show that the local behavior of the non-autonomous systems we consider can be deduced from the real part of the index.}

\begin{defi}\normalfont
Two sequences of functions $(f_n)$ and $(g_n)$ are said to be \emph{non-autonomously conjugate} if there exist a uniformly bounded sequence of local coordinate changes $(\psi_n)_{n \ge n_0}$, all defined in a uniform neighborhood of the origin, satisfying
$$f_n\circ \psi_n = \psi_{n+1}\circ g_n$$
for all $n\geq n_0$.
\end{defi}

\begin{defi} \normalfont
A sequence of functions $(f_n)$ 
is said to be non-autonomously \emph{linearizable} if there exists a sequence $(\lambda_n)_{n \ge n_0}$ in $\mathbb C\setminus \{0\}$ and a sequence of coordinate changes $(\psi_n)_{n \ge n_0}$, defined and uniformly bounded in a uniform neighborhood of the origin, and with derivative $\psi_n^\prime(0)$ uniformly bounded away from zero, so that
$$
f_n\circ \psi_n(z) = \psi_{n+1}(\lambda_n \cdot z)
$$
for all $n \ge n_0$. If the sequence $|\lambda(n)|$ is bounded, both from above and away from $0$, then we say that $(f_n)$ is  \emph{rotationally linearizable}.
\end{defi}

\begin{defi} \normalfont
A sequence of functions $(f_n)$ is said to be \emph{collapsing} if there is a neighborhood of the origin $U$ and an $n_0 \in \mathbb N$ such that $f_{n,m} \rightarrow 0$ on $U$ {\black as $n \rightarrow \infty$} for any $m \ge n_0$.
\end{defi}

An example of a collapsing sequence is given by a sequence of functions $f_n$ converging to a function $f$ with an attracting fixed point at the origin.

\begin{defi} \normalfont
We say that sequence $(f_n)$ is \emph{expulsive} if there exists $r>0$ such that for every $m\geq 0$ there exists at most one exceptional point $\hat{z}$ such that for every $z\in D_r(0) \setminus \{\hat{z}\}$ there exist $n>m$ for which $f_{n,m}(z)\notin D_r(0)$. {\black Here $D_r(0)$ denotes the disk of radius $r$ centered at the origin}
\end{defi}

An example of an expulsive sequence can be obtained by considering a sequence of maps $(f_n)$ converging locally uniformly to a map with a repelling fixed point. Since $f$ maps a small disk around the origin to a strictly larger holomorphic disk, the same holds for sufficiently small perturbations. A nested sequence argument shows that, starting at a sufficiently large time $n_0$, there is a unique orbit which remains in the small disk.


\begin{lem}\label{normal->rotation}
Consider a sequence $(f_n)$ of univalent holomorphic functions, defined in a uniform neighborhood of the origin. Suppose the compositions $f_{n,0}$ are all defined in a possibly smaller neighborhood of the origin, and form a normal family. Then the sequence $(f_n)$ is either rotationally linearizable, or there exist subsequences $(n_j)$ for which $f_{n_j,0}$ converges to a constant.
\end{lem}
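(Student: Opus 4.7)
The plan is to prove the dichotomy by contrapositive: assuming no subsequence of $(f_{n,0})$ converges to a constant, I will construct explicit linearizing data $(\lambda_n, \psi_n)$ witnessing rotational linearizability. For $n \ge 1$ set $\mu_n := f_{n,0}'(0)$; these are all nonzero because each $f_j$ is univalent. Define $\lambda_1 := \mu_1$ and $\lambda_n := \mu_n/\mu_{n-1}$ for $n \ge 2$, so that by telescoping $\lambda(n) = \mu_n$.

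The key analytic step is to show that $|\mu_n|$ is bounded both above and away from $0$. The upper bound follows from Cauchy's estimate applied to the uniform sup bound that normality provides on some fixed disk $D_r(0)$. For the lower bound, if $|\mu_{n_j}| \to 0$ along some subsequence, normality lets us extract a further subsequence with $f_{n_j,0} \to F$; by Hurwitz's theorem $F$ is either univalent or constant, but $F'(0) = 0$ forces $F$ to be constant, contradicting our standing assumption. Thus $|\lambda(n)| = |\mu_n|$ is bounded above and away from zero, which is exactly the rotational condition.

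Given these bounds, define $\psi_1 := \id$ and $\psi_n(z) := f_{n-1,0}(z/\mu_{n-1})$ for $n \ge 2$. Both sides of $f_n \circ \psi_n(z) = \psi_{n+1}(\lambda_n z)$ reduce to $f_{n,0}(z/\mu_{n-1})$, using $\lambda_n/\mu_n = 1/\mu_{n-1}$, so the functional equation holds. The chain rule gives $\psi_n'(0) = \mu_{n-1}/\mu_{n-1} = 1$, and because $|\mu_{n-1}| \ge c > 0$ the disk $D_{rc}(0)$ is sent by $z \mapsto z/\mu_{n-1}$ into $D_r(0)$, so each $\psi_n$ is defined, univalent, and uniformly bounded on $D_{rc}(0)$. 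The only nontrivial ingredient is the Hurwitz-based lower bound on $|\mu_n|$; every other step is either a direct calculation or an appeal to the normality hypothesis.
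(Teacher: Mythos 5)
Your proof is correct and follows essentially the same route as the paper: both arguments take $\psi_n = f_{n-1,0}(\,\cdot\,/f_{n-1,0}'(0))$ as the linearizing changes of coordinates, get the upper bound on $|\lambda(n)|$ from normality, and use Hurwitz to show that failure of the lower bound forces a subsequence of the compositions to degenerate to a constant. The only cosmetic difference is that the paper first conjugates by translations to arrange $f_n(0)=0$ so that $\lambda(n)=f_{n,0}'(0)$ is a product of the individual multipliers, whereas you sidestep this normalization by defining $\lambda_n$ directly as the ratio $f_{n,0}'(0)/f_{n-1,0}'(0)$.
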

\begin{proof}
By normality the orbit $f_{n,0}(0)$ stays bounded. By non-autonomously conjugating with a sequence of translations we may therefore assume that $f_n(0) = 0$ for all $n$. Note that normality is preserved under non-autonomous conjugation by bounded translations.

Write $\lambda_n = f_n^\prime(0)$. Normality implies that $|\lambda(n)|$ is bounded from above. The functions
$$
\psi_{n+1}(z) := f_{n,0} (\lambda(n)^{-1}\cdot z)
$$
are tangent to the identity, and they satisfy the functional equation
$$
f \circ \psi_n(z) = \psi_{n+1}(\lambda \cdot z).
$$
If the sequence $|\lambda(n)|$ is bounded away from the origin then the maps $\psi_n$ are uniformly bounded, and the sequence $(f(n))$ is rotationally linearizable. Suppose that the sequence $\lambda(n)$ is not bounded from below, in which case there is a subsequence $\lambda(n_j)$ converging to $0$. By Hurwitz Theorem the sequence of maps $f_{n_j,0}$ converges to a constant.
\end{proof}

\begin{lem}\label{lemma:reduction}
If the sequence $(f_n)$ is rotationally linearizable, and $(\zeta_n)$ is a sequence of absolutely summable holomorphic functions, i.e.
$$
\sum \|\zeta_n\|_{D_r{\black (0)}} < \infty
$$
for some $r>0$, then the sequence $(f_n + \zeta_n)$ is also rotationally linearizable.
\end{lem}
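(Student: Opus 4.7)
The plan is to perturb the given linearizing sequence $(\psi_n)$ rather than construct a new one from scratch. First I would reduce to a canonical form: conjugating the identity $f_n \circ \psi_n = \psi_{n+1}(\lambda_n \cdot)$ by $\psi_n$ turns $(f_n)$ into the strictly linear sequence $z \mapsto \lambda_n z$, and the perturbation becomes
$$\tilde\zeta_n(z) := \psi_{n+1}^{-1}\bigl(\psi_{n+1}(\lambda_n z) + \zeta_n(\psi_n(z))\bigr) - \lambda_n z.$$
The uniform boundedness of $(\psi_n)$, together with the hypothesis that $\psi_n'(0)$ is bounded away from zero, ensures that $(\psi_n^{-1})'$ is uniformly bounded near the relevant orbit, which gives $\|\tilde\zeta_n\|_r \le C\|\zeta_n\|_r$ on a uniform disk; hence $(\tilde\zeta_n)$ is again absolutely summable. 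The task thus reduces to rotationally linearizing $g_n(z) = \lambda_n z + \tilde\zeta_n(z)$ on a small disk around the origin.

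For this I would look for $\tilde\psi_n = \mathrm{id} + \eta_n$ solving $g_n \circ \tilde\psi_n = \tilde\psi_{n+1}(\lambda_n \cdot)$, which is equivalent to the recursion
$$\eta_{n+1}(\lambda_n z) = \lambda_n \eta_n(z) + \tilde\zeta_n\bigl(z + \eta_n(z)\bigr).$$
The key trick is to renormalize: fix $n_0$ and set $\Lambda_n := \lambda_{n-1}\lambda_{n-2}\cdots\lambda_{n_0}$ (with $\Lambda_{n_0} = 1$), so that by the hypothesis that $|\lambda(n)|$ is bounded above and away from zero, $|\Lambda_n| \asymp 1$ uniformly in $n$. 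Substituting $z = \Lambda_n w$ and introducing $H_n(w) := \eta_n(\Lambda_n w)/\Lambda_n$ converts the recursion into the purely additive form
$$H_{n+1}(w) = H_n(w) + \frac{1}{\Lambda_{n+1}}\, \tilde\zeta_n\bigl(\Lambda_n(w + H_n(w))\bigr).$$
Starting with $H_{n_0} \equiv 0$ and choosing $n_0$ large enough that $\sum_{n \ge n_0}\|\tilde\zeta_n\|$ is as small as desired, the increments $H_{n+1}-H_n$ are bounded by a constant multiple of $\|\tilde\zeta_n\|$, so $(H_n)$ is uniformly bounded on a uniform disk, and in fact uniformly small. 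A bootstrap using $|\Lambda_n(w + H_n(w))| \le M(r + \|H_n\|)$ shows that the argument at which $\tilde\zeta_n$ is evaluated stays inside the fixed disk on which it is defined, so the recursion is well-posed. Reverting, $\eta_n(z) = \Lambda_n H_n(z/\Lambda_n)$ is uniformly bounded, and by Cauchy estimates $\eta_n'(0) = H_n'(0)$ is uniformly small, so $\tilde\psi_n = \mathrm{id} + \eta_n$ has derivative at $0$ uniformly close to $1$. The composition $\psi_n \circ \tilde\psi_n$ then rotationally linearizes $(f_n + \zeta_n)$ with the same multipliers $\lambda_n$.

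The main obstacle is that the individual multipliers $\lambda_n$ may be arbitrarily large or small --- only the cumulative products $\lambda(n)$ are controlled --- so one cannot solve the recursion directly by a term-by-term estimate. The normalization $H_n(w) = \eta_n(\Lambda_n w)/\Lambda_n$ is precisely what absorbs this: it replaces the multiplicative $\lambda_n$-twist in the recursion by an additive perturbation whose size is governed directly by $\|\tilde\zeta_n\|$, allowing the summability hypothesis to do its work. A secondary technical point is the control of domains, which forces the initial time $n_0$ to be chosen large enough that the tail $\sum_{n \ge n_0}\|\zeta_n\|$ is as small as needed both for the recursion to be well-posed and for the derivatives $\tilde\psi_n'(0)$ to remain bounded away from zero.
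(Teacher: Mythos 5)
Your argument is correct, and after the (shared) first step it takes a genuinely different route from the paper. Both proofs begin by conjugating with the given $(\psi_n)$ to reduce to $g_n(z)=\lambda_n z+\tilde\zeta_n(z)$ with $(\tilde\zeta_n)$ still absolutely summable, using the uniform bounds on $\psi_n$ and $(\psi_n^{-1})'$. From there the paper argues softly: the summable errors telescope to give normality of the compositions $g_{n,0}$, Lemma~\ref{normal->rotation} then yields the dichotomy ``rotationally linearizable or some subsequence collapses to a constant,'' and summability rules out collapse. You instead construct the new linearizing coordinates explicitly, solving $\eta_{n+1}(\lambda_n z)=\lambda_n\eta_n(z)+\tilde\zeta_n(z+\eta_n(z))$ by the renormalization $H_n(w)=\eta_n(\Lambda_n w)/\Lambda_n$ with $\Lambda_n=\lambda_{n-1}\cdots\lambda_{n_0}$; since only $|\Lambda_n|\asymp 1$ is controlled (not the individual $\lambda_n$), this substitution is exactly what turns the twisted recursion into a telescoping sum whose increments are dominated by $\|\tilde\zeta_n\|$. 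Your version buys more: an explicit conjugacy with the \emph{same} multipliers $\lambda_n$, quantitative smallness of $\tilde\psi_n-\mathrm{id}$ (hence of $\tilde\psi_n'(0)-1$ by Cauchy estimates), and independence from the Hurwitz-type Lemma~\ref{normal->rotation}; the paper's version is shorter but nonconstructive. The only points to spell out in a final write-up are the routine domain bookkeeping you already flag: choose the radius $\rho$ of the $w$-disk and the starting time $n_0$ so that $M(\rho+\sup_n\|H_n\|)$ stays below the radius on which $\tilde\zeta_n$ is defined, closing the induction.
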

\begin{proof}
Write $g_n = f_n + \zeta_n$. We consider the errors due to the perturbations in linearization coordinates, i.e.
$$
\psi_{n+1}^{-1} \circ g_n \circ \psi_n(z) - \psi_{n+1}^{-1} \circ f_n \circ \psi_n(z) = \psi_{n+1}^{-1} \circ g_n \circ \psi_n(z) - \lambda_n \cdot z.
$$
By definition of the non-autonomous linearization, it follows that after restricting to a smaller neighborhood of the origin the derivatives of the maps $\psi_n$  and their inverses are uniformly bounded.  It follows that the above errors are also absolutely summable, which guarantees normality of the sequence $\psi_{n+1}^{-1} \circ g_{n,0}$ in a small neighborhood of the origin, and hence normality of the sequence $g_{n,0}$. It follows from Lemma~\ref{normal->rotation} that $(f_n + \zeta_n)$ is either rotationally linearizable, or has subsequences converging to the origin. It follows from the summability of the errors that the latter is impossible.
\end{proof}

\subsection{Introduction of the index}

Let $f(z)=\lambda z+b_2z^2+O(z^3)$ be a holomorphic function with $\lambda=e^{2\pi i\zeta}$ and $\zeta\in\mathbb{R}\backslash\mathbb{{\black Q}}$ Diophantine.  Let $h(z)=c_{0}+c_{1}z+O(z^2)$ be a holomorphic function defined in a neighborhood of the origin. Let $(\zeta_n(z))$  be a sequence of holomorphic functions that is defined and absolutely summable on some uniform neighborhood of the origin.  We consider the non-autonomous dynamical system given by compositions of the maps
$$
f_n(z)=f(z)+ \frac{1}{n} h(z)+\zeta_n(z).
$$

We introduce the index $\kappa$, depending rationally on the two-jet of $f$ at the origin, and the one-jet of $h$ at the origin, by
\begin{equation}\label{kappa}
\kappa:=\frac{2b_2c_{0}}{\lambda(1-\lambda)}+\frac{c_{1}}{\lambda}.
\end{equation}
{\black We claim that the index $\kappa$ is invariant under local autonomous changes of coordinates, i.e. when all the maps $f_n$ are conjugated by a single analytic transformation. One easily observes that the index is invariant under affine changes of coordinates, and is unaffected by terms of order $3$ and higher. It is therefore sufficient to only consider local changes of the form $z \mapsto z + \alpha z^2$. It is clear that $\lambda$ and $c_0$ are unaffected by such a coordinate change, while computation shows that $b_2$ is replaced by $b_2 + \alpha \lambda - \alpha \lambda^2$ and $c_1$ is replaced by $c_1 - \alpha \lambda c_0$. Indeed, $\kappa$ is invariant under these changes.}

\medskip

Since $\zeta$ is Diophantine the function $f$ is linearizable. Let us write $\phi(z) = z + h.o.t.$ for the linearization map of $f$, i.e. $f\circ\phi(z)=\phi(\lambda z)$.

We define
$$
\theta_n(z):=z+ \frac{1}{n}\frac{c_{0}}{1-\lambda}.
$$

\begin{lem}
With the above definitions we can write
\begin{equation}\label{form}
\begin{aligned}
\f_n:=& \phi^{-1}\circ\theta_{n+1}^{-1}\circ f_n\circ \theta_n\circ\phi \\
=& \lambda \cdot e^{\frac{\kappa}{n}} \cdot z + \frac{1}{n} \sum_{{\black k}=2}^{\infty}d_{\black k}z^{\black k} +\xi_n(z),
\end{aligned}
\end{equation}
where $(\xi_n)$  is a sequence of holomorphic functions that are defined and whose norms are summable on a uniform neighborhood of the origin.

\end{lem}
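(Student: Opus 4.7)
The plan is to carry out the double conjugation in two stages: first by the affine shifts $\theta_n$, and then by the Siegel linearization $\phi$ of $f$, tracking at each stage the constant, linear, and higher order parts of the resulting map. Throughout, any family of holomorphic functions whose sup-norms on a fixed neighborhood of $0$ are $O(1/n^2)$ is absolutely summable and can be absorbed into the eventual $\xi_n$. This observation applies in particular to $t_{n+1}-t_n$, to Taylor remainders of $f$ and $h$ in the small parameter $t_n = c_0/(n(1-\lambda))$, to the nonlinear corrections coming from $\zeta_n$, and to the difference between $1+\kappa/n$ and $e^{\kappa/n}$.

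For the first stage, set $F_n := \theta_{n+1}^{-1}\circ f_n\circ \theta_n$, so that $F_n(z) = f_n(z+t_n) - t_{n+1}$. Expanding in $t_n$ and separating the terms that are uniformly $O(1/n^2)$ gives
$$
F_n(z) = f(z) + (f'(z)-1)\,t_n + \tfrac{1}{n}h(z) + (t_n - t_{n+1}) + \text{summable}.
$$
Using $(\lambda-1)t_n = -c_0/n$, the constant term collapses to a summable quantity: this is the exact point of having chosen $t_n$ as we did. Gathering the remaining $1/n$-terms yields
$$
F_n(z) = f(z) + \tfrac{1}{n}\tilde h(z) + \xi_n^{(1)}(z),\qquad \tilde h(z) := h(z) - c_0 + \tfrac{c_0 \hat f'(z)}{1-\lambda},
$$
with $\xi_n^{(1)}$ absolutely summable on a uniform neighborhood. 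A direct computation gives $\tilde h(0)=0$ and $\tilde h'(0) = c_1 + 2b_2 c_0/(1-\lambda) = \lambda\kappa$, and this is precisely where the index $\kappa$ emerges.

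For the second stage, write $F_n = f + \eta_n$ with $\eta_n = O(1/n)$ uniformly, and use $f\circ \phi(z) = \phi(\lambda z)$ to obtain
$$
\f_n(z) = \phi^{-1}\bigl(\phi(\lambda z) + \eta_n(\phi(z))\bigr) = \lambda z + \frac{\eta_n(\phi(z))}{\phi'(\lambda z)} + O(\eta_n^2),
$$
the last term being $O(1/n^2)$ and hence summable. Since $\phi(z)=z+O(z^2)$ and $\tilde h(0)=0$, $\tilde h'(0)=\lambda\kappa$, the expansion of $\tilde h(\phi(z))/\phi'(\lambda z)$ in powers of $z$ has linear coefficient $\lambda\kappa$ and higher-order coefficients that assemble into the desired series. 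This yields
$$
\f_n(z) = \lambda(1+\kappa/n)\,z + \tfrac{1}{n}\sum_{k\ge 2} d_k z^k + \text{summable},
$$
and replacing $1+\kappa/n$ by $e^{\kappa/n}$ contributes only an $O(1/n^2)$ error, producing \eqref{form}.

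The main obstacle is the bookkeeping: every Taylor remainder, $\zeta_n$-contribution, and $t_n$-expansion must be controlled uniformly on a common neighborhood of the origin to ensure that the combined error remains summable. The only genuine algebraic cancellations are at the constant term, which fixes the formula for $t_n$, and at the linear coefficient, which produces $\kappa$; everything else is a routine application of the chain rule together with the functional equation for $\phi$.
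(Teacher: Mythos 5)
Your proof is correct and follows essentially the same route as the paper: conjugate first by the shifts $\theta_n$ so that the $1/n$-constant term cancels (fixing $t_n=c_0/(n(1-\lambda))$) and the $1/n$-linear coefficient produces $\lambda\kappa$, then conjugate by the Siegel linearization $\phi$ and replace $1+\kappa/n$ by $e^{\kappa/n}$ at summable cost. Packaging the intermediate perturbation as $\tilde h$ with $\tilde h(0)=0$, $\tilde h'(0)=\lambda\kappa$ is a clean way to organize the same computation the paper does directly via power-series expansion.
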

\begin{proof}
First observe that
\begin{align*}
\theta_{n+1}^{-1}\circ f_n\circ \theta_n &\asymp f(z+ \frac{1}{n}\frac{c_{0}}{1-\lambda})+ \frac{1}{n}h(z+ \frac{1}{n}\frac{c_{0}}{1-\lambda})-\frac{1}{n+1}\frac{c_{0}}{1-\lambda}\\
&\asymp f(z)+f'(z) \frac{1}{n}\frac{c_{0}}{1-\lambda}+ \frac{1}{n}h(z)-\frac{1}{n+1}\frac{c_{0}}{1-\lambda}.
\end{align*}
Using the power series expansions of $f^\prime$ and $h$ we can therefore write
\begin{align*}
\theta_{n+1}^{-1}\circ f_n\circ \theta_n &\asymp f(z)+ (\frac{1}{n}\frac{\lambda  c_{0}}{1-\lambda}+\frac{1}{n}c_0-\frac{1}{n+1}\frac{c_{0}}{1-\lambda})+\frac{1}{n}(\frac{2b_2c_{0}}{1-\lambda}+c_1)z+ \frac{1}{n}\sum_{k=2}^{\infty}\beta_kz^k\\
&\asymp f(z)+ \frac{c_{0}}{1-\lambda}(\frac{1}{n}-\frac{1}{n+1})+ \frac{1}{n}\lambda\kappa z+  \frac{1}{n}\sum_{k=2}^{\infty}\beta_kz^k\\
&\asymp f(z)+\frac{1}{n}\lambda\kappa z+ \frac{1}{n}\sum_{k=2}^{\infty}\beta_kz^k.
\end{align*}
It follows that

\begin{align*}
\f_n&\asymp\phi^{-1}(f(\phi(z)))+(\phi^{-1})'(f(\phi(z)))\left(\frac{1}{n}\lambda\kappa \phi(z)+ \frac{1}{n}\sum_{k=2}^{\infty}\beta_k\phi(z)^k\right)\\
&\asymp\lambda(1+\frac{\kappa}{n})z + \frac{1}{n}\sum_{k=2}^{\infty}d_kz^k\\
&\asymp\lambda e^{\frac{\kappa}{n}}z+ \frac{1}{n}\sum_{k=2}^{\infty}d_kz^k.
\end{align*}
For the last equality we used that $1+\frac{\kappa}{n} = e^{\frac{\kappa}{n}} +O(\frac{1}{n^2})$.
\end{proof}

\begin{coro}
If $\mathrm{Re}(\kappa) < 0$ the sequence $\f_n$ is collapsing.
\end{coro}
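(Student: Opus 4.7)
The plan is to exploit the fact that the linear multiplier of $\f_n$ has modulus $|\lambda e^{\kappa/n}| = e^{\mathrm{Re}(\kappa)/n}$, which is strictly less than $1$ since $\mathrm{Re}(\kappa)<0$. Writing $\alpha = -\mathrm{Re}(\kappa)>0$, one has $\prod_{j=1}^{n} e^{-\alpha/j} \sim n^{-\alpha}$, so the linear part alone already yields polynomial contraction; the task is to verify that the nonlinear $\tfrac{1}{n}$-sized terms in \eqref{form}, together with the absolutely summable $\xi_n$, do not destroy this contraction.

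First I would choose $r>0$ small enough that the power series $\sum d_k z^k$ is bounded on $\overline{D_r(0)}$, and pick $n_0$ large enough that the basic estimate
\[
|\f_n(z)| \;\leq\; e^{-\alpha/n}|z| + \frac{A}{n}|z|^2 + \eta_n, \qquad z \in D_r(0),\ n\geq n_0,
\]
holds with $A>0$ a uniform constant and $\eta_n := \|\xi_n\|_{D_r(0)}$ summable. By further shrinking $r$ and enlarging $n_0$ so that $Ar < \alpha/4$ and $\sum_{n\geq n_0}\eta_n < r/4$, a straightforward induction forces every forward orbit starting in $D_{r/2}(0)$ at time $m\geq n_0$ to remain in $D_r(0)$. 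Along such an orbit the estimate simplifies to $|\f_{n,m}(z)| \leq (1-\tfrac{\alpha}{2n})|\f_{n-1,m}(z)| + \eta_n$ for $n$ large.

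The main step is iterating this recursion. Writing $M_n = |\f_{n,m}(z)|$ and using the elementary inequality $\prod_{j=k+1}^{n}(1-\tfrac{\alpha}{2j}) \leq C(k/n)^{\alpha/2}$, I obtain
\[
M_n \;\leq\; C\Bigl(\frac{m}{n}\Bigr)^{\alpha/2} M_m \;+\; C \sum_{k=m+1}^{n}\Bigl(\frac{k}{n}\Bigr)^{\alpha/2}\eta_k.
\]
The first summand tends to $0$ as $n\to\infty$, uniformly in $z\in D_{r/2}(0)$. For the second I would split the sum at $k = \lfloor\sqrt n\rfloor$: for $k \leq \sqrt n$ the weight is bounded by $n^{-\alpha/4}$ and the whole sum is controlled by the finite $\sum \eta_k$; for $k > \sqrt n$ the weight is at most $1$ and the sum is controlled by the tail $\sum_{k>\sqrt n}\eta_k \to 0$. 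Both contributions vanish, which gives $\f_{n,m} \to 0$ uniformly on $D_{r/2}(0)$, hence the collapsing property.

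The main obstacle is reconciling the logarithmic divergence of $\sum \tfrac{1}{j}$ with the fact that the $\tfrac{1}{n}$-terms of \eqref{form} are \emph{not} absolutely summable, so Lemma \ref{lemma:reduction} does not apply directly. The polynomial contraction rate $n^{-\alpha/2}$ extracted from $\prod(1-\tfrac{\alpha}{2j})$ is exactly what is needed to simultaneously absorb both the nonlinear $\tfrac{|z|^2}{n}$ contribution and the summable perturbation $\xi_n$.
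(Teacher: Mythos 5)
Your proof is correct and follows essentially the same route as the paper: both extract the per-step contraction factor $e^{\re(\kappa)/(2n)}\approx 1-\alpha/(2n)$ by absorbing the $O(|z|^2/n)$ terms into the linear multiplier on a small enough disk, and both control the summable errors $\xi_n$ through the weighted sum $\sum_j\bigl(\prod_{k>j}e^{\re(\kappa)/(2k)}\bigr)\|\xi_j\|$. Your splitting of that sum at $k=\lfloor\sqrt n\rfloor$ merely supplies the detail behind a limit the paper asserts without proof.
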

\begin{proof}
Observe that $\f_n'(z) \asymp \lambda e^{\frac{\kappa}{n}}+O(\frac{z}{n})$ and
note that there is a small disk $D_r(0)$ such that for $n$ sufficiently large
$$
\|\f_n^\prime\|_{D_r(0)}< e^{\frac{\re(\kappa)}{2n}},
$$
and thus
\begin{equation}\label{go away}
|\f_n(z) - \f_n(w)| \le e^{\frac{\re(\kappa)}{2n}} |z - w|.
\end{equation}
Since $\re(\kappa)<0$ it follows that $\prod_{n \ge 1} e^{\frac{\re(\kappa)}{2n}} = 0$.

Let us write
$$
\varphi_n(z) = \lambda \cdot e^{\frac{\kappa}{n}} \cdot z+\hat{\f}_n(z),
$$
i.e. dropping the term $\xi_n$ from $\f_n$.  By decreasing the radius $r$ if necessary we can choose $m_0$ such that
$$
\sum_{j \ge m_0} \|\xi_j\|_{D_r(0)} < \frac{r}{2}.
$$
By increasing $m_0$ if necessary we can also guarantee that $\varphi_{n,m}(z) \in D_{r/2}(0)$ for all $z \in D_{r/4}(0)$ and $m \ge m_0$. Using \eqref{go away} it follows by induction on $n$ that whenever $z \in D_{r/4}(0)$ and $m \ge m_0$ then
$$
\| \f_{n,m}(z) - \varphi_{n,m}(z) \| \le \sum_{j = m}^n (\prod_{k=j+1}^n e^{\frac{\re(\kappa)}{2k}}) \|\xi_j\|_{D_r(0)}.
$$
Indeed, the inequality is trivially satisfied for $n =m$, and assuming the inequality holds for some $n \ge m$ implies
$$
\begin{aligned}
\| \f_{n+1,m}(z) - \varphi_{n+1,m}(z) \| = & \| \f_{n+1}\circ \f_{n,m}(z) - \f_{n+1} \circ \varphi_{n,m}(z) + \xi_{n+1}(\varphi_{n,m}(z))\|\\
\le & \sum_{j = m}^{n+1} (\prod_{k=j+1}^{n+1} e^{\frac{\re(\kappa)}{2k}}) \|\xi_j\|_{D_r(0)}.
\end{aligned}
$$
Note that $\sum_{j=m}^n (\prod_{k=j+1}^n e^{\frac{\re(\kappa)}{2k}}) \|\xi_j\|_{D_r(0)} \rightarrow 0$ as $n \rightarrow \infty$, hence the fact that the sequence $(\varphi_n)$ collapses implies that the sequence $(\f_n)$ collapses as well.
\end{proof}

Since the sequence $(\f_n)$ collapses, it follows immediately that the sequence $(f_n)$ collapses as well, concluding the case $\mathrm{Re}(\kappa) < 0$.

\begin{coro}
If $\mathrm{Re}(\kappa) > 0$ the sequence $\f_n$ is expulsive.
\end{coro}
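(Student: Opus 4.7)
The strategy mirrors the proof for $\mathrm{Re}(\kappa) < 0$, but with contraction replaced by expansion. The plan is to show that on a small disk $D_r(0)$ the maps $\f_n$ uniformly \emph{expand}, in the strong sense that $\prod_{k > m_0} \inf_{D_r(0)} |\f_k'|$ diverges to $+\infty$. Once this is established, a mean-value argument forces any two orbits remaining in $D_r(0)$ to coincide, which is exactly the ``at most one exceptional point'' requirement in the definition of expulsive.

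First I would differentiate the normal form \eqref{form}:
$$
\f_n'(z) = \lambda e^{\kappa/n} + \frac{1}{n}\sum_{k \geq 2} k d_k z^{k-1} + \xi_n'(z).
$$
The leading term has modulus $e^{\mathrm{Re}(\kappa)/n} = 1 + \mathrm{Re}(\kappa)/n + O(1/n^2)$, while on $D_r(0)$ the middle term is $O(r/n)$. For the tail I would apply Cauchy estimates on a slightly larger disk on which $(\xi_n)$ still satisfies the summability hypothesis, obtaining $\sum_n \|\xi_n'\|_{D_r(0)} < \infty$. Shrinking $r$ so that the $O(r/n)$ contribution is dominated by $\mathrm{Re}(\kappa)/(4n)$, I would conclude that for every $n \geq m_0$
$$
\inf_{D_r(0)} |\f_n'| \;\geq\; 1 + \frac{\mathrm{Re}(\kappa)}{2n} - \epsilon_n, \qquad \sum_n \epsilon_n < \infty.
$$
Taking logarithms, the harmonic divergence $\sum 1/n = \infty$ swallows the summable correction, so the product $\prod_{k > m_0} \inf_{D_r(0)} |\f_k'|$ diverges to $+\infty$.

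For the uniqueness step, suppose that for some $m \geq m_0$ there exist distinct points $z, w \in D_r(0)$ whose orbits $\f_{n,m}(z), \f_{n,m}(w)$ both remain in $D_r(0)$ for every $n > m$. By convexity of $D_r(0)$ the segment between $\f_{k-1,m}(z)$ and $\f_{k-1,m}(w)$ lies in $D_r(0)$, so the identity $\f_k(u) - \f_k(v) = (u-v)\int_0^1 \f_k'(v + t(u-v))\,dt$ iterates to
$$
|\f_{n,m}(z) - \f_{n,m}(w)| \;\geq\; |z-w| \prod_{k=m+1}^n \inf_{D_r(0)} |\f_k'| \;\longrightarrow\; +\infty,
$$
contradicting the a priori bound $2r$. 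Thus at most one orbit can persist in $D_r(0)$ from time $m_0$ onward; for the finitely many $m < m_0$, a further shrink of $r$ ensures injectivity of each initial composition $\f_{m_0,m}$ (since $\f_n'(0) = \lambda e^{\kappa/n} \neq 0$), so uniqueness transports backward. The main technical point to watch is the Cauchy-estimate step: summability of $(\xi_n)$ on a disk \emph{strictly} larger than the working disk is essential, for otherwise the derivative perturbation $\xi_n'$ might fail to be summable and could swamp the divergence $\sum \mathrm{Re}(\kappa)/(2n)$ that drives the expansion.
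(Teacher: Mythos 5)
Your strategy coincides with the paper's: show the maps expand on a small disk with a product of expansion factors diverging, so that two orbits which both persist in $D_r(0)$ would be forced apart, contradicting the diameter bound. You are in fact more explicit than the paper about the absolutely summable error $\xi_n$ from the normal form, which the paper's one-line equality silently drops; your Cauchy-estimate step correctly shows that the resulting summable correction $\epsilon_n$ to the expansion factor cannot spoil the divergence of $\prod e^{\re(\kappa)/(2n)}$, and your handling of the finitely many $m<m_0$ via local injectivity is a detail the paper leaves implicit.

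One step, however, is not valid as written: the inference that $|\f_k(u)-\f_k(v)|\geq |u-v|\,\inf_{D_r(0)}|\f_k'|$ for $u,v$ in the convex set $D_r(0)$. For holomorphic $f$ this can fail on a disk, because the line integral $\int_0^1 f'\bigl(v+t(u-v)\bigr)\,dt$ may have modulus much smaller than $\inf|f'|$ when $\arg f'$ varies (take $f(z)=e^{iNz}$ with $N$ large). What rescues the argument here is not the infimum of $|\f_k'|$ but the fact that $\f_k'(z)=\lambda e^{\kappa/k}+O(r/k)+\xi_k'(z)$ is uniformly close to the single constant $\lambda e^{\kappa/k}$ on $D_r(0)$; the line integral, and hence the difference quotient, is therefore itself close to $\lambda e^{\kappa/k}$, and its modulus admits the same lower bound $1+\re(\kappa)/(2k)-\epsilon_k$. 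This is precisely what the paper's identity $|\f_n(z)-\f_n(w)|=|z-w|\cdot|e^{\kappa/n}+\tfrac{1}{n}O(z,w)|$ records: the lower bound comes from the modulus of a near-constant complex number, not from $\inf|\f_k'|$. With that replacement your proof is complete, and is otherwise a more careful rendering of the paper's.
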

\begin{proof}
Note that there are $r,n_0>0$, such that for every $z,w\in D_r(0)$ and every $n>n_0$ we have
$$
|\f_n(z)-\f_n(w)|=|z-w|\cdot|e^{\frac{\kappa}{n}}+\frac{1}{n}O(z,w)|> e^{\frac{\re(\kappa)}{2n}}|z-w|.
$$
Expulsion of all but one orbit follows immediately.
\end{proof}
Again it follows that $(f_n)$ is expulsive, completing the case $\mathrm{Re}(\kappa) > 0$.

\subsection{Rotationally linearizable case ($\re(\kappa)=0$)}


Let us define
{\black
$$
L_n(z)=e^{\kappa \log n } \cdot z.
$$}
We obtain
\begin{align*}
g_n&=L^{-1}_{n+1}\circ \f_n\circ L_n\\
&=\lambda z + {\black e^{-\kappa \log(n+1) }} \frac{1}{n} \sum_{\ell=2}^{\infty} d_\ell {\black e^{\kappa \ell \log n}} z^\ell+ L^{-1}_{n+1}\circ \xi_n\circ L_n\\
&\asymp\lambda z + \frac{1}{n}\sum_{\ell=2}^{\infty} d_\ell e^{\kappa(\ell-1)\log n} z^\ell.
\end{align*}


Since $\mathrm{Re}(\kappa) = 0$, the maps $L_n$ are rotations, hence it is sufficient to prove that the sequence $(g_n)$ is rotationally linearizable.

By Lemma \ref{lemma:reduction} we may ignore the absolutely summable part of $g_n$, hence with slight abuse of notation we may assume that
$$
g_n=\lambda z + \frac{1}{n}\sum_{\ell=2}^{\infty} d_\ell e^{\kappa (\ell-1)\log n} z^\ell.
$$

Recall that $\lambda = e^{2\pi i \zeta}$, where $\zeta$ is Diophantine.

\begin{lem}\label{lemma:lambda} There exist constants $C,r>0$ such that for every integer $\ell\geq1$ and for every $0<m<N$ we have
$$
\left|\sum_{j=m}^{N}\lambda^{\ell j}\right|<C\ell^r.
$$
\end{lem}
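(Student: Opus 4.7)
The plan is to reduce the partial sum to a closed-form geometric expression and then invoke the Diophantine lower bound on $|\lambda^\ell - 1|$ directly.

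First I would observe that, since $\lambda^\ell \neq 1$ for every integer $\ell \geq 1$ (because $\zeta$ is irrational), the standard geometric series identity gives
\[
\sum_{j=m}^{N}\lambda^{\ell j} \;=\; \lambda^{\ell m}\cdot \frac{\lambda^{\ell(N-m+1)}-1}{\lambda^{\ell}-1},
\]
so
\[
\left|\sum_{j=m}^{N}\lambda^{\ell j}\right| \;\leq\; \frac{2}{|\lambda^{\ell}-1|}.
\]
This bound is uniform in $m$ and $N$, which is exactly the uniformity the lemma asks for.

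Next I would apply the Diophantine hypothesis on $\zeta$. By assumption there exist constants $c>0$ and $r>0$ such that $|\lambda^{\ell}-1| \geq c\,\ell^{-r}$ for every integer $\ell \geq 1$. Substituting this into the previous inequality yields
\[
\left|\sum_{j=m}^{N}\lambda^{\ell j}\right| \;\leq\; \frac{2}{c}\,\ell^{r},
\]
so the lemma holds with $C := 2/c$ and with the same exponent $r$ appearing in the Diophantine condition. The constant $r$ in the statement of the lemma is unrelated to the radius $r$ used elsewhere in the section, so there is no conflict of notation to worry about at the level of the proof itself.

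There is essentially no obstacle here: the only subtlety is making sure to use that the Diophantine inequality holds for \emph{every} positive integer $\ell$, not merely for $\ell$ large, so that the bound is valid uniformly over all $\ell \geq 1$. This is already built into the definition of Diophantine recalled before Proposition B', so the argument is complete in two lines once the geometric series is written down.
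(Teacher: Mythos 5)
Your proof is correct and follows essentially the same approach as the paper: the paper telescopes $\sum_{j=m}^{N}\lambda^{\ell j}=\frac{1}{\lambda^\ell-1}\sum_{j=m}^{N}(\lambda^{\ell(j+1)}-\lambda^{\ell j})$ to get the bound $\frac{2}{|\lambda^\ell-1|}$, which is just the geometric series identity you wrote, and then applies the Diophantine bound exactly as you do.
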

\begin{proof}

Since $\zeta$ is assumed to be Diophantine, there exist $c,r>0$ such that $|\lambda^n-1|\geq cn^{-r}$ for all $n$. This gives the bound
$$
\left|\sum_{j=m}^{N}\lambda^{\ell j}\right|  = \left|\sum_{j=m}^{N}\frac{\lambda^{\ell (j+1)}-\lambda^{\ell j}}{\lambda^{\ell}-1}\right|=
\left|\frac{1}{\lambda^{\ell}-1}\sum_{j=m}^{N}(\lambda^{\ell(j+1)}-\lambda^{\ell j})\right|<\left|\frac{2}{\lambda^\ell-1}\right|<C \ell^r.
$$
\end{proof}

\begin{lem}There exist $\tilde{C},r>0$ such that for all integers $n,\ell>0$ we have
$$
\left|\sum_{k=n}^{\infty} \frac{e^{\kappa \ell \log k}}{k} \lambda^{k\ell}\right|<\frac{\tilde{C}\ell^{r+1}}{n}
$$
\end{lem}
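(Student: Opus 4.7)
The plan is to deduce this estimate from Lemma \ref{lemma:lambda} by a summation-by-parts (Abel summation) argument, exploiting the fact that $\re(\kappa)=0$ forces $|e^{\kappa \ell\log k}|=1$.

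First I would set $a_k:=\frac{e^{\kappa\ell\log k}}{k}$ and $b_k:=\lambda^{k\ell}$, so that $|a_k|=1/k$. Defining the partial sums $B_k:=\sum_{j=n}^{k}b_j$ (with $B_{n-1}=0$), Lemma \ref{lemma:lambda} gives $|B_k|\le C\ell^r$ uniformly in $k\ge n-1$. Abel summation then yields, for any $N\ge n$,
\begin{equation*}
\sum_{k=n}^{N}a_k b_k \;=\; a_N B_N \;-\; \sum_{k=n}^{N-1}(a_{k+1}-a_k)\,B_k.
\end{equation*}
The boundary term satisfies $|a_N B_N|\le C\ell^r/N\to 0$ as $N\to\infty$, so it suffices to control the telescoping sum.

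Next I would estimate $|a_{k+1}-a_k|$. Writing $u(x):=x^{\kappa\ell-1}=e^{(\kappa\ell-1)\log x}$, we have $a_k=u(k)$, and since $\re(\kappa\ell-1)=-1$,
\begin{equation*}
|u'(x)|=|\kappa\ell-1|\,x^{-2}\le (|\kappa|\ell+1)\,x^{-2}.
\end{equation*}
The mean value theorem (applied along a segment in a neighborhood of the positive real axis where the principal branch of the logarithm is analytic) gives $|a_{k+1}-a_k|\le (|\kappa|\ell+1)/k^2$.

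Combining these bounds,
\begin{equation*}
\left|\sum_{k=n}^{N-1}(a_{k+1}-a_k)B_k\right| \le C\ell^r(|\kappa|\ell+1)\sum_{k=n}^{\infty}\frac{1}{k^2} \le \frac{2C(|\kappa|+1)\,\ell^{r+1}}{n},
\end{equation*}
since $\sum_{k\ge n}k^{-2}\le 2/n$. Letting $N\to\infty$ and setting $\tilde C:=2C(|\kappa|+1)$ gives the claimed inequality. No step is genuinely hard here; the only point that requires a moment of attention is noticing that the summability comes entirely from the decay $1/k^2$ of the discrete derivative of $a_k$, while Lemma \ref{lemma:lambda} is what converts the oscillatory factor $\lambda^{k\ell}$ into a uniformly bounded (in $k$) quantity at the cost of a factor $\ell^r$.
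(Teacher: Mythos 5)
Your proposal is correct and follows essentially the same route as the paper: summation by parts to transfer the oscillatory factor onto the partial sums $B_k$, Lemma \ref{lemma:lambda} to bound $|B_k|\le C\ell^r$, and a $O(\ell/k^2)$ bound on the increments $a_{k+1}-a_k$ (the paper does this by Taylor-expanding $e^{\kappa\ell\log(k+1)}$, you do it by bounding the derivative of $x\mapsto x^{\kappa\ell-1}$, which is the same estimate in slightly different clothing). One small wording note: for the complex-valued function $u$ the mean value \emph{theorem} does not apply, but the mean value \emph{inequality} $|u(k+1)-u(k)|\le\sup_{[k,k+1]}|u'|$ does (e.g.\ via $u(k+1)-u(k)=\int_k^{k+1}u'(x)\,dx$), and that is all you need.
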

\begin{proof}
Summation by parts gives
\begin{equation*}
\begin{aligned}
\sum_{k=n}^{N} \frac{e^{\kappa \ell \log k}}{k} \lambda^{k {\black \ell}}&= \frac{e^{\kappa \ell \log N}}{N}\sum_{k=n}^{N}\lambda^{k\ell }-\sum_{k=n}^{N-1}\left(\frac{e^{\kappa \ell \log (k+1)}}{k+1}-\frac{e^{\kappa \ell \log k}}{k}\right)\sum_{j=n}^{k}\lambda^{j \ell}\\
&=\frac{e^{\kappa \ell \log N}}{N}\sum_{k=n}^{N}\lambda^{k\ell}-\sum_{k=n}^{N-1}e^{\kappa \ell \log k}\left(\frac{1+\frac{\kappa l}{k}+O(\frac{1}{k^2})}{k+1}-\frac{1}{k}\right)\sum_{j=n}^{k}\lambda^{j \ell}.
\end{aligned}
\end{equation*}
Observe that $\frac{1+\frac{\kappa \ell }{k}+O(\frac{1}{k^2})}{k+1}-\frac{1}{k} =O(\frac{1}{k^2})$ is absolutely summable, hence using Lemma \ref{lemma:lambda} we obtain
\begin{equation*}
\begin{aligned}
\left|\sum_{k=n}^{N} \frac{e^{\kappa \ell  \log k}}{k} \lambda^{k\ell }\right|&< \frac{1}{N}\left|\sum_{k=n}^{N}\lambda^{k\ell }\right|+\sum_{k=n}^{N-1}\left|\frac{1+\frac{\kappa \ell }{k}+O(\frac{1}{k^2})}{k+1}-\frac{1}{k}\right|\left|\sum_{j=n}^{k}\lambda^{j\ell }\right| \\
&< \frac{C\ell^r}{N}+C\ell^r\sum_{k=n}^{N-1}\left|\frac{\ell \kappa-1}{k(k+1)}+O(\frac{1}{k^3})\right|\\
&< \frac{\tilde{C}\ell^{r+1}}{n}
\end{aligned}
\end{equation*}
\end{proof}

Let us introduce one more change of coordinates
$$
S_{n+1}(z)=z-\lambda^{-1}\sum_{\ell=2}^{\infty}\lambda^{(n+1)(1-\ell)} d_\ell z^\ell\sum_{k=n+1}^{\infty} \frac{e^{\kappa (\ell-1)\log k}}{k} \lambda^{k(\ell-1)}
$$
\begin{lem}\label{crucial-lemma}
Writing $S_n(z)=z+\hat{S}_n(z)$ we obtain
$$
\hat{S}_{n+1}(\lambda z)=\lambda \hat{S}_n(z)+ \hat{g}_n(z).
$$
\end{lem}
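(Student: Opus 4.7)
The plan is to verify the identity by a direct computation, exploiting the telescoping structure that is built into the definition of $S_{n+1}$. The cohomological equation $\hat{S}_{n+1}(\lambda z) - \lambda \hat{S}_n(z) = \hat{g}_n(z)$ should essentially reduce to recognizing that the difference $\sum_{k=n+1}^\infty - \sum_{k=n}^\infty$ in the tail sums defining $\hat{S}_{n+1}$ and $\hat{S}_n$ is exactly the single missing term at $k=n$, which produces $\hat{g}_n$.

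First I would record the formula for $\hat{S}_n(z)$ by simply shifting the index in the definition of $S_{n+1}$, namely
$$
\hat{S}_n(z) = -\lambda^{-1}\sum_{\ell=2}^{\infty} \lambda^{n(1-\ell)} d_\ell z^\ell \sum_{k=n}^{\infty} \frac{e^{\kappa(\ell-1)\log k}}{k} \lambda^{k(\ell-1)}.
$$
The bound from the previous lemma shows that this double series converges absolutely and uniformly on a small disk around the origin (because of the $1/n$ factor coming from the tail estimate and the factor $\lambda^{n(1-\ell)}$ has modulus $1$), so all rearrangements below are legitimate.

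Next I would compute $\hat{S}_{n+1}(\lambda z)$ by substituting $z \mapsto \lambda z$ and combining the $\lambda$ exponents. The terms in the exponent satisfy
$$
-1 + (n+1)(1-\ell) + \ell \;=\; n(1-\ell),
$$
so that
$$
\hat{S}_{n+1}(\lambda z) = -\sum_{\ell=2}^{\infty} \lambda^{n(1-\ell)} d_\ell z^\ell \sum_{k=n+1}^{\infty} \frac{e^{\kappa(\ell-1)\log k}}{k} \lambda^{k(\ell-1)}.
$$
On the other hand, $\lambda \hat{S}_n(z)$ has the same outer structure, but with the sum over $k$ starting at $k=n$.

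Finally, subtracting yields
$$
\hat{S}_{n+1}(\lambda z) - \lambda \hat{S}_n(z) = \sum_{\ell=2}^{\infty} \lambda^{n(1-\ell)} d_\ell z^\ell \cdot \frac{e^{\kappa(\ell-1)\log n}}{n}\, \lambda^{n(\ell-1)},
$$
and since $\lambda^{n(1-\ell)}\lambda^{n(\ell-1)} = 1$, the right-hand side reduces to $\frac{1}{n}\sum_{\ell=2}^\infty d_\ell\, e^{\kappa(\ell-1)\log n}\, z^\ell = \hat{g}_n(z)$, as required. No step is truly an obstacle here: the conceptual content was already placed in the previous lemma, which justified the convergence of the tail series that defines $S_n$; the present statement is a bookkeeping verification that $S_n$ was set up so that the sum-difference produces exactly $\hat{g}_n$.
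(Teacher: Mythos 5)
Your proof is correct and takes essentially the same approach as the paper: a direct telescoping computation. The only cosmetic difference is that the paper verifies $\hat{S}_{n+1}(\lambda z)-\hat{g}_n(z)=\lambda\hat{S}_n(z)$ while you verify the equivalent $\hat{S}_{n+1}(\lambda z)-\lambda\hat{S}_n(z)=\hat{g}_n(z)$; the exponent bookkeeping and the single $k=n$ term are handled identically.
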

\begin{proof} Computing $\hat{S}_{n+1}(\lambda z)-\hat{g}_n(z)$ gives
\begin{align*}
&-\lambda^{-1}\sum_{\ell=2}^{\infty}\lambda^{(n+1)(1-\ell)} d_\ell \lambda^\ell z^\ell \sum_{k=n+1}^{\infty} \frac{e^{\kappa (\ell-1)\log k}}{k} \lambda^{k(\ell-1)}- \frac{1}{n}\sum_{\ell=2}^{\infty} e^{\kappa (\ell-1)\log n} d_\ell z^\ell\\
&=-\sum_{\ell=2}^{\infty}\lambda^{n(1-\ell)} d_\ell z^\ell\sum_{k=n+1}^{\infty} \frac{e^{\kappa (\ell-1)\log k}}{k} \lambda^{k(\ell-1)}- \sum_{\ell=2}^{\infty} \frac{e^{\kappa (\ell-1)\log n}}{n}\lambda^{n(\ell-1)}\lambda^{n(1-\ell)}d_\ell  z^\ell\\
&=-\sum_{\ell=2}^{\infty}\lambda^{n(1-\ell)} d_\ell z^\ell\sum_{k=n}^{\infty} \frac{e^{\kappa (\ell-1)\log k}}{k} \lambda^{k(\ell-1)} \;\; \quad = \lambda \hat{S}_n(z).
\end{align*}
\end{proof}

\begin{lem} The maps $S_n$ satisfy $S_{n}=z+O(\frac{1}{n})$, with uniform bounds.
\end{lem}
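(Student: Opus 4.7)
The plan is to bound the tail series appearing in the definition of $S_{n+1}$ by invoking the preceding lemma term-by-term and then summing. Writing out
$$
S_{n+1}(z) - z = -\lambda^{-1}\sum_{\ell=2}^{\infty}\lambda^{(n+1)(1-\ell)} d_\ell z^\ell \sum_{k=n+1}^{\infty} \frac{e^{\kappa (\ell-1)\log k}}{k} \lambda^{k(\ell-1)},
$$
I would first note that, since $\re(\kappa)=0$ and $|\lambda|=1$, the prefactor $|\lambda^{(n+1)(1-\ell)}|$ equals $1$. Applying the previous lemma with $\ell$ replaced by $\ell-1\ge 1$ (and $n$ by $n+1$) then gives
$$
\left|\sum_{k=n+1}^{\infty} \frac{e^{\kappa(\ell-1)\log k}}{k}\lambda^{k(\ell-1)}\right| \le \frac{\tilde{C}(\ell-1)^{r+1}}{n+1}.
$$
This reduces matters to the majorant estimate
$$
|S_{n+1}(z) - z| \le \frac{\tilde{C}}{n+1}\sum_{\ell=2}^{\infty}|d_\ell|\,(\ell-1)^{r+1}\,|z|^\ell.
$$

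Next, I would check that the right-hand side converges uniformly on some fixed disk around the origin, independent of $n$. The coefficients $d_\ell$ are the Taylor coefficients of the nonlinear part of the $n$-independent principal term in \eqref{form}, a function holomorphic on a uniform neighborhood $D_\rho(0)$ of the origin. Cauchy's estimates therefore yield $|d_\ell|\le M\rho^{-\ell}$ for some $M>0$, and the polynomial factor $(\ell-1)^{r+1}$ does not affect the radius of convergence. For any fixed $r'<\rho$ the majorant series is thus uniformly bounded on $D_{r'}(0)$ by some constant $M'$, whence $|S_{n+1}(z)-z|\le \tilde{C}M'/(n+1)$ uniformly on $D_{r'}(0)$. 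The same estimate with $n$ in place of $n+1$ gives the analogous bound for $S_n$, yielding the desired $O(1/n)$ control.

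The argument is essentially a direct combination of the previous lemma with Cauchy's estimates, so there is no serious technical obstacle. The one conceptual point worth highlighting is that the hypothesis $\re(\kappa)=0$ is essential here: without it, the factors $\lambda^{(n+1)(1-\ell)}$ in the definition of $S_{n+1}$ would be replaced by quantities of modulus growing or decaying exponentially in $\ell$, and the majorant series would generically fail to converge on any disk of positive radius.
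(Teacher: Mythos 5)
Your proof is correct and follows the same line as the paper: bound the inner tail series by invoking the preceding lemma, then observe that the resulting majorant series in $\ell$ converges on a fixed disk because the $d_\ell$ are Taylor coefficients of a function holomorphic near $0$. The paper compresses this into a two-line display and does not spell out the Cauchy-estimate step; your version makes that explicit, which is a mild improvement in rigor rather than a different approach.

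One small inaccuracy in your closing remark: the factors $\lambda^{(n+1)(1-\ell)}$ always have modulus $1$ (since $\lambda=e^{2\pi i\zeta}$ with $\zeta\in\R$), independently of $\kappa$. The place where $\re(\kappa)=0$ is actually used is in the preceding lemma itself, whose proof needs $|e^{\kappa\ell\log k}|=k^{\ell\,\re(\kappa)}=1$ to pull those factors through the Abel-summation estimate. So the hypothesis is indeed essential, but it enters through the $e^{\kappa(\ell-1)\log k}$ factors inside the inner sum, not through the $\lambda$ powers.
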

\begin{proof}
\begin{equation*}
\begin{aligned}
\left|\hat{S}_n(z)\right|&= \left|\lambda^{-1}\sum_{\ell=2}^{\infty}\lambda^{n(1-\ell)} d_\ell z^\ell \sum_{k=n}^{\infty} \frac{e^{\kappa (\ell-1)\log k}}{k} \lambda^{k(\ell-1)}\right|\\
&<\frac{\tilde{C}}{n}\sum_{\ell=2}^{\infty}|d_\ell z^\ell|(\ell-1)^{r+1}.
\end{aligned}
\end{equation*}
\end{proof}

Let us define
$$
h_n:=S^{-1}_{n+1}\circ g_n \circ S_n.
$$
\begin{lem} The maps $h_n$ are of the form
$$
h_n= \lambda z + O(n^{-2}).
$$
\end{lem}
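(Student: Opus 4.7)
The plan is to combine the functional equation for $\hat{S}_n$ established in Lemma \ref{crucial-lemma} with the size estimate $\hat{S}_n = z + O(1/n)$ from the previous lemma, and read off the cancellation directly from the definition $h_n = S_{n+1}^{-1} \circ g_n \circ S_n$.

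First I would write $g_n(S_n(z)) = \lambda S_n(z) + \hat{g}_n(S_n(z)) = \lambda z + \lambda \hat{S}_n(z) + \hat{g}_n(z + \hat{S}_n(z))$. The key observation is that $\hat{g}_n(z) = \frac{1}{n}\sum_{\ell \geq 2} d_\ell e^{\kappa(\ell-1)\log n} z^\ell$ has norm $O(1/n)$ on a small disk, since $\mathrm{Re}(\kappa) = 0$ makes the factors $e^{\kappa(\ell-1)\log n}$ unimodular. By Cauchy estimates on a slightly smaller disk, $\hat{g}_n'$ is also $O(1/n)$. Since $\hat{S}_n = O(1/n)$ uniformly, Taylor expansion gives
$$
\hat{g}_n(z + \hat{S}_n(z)) = \hat{g}_n(z) + O(1/n^2).
$$

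Plugging into the previous display and invoking Lemma \ref{crucial-lemma}, I obtain
$$
g_n(S_n(z)) = \lambda z + \lambda \hat{S}_n(z) + \hat{g}_n(z) + O(1/n^2) = \lambda z + \hat{S}_{n+1}(\lambda z) + O(1/n^2) = S_{n+1}(\lambda z) + O(1/n^2).
$$
Finally, since $S_{n+1} = \mathrm{id} + O(1/n)$ on a uniform neighborhood of the origin, its inverse is well defined there and has uniformly bounded derivative. Consequently, applying $S_{n+1}^{-1}$ to the equality above yields
$$
h_n(z) = S_{n+1}^{-1}\bigl(S_{n+1}(\lambda z) + O(1/n^2)\bigr) = \lambda z + O(1/n^2),
$$
which is the claim.

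The only delicate point, and the one I would check carefully, is that all error bounds are uniform on a common small disk independent of $n$; this follows from the explicit bound on $\hat{S}_n$ proved in the preceding lemma, together with Cauchy estimates for $\hat{g}_n$ on a slightly smaller disk so that $z + \hat{S}_n(z)$ and the images $S_{n+1}(\lambda z) + O(1/n^2)$ remain inside the domain of $S_{n+1}^{-1}$ for all sufficiently large $n$.
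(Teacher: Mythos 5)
Your proof is correct and follows essentially the same route as the paper: expand the functional equation $g_n\circ S_n=S_{n+1}\circ h_n$, use Lemma \ref{crucial-lemma} to cancel $\lambda\hat S_n+\hat g_n$ against $\hat S_{n+1}(\lambda z)$, and bound the remaining cross terms by $O(1/n^2)$ via $\hat g_n,\hat S_n=O(1/n)$. The only (cosmetic) difference is that you apply $S_{n+1}^{-1}$ directly, whereas the paper expands both sides and solves for $\hat h_n$ using the a priori bound $\hat h_n=O(1/n)$; your variant is, if anything, slightly cleaner.
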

\begin{proof}
The definition of $h_n$ immediately gives that $h_n(z) = \lambda z + O(\frac{1}{n})$.
$$
g_n \circ S_n = S_{n+1} \circ h_n
$$
and thus
$$
\lambda z + \lambda \hat{S}_n(z)+ \hat{g}_n(z +\hat{S}_n) = \lambda z+ \hat{h}_n(z)+ \hat{S}_{n+1}(\lambda z+\hat{h}_n),
$$
which gives
$$
\lambda \hat{S}_n(z)+ \hat{g}_n(z)+ \hat{g}_n'(z)\hat{S}_n(z) +O(\hat{S}_n^2) = \hat{h}_n(z)+ \hat{S}_{n+1}(\lambda z)+ \hat{S}'_{n+1}(\lambda z)\hat{h}_n+ O(\hat{h}^2_n).
$$

Hence by Lemma~\ref{crucial-lemma} we obtain
$$
\hat{g}_n'(z)\hat{S}_n(z) +O(\hat{S}_n^2) = \hat{h}_n(z)(1+ \hat{S}'_{n+1}(\lambda z))+ O(\hat{h}^2_n).
$$
Since $\hat{g}_n=O(\frac{1}{n})$ and $\hat{S}_n =O(\frac{1}{n})$ we get
$$
\hat{h}_n(z)(1+ \hat{S}'_{n+1}(\lambda z))+ O(\hat{h}^2_n) =O(\frac{1}{n^2}).
$$
Since $h_n(z) = \lambda z + O(\frac{1}{n})$ it follows that $\hat{h}_n(z)=O(\frac{1}{n^2})$.

\end{proof}

Lemma \ref{lemma:reduction} implies that the sequence $(h_n)$ is rotationally linearizable, hence the same holds for $(g_n)$, $(\f_n)$ and finally $(f_n)$, which completes the proof of Theorem \ref{thm:siegel-simple}.

\begin{rem}\label{remark:generalstatement} The proof of Theorem \ref{thm:siegel-simple} also works for more general perturbations, for example
 $$
f_n(z)\asymp f(z)+\frac{1}{n}h_1(z) +\frac{\log n}{n}h_2(z),
$$
 {\black where $h_1$ and $h_2$ are holomorphic around the origin.
 In this case we have two indexes $\kappa_j$, $j\in\{1,2\}$, that can be computed using the equation \eqref{kappa}, where constants $c_0$ and $c_1$ are the coefficients of the linear part of the Taylor series of $h_j$ at the origin.} The following is a general version of Theorem \ref{thm:siegel-simple}.
\begin{enumerate}
\item If  $\mathrm{Re}(\kappa_2)>0$  then the sequence $(f_n)$ is expulsive.
\item If  $\mathrm{Re}(\kappa_2)<0$  then the sequence $(f_n)$ is collapsing.

\item If $\mathrm{Re}(\kappa_2) = 0$ and
\begin{enumerate}
\item  $\mathrm{Re}(\kappa_1)>0$,  then the sequence $(f_n)$ is expulsive.
\item  $\mathrm{Re}(\kappa_1)<0$, then the sequence $(f_n)$ is collapsing.
\item $\mathrm{Re}(\kappa_1) = 0$, then the sequence $(f_n)$ is rotationally linearizable, hence all limit maps have rank $1$.
\end{enumerate}
\end{enumerate}
\end{rem}

\section{Existence of parabolic curves}\label{section:parabolic}
	
The purpose of this section is to prove the following proposition.

\begin{prop}\label{prop:curve}
	Let $P(z,w):=(f(z)+\frac{\pi^2}{4}w,g(w))$, with $f(z)=z+z^2+bz^3+O(z^4)$  and $g(w)=w-w^2+O(w^3)$.
	Then $P$ has at least 3 parabolic curves: one is contained in the invariant fiber ${\black w}=0$ and is an attracting petal for $f$;
	the other two are graphs over the same petal $\mathcal{P}$ in the parabolic basin $\bcal_g$. Moreover they are of the form
	$$\zeta^\pm(w)=\pm c_1 \sqrt{w} + c_2 w \pm  c_3 w^{3/2}+O(w^2),$$
	where $c_1=\frac{\pi i}{2}$ and $c_2=\frac{b\pi^2}{8}- \frac{1}{4}$.
\end{prop}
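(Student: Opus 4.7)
The plan is to produce the three curves separately. The one in the fiber $\{w=0\}$ is immediate: this fiber is $P$-invariant and $P$ restricts to it as $(f(z),0)$, so any attracting petal of $f$ inside this fiber is already a parabolic curve of $P$.

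For the two non-trivial curves $\zeta^\pm$, I would first compute the formal solution in powers of $\sqrt{w}$. Plugging the ansatz $\zeta(w)=\sum_{k\ge 1}c_k w^{k/2}$ into the invariance equation $\zeta(g(w))=f(\zeta(w))+\tfrac{\pi^2}{4}w$ and matching coefficients, the $w^1$-equation forces $c_1^2=-\pi^2/4$, giving $c_1=\pm\pi i/2$; the $w^{3/2}$-equation yields $c_2=b\pi^2/8-1/4$; and every further coefficient is uniquely determined by recursion on lower ones. For fixed large $N$, the truncation $\zeta_N(w):=\sum_{k=1}^N c_k w^{k/2}$ is a formal approximate solution with residual of order $w^{(N+2)/2}$.

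Next, I would pass to the square-root coordinate $w=u^2$. Then $g$ lifts to the cubic-parabolic germ $\tilde g(u)=u-u^3/2+O(u^5)$, which has two attracting petals $\mathcal P^{\pm}$ tangent to the positive and negative real $u$-axes, both projecting under $w=u^2$ to a single attracting petal $\mathcal P$ of $g$ in the $w$-plane. The truncation $\tilde\zeta_N(u):=\sum c_k u^k$ is now an honest polynomial in $u$, and the task becomes: find holomorphic $\tilde\zeta^\pm$ on $\mathcal P^{\pm}$ solving $\tilde\zeta(\tilde g(u))=f(\tilde\zeta(u))+\pi^2 u^2/4$. Writing $\tilde\zeta=\tilde\zeta_N+\xi$ with $\xi=O(u^{N+1})$, this becomes the cohomological equation
$$\xi(\tilde g(u))=A(u)\,\xi(u)-R(u)+Q(u,\xi(u)),$$
where $A(u)=f'(\tilde\zeta_N(u))=1\pm\pi i u+O(u^2)$, $R(u)=O(u^{N+2})$ is the residual from the formal step, and $Q$ collects the terms quadratic in $\xi$. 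I would realise $\xi$ as the unique fixed point of the graph-transform operator
$$T\xi(u):=A(u)^{-1}\bigl(\xi(\tilde g(u))+R(u)-Q(u,\xi(u))\bigr)$$
on the Banach space of bounded holomorphic functions on $\mathcal P^{\pm}$ equipped with the weighted norm $\|\xi\|:=\sup_{\mathcal P^\pm}|\xi(u)|/|u|^{N+1}$; unfolding the iteration yields the Poincar\'e-type series $\xi(u)=\sum_{k\ge 0}\bigl(\prod_{j=0}^{k}A(\tilde g^j(u))^{-1}\bigr)R(\tilde g^k(u))$ plus nonlinear corrections.

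The hard part will be closing the contraction estimate on a \emph{full} attracting petal of $\tilde g$, which is exactly what provides the advertised control of the domain of definition (and what the general result of L\'opez-Hern\'anz--Rosas does not supply). On each petal one has $|\tilde g^k(u)|\sim k^{-1/2}$, so $|R(\tilde g^k(u))|=O(k^{-(N+2)/2})$; meanwhile $\prod_{j<k}A(\tilde g^j(u))^{-1}$ grows at most polynomially in $k$, the leading contribution $\pm\pi i\sum \tilde g^j(u)$ to $\sum\log A(\tilde g^j(u))$ being asymptotically imaginary (hence of bounded modulus) and the $O(u^2)$ remainder contributing only $\sum O(1/j)=O(\log k)$. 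For $N$ chosen large enough the algebraic decay of $R$ dominates this polynomial growth, giving absolute convergence on $\mathcal P^{\pm}$ and the contraction property for $T$; pulling back by $w=u^2$ then produces the two parabolic curves $\zeta^\pm(w)$ on $\mathcal P$, whose asymptotic expansions inherit from the formal step precisely the stated values $c_1=\pi i/2$ and $c_2=b\pi^2/8-1/4$.
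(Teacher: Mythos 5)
Your plan is essentially the paper's own argument: trivial curve in $\{w=0\}$, high-order formal jet as the starting approximation, passage to $\eps=\sqrt w$ so that $\tilde g(\eps)=\eps-\eps^3/2+O(\eps^5)$, and a graph transform whose unfolding is the Poincar\'e-type series you write; the paper iterates $\zeta_{k+1}(\eps)=f_\eps^{-1}(\zeta_k(\tilde g(\eps)))$ from an order-$20$ jet, and the key bound on $\prod_j f'(\zeta(\tilde g^j(\eps)))^{-1}$ is its Lemma~\ref{lemma:3}. Your two-petal packaging of $\zeta^\pm$ is equivalent, by oddness of $\tilde g$, to the paper's construction of both curves over the same right-half-plane petal.

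One clause in your estimate sketch is wrong as stated: $\pm\pi i\sum_j\tilde g^j(u)$ is \emph{not} of bounded modulus (it grows like $k^{1/2}$), and even $\bigl|\exp\bigl(\pm\pi i\sum_j\tilde g^j(u)\bigr)\bigr|=\exp\bigl(\mp\pi\sum_j\mathrm{Im}\,\tilde g^j(u)\bigr)$ is only $\exp(O(\log k))$, since on the petal one has $\mathrm{Im}\,\tilde g^j(u)=O(1/j)$. So both your ``leading'' term and your $O(u^2)$ remainder contribute polynomial --- not bounded --- growth to $\prod_j A^{-1}$; your conclusion that large $N$ makes the series converge is still correct, but the breakdown between the two contributions is off. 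The paper's Lemma~\ref{lemma:3} in fact obtains the explicit linear bound $C_1(k+1-\ell)$ by also shrinking the petal so that the constant $C$ in \eqref{estimates} is small. Finally, be aware that the naive one-step contraction in your weighted norm cannot hold, because $|\tilde g(u)|/|u|\to1$ as $u\to0$; convergence has to come from summing the Poincar\'e-type series directly, which is precisely why the paper runs an explicit induction on the telescoping errors $E_k$ rather than invoking a single Banach fixed-point step.
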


Proposition \ref{prop:curve}  gives a positive answer to a question posed in \cite{ABDPR}. {\black We note that the result does not follow from the results of Hakim~\cite{Hakim}, as the $2$ characteristic directions we consider are \emph{degenerate}, in the language used by Hakim}.
The existence of $3$ parabolic curves can be derived from the recent paper \cite{LH-R} by Lopez-Hernanz and Rosas. However, their proof gives no guarantee that the parabolic curves $\zeta^\pm$ are graphs over the same petal in $\bcal_g$, which is crucial for our purpose.

Let us start by observing that $P$ is semi-conjugate to a map $Q$, holomorphic near the origin, given by
$$
Q(z,\eps)=\left(f(z)+\frac{\pi^2}{4}\eps^2, \eps- \frac{\eps^3}{2}+O(\eps^5)\right)
$$
(with $\eps^2=w$). The map $Q$ has $3$ characteristic directions: ${\black \epsilon}=0$, $z=\frac{\pi i}{2}\eps$ and $z=-\frac{\pi i}{2}\eps$.
It is clear that there is a parabolic curve tangent to the characteristic direction ${\black \epsilon}=0$, namely the attracting petal for $f$ in the invariant
fiber $\{{\black \epsilon}=0\}$. We call this parabolic curve the trivial curve. For the existence of the two other parabolic curves we will use a graph transform argument.
	
Let us write $Q(z,\eps) = \left(f_{\eps}(z), \tilde{g}(\eps)\right)$, so that $f_{\eps}(z)=f(z)+\frac{\pi^2}{4}\eps^2$ and $\teps:=\tilde{g}(\eps)=\sqrt{g(\eps^2)}=\eps -\frac{\eps^3}{2}+ O(\eps^5)$. We are looking for parabolic curves of the form $\eps\rightarrow(\zeta(\eps),\eps)$, hence satisfying the equation
\begin{equation}\label{eq:funcequation}
Q(\zeta(\eps),\eps)=(\zeta(\teps),\teps).
\end{equation}
Equivalently we are looking for a function $\zeta$, defined for $\epsilon$ in a parabolic petal of $\tilde{g}$, satisfying the functional equation
$$
\zeta(\tilde{g}(\eps)) =f_{\eps}( \zeta(\eps)).
$$
	
We will prove that $Q$ has two parabolic curves $\zeta^\pm$, corresponding to the characteristic directions $z=\pm\frac{\pi i}{2}\eps$,  which are graphs over the same attracting petal of $\tilde{g}$ in the right half-plane. This will complete the proof of Proposition \ref{prop:curve}, since these two parabolic curves can be lifted to parabolic curves of $P$ satisfying the desired properties.

The key idea in proving the existence of $\zeta(\epsilon)$ is to start with sufficiently high order jets $\zeta_1(\epsilon)$ of the \emph{formal} solution to the equation \eqref{eq:funcequation}, and then apply a graph transform argument, starting with $\zeta_1$. By starting with higher order jets, we obtain higher order error estimates, but the constants in those estimates are likely to deteriorate. However, these estimates can be controlled by dropping the order of the error estimates by $1$, and working with $|{\black \epsilon}| < \delta$, with $\delta$ depending on the order of the jets. It turns out that starting with jets of order $20$ is sufficient to obtain convergence of the graph transforms. {\black We do not claim that $20$ is the minimal order for which convergence can be obtained, only that the order suffices for our purposes.}
	
\begin{lem}\label{lemma:1} For every integer $n>0$ there exists $\zeta_1(\eps)=c_1\eps+c_2\eps^2+c_3\eps^2+\cdots +c_n\eps^n$ and $\delta>0$  such that $|\zeta_1(\teps)-f_{\eps}(\zeta_1(\eps))|<|\eps|^n$ for all $|\eps|<\delta$. Moreover we have $c_1=\pm\frac{\pi i}{2}$ and $c_2 =\frac{b\pi^2}{8}-\frac{1}{4}$.
\end{lem}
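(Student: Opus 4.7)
The plan is to determine the coefficients $c_1, \ldots, c_n$ by matching the formal power series expansion of both sides of the identity $\zeta_1(\teps) = f_\eps(\zeta_1(\eps))$ order by order, and then read off the stated bound from the vanishing of the low-order Taylor coefficients of the resulting holomorphic error.

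First I substitute $\zeta_1(\eps) = \sum_{k=1}^n c_k \eps^k$, together with $\teps = \eps - \eps^3/2 + O(\eps^5)$ and $f(z) = z + z^2 + bz^3 + O(z^4)$, into
\[
\zeta_1(\teps) - f(\zeta_1(\eps)) - \tfrac{\pi^2}{4}\eps^2 = 0
\]
and equate coefficients of $\eps^j$. The order-$\eps^2$ equation reduces to $c_1^2 + \pi^2/4 = 0$, forcing $c_1 = \pm \pi i/2$. The order-$\eps^3$ equation becomes $-c_1/2 = 2 c_1 c_2 + b c_1^3$, and after substituting $c_1^2 = -\pi^2/4$ I obtain $c_2 = -1/4 - b c_1^2/2 = b\pi^2/8 - 1/4$, matching the stated value.

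For the inductive step I would verify that at each order $\eps^j$ with $j \geq 2$, the new coefficient $c_{j-1}$ enters the equation only through the contribution $2 c_1 c_{j-1}$ coming from the quadratic term of $f$ on the right-hand side: its would-be contribution on the left, via $c_{j-1} \teps^{\,j-1}$, vanishes because the $\eps^2$-coefficient of $\teps$ is zero (equivalently, $\teps^{\,j-1} = \eps^{j-1} + O(\eps^{j+1})$). All other terms at this order are polynomial expressions in the previously determined $c_1, \ldots, c_{j-2}$ and in the fixed Taylor coefficients of $f$ and $\teps$. Since $2 c_1 \neq 0$, each equation solves uniquely for $c_{j-1}$, so running the recursion through $j = n+1$ produces all the required coefficients.

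Finally, the expression $\zeta_1(\teps) - f_\eps(\zeta_1(\eps))$ is holomorphic in a neighborhood of the origin and, by construction, has Taylor series vanishing up to and including order $\eps^{n+1}$; hence it is $O(\eps^{n+2})$, which is comfortably bounded by $|\eps|^n$ on a sufficiently small disk $\{|\eps|<\delta\}$. I do not expect any real obstacle in carrying this out: the only thing that could break the plan is a degeneracy of the recursive step, and this is ruled out once and for all by $c_1 \neq 0$, which is forced by the very first equation.
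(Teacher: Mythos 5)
Your proposal is correct and follows essentially the same route as the paper: both arguments determine the coefficients recursively by observing that, thanks to $\teps = \eps + O(\eps^3)$, the new coefficient $c_{j-1}$ enters the order-$\eps^{j}$ identity only through the nondegenerate term $2c_1c_{j-1}$, and then conclude from the vanishing of the low-order Taylor coefficients of the holomorphic error. The only cosmetic difference is that you compute the base case ($c_1$, $c_2$) directly where the paper cites \cite{ABDPR}, and you phrase the induction as formal power-series matching rather than as tracking the error $E_n$ under the perturbation $\zeta_1 \mapsto \zeta_1 + c_{n+1}\eps^{n+1}$.
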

	
\begin{proof}  Recall from \cite{ABDPR} that by choosing $\zeta_1(\epsilon) = c_1 \eps + c_2 \eps^2$, with
$c_1=\pm\frac{\pi i}{2}$ and $c_2 =\frac{b\pi^2}{8}-\frac{1}{4}$, we obtain
$$
|\zeta_1(\teps)-f_{\eps}(\zeta_1(\eps))| < O(|\eps|^4).
$$
Now suppose that $c_1, \ldots , c_n$ are found such that for $\zeta(\epsilon) = c_1\eps+ \cdots + c_n\eps^n$ we have
$$
|\zeta_1(\teps)-f_{\eps}(\zeta_1(\eps))| < O(|\eps|^{n+2}).
$$

Let $E_n(\eps) :=f_{\eps}(\zeta_1(\eps))-\zeta_1(\teps)$. For $c_{n+1} \in \C$, let $$E_{n+1}(\eps):=f_{\eps}(\zeta_1(\eps)+c_{n+1} \eps^{n+1})-\zeta_1(\teps)-c_{n+1} \teps^{n+1};$$ we shall prove that there exists some $c_{n+1}$ such that $E_{n+1}=O(\eps^{n+3})$. Indeed,
	\begin{align*}
		f_{\eps}(\zeta_1(\eps)+c_{n+1} \eps^{n+1})&=f_{\eps}(\zeta_1(\eps)) +f'(\zeta_n(\eps)) c_n \eps^{n+1}+O(\eps^{2n+2}) \\
		&=f_{\eps}(\zeta_1(\eps)) + (1+2c_1\eps) c_{n+1} \eps^{n+1}+O(\eps^{n+3} ).
	\end{align*}	
	On the other hand, we have $c_{n+1} \teps^{n+1} = c_{n+1} \eps^{n+1}+O(\eps^{n+3})$;
	so
	$$E_{n+1}(\eps) = E_n(\eps)+ 2c_1 c_{n+1} \eps^{n+2}+O(\eps^{n+3}).$$
	Since $E_n(\eps) = O(\eps^{n+2})$ (and $c_1 \neq 0$), we may therefore find some value of $c_{n+1}$
	for which $E_{n+1}(\eps)=O(\eps^{n+3})$.

We conclude that if $\delta$ is small enough then $|\zeta_1(\teps)-f_{\eps}(\zeta_1(\eps))|<|\eps|^n$ for all $|\eps|<\delta$.
\end{proof}

\begin{rem} The choice of parabolic curve is determined by the choice of $c_1$. From now on we will assume that $c_1=\frac{\pi i}{2}$; for the case $c_1=-\frac{\pi i}{2}$ the proofs are essentially the same.
\end{rem}
\medskip

{\black For $R \in \mathbb C$ we} write $\mathbb{H}_R=\{Z \in\C \mid \arg(Z-R)\in (-\frac{\pi}{2} -\epsilon_0,\frac{\pi}{2}+\epsilon_0)\}$ for some $\epsilon_0>0$,
and
$$
\mathcal{P}_{\delta}=\{\eps \in\C\mid \eps^{-2}\in\mathbb{H}_{\delta^{-2}}
 \text{ and } \re(\eps)>0 \}.
$$

For $\delta>0$ sufficiently small the petal $\mathcal{P}_{\delta}$ is forward invariant under $\tilde{g}$, i.e. $\tilde{g}(\mathcal{P}_{\delta})\subset \mathcal{P}_{\delta}$. Recall the existence of Fatou coordinates on $P_\delta$: the function $\tilde{g}$ is conjugate to the translation $T_1: Z \mapsto Z+1$ via a conjugation of the form
$$
Z = \frac{1}{\epsilon^2} + \alpha \log(\epsilon) + o(1),
$$
where the constant $\alpha$ depends on $g$. All forward orbits in $P_\delta$ converge to $0$ tangent to the positive real axis, and the conjugation gives the estimates

\begin{equation}\label{estimates}
|\mathrm{Re}(\tilde{g}^k(\eps))|<\frac{C}{\sqrt{k}} \; \; \mathrm{and} \; \; |\mathrm{Im}(\tilde{g}^k(\eps))|<\frac{C}{k},
\end{equation}
for a uniform $C>0$ depending on $\delta$. We note that by choosing $\delta$ sufficiently small, the constant $C$ can be chosen arbitrarily small as well.

\begin{lem}\label{lemma:2} Let $n > 0$ and $\zeta_1(\eps)$ be as in Lemma \ref{lemma:1}. There exist $\delta,A>0$  such that for every $|\eps|<\delta$ we have
$$
|f^{-1}(f(\zeta_1(\eps))+3\eps^4)-\zeta_1(\eps)| \le A|\eps|^4.
$$
\end{lem}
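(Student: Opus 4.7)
The plan is a direct application of the inverse function theorem combined with a first-order Taylor estimate. Since $f(z) = z + z^2 + bz^3 + O(z^4)$ satisfies $f'(0) = 1 \neq 0$, the map $f$ is biholomorphic on some neighborhood $U$ of $0$, and its inverse $f^{-1}$ is holomorphic on $f(U)$. In particular $(f^{-1})'$ is continuous there, with $(f^{-1})'(0) = 1$.

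First I would note that $\zeta_1(\eps) = c_1\eps + O(\eps^2) = O(\eps)$, so by choosing $\delta > 0$ small enough one can guarantee that both $\zeta_1(\eps)$ lies in $U$ and $f(\zeta_1(\eps)) + 3\eps^4$ lies in $f(U)$ for all $|\eps| < \delta$. Writing $y := f(\zeta_1(\eps))$, we have $f^{-1}(y) = \zeta_1(\eps)$, so the quantity to estimate is simply $f^{-1}(y + 3\eps^4) - f^{-1}(y)$.

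The next step is a Taylor-with-remainder computation for $f^{-1}$ around $y$:
$$
f^{-1}(y + 3\eps^4) - f^{-1}(y) = 3\eps^4 \int_0^1 (f^{-1})'\bigl(y + 3t\eps^4\bigr)\, dt.
$$
Since $(f^{-1})'$ is continuous at $0$ and $y + 3t\eps^4 \to 0$ uniformly in $t \in [0,1]$ as $\eps \to 0$, by shrinking $\delta$ we can ensure $|(f^{-1})'(y + 3t\eps^4)| \le 2$ for all $t \in [0,1]$ and all $|\eps| < \delta$. Thus
$$
\bigl|f^{-1}\bigl(f(\zeta_1(\eps)) + 3\eps^4\bigr) - \zeta_1(\eps)\bigr| \le 6|\eps|^4,
$$
and the lemma holds with $A = 6$ (any constant strictly larger than $3$ would do, after further shrinking $\delta$).

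No serious obstacle is expected: the argument is simply the mean value inequality for the holomorphic inverse of $f$, whose derivative at $0$ is $1$, applied to a perturbation of size $3|\eps|^4$. The only point to check carefully is that both $\zeta_1(\eps)$ and the perturbed point $f(\zeta_1(\eps)) + 3\eps^4$ lie in the domain where $f^{-1}$ is defined and has bounded derivative, which is automatic for $\delta$ small enough since $\zeta_1(0) = 0$.
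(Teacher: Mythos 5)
Your proof is correct and takes essentially the same approach as the paper's: where you use the integral form of the first-order remainder (the mean-value inequality for $f^{-1}$), the paper expands $f^{-1}$ in its full Taylor series around $f(\zeta_1(\eps))$ and bounds the resulting sum, but both arguments reduce to the boundedness of the derivatives of $f^{-1}$ near the origin after noting $\zeta_1(\eps) = O(\eps)$.
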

\begin{proof}
The Taylor series expansion of $f$ gives
$$
|f^{-1}(f(\zeta_1(\eps))+3\eps^4)-\zeta_1(\eps)| \le \sum_{i=1}^{\infty}\left|\frac{(f^{-1})^{(i)}(f(\zeta_1(\eps)))}{i!}\right|3^i|\eps^4|^i,
$$
and the desired estimate follows immediately.
\end{proof}

\begin{lem}\label{lemma:3} Let $n > 0$ and $\zeta_1(\eps)$ be as in Lemma \ref{lemma:1}, $A>0$ and $\delta>0$ sufficiently small. Let  $(\zeta_k(\eps))$ be any sequence of holomorphic functions defined on $\mathcal{P}_{\delta}$ and satisfying
$$
|\zeta_k(\eps)-\zeta_1(\eps)| < A|\eps|^4.
$$
Then there exists $C_1 > 0$, depending on $\zeta_1$, such that
$$
\left|\prod_{s={\black \ell}}^k f'(\zeta_s(\tilde{g}^{k+1-s}(\eps)))\right|^{-1}<C_1\cdot (k+1-{\black \ell}),
$$
for all $\eps\in \mathcal{P}_{\delta}$ and every $0<{\black \ell}\leq k$.
\end{lem}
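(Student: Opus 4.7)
The plan is to take the logarithm of the product and extract its real part, exploiting both the explicit leading coefficient of $\zeta_1$ and the Fatou-coordinate estimates \eqref{estimates}. After reindexing by $j = k+1-s$, the product becomes $\prod_{j=1}^{n} f'(\zeta_{k+1-j}(\tilde{g}^{j}(\eps)))$, where $n = k+1-\ell$. Writing $\eps_j := \tilde{g}^j(\eps)$, the forward invariance of $\mathcal{P}_\delta$ under $\tilde{g}$ together with \eqref{estimates} give $|\re \eps_j| \leq C/\sqrt{j}$ and $|\im \eps_j| \leq C/j$, where $C$ can be made arbitrarily small by shrinking $\delta$.

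Next, I combine the hypothesis $\zeta_s(\eta) = \zeta_1(\eta) + O(|\eta|^4)$ with $\zeta_1(\eta) = \tfrac{\pi i}{2}\eta + O(|\eta|^2)$ and the Taylor expansion $f'(z) = 1 + 2z + O(z^2)$ at the origin. This yields, uniformly in $s$ and in small $\eta \in \mathcal{P}_\delta$,
$$
f'(\zeta_s(\eta)) = 1 + \pi i\,\eta + O(|\eta|^2).
$$
For $\delta$ small enough, each factor stays close to $1$, so the principal branch of the logarithm gives $\log f'(\zeta_s(\eta)) = \pi i\,\eta + O(|\eta|^2)$. Summing over $j$ and taking real parts,
$$
-\log\Bigl|\prod_{j=1}^n f'(\zeta_{k+1-j}(\eps_j))\Bigr| = \pi \sum_{j=1}^n \im(\eps_j) + \sum_{j=1}^n O(|\eps_j|^2).
$$
The first sum is bounded in absolute value by $\pi C (\log n + 1)$ using $|\im \eps_j| \leq C/j$, while the second is bounded by $M C^2 (\log n + 1)$ for a constant $M$ depending only on $\zeta_1$ and $A$ (using $|\eps_j|^2 \leq C^2/j + C^2/j^2$). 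Exponentiating gives
$$
\Bigl|\prod_{j=1}^n f'(\zeta_{k+1-j}(\eps_j))\Bigr|^{-1} \leq C_1' \cdot n^{\pi C + M C^2}.
$$

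The main obstacle is that the apparent polynomial growth rate $\pi C + MC^2$ is a priori not comparable to $1$. Here the flexibility to shrink $\delta$ is essential: as noted after \eqref{estimates}, the constant $C$ can be made arbitrarily small by choosing $\delta$ small, while $M$ depends only on $\zeta_1$ and $A$. Choosing $\delta$ so that $\pi C + MC^2 \leq 1$ therefore yields $|\prod|^{-1} \leq C_1 \cdot (k+1-\ell)$, which is the claim. Note that in this argument all constants are independent of $k$, $\ell$, and $\eps$.
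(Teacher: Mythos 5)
Your proof is correct and follows essentially the same route as the paper's: both take logarithms, use $\zeta_s(\eta)=\tfrac{\pi i}{2}\eta+O(|\eta|^2)$ to get $\log f'(\zeta_s(\eta))=\pi i\,\eta+O(|\eta|^2)$ with error constant independent of $\delta$, invoke the Fatou-coordinate bounds $|\im\eps_j|\leq C/j$, $|\re\eps_j|\leq C/\sqrt j$, and then exploit that $C$ can be made arbitrarily small by shrinking $\delta$ so that the resulting growth exponent is at most $1$. The only cosmetic difference is that the paper keeps the explicit quadratic term in the exponent before folding the summable cubic tail into the constant, while you absorb it directly into $O(|\eta|^2)$.
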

	
\begin{proof} Let us write $x_k=\mathrm{Re}(\tilde{g}^k(\eps))>0$ and $y_k=\mathrm{Im}(\tilde{g}^k(\eps))$. Estimates (\ref{estimates}) imply

\begin{equation}\label{estimates2}
\sum_{k=0}^{\infty}|\tilde{g}^k(\eps)|^3<K<\infty \qquad \textrm{for all} \quad \eps\in \mathcal{P}_{\delta}.
\end{equation}
Since by assumption $|\zeta_s(\eps)-\zeta_1(\eps)| < A|\eps|^4$ for every $s\ge 1$, it follows that $\zeta_s(\eps)=c_1\eps+c_2\eps^2+O(\eps^3)$ and
$$
|f'(\zeta_s(\eps))-f'(\zeta_1(\eps))| < B|\eps^4|,
$$
where $B>0$  depends only on $\zeta_1$ and $A$.

Observe that $f'(z)=1+2z+3bz^2+O(z^3)=e^{2z+(3b-2)z^2+O(z^3)}$, hence we obtain
$$
f'(\zeta_s(\eps))=e^{\pi i \eps +\left(\frac{\pi^2(1-b)}{2}-\frac{1}{2}\right)\eps^2+O(\eps^3)},
$$
where the bound $O(\eps^3)$ is uniform with respect to $s$.

Therefore we can find $C_1>0$ such that
\begin{align*}
\left|\prod_{s=\ell}^k f'(\zeta_s(\tilde{g}^{k+1-s}(\eps)))\right|&>
\left|e^{\sum_{s=\ell}^k \mathrm{Re}\left(\pi i \tilde{g}^{k+1-s}(\eps) + \left(\frac{\pi^2(1-b)}{2}-\frac{1}{2}\right)(\tilde{g}^{k+1-s}(\eps))^2\right)+O((\tilde{g}^{k+1-s}(\eps))^3)}\right|\\			
&>\frac{1}{C_1}\left|e^{\sum_{s=\ell}^k-\pi y_{k+1-s} +\left(\frac{\pi^2(1- \mathrm{Re}(b))}{2}-\frac{1}{2}\right)x^2_{k+1-s}}\right|\\
&>\frac{1}{C_1}\left|e^{-\sum_{s=\ell}^k\frac{1}{k+1-s}}\right|\\
&>\frac{1}{C_1(k+1-\ell)}.
\end{align*}
In the first inequality we used the fact that $|e^z|=e^{\mathrm{Re}(z)}$. The second inequality follows from estimates (\ref{estimates}) and  (\ref{estimates2}). The third inequality depends on the constant $C$ from (\ref{estimates}) being sufficiently small, which can be guaranteed by taking sufficiently small $\delta$.
\end{proof}

\begin{rem}Note that the estimates in Lemmas \ref{lemma:1}, \ref{lemma:2} and \ref{lemma:3} hold regardless of the choice of $n$ in the definition of $\zeta_1$. If $n$ is increased, then all estimates hold, with the same constants, for $\delta$ sufficiently small. It turns out that it will be sufficient for us to work with $n = 20$, and we will work with this choice from now on.
\end{rem}

\begin{lem}\label{lemma:4} There exists sufficiently small $\delta >0$ such that for every $k\geq 2$  and every $\eps \in\mathcal{P}_{\delta}$ we have
\begin{equation*}\label{ineq}
|\tilde{g}^{k}(\eps)|^{19}k+ |\tilde{g}^{k}(\eps)|^{39}(k-1)+\sum_{\ell=2}^{k-1}\frac{|\tilde{g}^{k+1-\ell}(\eps)|^{23}(k-\ell)}{(\ell-1)^{4}}+\frac{|\tilde{g}(\eps)|^{23}}{(k-1)^ {4}}<\frac{4|\eps|^{12}}{k^2}
\end{equation*}
\end{lem}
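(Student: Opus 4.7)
The plan is to bound each of the four summands on the left-hand side separately, extracting in each case a factor of the form $|\eps|^{12}/k^2$. For this I would combine two complementary estimates on the iterates $\tilde g^k(\eps)$ for $\eps\in\mathcal{P}_\delta$. The first is the decay bound~(\ref{estimates}), which immediately gives $|\tilde g^k(\eps)| \le 2C/\sqrt{k}$. The second is a near-monotonicity bound $|\tilde g^k(\eps)| \le C''|\eps|$, derived from the Fatou coordinate identity $\tilde g^k(\eps)^{-2} = \eps^{-2} + k + O(\log)$ together with $\re(\eps^{-2}) \ge \delta^{-2}$ on the petal; crucially, both $C$ and $C''-1$ can be made arbitrarily small by shrinking $\delta$, as noted just after (\ref{estimates}). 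The key algebraic device is then the interpolation
$$
|\tilde g^k(\eps)|^N \;\le\; (C'')^{12}(2C)^{N-12}\,|\eps|^{12}\,k^{-(N-12)/2}\qquad(N\ge 12,\ k\ge 1),
$$
which simultaneously extracts the desired factor $|\eps|^{12}$ and a favorable power of $1/k$.

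Applying the interpolation with $N=19$ yields $|\tilde g^k(\eps)|^{19}k \le \text{const}\cdot|\eps|^{12}/k^{5/2}$, and with $N=39$ the second term is bounded by $\text{const}\cdot|\eps|^{12}/k^{25/2}$. For the fourth term the $\eps$-bound alone suffices: $|\tilde g(\eps)|^{23}/(k-1)^4 \le (C'')^{23}|\eps|^{23}/(k-1)^4$, which is dominated by $|\eps|^{12}/k^2$ once $|\eps|$ is small. The third (and most elaborate) term becomes, after the index change $j=k+1-\ell$ and the splitting above,
$$
\sum_{\ell=2}^{k-1}\frac{|\tilde g^{k+1-\ell}(\eps)|^{23}(k-\ell)}{(\ell-1)^4} \;\le\; \text{const}\cdot|\eps|^{12}\sum_{j=2}^{k-1}\frac{1}{j^{9/2}(k-j)^{4}}.
$$
Breaking this sum at $j=\lfloor k/2\rfloor$, the contribution from $j\le k/2$ is bounded by $(16/k^4)\sum_{j\ge 2}j^{-9/2} = O(1/k^4)$, and the contribution from $j>k/2$ is bounded by $(2^{9/2}/k^{9/2})\sum_{m\ge 1}m^{-4} = O(1/k^{9/2})$; both are dominated by $1/k^2$.

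Summing the four contributions and shrinking $\delta$ further if necessary so that every absorbed multiplicative constant is small, the total is bounded by $4|\eps|^{12}/k^2$, as required. The only nontrivial technical step, and hence the main obstacle, is establishing the near-monotonicity bound $|\tilde g^k(\eps)|\le C''|\eps|$ uniformly in $k\ge 0$ with $C''$ arbitrarily close to $1$: this requires careful control of the logarithmic correction in the Fatou coordinate normalization of $\tilde g$ on $\mathcal{P}_\delta$. Once that is in place, the rest of the argument is straightforward bookkeeping with the two uniform bounds on $|\tilde g^k(\eps)|$.
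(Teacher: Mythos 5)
Your proof is correct and follows essentially the same strategy as the paper: bound each of the four terms separately by a quantity $\lesssim |\eps|^{12}/k^2$, using in each case an interpolation between the decay bound $|\tilde g^j(\eps)|\lesssim 1/\sqrt{j}$ from \eqref{estimates} and a near-monotonicity bound $|\tilde g^j(\eps)|\lesssim |\eps|$ coming from the Fatou coordinate relation (the paper does this implicitly by factoring powers of $|j+\eps^{-2}|$ and bounding part by $|\eps|^{-2}$, part by $j$). The only real difference is cosmetic: you state the interpolation as an explicit lemma and split the convolution sum at $j=\lfloor k/2\rfloor$ to get $O(1/k^4)$, whereas the paper uses the pointwise bound $1/\big((\ell-1)(k-\ell)\big)\le 4/k$ and sums crudely; both routes close the estimate with room to spare.
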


\begin{proof} We will prove that each of the four terms in the left hand summation is bounded by $\frac{|\eps|^{12}}{k^2}$.  It follows from (\ref{estimates}) that for every $0\leq \ell \leq 19$ we have
$$
|\tilde{g}^{k}(\eps)|^{19}k<\frac{C^{19}k}{|k+\frac{1}{\eps^2}|^{\frac{19}{2}}}<\frac{C^{19}| \eps|^{\ell} k}{k^{\frac{19-\ell}{2}}}.
$$
If we choose $\ell=13$ and assume that $\delta$ is small enough, then we get
$$
|\tilde{g}^{k}(\eps)|^{19}k<\frac{|\eps|^{12}}{k^2}
$$
for $\eps\in \mathcal{P}_{\delta}$. The desired bound for a second term follows immediately from the inequality
$$
|\tilde{g}^{k}(\eps)|^{39}(k-1)<|\tilde{g}^{k}(\eps)|^{19}k.
$$

Next observe that for every $k\geq 2$ we have
$$
\frac{|\tilde{g}(\eps)|^{23}}{(k-1)^{4}}<\frac{2^2|\eps|^{23}}{(2(k-1)^2)^{2}}<\frac{|\eps|^{12}}{k^2},
$$
where the last inequality holds for sufficiently small $\delta$. Finally, for the third term in the summation we use (\ref{estimates}) to obtain
$$
\sum_{\ell=2}^{k-1}\frac{|\tilde{g}^{k+1-\ell}(\eps)|^{23}(k-\ell)}{(\ell-1)^{4}}<\sum_{\ell=2}^{k-1}\frac{C^{10}|\eps|^{13}(k-\ell)}{(k+1-\ell)^{5}(\ell-1)^{4}}<C^{10}|\eps|\sum_{\ell=2}^{k-1}\frac{|\eps|^{12}}{(k-\ell)^{4}(\ell-1)^{4}}.
$$

In order to obtain the desired bound it suffices to prove that
$$
\sum_{\ell=2}^{k-1}\frac{1}{(k-\ell)^{4}(\ell-1)^{4}}<\frac{4^4}{k^2}.
$$

First observe that
$$
\frac{1}{(\ell-1)(k-\ell)}\leq \frac{4}{k}
$$
for every $k\geq3$ and  $2\leq \ell\leq k-1$. To see this let us denote $s=\ell-1$ and $t=k-1$. The above inequality now translates to
$$\frac{1}{s(t-s)}\leq \frac{4}{t+1}$$
for $t\geq2$ and  $1\leq s\leq t-1$, and hence to
$$p_t(s):=4s^2-4ts+t+1\leq 0.$$
Observe that $p_t(1)< 0$ and that roots of $p_t(s)$ lie outside the closed interval $[1,t-1]$. Therefore we obtain
$$
\sum_{\ell=2}^{k-1}\frac{1}{(k-l)^{4}(\ell-1)^{4}}<\sum_{\ell=2}^{k-1}\frac{4^{4}}{k^{4}}<\frac{4^4}{k^{2}},
$$
and hence for $\delta$ sufficiently small
$$
\sum_{\ell=2}^{k-1}\frac{|\tilde{g}^{k+1-\ell}(\eps)|^{23}(k-\ell)}{(\ell-1)^{4}}<\frac{|\eps|^{12}}{k^2}.
$$

This completes the proof of Lemma \ref{lemma:4}.

\end{proof}
	
\begin{proof}[Proof of Proposition \ref{prop:curve}:] As we have remarked at the beginning of this section, it is enough to prove that $Q$ has two parabolic curves $\zeta^\pm$ corresponding to the characteristic directions $z=\pm\frac{\pi i}{2}\eps$, both curves graphs over the same attracting petal of $\tilde{g}$ in the right half-plane.
 By Lemma \ref{lemma:1} there exist $\delta>0$ and $\zeta_1(\eps)=c_1\eps+c_2\eps^2 +\ldots + c_{20}\eps^{20}$ such that $|\zeta_1(\teps)-f_{\eps}(\zeta_1(\eps))|<|\eps|^{20}$ for $|\eps|<\delta$. Let $A>0$ be as in Lemma \ref{lemma:2}, and let $C_1>0$ be the constant defined in Lemma \ref{lemma:3}.

We will show that the sequence of functions defined inductively by
$$
\zeta_{k+1}(\eps):=f_{\eps}^{-1}(\zeta_k(\teps))
$$
is convergent, and that the limit satisfies the functional equation \eqref{eq:funcequation}. Let us define
$$E_{k}(\eps):=\zeta_{k}(\teps)-f_{\eps}(\zeta_{k}(\eps)) $$
and observe that
$$\zeta_{k+1}(\eps)=f_{\eps}^{-1}(\zeta_k(\teps))=f_{\eps}^{-1}(f_{\eps}(\zeta_k(\eps))+E_k(\eps))$$
and hence
$$f_{\eps}(\zeta_{k+1}(\eps))=f_{\eps}(\zeta_{1}(\eps))+\sum_{{\black \ell}=1}^kE_{\black \ell}(\eps).$$
Note that we can replace $f_\epsilon$ by $f$ on both sides, giving
$$
\zeta_{k+1}(\eps)=f^{-1}\left(f(\zeta_1(\eps))+\sum_{{\black \ell}=1}^kE_{\black \ell}(\eps)\right),
$$
and hence
$$
\zeta_{k+1}(\eps)=\zeta_{1}(\eps)+\sum_{i=1}^{\infty}\frac{(f^{-1})^{(i)}(f(\zeta_1(\eps)))}{i!}\left(\sum_{{\black \ell}=1}^k E_{\black \ell}(\eps)\right)^i.
$$

We will prove that $|E_k(\eps)|<\frac{|\eps|^4}{|k-1|^2}$ for every $k\geq 2$ on some small petal $\mathcal{P}_{\delta}$.  This will imply that the sequence $\zeta_{k+1}$ converges to a parabolic curve $\zeta$ on $\mathcal{P}_{\delta}$ for sufficiently small $\delta$.

We claim that there exists $\delta>0$ such that for every $\eps\in\mathcal{P}_{\delta}$ and every $k>1$ the following two statements hold:
\begin{enumerate}
\item[$\bf I_k(1):$] $\; \; \; \; \; \; |\zeta_k(\eps)-\zeta_1(\eps)|<A|\eps|^4$, \; \; and
\item[$\bf I_k(2):$] $\; \; \; \; \; \; |E_k(\eps)|<\frac{4|\eps|^{12}}{|k-1|^2}<\frac{|\eps|^4}{|k-1|^2}$
\end{enumerate}
We will prove these two statements simultaneously by induction on $k$. 

\medskip

\noindent {\bf Step 1:}  First we prove $I_2(1)$. By definition
$$\zeta_2=\zeta_{1}(\eps)+\sum_{i=1}^{\infty}\frac{(f^{-1})^{(i)}(f(\zeta_1(\eps)))}{i!}\left(E_1(\eps)\right)^i,$$
hence by Lemma \ref{lemma:2} we obtain the desired inequality.

Next we prove that $I_2(2)$. Observe that for sufficiently small $\delta$ we get
\begin{align*}
|E_2(\eps)|&<\left|\frac{E_1(\teps)}{f'(\zeta_1(\teps))}\right|+C_2|E_1(\teps)|^2\\
&<C_1|\tilde{g}(\eps)|^{20}+C_2|\eps||\tilde{g}(\eps)|^{40}\\
&< C_1|\eps||\tilde{g}(\eps)|^{19}+C_2|\eps||\tilde{g}(\eps)|^{39}\\
&<4|\eps|^{12}.
\end{align*}
Here $C_1$ is the constant introduced in Lemma \ref{lemma:3}.

\medskip

\noindent {\bf Step 2:} Now let us assume that $I_{\black \ell}(1)$ and $I_{\black \ell}(2)$ hold for every $2\leq {\black \ell}\leq k$. Observe that
$$
|\zeta_{k+1}(\eps)-\zeta_1(\eps)|<\sum_{i=1}^{\infty}\left|\frac{(f^{-1})^{(i)}(f(\zeta_1(\eps)))}{i!}\right|\left|\sum_{{\black \ell}=1}^k E_{\black \ell}(\eps)\right|^i.
$$

Since  $|E_{\black \ell}(\eps)|<\frac{|\eps|^4}{|{\black \ell}-1|^2}$ for ${\black \ell}\geq2$ and $|E_1(\eps)|<|\eps|^4$ we get
$$\left|\sum_{{\black \ell}=1}^k E_{\black \ell}(\eps)\right|<3|\eps|^4$$
hence by Lemma \ref{lemma:2} inequality $I_{k+1}(1)$ holds.

Observe that
\begin{align*}
E_{k+1}(\eps)&=\zeta_{k+1}(\teps)-f_{\eps}(\zeta_{k+1}(\eps)) \\
&=  \zeta_{k+1}(\teps)-\zeta_{k}(\teps)\\
&=f^{-1}_{\teps}(f_{\teps}(\zeta_k(\teps))+E_k(\teps))-\zeta_{k}(\teps)\\
& =  f^{-1}(f(\zeta_k(\teps))+E_k(\teps))-\zeta_{k}(\teps)\\
&=(f^{-1})'(f(\zeta_k(\teps)))\cdot E_k(\teps)+O(E_k(\teps)^2), &
\end{align*}
where the constant in the order can be chosen independently from $k$. It follows that there exists $C_2>0$ independent of $k$ such that

\begin{equation}\label{inequality1}
|E_{k+1}(\eps)|<\left|\frac{E_k(\teps)}{f'(\zeta_k(\teps))}\right|+C_2|E_k(\teps)|^2.
\end{equation}
Using the inequality (\ref{inequality1}) successively we obtain
\begin{equation}
\begin{aligned}
& |E_{k+1}(\eps)| < \left|\frac{E_k(\tilde{g}(\eps))}{f'(\zeta_k(\tilde{g}(\eps)))}\right|+C_2|E_k(\tilde{g}(\eps))|^2\\
	&<\left|\frac{E_{k-1}(\tilde{g}^2(\eps))}{f'(\zeta_{k-1}(\tilde{g}^2(\eps)))\cdot f'(\zeta_k(\tilde{g}(\eps)))}\right|+C_2\frac{|E_{k-1}(\tilde{g}^2(\eps))|^2}{|f'(\zeta_k(\tilde{g}(\eps)))|}+C_2|E_k(\tilde{g}(\eps))|^2\\
\label{triple_eq} &<\frac{|E_1(\tilde{g}^{k}(\eps))|}{\prod_{{\black \ell}=1}^{k}{|f'(\zeta_{\black \ell}(\tilde{g}^{k+1-{\black \ell}}(\eps)))|}}+C_2\sum_{{\black \ell}=1}^{k-1} \frac{|E_{\black \ell}(\tilde{g}^{k+1-{\black \ell}}(\eps))|^2}{\prod_{s={\black \ell}+1}^k|f'(\zeta_s(\tilde{g}^{k+1-s}(\eps)))|}+C_2|E_k(\teps)|^2
\end{aligned}
\end{equation}

Combining equation \ref{triple_eq} and Lemma \ref{lemma:3} gives
\begin {align*}
|E_{k+1}(\eps)|&< C_1|\tilde{g}^{k}(\eps)|^{20}k+C_1C_2|\tilde{g}^{k}(\eps)|^{40}(k-1)\\
 & +16C_1C_2\sum_{{\black \ell}=2}^{k-1}\frac{|\tilde{g}^{k+1-{\black \ell}}(\eps)|^{24}(k-{\black \ell})}{({\black \ell}-1)^{4}}+16C_1\frac{|\tilde{g}(\eps)|^{24}}{(k-1)^{4}}\\
&< C_1|\eps||\tilde{g}^{k}(\eps)|^{19}k +C_1C_2|\eps||\tilde{g}^{k}(\eps)|^{39}(k-1)\\
& +16C_1C_2|\eps|\sum_{{\black \ell}=2}^{k-1}\frac{|\tilde{g}^{k+1-{\black \ell}}(\eps)|^{23}(k-{\black \ell})}{({\black \ell}-1)^{4}}+16C_1|\eps|\frac{|\tilde{g}(\eps)|^{23}}{(k-1)^{4}}.
\end {align*}
If $\delta$ is sufficiently small this last inequality together with Lemma \ref{lemma:4} implies
$$
|E_{k+1}(\eps)|<\frac{4|\eps|^{12}}{k^2}<\frac{|\eps|^{4}}{k^2},
$$
completing the proof of $I_{k+1}(2)$ and thus the induction argument. We emphasize that throughout the proof $\delta$ can be chosen dependently of $k$.

To summarize, the equation
$$ \zeta_{k+1}(\eps)=f^{-1}\left(f(\zeta_1(\eps))+\sum_{{\black \ell}=1}^kE_{\black \ell}(\eps)\right)$$
implies that for sufficiently small $\delta$ the sequence $\zeta_k$ converges on $\mathcal{P}_{\delta}$ to a parabolic curve $\zeta$ satisfying
$\zeta(\teps)=f_{\eps}(\zeta(\eps))$. Recall that we have only proven the existence of parabolic curve for $c_1=\frac{\pi i}{2}$. For $c_1=-\frac{\pi i}{2}$ we can use  same arguments as above, but we might get a different value for $\delta$. Since the parabolic petals are nested and forward invariant, both parabolic curves are graphs over the petal with minimal $\delta$.
\end{proof}

From the proof it follows that
$$
\zeta^\pm(\eps)=\pm c_1 \eps + c_2 \eps^2 \pm  c_3 \eps^3+O(\eps^4),
$$
where $c_1=\frac{\pi i}{2}$ and $c_2=\frac{b\pi^2}{8}- \frac{1}{4}$.

\section{Estimates on Convergence towards Lavaurs map}\label{section:ConvergenceEstimates}

\subsection{Preliminaries}

The goal of this section is to obtain explicit estimates for one of the main objects to appear in our arguments: the functions $A(\epsilon,z)$ and $A_0(z)$, \black  {which measure how much the dynamics differ from a translation after
a certain change of coordinates. The key difference between this section and the corresponding computations in \cite{ABDPR} is that we now know that we have two exactly invariant parabolic curves $\zeta^\pm$, instead of invariant jets. This is used crucially in the proof of Proposition \ref{prop:phianalytic}.}

\begin{defi}
	Let $f_w(z):=f(z)+\frac{\pi^2}{4}w$,
	where $f(z)=z+z^2+z^3+O(z^4)$ is a degree $d$ polynomial. Let $g(w)=w-w^2+O(w^3)$ be a degree $d$ polynomial.
\end{defi}

In what follows, we set $\epsilon:=\sqrt{w}$, {\black working throughout with the branch that takes positive values on the positive real axis. We note that this branch is well-defined on the parabolic basin of the polynomial $g$}. Abusing notation, we write
$f_\epsilon(z):=f(z)+\frac{\pi^2}{4}\epsilon^2$ and $\zeta^\pm(\epsilon)=\pm i\frac{\pi}{2} \epsilon + c_2 \epsilon^2+ O(\epsilon^3)$, where $\zeta^\pm$ are the parabolic curves constructed in the preceding section. Let $\tilde g(\epsilon):=\sqrt{g(\epsilon^2)} = \epsilon - \frac{1}{2}\epsilon^3+O(\epsilon^5)$ ($\tilde g$ is analytic near $\epsilon=0$).

Let us first record here the following lemma for later use:

\begin{lem}\label{lem:epsj} {\black Let $w_0\in \mathcal{B}_g$ and let $\eps_j:=\sqrt{g^{n^2+j}(w_0)}$.}
	For $1 \leq j \leq n$, we have:
	$$\epsilon_j = \frac{1}{n}- \frac{j}{2n^3}-\frac{\phi^{{\black\iota}}_g (w_0)}{2n^3}+ o(\frac{1}{n^3})$$
\end{lem}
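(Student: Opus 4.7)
The plan is to use the Fatou coordinate $\phi_g^\iota$ of $g$ at the parabolic fixed point $0$. Thanks to the specific form $g(w) = w - w^2 + w^3 + O(w^4)$, no logarithmic term appears in $\phi_g^\iota$ (analogously to the observation made for $f$ in the introduction regarding the vanishing of $b$ when $a = 1$), so with the standard normalization $\phi_g^\iota$ admits an asymptotic expansion $\phi_g^\iota(w) = 1/w + O(w)$ as $w \to 0$ in $\mathcal{B}_g$.

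First, I would iterate the functional equation $\phi_g^\iota \circ g = \phi_g^\iota + 1$ to obtain $\phi_g^\iota(w_N) = \phi_g^\iota(w_0) + N$, where $w_N := g^N(w_0)$ and $N := n^2 + j$. Setting $A := \phi_g^\iota(w_0)$ and inverting the asymptotic expansion yields $1/w_N = N + A + O(w_N) = N + A + O(1/N)$, which bootstraps to $w_N = 1/(N+A) + O(1/N^3)$.

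Next I would substitute $N = n^2 + j$ and expand $1/(n^2 + j + A)$ as a geometric series in $(j+A)/n^2 = O(1/n)$, obtaining
$$
w_N = \frac{1}{n^2} - \frac{j + A}{n^4} + O\!\left(\frac{1}{n^4}\right).
$$
Taking the square root via the binomial expansion $\sqrt{1+x} = 1 + x/2 + O(x^2)$ with $x = -(j+A)/n^2 + O(1/n^2)$ then produces the claimed formula
$$
\epsilon_j = \sqrt{w_N} = \frac{1}{n} - \frac{j}{2n^3} - \frac{\phi_g^\iota(w_0)}{2n^3} + o\!\left(\frac{1}{n^3}\right).
$$

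The main technical point is the careful bookkeeping of error terms: since $\beta := j + A = O(n)$ over the range $1 \le j \le n$, the quadratic correction in the geometric series contributes a term of order $\beta^2/n^6$, and the quadratic correction in the binomial expansion of the square root introduces a further contribution of order $\beta^2/n^5$. One must verify that these subleading corrections combine to produce the stated uniform $o(1/n^3)$ bound, which is the only step in the argument requiring any real care.
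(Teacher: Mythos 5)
Your approach matches the paper's: iterate the Fatou functional equation to get $\phi_g^\iota(w_{n^2+j}) = \phi_g^\iota(w_0) + n^2 + j$, invert the asymptotic expansion of $\phi_g^\iota$ near the parabolic point (the log term vanishes because $g(w)=w-w^2+w^3+O(w^4)$), and then extract $\epsilon_j$ by a binomial expansion of the square root. Your asymptotic $\phi_g^\iota(w) = 1/w + O(w)$ is a valid (and slightly sharper) replacement for the paper's $1/w + o(1)$.

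However, the step you describe as the "only step requiring real care" --- checking that the discarded quadratic corrections give a uniform $o(1/n^3)$ error over $1 \le j \le n$ --- is exactly where the claimed bound breaks, so it cannot simply be deferred. The second-order term of $(1+x)^{-1/2}$ with $x = (j+\phi_g^\iota(w_0))/n^2$ contributes $\dfrac{3(j+\phi_g^\iota(w_0))^2}{8n^5}$ to $\epsilon_j$; once $j$ is comparable to $n$ this is of size $\approx \tfrac{3}{8}n^{-3}$, i.e.\ genuinely $\Theta(1/n^3)$. The uniform estimate the argument actually yields is $\epsilon_j = \frac{1}{n} - \frac{j+\phi_g^\iota(w_0)}{2n^3} + O(j^2/n^5)$, an $O(1/n^3)$ error, not $o(1/n^3)$. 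You should be aware that the paper's own statement and proof carry the identical imprecision: it is harmless in the many places that only use $\epsilon_j = \frac{1}{n} + O(j/n^3)$ (e.g.\ Lemma \ref{lem:translation}), but in Proposition \ref{prop:sumeps} the omitted quadratics sum over $0 \le j \le 2n$ to an additional $\frac{1}{n^2}+O(1/n^3)$, which changes the universal constant $-\tfrac12$ in the formula for $h$ in Theorem \ref{thm:lav}. Since that shift does not touch the coefficient of $\phi_g^\iota(w)$, none of the qualitative conclusions about wandering domains are affected, but the literal $o(1/n^3)$ in the lemma cannot be established uniformly in $j$.
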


\begin{proof} {\black Let us write $w_{n^2+j}:=g^{n^2+j}(w_{\black 0})$.}
	We have
	$$\phi_g(w_{n^2+j})= \phi_g^{{\black \iota}}(w_{\black 0})+ n^2+j=\frac{1}{w_{n^2+j}}+o(1)$$
	(note that we assume here $g(w)=w-w^2+w^3+O(w^4)$).
	Therefore
	$$w_{n^2+j}=\frac{1}{n^2+j+\phi_g^{{\black\iota}}(w_{\black 0})+o(1)},$$
	and
	\begin{align*}
	\epsilon_j &= \sqrt{w_{n^2+j}} = \frac{1}{n} \left(1+ \frac{j+\phi_g^{{\black \iota}}(w_{\black 0})}{n^2}+o\left(\frac{1}{n^2}\right) \right)^{-\frac{1}{2}} \\
	&=\frac{1}{n} \left(1- \frac{1}{2}  \frac{j+\phi_g^{{\black \iota}}(w_{\black 0})}{n^2} + o\left(\frac{1}{n^2}\right)\right).
	\end{align*}
\end{proof}

\begin{defi}
	{\black Let $$
	\psi_\epsilon^{\iota} (z):=\frac{1}{i\pi} \log \left[\frac{\zeta^+(\epsilon)-z}{z-\zeta^-(\epsilon)} \right] + 1
	$$
	and
	 $$
	 \psi_\epsilon^{o} (z):=\frac{1}{i\pi} \log \left[\frac{\zeta^+(\epsilon)-z}{z-\zeta^-(\epsilon)} \right] - 1
	 $$
	where $\log$ is the principal branch of the logarithm.}
\end{defi}

Remark: with that choice of branch, $\psi$ is defined on $\C \backslash L_\epsilon$,
where $L_\epsilon$ is the real line through  $\zeta^+(\epsilon)$ and $\zeta^-(\epsilon)$ minus the segment
$[\zeta^-(\epsilon), \zeta^+(\epsilon)]$. In particular, $\psi_\epsilon^\iota$ and $\psi_\epsilon^o$ are both
defined in a disk centered at $z=0$ whose radius is of order $\epsilon$.

It will also be useful to note that
\begin{equation}\label{eq:psiinv}
	\left( \psi_{\epsilon}^{\iota/o} \right)^{-1} (Z) = \frac{\zeta^+(\eps)-e^{\pm i\pi Z} \zeta^-(\eps)}{1-e^{\pm i\pi Z}} = - \frac{\pi}{2} \eps \cot\left(\pm \frac{\pi Z}{2}\right) + O(\eps^2).
\end{equation}

\begin{defi}
	Let
	\begin{enumerate}
		\item $A(\epsilon,z):=\psi_{\tilde g(\epsilon)}^{\iota/o} \circ f_{\epsilon}(z)-\psi_\epsilon^{\iota/o}(z)-\epsilon$
		\item $A_0(z):=-\frac{1}{f(z)}+\frac{1}{z}-1$
	\end{enumerate}
\end{defi}

Note that the formula for $A(\epsilon,z)$ does not depend on whether the ingoing or outgoing coordinate $\psi_\epsilon$ is used, and is therefore well defined.

\begin{prop}\label{prop:phianalytic}
	We have
	\begin{enumerate}
		\item $A_0$ is analytic near zero.
		\item There exists $r>0$ such that for all $\epsilon \neq 0$ in a neighborhood of zero, $A(\epsilon, \cdot)$ is analytic on
		$\D(0,r)$.
	\end{enumerate}
\end{prop}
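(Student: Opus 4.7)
Part (1) is a direct power-series expansion. Writing $f(z) = z(1 + z + z^2 + O(z^3))$ and expanding the geometric series, one finds that $\tfrac{1}{f(z)} - \tfrac{1}{z} = -1 + O(z^2)$, so that $A_0(z) = O(z^2)$ is manifestly analytic at the origin.

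For part (2), the plan is to exploit the exact invariance relation $f_\eps(\zeta^\pm(\eps)) = \zeta^\pm(\tilde g(\eps))$ furnished by Proposition~\ref{prop:curve}; this is precisely where the improvement over \cite{ABDPR} (in which only \emph{almost} invariant jets were available) is needed. Unfolding the definitions and observing that the constants $\pm 1$ in $\psi_\eps^{\iota/o}$ cancel when forming $A$, one rewrites
\[
A(\eps, z) = \frac{1}{i\pi} \log\!\left[\frac{(\zeta^+(\tilde g(\eps)) - f_\eps(z))\,(z - \zeta^-(\eps))}{(f_\eps(z) - \zeta^-(\tilde g(\eps)))\,(\zeta^+(\eps) - z)}\right] - \eps.
\]
Substituting $\zeta^\pm(\tilde g(\eps)) = f_\eps(\zeta^\pm(\eps))$ in both numerator and denominator, and introducing the divided differences
\[
Q^\pm(\eps, z) := \frac{f(\zeta^\pm(\eps)) - f(z)}{\zeta^\pm(\eps) - z},
\]
which are polynomials in $z$ (since the additive perturbation $\tfrac{\pi^2}{4}\eps^2$ drops out of the numerator), the apparently singular linear factors $\zeta^+(\eps) - z$ and $z - \zeta^-(\eps)$ cancel, and the argument of the logarithm reduces to the rational expression $Q^+(\eps, z)/Q^-(\eps, z)$.

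Since $Q^\pm(0,0) = f'(0) = 1$, for all sufficiently small $\eps$ the ratio $Q^+/Q^-$ stays close to $1$ on a fixed disk $\D(0, r)$; in particular $Q^-$ is nonvanishing there and $Q^+/Q^-$ takes values in the domain of the principal logarithm, yielding a candidate analytic formula $A(\eps, z) = \tfrac{1}{i\pi}\log\bigl(Q^+(\eps,z)/Q^-(\eps,z)\bigr) - \eps$ on $\D(0, r)$. The step that demands the most care---and which I expect to be the chief technical subtlety---is verifying that this is the correct branch: the formal identity between the two expressions for $A$ holds only modulo the period $2$ of the logarithmic coordinates. To pin down the branch one evaluates at a single base point, say $z = 0$: both $\psi_\eps^{\iota/o}(0)$ (computed via the principal branch used in the definition) and $\tfrac{1}{i\pi}\log\bigl(Q^+(\eps, 0)/Q^-(\eps, 0)\bigr)$ are small when $\eps$ is small, so the two branches agree at this point. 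Simple connectedness of $\D(0, r)$ together with analytic continuation then propagates the agreement across the whole disk, proving that $A(\eps, \cdot)$ is analytic on $\D(0, r)$ for all sufficiently small $\eps \neq 0$.
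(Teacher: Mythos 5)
Your proof of part (1) is the same computation as the paper's, and your part (2) follows the paper's core idea precisely: use the exact invariance $f_\eps(\zeta^\pm(\eps)) = \zeta^\pm(\tilde g(\eps))$ from Proposition~\ref{prop:curve} to rewrite the argument of the logarithm as a ratio of divided differences $Q^\pm$, whose apparent poles and zeros at $z=\zeta^\pm(\eps)$ are removable because $Q^\pm(\eps,z)\to f(z)/z$ (equivalently, because the critical points of $f$ are bounded away from the origin). The one place you go beyond the paper is the branch-matching step: the paper's one-line formula $A = \tfrac{1}{i\pi}\log(\dots : \dots) - \eps$ silently combines two logarithms and thus holds a priori only modulo $2$, and your check that both sides are small at $z=0$ for small $\eps$, together with analytic continuation over the simply connected disk $\D(0,r)$, is the tidy way to close that gap. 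So this is not a different route, just the same one with the branch bookkeeping made explicit --- a legitimate and welcome bit of added rigor, but it does not change the structure of the argument.
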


\begin{proof}
	(1) {\black A quick computation shows that $A_0(z)=\frac{f(z)-z-zf(z)}{zf(z)}=\frac{O(z^2)}{1+O(z)}$, from where the conclusion easily follows. }
	
	For (2), note that
	\begin{align*}
		A(\epsilon,z) &= \frac{1}{i\pi} \log \left(\frac{\zeta^+( \tilde g(\epsilon)) - f_{\epsilon}(z)}{f_{\epsilon}(z) - \zeta^-( \tilde g(\epsilon))} :
		\frac{\zeta^+(\epsilon)-z}{z-\zeta^-(\epsilon)}		\right) -\epsilon \\
		&= \frac{1}{i\pi} \log  \left(
		\frac{f_{\epsilon}( \zeta^+(\epsilon))- f_{\epsilon}(z)}{\zeta^+(\epsilon)-z}		
		 : 	\frac{f_{\epsilon}(z) - f_{\epsilon}( \zeta^-(\epsilon))}{z-\zeta^-(\epsilon)}
		\right) -\epsilon
	\end{align*}
	From the above expression we see that the singularities at $z=\zeta^\pm(\epsilon)$ are in fact
	removable, unless one of the points coincides with a critical point of $f$. The fact that these critical points are bounded away from zero completes the proof.
\end{proof}

\begin{lem}\label{prop:1/z}
	Let $K$ be a compact subset of $\C^*$. There exists $C=C_K>0$ such that for all $z \in K$,
	$$\left| \frac{z-\zeta^+(\epsilon)}{z-\zeta^-(\epsilon)} -  (1 - \frac{i\pi}{z} \epsilon - \frac{\pi^2}{2z^2} \epsilon^2) \right| \leq C \epsilon^3$$
\end{lem}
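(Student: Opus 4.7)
The plan is straightforward Taylor expansion, organized to isolate where the $\epsilon^3$ error comes in. First I would use the parabolic curves from Proposition~\ref{prop:curve}, which give the explicit expansions $\zeta^\pm(\epsilon) = \pm\frac{i\pi}{2}\epsilon + c_2\epsilon^2 \pm c_3\epsilon^3 + O(\epsilon^4)$. From these, the two algebraic ingredients are
\[
\zeta^+(\epsilon) - \zeta^-(\epsilon) = i\pi\epsilon + 2c_3\epsilon^3 + O(\epsilon^5),\qquad \zeta^-(\epsilon) = -\tfrac{i\pi}{2}\epsilon + c_2\epsilon^2 + O(\epsilon^3),
\]
where crucially the $\epsilon^2$ term in $\zeta^+-\zeta^-$ vanishes by the parity of the expansion.

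Next I would rewrite the ratio via the identity
\[
\frac{z-\zeta^+(\epsilon)}{z-\zeta^-(\epsilon)} = 1 - \frac{\zeta^+(\epsilon)-\zeta^-(\epsilon)}{z-\zeta^-(\epsilon)},
\]
and expand $\frac{1}{z-\zeta^-(\epsilon)} = \frac{1}{z}\bigl(1 + \frac{\zeta^-(\epsilon)}{z} + O(\epsilon^2)\bigr)$ as a geometric series in the small quantity $\zeta^-(\epsilon)/z$. Multiplying by $\zeta^+(\epsilon)-\zeta^-(\epsilon)=i\pi\epsilon+O(\epsilon^3)$ and collecting terms through order $\epsilon^2$, the $c_2\epsilon^2$ contribution in $\zeta^-$ multiplies only $O(\epsilon)$ factors from the numerator, so the only surviving $\epsilon^2$ term is
\[
\frac{i\pi\epsilon}{z}\cdot\frac{-i\pi\epsilon/2}{z} = \frac{\pi^2\epsilon^2}{2z^2},
\]
yielding the claimed expansion with remainder $O(\epsilon^3)$.

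Finally, to get the uniform constant $C=C_K$, I would observe that since $K$ is compact in $\mathbb{C}^*$, there exists $r_0>0$ with $|z|\geq r_0$ on $K$, so for $\epsilon$ small enough (depending only on $K$) we have $|\zeta^-(\epsilon)/z|\leq 1/2$ uniformly on $K$, making the geometric series converge uniformly and all higher-order remainders bounded by a constant multiple of $\epsilon^3$. This is essentially a bookkeeping step rather than a real obstacle.

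There is no hard part here: the only thing to be careful about is that the $\epsilon^2$ coefficient in $\zeta^+-\zeta^-$ vanishes (so the leading $\epsilon$ term of $\zeta^+-\zeta^-$ really does combine only with the $\epsilon$ part of $\zeta^-$ to produce the $\pi^2\epsilon^2/(2z^2)$ term), and that the $c_2\epsilon^2$ part of $\zeta^-$ contributes only to the $O(\epsilon^3)$ remainder. Everything else is a uniform estimate guaranteed by compactness of $K$ away from the origin.
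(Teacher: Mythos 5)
Your proposal is correct and follows essentially the same route as the paper: both expand $\frac{1}{z-\zeta^-(\epsilon)}$ as a geometric series in $\zeta^-(\epsilon)/z$ using the expansions of the parabolic curves, and collect terms through order $\epsilon^2$, with uniformity on $K$ coming from $|z|\geq r_0>0$. The only cosmetic difference is that you factor out $\zeta^+(\epsilon)-\zeta^-(\epsilon)$ first (so the $c_2\epsilon^2$ terms cancel automatically), whereas the paper writes the ratio as $\frac{\zeta^+(\epsilon)}{z-\zeta^-(\epsilon)}-\frac{z}{z-\zeta^-(\epsilon)}$ and sees the same cancellation appear in the final line.
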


\begin{proof}
	For $z \in K$, we have:
	\begin{align*}
		\frac{\zeta^+(\epsilon)-z}{z-\zeta^-(\epsilon)} &= \frac{\zeta^+(\epsilon)}{z-\zeta^-(\epsilon)}
		- \frac{z}{z-\zeta^-(\epsilon)}\\
		&= \frac{\zeta^+(\epsilon)}{z} \left(  \frac{1}{1-\frac{\zeta^-(\epsilon)}{z}}     \right)
		- \frac{1}{1-\frac{\zeta^-(\epsilon)}{z}} \\
		&=\frac{\zeta^+(\epsilon)}{z} \left(1+\frac{\zeta^-(\epsilon)}{z}  +O(\epsilon^2) \right)
		- \left( 1+\frac{\zeta^-(\epsilon)}{z} + (\frac{\zeta^-(\epsilon)}{z})^2+O(\epsilon^3)  \right)\\
		&= \frac{c_1 \epsilon + c_2 \epsilon^2 - \frac{c_1^2}{z} \epsilon^2}{z}
		-1 - \frac{-c_1 \epsilon +c_2\epsilon^2 }{z} - \frac{c_1^2 \epsilon^2}{z^2} +O(\epsilon^3)\\
		&= -1 + \frac{2 c_1}{z} \epsilon - \frac{2c_1^2}{z^2} \epsilon^2 + O(\epsilon^3)\\
		&= -1 + \frac{i\pi}{z} \epsilon + \frac{\pi^2}{2z^2} \epsilon^2 + O(\epsilon^3).
	\end{align*}

\end{proof}

\begin{lem}\label{lem:1/z}
	Let $K$ be a compact subset of $\C^*$.
	Then
	$$\frac{f_\epsilon(z) - f_\epsilon(\zeta^+(\epsilon))}{f_\epsilon(z) - f_\epsilon(\zeta^-(\epsilon))}
	 = 1-\frac{i\pi}{f(z)}\epsilon -\frac{\pi^2}{2f(z)^2}\epsilon^2+O(\epsilon^3).$$
	 As in the previous lemma the constant in the $O$ depends on $K$.
\end{lem}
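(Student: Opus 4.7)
The plan is to reduce this lemma to the previous one (Lemma \ref{prop:1/z}) by exploiting the fact that the parabolic curves $\zeta^\pm$ are exactly invariant, not merely approximately so. By Proposition \ref{prop:curve}, we have the functional equation $f_\epsilon(\zeta^\pm(\epsilon)) = \zeta^\pm(\tilde g(\epsilon))$. Substituting this into the numerator and denominator of the left-hand side transforms the ratio into
$$
\frac{f_\epsilon(z) - \zeta^+(\tilde g(\epsilon))}{f_\epsilon(z) - \zeta^-(\tilde g(\epsilon))},
$$
which is now in precisely the form handled by Lemma \ref{prop:1/z}, but with the point $z$ replaced by $f_\epsilon(z)$ and the parameter $\epsilon$ replaced by $\tilde g(\epsilon)$.

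To apply Lemma \ref{prop:1/z} I need the points $f_\epsilon(z)$ to lie in a compact subset of $\C^*$ uniformly in $\epsilon$ for $z$ ranging over $K$. Since $f$ is continuous and $f(z) \neq 0$ on any compact subset of $\C^*$ not containing the zeros of $f$, and $f_\epsilon = f + \frac{\pi^2}{4}\epsilon^2$ converges uniformly to $f$, this holds provided $K$ is chosen to avoid the zeros of $f$; the constant in the resulting $O$ depends on $K$ through this compact set, as asserted. Lemma \ref{prop:1/z} then yields
$$
1 - \frac{i\pi}{f_\epsilon(z)}\,\tilde g(\epsilon) - \frac{\pi^2}{2 f_\epsilon(z)^2}\,\tilde g(\epsilon)^2 + O(\tilde g(\epsilon)^3).
$$

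It remains to expand in $\epsilon$. Using $\tilde g(\epsilon) = \epsilon - \tfrac{1}{2}\epsilon^3 + O(\epsilon^5)$, one has $\tilde g(\epsilon) = \epsilon + O(\epsilon^3)$, $\tilde g(\epsilon)^2 = \epsilon^2 + O(\epsilon^4)$ and $\tilde g(\epsilon)^3 = O(\epsilon^3)$. Moreover, $\tfrac{1}{f_\epsilon(z)} = \tfrac{1}{f(z)} + O(\epsilon^2)$ and $\tfrac{1}{f_\epsilon(z)^2} = \tfrac{1}{f(z)^2} + O(\epsilon^2)$, uniformly for $z \in K$. Multiplying out and absorbing the higher-order terms into the remainder gives
$$
1 - \frac{i\pi}{f(z)}\epsilon - \frac{\pi^2}{2 f(z)^2}\epsilon^2 + O(\epsilon^3),
$$
which is the claim. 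There is no real obstacle here: the conceptual work has already been done in constructing the exactly invariant parabolic curves in Section \ref{section:parabolic}, which is precisely what allows the clean substitution $f_\epsilon(\zeta^\pm(\epsilon)) = \zeta^\pm(\tilde g(\epsilon))$ and hence the direct reduction to Lemma \ref{prop:1/z}. Had we only invariant jets, extra error terms of size $O(\epsilon^N)$ would appear, but since $N$ can be chosen as large as desired from Proposition \ref{prop:curve}, even that weaker setup would suffice for the order $\epsilon^3$ we need here.
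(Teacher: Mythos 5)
Your proof is correct and follows essentially the same route as the paper: both use the exact invariance $f_\epsilon(\zeta^\pm(\epsilon))=\zeta^\pm(\tilde g(\epsilon))$ to reduce to Lemma \ref{prop:1/z} applied at the point $f_\epsilon(z)$ with parameter $\tilde g(\epsilon)$, and then expand using $\tilde g(\epsilon)=\epsilon+O(\epsilon^3)$. Your added remarks on the uniformity of the compact set $f_\epsilon(K)\subset\C^*$ are a harmless (and slightly more careful) elaboration of what the paper leaves implicit.
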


\begin{proof}
	The invariance of the parabolic curves gives
    $$
    \begin{aligned}
    \frac{f_\epsilon(z) - f_\epsilon(\zeta^+(\epsilon))}{f_\epsilon(z) - f_\epsilon(\zeta^-(\epsilon))}
	 & = \frac{f_\epsilon(z) - \zeta^+(\tilde{g}(\epsilon))}{f_\epsilon(z) - \zeta^-(\tilde{g}(\epsilon))}
     = 1 - \frac{i\pi}{f_\epsilon(z)} \tilde{g}(\epsilon) + \frac{\pi^2}{2 f_\epsilon(z)^2}\tilde{g}(\epsilon^2) + O(\epsilon^3)\\
     & = 1 - \frac{i \pi}{f(z)} \epsilon + \frac{\pi^2}{2f(z)^2} \epsilon^2 + O(\epsilon^3).
     \end{aligned}
     $$
     The last equality uses the fact that $\tilde{g}(\epsilon) = \epsilon + O(\epsilon^3)$.
\end{proof}

\begin{prop}\label{prop:estimateA}
	There exists a constant $C_0 \in \C$ (depending only on $f$ and
	$g$) such that:
	$$A(\epsilon,z)=\epsilon A_0(z) + \epsilon^3 C_0+
	O(\epsilon^4, \epsilon^3 z)$$
	where the constants in the $O$ are uniform for $(z,\epsilon) \in \C^2$ near $(0,0)$
	(with $\mathrm{Re}(\epsilon)>0$).
\end{prop}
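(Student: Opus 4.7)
The proof proceeds in two stages. First I establish the asserted expansion on compact subsets $K\Subset\C^*$ by direct computation using Lemmas \ref{prop:1/z} and \ref{lem:1/z}. Second I bootstrap to a full neighborhood of $(0,0)$ using the joint analyticity of $A(\eps,z)$ provided by Proposition \ref{prop:phianalytic} (which gives analyticity in $z$ for each fixed $\eps\ne0$ and hence, combined with manifest analyticity in $\eps$ through $\zeta^\pm(\eps)$ and $\tilde g(\eps)$, joint holomorphy near $(0,0)$ by Hartogs's theorem).

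For the first stage, the invariance $f_\eps(\zeta^\pm(\eps))=\zeta^\pm(\tilde g(\eps))$ of the parabolic curves lets me rewrite
$$A(\eps,z)=\frac{1}{i\pi}\log\!\left[\frac{f_\eps(\zeta^+(\eps))-f_\eps(z)}{f_\eps(z)-f_\eps(\zeta^-(\eps))}\cdot\frac{z-\zeta^-(\eps)}{\zeta^+(\eps)-z}\right]-\eps.$$
Fixing $K\Subset\C^*$, Lemma \ref{prop:1/z} (after inverting a geometric series) yields
$$\frac{z-\zeta^-(\eps)}{\zeta^+(\eps)-z}=-1-\tfrac{i\pi}{z}\eps+\tfrac{\pi^2}{2z^2}\eps^2+O(\eps^3),$$
and Lemma \ref{lem:1/z} yields
$$\frac{f_\eps(\zeta^+(\eps))-f_\eps(z)}{f_\eps(z)-f_\eps(\zeta^-(\eps))}=-1+\tfrac{i\pi}{f(z)}\eps+\tfrac{\pi^2}{2f(z)^2}\eps^2+O(\eps^3)$$
uniformly on $K$. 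Multiplying the two expansions produces
$$1+i\pi\bigl(\tfrac1z-\tfrac{1}{f(z)}\bigr)\eps-\tfrac{\pi^2}{2}\bigl(\tfrac1z-\tfrac{1}{f(z)}\bigr)^2\eps^2+O(\eps^3),$$
and applying $\log(1+X)=X-\tfrac12 X^2+O(X^3)$ the two contributions at order $\eps^2$ cancel exactly, leaving
$$A(\eps,z)=\eps\bigl(\tfrac1z-\tfrac{1}{f(z)}-1\bigr)+O(\eps^3)=\eps A_0(z)+O(\eps^3)$$
uniformly on $K$.

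For the second stage, write the Taylor expansion $A(\eps,z)=\sum_{k\ge0}a_k(z)\eps^k$ with each $a_k$ holomorphic near $z=0$. Matching with the stage-one identity on $K$ and invoking uniqueness of power series forces $a_0\equiv0$, $a_1\equiv A_0$, and $a_2\equiv0$ on $K$, hence everywhere by analytic continuation. Defining $C_0:=a_3(0)$ and splitting $a_3(z)=C_0+O(z)$ then gives
$$A(\eps,z)=\eps A_0(z)+\eps^3 C_0+\eps^3\bigl(a_3(z)-C_0\bigr)+\sum_{k\ge4}a_k(z)\eps^k=\eps A_0(z)+\eps^3 C_0+O(\eps^4,\eps^3z),$$
as required. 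The only nontrivial arithmetic step is the order-$\eps^2$ cancellation in stage one; its cleanness is a direct consequence of the symmetric form $-1+\tfrac{i\pi}{\star}\eps+\tfrac{\pi^2}{2\star^2}\eps^2+O(\eps^3)$ taken by the two underlying expansions, one at $\star=z$ and the other at $\star=f(z)$, which ensures that the linear-and-quadratic contributions assemble into a perfect square that is exactly annihilated by the $-X^2/2$ term of the logarithm.
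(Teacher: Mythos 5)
Your first stage is correct and coincides with the paper's own computation: the product of the two expansions from Lemmas \ref{prop:1/z} and \ref{lem:1/z} (equivalently, the difference of their logarithms) exhibits exactly the order-$\epsilon^2$ cancellation you describe, giving $A(\epsilon,z)=\epsilon A_0(z)+O(\epsilon^3)$ uniformly on compacts of $\C^*$.

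The second stage, however, has a genuine gap. You treat $A(\epsilon,z)$ as jointly holomorphic near $(0,0)$ and expand it as a convergent power series $\sum_k a_k(z)\epsilon^k$ in $\epsilon$. But $A$ is not holomorphic in $\epsilon$ at $\epsilon=0$: it is built from the parabolic curves $\zeta^\pm(\epsilon)$, which are defined only on a petal $\mathcal{P}_\delta$ whose closure merely touches $0$ (Proposition \ref{prop:curve} constructs them by a graph transform on such a petal), and whose expansions $\zeta^\pm(\epsilon)=\pm\frac{i\pi}{2}\epsilon+c_2\epsilon^2+\cdots$ are only asymptotic there, not convergent Taylor series at $\epsilon=0$. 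Hartogs's theorem does not apply (one would need separate holomorphy on a full bidisk, and $A(\cdot,z)$ is not even defined for $\mathrm{Re}(\epsilon)\le 0$), so the coefficients $a_k(z)$, the identification $a_2\equiv 0$ by ``uniqueness of power series,'' and the definition $C_0:=a_3(0)$ are all unjustified. The paper's route avoids this entirely: it works with holomorphy in $z$ alone. By Proposition \ref{prop:phianalytic}, for each fixed small $\epsilon\ne 0$ the function $\phi_\epsilon(z):=\bigl(A(\epsilon,z)-\epsilon A_0(z)\bigr)/\epsilon^3$ is holomorphic on a fixed disk $\D(0,r)$; the stage-one estimate bounds it on the circle $\{|z|=r/2\}$ uniformly in $\epsilon$, and the maximum modulus principle propagates the bound to the whole disk. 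The constant $C_0$ and the refined error $O(\epsilon^4,\epsilon^3 z)$ are then obtained not from a Taylor expansion in $\epsilon$ but by redoing the compact-set computation to one more order (as in the lemma following the proposition in the paper), where the negative powers of $z$ again cancel, and applying the same maximum-principle argument to the next-order remainder. You should replace your Taylor-coefficient extraction by this argument.
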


\begin{proof}
	Let $K$ be a compact of $\C^*$. Then by the two previous lemmas, we have
	\begin{align*}
		A(\epsilon,z) &= \frac{1}{i\pi} \log \left(  \frac{z-\zeta^-(\epsilon)}{z-\zeta^+(\epsilon) }
		\cdot \frac{f_\epsilon(z) - f_\epsilon(\zeta^+(\epsilon))}{f_\epsilon(z)-f_\epsilon(\zeta^-(\epsilon))}
		\right) -\epsilon \\
		&=  -\frac{1}{i\pi} \log \left(  1 - \frac{i\pi}{z} \epsilon - \frac{\pi^2}{2z^2} \epsilon^2+O(\epsilon^3) \right) +\frac{1}{i\pi} \log \left(  1-\frac{i\pi}{f(z)}\epsilon -\frac{\pi^2}{2f(z)^2}\epsilon^2+O(\epsilon^3)		\right) -\epsilon\\
	&=  \frac{\epsilon}{z} - \frac{\epsilon}{f(z)} - \epsilon + O(\epsilon^3)   = \epsilon A_0(z) + O(\epsilon^3).
	\end{align*}
	Here the constant in the $O$ still depends on $K \subset \C^*$.
	Let $\phi_\epsilon(z):=\frac{A(\epsilon,z)-\epsilon A_0(z)}{\epsilon^3}$. By Proposition \ref{prop:phianalytic}  $\phi_\eps$ is holomorphic on $\D(0,r)$. We have proved that
	for all compact $K \subset \C^*$, for all $z \in K$, and for all small $\epsilon \neq 0$ with $\mathrm{Re}(\epsilon)>0$, we have
	$|\phi_\epsilon(z)| \leq C_K$. By taking $K=\{ |z|=\frac{r}{2}  \}$ we therefore obtain the same
	estimate $|\phi_\epsilon(z)| \leq C_K$ for all $|z| \leq \frac{r}{2}$ because of the maximum modulus principle.
	This gives the desired uniformity.
\end{proof}

\begin{lem}
	If $\zeta^\pm(\epsilon)=\pm \frac{i\pi}{2}\epsilon+c_2 \epsilon^2+ c_3^\pm \epsilon^3+O(\epsilon^4)$,
	and $f(z)=z+z^2+z^3+bz^4+O(z^5)$, then
	$$C_0 = \frac{-3b\pi^3+2\pi^3+12c_2\pi + 12i(c_3^- - c_3^+)}{12\pi}$$
\end{lem}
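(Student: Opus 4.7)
The idea is to extract $C_0$ as the coefficient of $\epsilon^3$ in $A(\epsilon, z)$, evaluated at $z = 0$. A short calculation starting from $f(z) = z + z^2 + z^3 + bz^4 + O(z^5)$ gives $A_0(z) = (b-1)z^2 + O(z^3)$, so in particular $A_0(0) = 0$, and Proposition \ref{prop:estimateA} then yields $A(\epsilon, 0) = \epsilon^3 C_0 + O(\epsilon^4)$. Hence $C_0 = \lim_{\epsilon \to 0} A(\epsilon, 0)/\epsilon^3$.

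To compute this limit I would extend Lemmas \ref{prop:1/z} and \ref{lem:1/z} by one order in $\epsilon$. A direct Taylor expansion shows that the $\epsilon^3$ coefficient of $(z - \zeta^+(\epsilon))/(z - \zeta^-(\epsilon))$ equals
$$R(z) := \frac{c_3^- - c_3^+}{z} - \frac{i\pi c_2}{z^2} + \frac{i\pi^3}{4z^3}.$$
By invariance $f_\epsilon(\zeta^\pm(\epsilon)) = \zeta^\pm(\tilde g(\epsilon))$ (as used in the proof of Lemma \ref{lem:1/z}), the corresponding expansion for $(f_\epsilon(z) - f_\epsilon(\zeta^+(\epsilon)))/(f_\epsilon(z) - f_\epsilon(\zeta^-(\epsilon)))$ reduces to the previous one with $z \mapsto f_\epsilon(z)$ and $\epsilon \mapsto \tilde g(\epsilon)$. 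Using $\tilde g(\epsilon) = \epsilon - \epsilon^3/2 + O(\epsilon^5)$ and $f_\epsilon(z) = f(z) + \pi^2\epsilon^2/4$, collecting the additional contributions at order $\epsilon^3$ produces
$$S(z) := R(f(z)) + \frac{i\pi}{2 f(z)} + \frac{i\pi^3}{4 f(z)^2}.$$

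Substituting $R$ and $S$ into the expansion of $A(\epsilon, z)$ appearing in the proof of Proposition \ref{prop:estimateA} and expanding the two logarithms through $\epsilon^3$ yields
$$A_3(z) = \frac{S(z)-R(z)}{i\pi} + \frac{\pi^2}{6}\left(\frac{1}{z^3} - \frac{1}{f(z)^3}\right),$$
where $A_3$ denotes the $\epsilon^3$ coefficient of $A$, so that $C_0 = \lim_{z \to 0} A_3(z)$. Using the expansions $1/f(z) = 1/z - 1 + O(z^2)$, $1/f(z)^2 = 1/z^2 - 2/z + 1 + O(z)$, and $1/f(z)^3 = 1/z^3 - 3/z^2 + 3/z + (2-3b) + O(z)$, the $1/z^2$ singularities of the two summands of $A_3(z)$ cancel automatically, while the $1/z$ terms cancel precisely via the order-$\epsilon^3$ consequence of invariance, namely $c_2 = \pi^2/8 - 1/4$. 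After these cancellations the surviving constant term yields $C_0$; a final rewriting using the same invariance relation puts it into the form claimed in the lemma, with the $b$-dependence entering through the $(2-3b)$ term in $1/f(z)^3$.

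The main obstacle is the careful bookkeeping needed to see the singular cancellations as $z \to 0$, which is also where the invariance of the parabolic curves plays its essential role. As consistency checks, the $\epsilon^1$ coefficient of $A(\epsilon, z)$ produced by the same calculation must reproduce $A_0(z) = -1/f(z) + 1/z - 1$ from Proposition \ref{prop:estimateA}, and the $\epsilon^2$ coefficient must vanish identically; both serve to catch possible arithmetic slips in the intermediate expansions.
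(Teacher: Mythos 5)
Your approach is essentially the same as the paper's: extend the two expansion lemmas by one order in $\epsilon$, substitute into the logarithmic formula for $A(\epsilon,z)$, expand the logarithms, verify that all singular powers of $z$ cancel (using $c_2 = \tfrac{\pi^2}{8}-\tfrac{1}{4}$ for the $1/z$ cancellation), and read off the constant term as $z\to 0$. One point worth flagging: your $S(z)$ correctly includes the contribution $\frac{i\pi}{2f(z)}$ coming from the $\epsilon^3$ term of $\tilde g(\epsilon)=\epsilon-\tfrac12\epsilon^3+O(\epsilon^5)$, which is omitted from the $\epsilon^3$ coefficient displayed in the paper's proof (evidently a transcription slip, since without that term the $1/z$ singularity in $A_3(z)$ fails to cancel and $C_0$ comes out off by $\tfrac12$); with it included, your $A_3(z) = \frac{S(z)-R(z)}{i\pi}+\frac{\pi^2}{6}\bigl(\frac{1}{z^3}-\frac{1}{f(z)^3}\bigr)$ reproduces exactly the value of $C_0$ stated in the lemma.
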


\begin{proof}
	By repeating the computations from Lemma \ref{prop:1/z} and Lemma \ref{lem:1/z} with one additional order of significance, one obtains
    $$
    \frac{z-\zeta^+(\epsilon)}{z-\zeta^-(\epsilon)} =  1 - \frac{i\pi}{z} \epsilon - \frac{\pi^2}{2z^2} \epsilon^2 + \left(\frac{c_3^- - c_3^+}{z} - \frac{i \pi c_2}{z^2} + \frac{i \pi^3}{4 z^3}\right) \epsilon^3 + O(\epsilon^3),
    $$
    and
   $$
    \frac{f_\epsilon(z) - f_\epsilon(\zeta^+(\epsilon))}{f_\epsilon(z) - f_\epsilon(\zeta^-(\epsilon))} =  1 - \frac{i\pi}{f(z)} \epsilon - \frac{\pi^2}{2f^2(z)} \epsilon^2 + \left(\frac{i\pi^3}{4f^2(z)}+\frac{c_3^- - c_3^+}{f(z)} - \frac{i \pi c_2}{f^2(z)} + \frac{i \pi^3}{4 f^3(z)}\right) \epsilon^3 + O(\epsilon^4).
    $$
	
    Plugging these two equations into the formula for $A(\epsilon,z)$, and using the power series expansions of $\frac{1}{f(z)^j}$, for $j = 1, \ldots , 3$, one notices again that all terms involving negative powers of $z$ cancel, either by the argument used in the proof of the previous proposition, or by lengthy computations using
    $$
    c_2 = \frac{\pi^2}{8} - \frac{1}{4}.
    $$
    Summing the terms that do not depend on $z$ gives the desired result.
\end{proof}

\begin{lem}
	We have
	$$c_3^+= -c_3^- = \frac{1}{i\pi} \left(\frac{3}{16} + \frac{5\pi^4}{64}-\frac{b\pi^4}{16}-\frac{\pi^2}{4}  \right).$$
\end{lem}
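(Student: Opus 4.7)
The plan is to extract $c_3^{\pm}$ directly from the invariance equation $\zeta^{\pm}(\tilde g(\epsilon)) = f_\eps(\zeta^\pm(\eps)) = f(\zeta^\pm(\eps))+\tfrac{\pi^2}{4}\eps^2$, which holds exactly because $\zeta^{\pm}$ are genuine parabolic curves (Proposition \ref{prop:curve}). Matching at order $\eps^4$ determines $c_3^{\pm}$ in terms of $c_1$, $c_2$, and $b$.

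First I would expand the left-hand side. Writing $\zeta^{\pm}(x) = c_1 x + c_2 x^2 + c_3^{\pm} x^3 + c_4^{\pm} x^4 + O(x^5)$ with $c_1 = \pm i\pi/2$, and using $\tilde g(\eps) = \eps - \tfrac{1}{2}\eps^3 + O(\eps^5)$ (note $\tilde g$ is odd, as it arises from $\sqrt{g(\eps^2)}$), a direct substitution gives
\[
\zeta^{\pm}(\tilde g(\eps)) = c_1\eps + c_2\eps^2 + \bigl(c_3^{\pm} - \tfrac{c_1}{2}\bigr)\eps^3 + \bigl(c_4^{\pm} - c_2\bigr)\eps^4 + O(\eps^5).
\]

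Next I would expand the right-hand side. Setting $u = \zeta^\pm(\eps)$, we have $u^2 = c_1^2\eps^2 + 2c_1 c_2 \eps^3 + (2c_1 c_3^{\pm} + c_2^2)\eps^4 + O(\eps^5)$, $u^3 = c_1^3\eps^3 + 3c_1^2 c_2 \eps^4 + O(\eps^5)$, and $u^4 = c_1^4\eps^4 + O(\eps^5)$. Summing $u + u^2 + u^3 + bu^4 + \tfrac{\pi^2}{4}\eps^2$ yields
\[
c_1\eps + \bigl(c_2+c_1^2+\tfrac{\pi^2}{4}\bigr)\eps^2 + \bigl(c_3^{\pm}+2c_1 c_2 + c_1^3\bigr)\eps^3 + \bigl(c_4^{\pm}+2c_1 c_3^{\pm}+c_2^2+3c_1^2 c_2 + bc_1^4\bigr)\eps^4 + O(\eps^5).
\]
The $\eps^2$ and $\eps^3$ coefficients match automatically (the first using $c_1^2 = -\pi^2/4$, the second using $c_2 = \pi^2/8 - 1/4$, which is precisely how $c_2$ is pinned down in Lemma~\ref{lemma:1}). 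Note that $c_4^{\pm}$ cancels from both sides at order $\eps^4$.

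Equating the $\eps^4$ coefficients then gives the linear relation
\[
2c_1 c_3^{\pm} = -c_2 - c_2^2 - 3c_1^2 c_2 - bc_1^4 = -c_2 - c_2^2 + \tfrac{3\pi^2}{4}c_2 - \tfrac{b\pi^4}{16}.
\]
The main (and only) obstacle is algebraic simplification. Substituting $c_2 = \tfrac{\pi^2-2}{8}$ one computes $c_2^2 = \tfrac{\pi^4-4\pi^2+4}{64}$ and $c_2(\tfrac{3\pi^2}{4}-1) = \tfrac{3\pi^4-10\pi^2+8}{32}$, so that
\[
2c_1 c_3^{\pm} = \frac{5\pi^4 - 16\pi^2 + 12 - 4b\pi^4}{64} = \frac{3}{16} + \frac{5\pi^4}{64} - \frac{b\pi^4}{16} - \frac{\pi^2}{4}.
\]
Finally, dividing by $2c_1 = \pm i\pi$ yields $c_3^{+} = \tfrac{1}{i\pi}\bigl(\tfrac{3}{16}+\tfrac{5\pi^4}{64}-\tfrac{b\pi^4}{16}-\tfrac{\pi^2}{4}\bigr)$ and $c_3^{-} = -c_3^{+}$, as desired. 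The fact that the two signs are negatives of each other is a structural consequence of the odd character of $\tilde g$ together with the fact that $c_2$ is independent of the sign choice for $c_1$.
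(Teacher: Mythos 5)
Your proof is correct and follows exactly the route the paper indicates: expand both sides of the invariance equation $\zeta^\pm(\tilde g(\eps)) = f_\eps(\zeta^\pm(\eps))$ in powers of $\eps$ and match coefficients at order $\eps^4$, where $c_4^\pm$ cancels. The paper omits this computation ("a long but direct computation, starting from the functional equation \ldots and identifying coefficients in powers of $\eps$"), and your work supplies the details and verifies the algebra accurately, including the cancellation of $c_4^\pm$ and the sign structure $c_3^- = -c_3^+$.
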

	We will omit the proof, which is a long but direct computation, starting from the functional
	equation $f_\epsilon \circ \zeta^\pm(\eps) = \zeta^\pm \circ \tilde{g}(\eps)$ and identifying
	coefficients in powers of $\eps$.

	In particular, it follows that
	$$
    \begin{aligned}C_0 & = -\frac{b\pi^2}{4}-\frac{1}{4}+\frac{7\pi^2}{24} + \left(\frac{b\pi^2}{8}+\frac{1}{2}-\frac{5\pi^2}{32}-\frac{3}{8\pi^2} \right)\\
	& =\frac{-b\pi^2}{8}+\frac{13\pi^2}{96}-\frac{3}{8\pi^2}+\frac{1}{4}.
    \end{aligned}
    $$

\subsection{Convergence result} 
{\black
 For the rest of the section 5 we fix a compact subset $K\times K' \subset \mathcal{B}_f\times \mathcal{B}_g$  and a point $(z_0,w_0)\in K\times K'$. Moreover we assume that $n$ is sufficiently large so that $g^{n^2}(K')$ is contained in a petal $\mathcal{P}$ from Proposition \ref{prop:curve}.
Unless otherwise stated, all the constants appearing in estimates depend only on the compact $K\times K'$, but not on the point $(z_0,w_0)$ nor the integer $n$.

  }

Let $f_j(z):=f(z)+\frac{\pi^2}{4}w_{n^2+j}$, where $w_{n^2+j}:=g^{n^2+j}(w_0)$.
Let $z_j:=f_j \circ f_{j-1} \circ \ldots \circ f_1(z_0)$.
Let $F_{m,p}:=f_m \circ \ldots \circ f_{p+1}$, and let $\epsilon_j:=\sqrt{w_{n^2+j}}$.

\black

The strategy of the proof of Theorem \ref{thm:lav} is as follows: we will use \emph{approximate Fatou coordinates}
$\phi_n^{\iota/o}$ and prove that on some appropriate domains, $\phi_n^{\iota/o}$ converges locally uniformly to $\phi_f^{\iota/o}$
(with a known error term of order $\frac{1}{n}$).
Moreover, we will compute $\phi_n^\iota(z_0)$ and $\phi_n^o(z_{2n+1})$, again at a precision of order $\frac{1}{n}$.
This will allow us to compare accurately $z_{2n+1}$ and $\lcal_f(z_0) = (\phi_f^o)^{-1} \circ \phi_f^\iota(z_0)$.
This approach differs from \cite{ABDPR} in that approximate Fatou coordinates in \cite{ABDPR} were only used
at small scale near $0$, while here they are defined on a whole petal: this simplifies the comparison with the
actual Fatou coordinates $\phi_f^{\iota/o}$. The approach used here is strongly inspired by \cite{BSU}.

\color{black}

\begin{defi}
	Let
	$$Z_j^{i/o}:=\psi_{\epsilon_j}^{\iota/o}(z_j)=\frac{1}{i\pi} \log \frac{\zeta^+(\epsilon_{j})-z_j}{z_j-\zeta^-(\epsilon_{j})}\pm 1$$
\end{defi}

Observe that by definition of $A(\eps,z)$,
\begin{equation}
A(\epsilon_j,z_j)=Z_{j+1}^i - Z_j^i - \epsilon_j.
\end{equation}

\begin{prop}\label{prop:error1}
	We have
	$$\psi_{\epsilon_j}^{\iota/o}(z_0)=-\frac{\epsilon_j}{z_0}+O\left(\frac{1}{n^3}\right).$$
\end{prop}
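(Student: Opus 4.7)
The proof is essentially a direct Taylor expansion combined with the estimate $\epsilon_j = O(1/n)$ supplied by Lemma \ref{lem:epsj}. The miracle that makes the statement work is a cancellation of the $\epsilon^2$ terms at the order of the Taylor expansion; once one notices that, the remainder is bookkeeping.

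The plan is as follows. Starting from the definition, write
$$\frac{\zeta^+(\epsilon)-z_0}{z_0-\zeta^-(\epsilon)} = -\,\frac{z_0-\zeta^+(\epsilon)}{z_0-\zeta^-(\epsilon)},$$
and apply Lemma \ref{prop:1/z} (using that $z_0$ lies in a fixed compact subset of $\mathcal{B}_f\setminus\{0\}$, and that the constant in the error term is uniform on $K$) to get
$$\frac{\zeta^+(\epsilon)-z_0}{z_0-\zeta^-(\epsilon)} \;=\; -1 + \frac{i\pi}{z_0}\epsilon + \frac{\pi^2}{2z_0^2}\epsilon^2 + O(\epsilon^3).$$
Then I would take the logarithm, choosing the branch compatible with the $\iota$/$o$ convention (i.e.\ the branch that makes $\psi^{\iota/o}_\epsilon(0)=0$ in the limit $\epsilon\to 0$, which is precisely the role of the $\pm 1$ in the definition of $\psi^{\iota/o}_\epsilon$). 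Using $\log(1-y)=-y-y^2/2+O(y^3)$ with $y=\frac{i\pi}{z_0}\epsilon+\frac{\pi^2}{2z_0^2}\epsilon^2+O(\epsilon^3)$, I compute
\begin{align*}
-y-\tfrac{1}{2}y^2 &= -\frac{i\pi}{z_0}\epsilon - \frac{\pi^2}{2z_0^2}\epsilon^2 - \frac{1}{2}\left(\frac{i\pi}{z_0}\right)^{\!2}\!\epsilon^2 + O(\epsilon^3) \\
&= -\frac{i\pi}{z_0}\epsilon - \frac{\pi^2}{2z_0^2}\epsilon^2 + \frac{\pi^2}{2z_0^2}\epsilon^2 + O(\epsilon^3) \;=\; -\frac{i\pi}{z_0}\epsilon + O(\epsilon^3),
\end{align*}
with the $\epsilon^2$ terms cancelling exactly. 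Dividing by $i\pi$ and adding the $\pm 1$ that kills $\log(-1)=\mp i\pi$ gives
$$\psi^{\iota/o}_\epsilon(z_0) \;=\; -\frac{\epsilon}{z_0} + O(\epsilon^3),$$
with the constant in the $O$ uniform for $z_0\in K$.

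Finally, I would specialize to $\epsilon=\epsilon_j$. From Lemma \ref{lem:epsj}, $|\epsilon_j|\leq C/n$ uniformly in $1\leq j\leq n$ and in $w_0\in K'$, so $O(\epsilon_j^3)=O(1/n^3)$, giving the claimed estimate. There is no real obstacle here; the only subtlety is keeping track of the branch of the logarithm, but this is handled automatically by the $\pm 1$ correction built into the definition of $\psi^{\iota/o}_\epsilon$, which is precisely tailored so that the two branches produce holomorphic functions vanishing at $z=0$ in the limit $\epsilon\to 0$.
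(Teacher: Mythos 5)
Your proof is correct and follows the same route the paper intends: the paper's own proof of this proposition is a one-line reference to ``computations similar to those in the proof of Proposition \ref{prop:phianalytic},'' and the actual content is exactly the expansion of Lemma \ref{prop:1/z} followed by the logarithm expansion with the $\epsilon^2$ cancellation, which you carry out explicitly and correctly (including the role of the $\pm 1$ in absorbing $\log(-1)$ and the uniformity on $K$). The only implicit convention worth keeping in mind, which you handle correctly, is that $\psi^\iota$ and $\psi^o$ differ by $2$, so the shared asymptotic $-\epsilon/z_0+O(\epsilon^3)$ is to be read with $\psi^\iota$ when $z_0$ lies on the incoming side of $L_\epsilon$ (as here, for $z_0\in K\subset\mathcal{B}_f$) and with $\psi^o$ on the outgoing side; the branch of the principal logarithm then jumps by $2\pi i$ across $[\zeta^-,\zeta^+]$, and the built-in $\pm 1$ precisely cancels the corresponding $\mp 1$ coming from $\tfrac{1}{i\pi}\log(-1)$.
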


\begin{proof}
	This follows from computations similar to those appearing in the proof of Proposition \ref{prop:phianalytic}
	(recall as well that $\epsilon_j = O\left(\frac{1}{n}\right)$).
\end{proof}

We now introduce approximate incoming Fatou coordinates:

\begin{defi}
	Let
	$$\phi_{n}^{\iota}(z_0):=\frac{1}{\epsilon_{n}} Z_{n} - \frac{1}{\epsilon_n} \sum_{j=1}^{n-1} \epsilon_j$$
\end{defi}

\black

Let $D_\eps$ be the disk of radius $\frac{1}{2}|\zeta^+(\eps)-\zeta^-(\eps)|$ centered at  $\frac{1}{2}(\zeta^+(\eps)+\zeta^-(\eps))$. Let $\mathcal{S}(\eps,r)$ be the union of the two disks of radius $r$ that both contain the points $\zeta^+(\eps),\zeta^-(\eps)$ on their boundary. Here $r$ will be a sufficiently small number, to be fixed in the paragraph before Lemma \ref{lem:translation}. The definition of $\mathcal{S}(\eps,r)$ of course only makes sense when the distance between $\zeta^+(\eps)$ and $\zeta^-(\eps)$ is less than $2r$, which once $r$ is fixed will be satisfied for $\epsilon$ sufficiently small. We note that the choice of $r$ will depend on the map $f$, but not on $\epsilon$.

The line $L_\eps$ through  $\zeta^+(\epsilon)$ and $\zeta^-(\epsilon)$ cuts the complex plane into the left half plane $H^\iota_\eps$ and the right half plane $H^o_\eps$. We define $\mathcal{S}^{\iota/ o}(\epsilon,r):= \mathcal{S}(\epsilon,r) \cap H^{\iota/o}_\eps$. The map $\psi_\epsilon^\iota$ maps the disk $D_\epsilon$ to the  strip $[1/2,3/2]\times i\mathbb{R}$.  The image of $\mathcal{S}^{\iota}(\epsilon,r)$ is bounded by two vertical lines, intersecting the real line in a point of the form $0 + O(\eps)$ and in the point $1$, see Figure \ref{fig:conformalmaps}. In particular we can find
$0<\alpha< \beta$, such that
$$
[\beta \eps,1]\times i\mathbb{R}\subset \psi_\epsilon^\iota(\mathcal{S}^{\iota}(\epsilon,r))\subset[\alpha \eps,1]\times i\mathbb{R}
$$
for all $\eps$. We define $\mathcal{S}^{\iota}(0,r):=\mathbb{D}(- r,r)$.

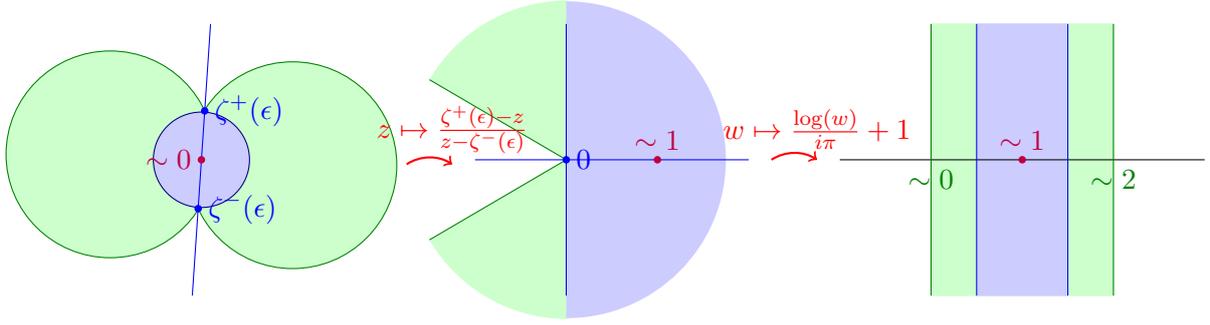
\begin{figure}[hbt!]
	\begin{center}
		\begin{tikzpicture}[scale=0.6]\label{fig:2}
		\filldraw[fill=green!20!white, draw=green!50!black] (-2,0.12) circle (65pt);
		\filldraw[fill=green!20!white, draw=green!50!black] (2,-0.12) circle (65pt);
		\filldraw[fill=blue!20!white, draw=blue!50!black] (0,0) circle (30pt);
		\draw[blue] (-0.2,-3) -- (0.2,3);
		\filldraw[purple] (0,0) circle (2pt) node[anchor=east] {$\sim 0$};
		\filldraw[blue] (0.072,1.08) circle (2pt) node[anchor=west] {$\zeta^+(\epsilon)$};
		\filldraw[blue] (-0.072,-1.08) circle (2pt) node[anchor=west] {$\zeta^-(\epsilon)$};

		\fill[green!20!white] (8,0) -- (5,1.77) arc (150:90:35mm) -- (8,0);
		\fill[green!20!white] (8,0) -- (5,-1.77) arc (-150:-90:35mm) -- (8,0);
		\fill[blue!20!white] (8,0) -- (8,-3.5) arc (-90:90:35mm) -- (8,0);
		\draw[green!50!black] (8,0) -- (5,1.77);
		\draw[green!50!black] (8,0) -- (5,-1.77);
		\draw[blue] (8,-3)  -- (8,3);
		\draw[blue] (6,0) -- (12,0);
		\filldraw[blue] (8,0) circle (2pt) node[anchor=west] {$0$};
		\filldraw[purple] (10,0) circle (2pt) node[anchor=south] {$\sim 1$};

		\filldraw[blue!20!white] (17,-3) rectangle (19,3);
		\filldraw[green!20!white] (16,-3) rectangle (17,3);
		\filldraw[green!20!white] (19,-3) rectangle (20,3);
		
		\draw[black] (14,0) -- (22,0);
		\draw[blue] (17,-3) -- (17,+3);
		\draw[blue] (19,-3) -- (19,+3);
		\draw[green!50!black] (16,-3) -- (16,+3);
		\draw[green!50!black] (20,-3) -- (20,+3);
		
		\filldraw[purple] (18,0) circle (2pt) node[anchor=south] {$\sim 1$};
		\filldraw[green!50!black] (16,0) circle (0pt) node[anchor=north] {$\sim 0$};
		\filldraw[green!50!black] (20,0) circle (0pt) node[anchor=north] {$\sim 2$};
		
		\draw[red, thick, ->] (12.5,0) .. controls (12.8,0.2)  and (13.2,0.2) .. (13.5,0) node[anchor=south] {$w \mapsto \frac{\log(w)}{i \pi} +1$};
		\draw[red, thick, ->] (4.5,-0.1) .. controls (4.8,0.1)  and (5.2,0.1) .. (5.5,-0.1) node[anchor=south] {$z \mapsto \frac{\zeta^+(\epsilon) - z}{z-\zeta^-(\epsilon)}$};
		\end{tikzpicture}
	\end{center}
	\caption{\emph{The sets $D_\epsilon \subset S(\epsilon,r)$ and their images under $\psi_\epsilon^\iota$.}}
\label{fig:conformalmaps}
\end{figure}

\medskip

Recall that $A(\eps,\cdot)$ is analytic on a small disk $\mathbb{D}(0,R)$ centered at the origin. Moreover there exists $R>0$ such that $|A(\eps,z)|\leq \frac{|\eps|}{12}$ for all $z\in \mathbb{D}(0,R)$ and $\eps$ in the petal $\mathcal{P}_{\delta}$ defined in Section 4. By taking smaller $R$ if necessary we my assume that $f$ is $1-$Lipschitz on $\mathbb{D}(- R,R)$.

Now let us assume that $r<<R$ is sufficiently small so that $\mathcal{S}(\eps,r)\subset \mathbb{D}(0,R)$ for all $\eps>0$, and note that for every compact set $K\subset \mathcal{B}_f$, there exist $n',\eps'>0$ so that $f^{n}(K)\subset  \mathcal{S}^{\iota}(\epsilon,r) \cap \mathbb{D}(- R,R)$ for all  $n\geq n'$ and all $\eps\geq\eps'$. We now fix this $r$.

\begin{lem}\label{lem:translation} Let $K\times K'\subset \mathcal{B}_f \times \mathcal{B}_g$ be a compact set. There exist $n_0,m_0>0$ such that for all $(z_0,w_0)\in K\times K'$ and all $n>n_0$ we have:
\begin{enumerate}
\item $z_j\in \mathcal{S}^{\iota}(\epsilon_j,r)\cup D_{\eps_j}$ for all $m_0\leq j\leq n-1$,
\item $z_j\in D_{\eps_j}$ for all $\frac{2n}{3}\leq j\leq n-1$,
\item If $z_k\in \mathcal{S}(\epsilon_k,r)$ for all $m_0\leq k\leq j$, then $|\im (\psi^\iota_{\eps_{j+1}}(z_{j+1}))|<1$,
\end{enumerate}
where $\epsilon_j:=\sqrt{w_{n^2+j}}=\frac{1}{n}+O\left(\frac{j}{n^3}\right)$.
\end{lem}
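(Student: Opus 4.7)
The approach I would take is to pass to the approximate Fatou coordinate $Z_j^\iota := \psi_{\epsilon_j}^\iota(z_j)$ and exploit the near-translation identity $Z_{j+1}^\iota - Z_j^\iota = \epsilon_j + A(\epsilon_j, z_j)$. Since Proposition~\ref{prop:estimateA} gives $A(\epsilon, z) = \epsilon A_0(z) + O(\epsilon^3)$ with $A_0(0) = 0$, whenever $z_j$ stays in the disk $\mathbb{D}(0, 2r)$ one has $|A(\epsilon_j, z_j)| \leq C r / n + C / n^3$. Combining this with Lemma~\ref{lem:epsj}'s expansion $\epsilon_j = \frac{1}{n} + O(\frac{1}{n^3})$ and telescoping over $k$ steps gives, as long as all intermediate iterates remain in $\mathbb{D}(0, 2r)$,
\begin{align*}
\mathrm{Re}(Z_{j+k}^\iota - Z_j^\iota) &= \frac{k}{n} + O(r) + O\left(\frac{1}{n}\right), \\
|\mathrm{Im}(Z_{j+k}^\iota - Z_j^\iota)| &\leq C r + O\left(\frac{1}{n^2}\right).
\end{align*}

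First I would establish the base case at time $m_0$. Using compactness of $K \subset \mathcal{B}_f$, I choose $m_0$ large (uniformly in $z_0 \in K$) so that $f^{m_0}(z_0)$ lies well inside the incoming petal $P_f^\iota$ of $f$, with $|f^{m_0}(z_0)|$ bounded away from $0$. Because $|w_{n^2+k}| = O(1/n^2)$ uniformly for $w_0 \in K'$, the maps $f_k$ converge to $f$ uniformly on compacts, so $|z_{m_0} - f^{m_0}(z_0)| = O(1/n^2)$ and hence $z_{m_0} \in P_f^\iota$ for $n$ large. Since $P_f^\iota$ is a fixed disk in the left half-plane tangent to the imaginary axis at $0$ while $\mathcal{S}^\iota(\epsilon_{m_0}, r)$ converges as $n \to \infty$ to a larger disk of radius $r$ also tangent at $0$, one has $z_{m_0} \in \mathcal{S}^\iota(\epsilon_{m_0}, r)$. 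A computation analogous to Proposition~\ref{prop:error1} yields $Z_{m_0}^\iota = -\epsilon_{m_0}/f^{m_0}(z_0) + O(1/n^3) = O(1/n)$, so both $|\mathrm{Re}(Z_{m_0}^\iota)|$ and $|\mathrm{Im}(Z_{m_0}^\iota)|$ go to zero as $n \to \infty$.

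For (1) I use a contradiction argument. Let $j_*$ be the smallest index in $(m_0, n-1]$, if any, for which $\mathrm{Re}(Z_{j_*}^\iota) \geq 3/2$. Since $\psi_\epsilon^\iota$ is a conformal bijection from $\mathbb{C} \setminus L_\epsilon$ onto $(0, 2) \times i\mathbb{R}$ carrying $\mathcal{S}^\iota(\epsilon, r) \cup D_\epsilon$ onto a region containing $(\beta \epsilon, 3/2) \times i\mathbb{R}$, the condition $\mathrm{Re}(Z_j^\iota) \in (\beta \epsilon_j, 3/2)$ for $m_0 \leq j < j_*$ guarantees $z_j \in \mathcal{S}^\iota(\epsilon_j, r) \cup D_{\epsilon_j} \subset \mathbb{D}(0, 2r)$, so the telescoping estimate applies and gives $\mathrm{Re}(Z_{j_*}^\iota) = (j_* - m_0)/n + O(r) + O(1/n)$. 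If $j_* \leq n - 1$ this forces $3/2 \leq 1 + O(r) + O(1/n)$, which fails once $r$ is small and $n$ is large, proving (1). Part (2) follows from the same bound: for $j \geq 2n/3$, $\mathrm{Re}(Z_j^\iota) \geq 2/3 - O(r) > 1/2$, while $\mathrm{Re}(Z_j^\iota) < 1 + O(r) < 3/2$ for $j \leq n-1$, so $z_j \in D_{\epsilon_j}$.

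Part (3) follows from the imaginary-part bound: under the hypothesis $z_k \in \mathcal{S}(\epsilon_k, r) \subset \mathbb{D}(0, 2r)$ for $m_0 \leq k \leq j$, the imaginary-part telescoping yields $|\mathrm{Im}(Z_{j+1}^\iota)| \leq |\mathrm{Im}(Z_{m_0}^\iota)| + C r + O(1/n^2)$, which is less than $1$ for $r$ small and $n$ large. The main technical obstacle will be the book-keeping of error constants: a single small value of $r$ must simultaneously control the upper bound on $\mathrm{Re}(Z_j^\iota)$ in (1), the lower bound in (2), and the imaginary-part drift in (3), while also remaining compatible with the earlier constraints (notably $\mathcal{S}(\epsilon, r) \subset \mathbb{D}(0, R)$ and the applicability of the estimate on $A$ throughout $\mathbb{D}(0, 2r)$).
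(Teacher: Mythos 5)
Your proposal follows the same strategy as the paper's proof: establish a base case at a uniform time $m_0$, exploit the near-translation identity $Z_{j+1}^\iota = Z_j^\iota + \epsilon_j + A(\epsilon_j,z_j)$ with the smallness of $A$ on a small disk, and telescope over $j$ to control both $\re(Z_j^\iota)$ and $\im(Z_j^\iota)$ simultaneously. The paper phrases this slightly differently — it fixes a radius $R$ in advance on which $|A(\epsilon,z)| \le |\epsilon|/12$, then uses the inequality $|\epsilon_j| < 2\re(\epsilon_j)$ to get a clean one-step bound $\tfrac{5}{6}\re(\epsilon_j) < \re(Z_{j+1}^\iota - Z_j^\iota) < \tfrac{7}{6}\re(\epsilon_j)$ (and analogously for the imaginary part), avoiding the floating $O(r)$ constants you carry along; but these are cosmetic differences and both lead to the same estimate \eqref{translation}.

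One point worth being explicit about, which your write-up leaves implicit: to conclude $z_j \in \mathcal{S}^\iota(\epsilon_j,r) \cup D_{\epsilon_j}$ you need a \emph{lower} bound $\re(Z_j^\iota) > \beta\epsilon_j$ as well as the upper bound $\re(Z_j^\iota) < 3/2$ that your contradiction argument controls. Your $Z_{m_0}^\iota = -\epsilon_{m_0}/f^{m_0}(z_0) + O(1/n^3) = O(1/n)$ is of the same size as $\beta\epsilon_{m_0}$, so the constants matter: one must choose $m_0$ so that $1/|f^{m_0}(z_0)| > \beta$ uniformly for $z_0 \in K$, and then observe that the one-step increment $\re(\epsilon_j) + \re A(\epsilon_j,z_j) \ge (1-O(r))\re(\epsilon_j) > 0$ keeps $\re(Z_j^\iota)$ strictly increasing, so it never dips below $\beta\epsilon_j$ (recall $\epsilon_j$ is decreasing). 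This is exactly the role of the paper's explicit condition (i), $\beta|\epsilon_{m_0}| < \re(\psi^\iota_{\epsilon_{m_0}}(z_{m_0})) < \tfrac{1}{6}$; without something of this form the inductive step does not quite close.
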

\begin{proof} There exists $m_0>0$ so that $f^{m_0}(K)\subset S^{\iota}(0,r)$. Let $n_0$ be sufficiently large so  that for all $(z_0,w_0)\in K\times K'$ we have:
\begin{enumerate}[label=(\roman*)]
\item  $\beta |\eps_{m_0}|<\re(\psi_{\eps_{m_0}}^\iota(z_{m_0}))<\frac{1}{6}$
\item $|\im(\psi_{\eps_{m_0}}^\iota(z_{m_0}))|<\frac{1}{2}$
\item $|\eps_j|<2\re(\eps_j)<R$ for $0\leq j\leq 2n+1$
\end{enumerate}
Indeed, (i) and (ii)  follow from the equality $\psi_\eps^\iota(z)=-\frac{\eps}{z}+O(\eps^3)$  and (iii) follows from the fact that $\epsilon_j=\frac{1}{n}+O\left(\frac{j}{n^3}\right)$. Note that the constants in $O$ depend only on the compact $K\times K'$ and not on $n$ or $j$.

Recall that by our assumption $\mathcal{S}^{\iota}(\epsilon_j,r)\subset \mathbb{D}(0,R)$ for all $j$ and that
 $|A(\eps_j,z)|\leq \frac{|\eps_j|}{12}$ for all $z\in\mathbb{D}(0,R)$. By (i) we have  $z_{m_0}\in\mathcal{S}^{\iota}(\epsilon_{m_0},r)$ and observe that for $z_{j}\in\mathcal{S}^{\iota}(\epsilon_{j},r)$ we have
$$
\frac{5}{6}\re(\eps_{j})<\re(\psi^{\iota}_{\eps_{j+1}}(z_{j+1})-\psi^{\iota}_{\eps_{j}}(z_{j}))<\frac{7}{6}\re(\eps_{j}).
$$

It follows that
\begin{equation}\label{translation}
\beta |\eps_{m_0}|+\frac{5}{6}\sum_{k=m_0}^{j-1}\re(\eps_k)<\re(\psi^\iota_{\eps_j}(z_j))< \frac{1}{6} +\frac{7}{6}\sum_{k=m_0}^{j-1}\re(\eps_k),
\end{equation}
and since $\re(\eps_k)=\frac{1}{n}+O\left(\frac{k}{n^3}\right)$ we have
$$
\beta |\eps_{m_0}| <\re(\psi^{\iota}_{\eps_j}(z_j))< \frac{3}{2}
$$
for all $m_0\leq j\leq n-1$ as long as $n$ is sufficiently large. This proves $(1)$.

For $(2)$ observe that
$$-\frac{1}{2}<-1 +\frac{5}{6}\sum_{k=0}^{j-1}\re(\eps_k)$$
for all $\frac{2}{3}n\leq j\leq n-1$ as long as n is sufficiently large.

Finally for $(3)$ observe that equation \eqref{translation} implies that $z_k\in \mathcal{S}(\epsilon_k,r)$ for all $0\leq k\leq j$ can only hold for some $j<3n$. By (ii) and (iii) we have
$$
\im(\eps_{j}) -\frac{1}{6}\re(\eps_{j})<\im(\psi^{\iota}_{\eps_{j+1}}(z_{j+1})-\psi^{\iota}_{\eps_{j}}(z_{j}))<\im(\eps_{j}) +\frac{1}{6}\re(\eps_{j}),
$$
for  $z_{j}\in\mathcal{S}^{\iota}(\epsilon_{j},r)$,
hence
$$
-\frac{1}{2} +\sum_{k=m_0}^{j-1}\im(\eps_{k}) -\frac{1}{6}\re(\eps_{k})<\im(\psi_{\eps_j}(z_j))<\frac{1}{2} +\sum_{k=m_0}^{j-1}\im(\eps_{k}) +\frac{1}{6}\re(\eps_{k}).
$$
Since $\im(\eps_k)=O\left(\frac{k}{n^3}\right)$ we can conclude that $(3)$ holds as long as $n$ is sufficiently large.
\end{proof}

\begin{lem}\label{lem:zj-yj} For $0\leq j\leq n-1$ we have
$$
z_j-f^j(z_0)=O\left(\frac{j}{n^2}\right).
$$
\end{lem}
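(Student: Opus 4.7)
The plan is to transport everything into the incoming Fatou coordinate $\phi = \phi_f^\iota$ of $f$, where the unperturbed iteration becomes translation by $1$, and then to accumulate the contributions of the perturbations $\tfrac{\pi^2}{4}\eps_k^2$ along the orbit.

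First I would dispose of the initial range of indices. Fix $m_0 = m_0(K)$ such that $y_{m_0} := f^{m_0}(z_0)$ lies in the incoming petal $P_f^\iota$ for every $z_0 \in K$. On a fixed compact neighborhood containing both $(y_k)_{0\le k\le m_0}$ and $(z_k)_{0\le k\le m_0}$ (which holds for $n$ large), $f$ is Lipschitz, so a straightforward induction from
$$
z_k - y_k = f(z_{k-1}) - f(y_{k-1}) + \tfrac{\pi^2}{4}\eps_k^2,
$$
together with $|\eps_k|^2 = O(1/n^2)$ supplied by Lemma~\ref{lem:epsj}, yields $|z_k - y_k| = O(1/n^2) = O(k/n^2)$ for $1 \le k \le m_0$.

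For $m_0 \le k \le n-1$, I would run the same telescoping argument in $\phi$-coordinates. Since $\phi \circ f = \phi + 1$ on $\mathcal{B}_f$, Taylor expanding at $f(z_{k-1})$ gives
$$
\phi(z_k) - \phi(z_{k-1}) - 1 = \tfrac{\pi^2}{4}\eps_k^2 \, \phi'(f(z_{k-1})) + O\bigl(\eps_k^4 \phi''(\xi_k)\bigr),
$$
and subtracting the identity $\phi(y_k) - \phi(y_{k-1}) = 1$ and summing from $m_0+1$ to $j$ produces
$$
\phi(z_j) - \phi(y_j) = \bigl(\phi(z_{m_0}) - \phi(y_{m_0})\bigr) + \sum_{k=m_0+1}^{j} \tfrac{\pi^2}{4}\eps_k^2 \, \phi'(f(z_{k-1})) + \text{higher order}.
$$
The main estimates then come from the asymptotics $\phi(z) \sim -1/z$ and $\phi'(z) \sim 1/z^2$ at the parabolic fixed point, together with $|y_k| \sim 1/k$ on the petal (and $z_k$ close to $y_k$ by induction). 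These give $\phi'(f(z_{k-1})) = O(k^2)$, so each summand is $O(k^2/n^2)$ and the partial sum through $j$ is $O(j^3/n^2)$; the Taylor remainder at step $k$ is suppressed by an extra factor $\eps_k^2/|y_k| = O(k/n^2) = O(1/n)$, hence negligible.

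Returning to the original coordinate, $(\phi^{-1})'(\phi(y_j)) = O(1/\phi(y_j)^2) = O(1/j^2)$ since $\phi(y_j) = \phi(z_0) + j$, and the quadratic term $\tfrac12 (\phi^{-1})''(\phi(y_j)) \bigl(\phi(z_j) - \phi(y_j)\bigr)^2 = O(j^{-3} \cdot j^6/n^4) = O(j^3/n^4)$ is itself $O(j/n^2)$ for $j \le n$. Consequently
$$
z_j - y_j = (\phi^{-1})'(\phi(y_j))\bigl[\phi(z_j) - \phi(y_j)\bigr] + O(j/n^2) = O(j/n^2),
$$
as claimed. The main obstacle I expect is bookkeeping the various remainder terms uniformly in $k$ and in $(z_0, w_0) \in K \times K'$, since all the relevant derivatives of $\phi$ blow up at the parabolic fixed point; this uniformity is precisely what Lemma~\ref{lem:translation} delivers, by confining every $z_k$ to a single small neighborhood of the origin on which $\phi$ and its derivatives admit uniform bounds.
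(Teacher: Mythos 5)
Your approach — telescoping in incoming Fatou coordinates $\phi=\phi_f^\iota$ and then inverting — is genuinely different from the paper's, which never passes to Fatou coordinates for this lemma. Unfortunately your argument has a real gap in the final step, precisely in the regime $j$ comparable to $n$, which is where the lemma has to do actual work.

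The issue is that your bound $\phi(z_j)-\phi(y_j)=O(j^3/n^2)$ is not small compared to $\phi(y_j)=\phi(z_0)+j$ once $j$ is a definite fraction of $n$: for $j\asymp n$ both are $O(n)$. Your two-term Taylor expansion of $\phi^{-1}$ around $\phi(y_j)$ is then not dominated by its first terms. Writing $h:=\phi(z_j)-\phi(y_j)$ and $W_0:=\phi(y_j)$, the $m$-th Taylor term scales like $|h|^m/|W_0|^{m+1}\asymp (cj^2/n^2)^m/j$ for a fixed constant $c$; for $j\asymp n$ every term is of the same order $O(1/n)$ and the series is only controlled when $c<1$ (equivalently when the path from $W_0$ to $W_0+h$ stays well inside $\phi(P_f^\iota)$ with $|W|$ comparable to $j$ throughout). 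So the assertion ``the quadratic term is $O(j/n^2)$, hence we may absorb it and the higher terms'' is a non sequitur: the higher terms are not smaller than the quadratic one, and whether their sum is $O(j/n^2)$ depends on a ratio that your sketch never bounds. This is the one place in the argument where the specific normalization $\frac{\pi^2}{4}$ and the detailed geometry of the almost-Fatou regions actually matter, and it is not covered by ``$\phi$ and its derivatives admit uniform bounds on a neighborhood.''

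The paper sidesteps all of this with a much more elementary route. After a fixed number $m_0$ of preliminary iterates, it works directly in the $z$-coordinate on a disk $\D(-R,R)$ where $f$ is $1$-Lipschitz, so each step only adds the perturbation $\frac{\pi^2}{4}\eps_{k}^2=O(1/n^2)$ to the error, giving $|z_j-y_j|=O(j/n^2)$ for $m_0\le j<\frac{2}{3}n$. For $\frac{2}{3}n\le j\le n-1$ it doesn't compare $z_j$ and $y_j$ at all: Lemma~\ref{lem:translation} puts $z_j\in D_{\eps_j}$, so $z_j=O(1/n)$, while $y_j=f^j(z_0)=O(1/j)$, and since $j\ge\frac{2}{3}n$ both are $O(j/n^2)$ separately. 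This avoids inverting $\phi$ entirely and with it the convergence issue above. If you want to salvage the Fatou-coordinate route, you would need to replace the truncated Taylor expansion by an exact identity (for instance the fundamental theorem of calculus for $\phi^{-1}$ along a path on which $|(\phi^{-1})'|$ stays $O(1/j^2)$), together with an a priori estimate keeping $\mathrm{Re}\,\phi(z_j)$ bounded below by a positive multiple of $j$ — which is essentially what the translation estimates in Lemma~\ref{lem:translation} provide.
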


\begin{proof}
Let $m_0$ be as in Lemma \ref{lem:translation}. Since $m_0$ is independent from $n$ it is easy to see that for all $0\leq j\leq m_0$ we have
$z_{j}-f^{j}(z_0)=\frac{\pi^2}{4}\sum_{k=0}^{j-1}\left(\eps_k^2+ O\left(\frac{1}{n^3}\right)\right)=O\left(\frac{j}{n^2}\right)$.

Let $V_\eps:=\{z\in \mathcal{S}^\iota(\eps,r)\mid |\im(\psi^{\iota}_{\eps}(z))|<1\}$ and observe that
$V_\eps\backslash D_\eps\subset \mathbb{D}(- R,R)$ for all sufficiently small $\eps$. Since by our assumption  $f$ is 1-Lipschitz on $\mathbb{D}(- R,R)$, it follows by (1) in Lemma \ref{lem:translation} that for all $m_0< j<\frac{2}{3}n$ we have
\begin{align*}
|z_j-f^j(z_0)|&<|z_{j-1}-f^{j-1}(z_0)|+\frac{\pi^2}{4}\eps_{j-1}^2 \\
&< |z_{m_0}-f^{m_0}(z_0)|+\frac{\pi^2}{4}\sum_{k=m_0}^{j-1}\eps_k^2 \\
&=\frac{\pi^2}{4}\sum_{k=0}^{j-1}\left(\frac{1}{n^2}+O\left(\frac{1}{n^3}\right)\right) =O\left(\frac{j}{n^2}\right).
\end{align*}
Finally for $\frac{2}{3}n\leq j\leq n-1$, by item $(2)$ of Lemma \ref{lem:translation}, we have
$z_j\in D_{\eps_j}$, and in particular $z_j=O\left(\frac{1}{n}\right)$.  It follows that
$$
z_j-f^j(z_0)=O\left(\frac{1}{n},\frac{1}{j}\right)=O\left(\frac{j}{n^2}\right),
$$
where the last equality follows from the fact that $\frac{2}{3}n\leq j\leq n-1$. This completes the proof.
\end{proof}

\color{black}

\begin{lem}\label{lem:sumA}
	 We have:
	$$\sum_{j=0}^{n-1} A_0(z_j)-A_0(f^j(z_0))= (b-1)\sum_{j=0}^{n-1} z_j^2-f^j(z_0)^2 + O\left(\frac{\black \log n}{n^2} \right)
	$$
\end{lem}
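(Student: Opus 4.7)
The plan is to Taylor-expand $A_0$ at the origin and reduce the sum to the pointwise bound $|z_j - f^j(z_0)| = O(j/n^2)$ from Lemma \ref{lem:zj-yj}, together with the standard parabolic decay $|f^j(z_0)| = O(1/(j+1))$. A direct power-series computation using $f(z) = z + z^2 + z^3 + bz^4 + O(z^5)$ gives $\frac{1}{f(z)} = \frac{1}{z} - 1 + (1-b)z^2 + O(z^3)$, from which
\[
A_0(z) = -\frac{1}{f(z)} + \frac{1}{z} - 1 = (b-1)z^2 + O(z^3).
\]
Set $R(z) := A_0(z) - (b-1)z^2$. Then $R$ is holomorphic on a fixed disk $U$ around $0$, with $R(z) = O(z^3)$ and $R'(z) = O(z^2)$ on $U$.

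Writing $y_j := f^j(z_0)$ and $\Delta_j := z_j - y_j$, the fundamental theorem of calculus gives
\[
A_0(z_j) - A_0(y_j) - (b-1)(z_j^2 - y_j^2) = R(z_j) - R(y_j) = \Delta_j \int_0^1 R'(y_j + t\Delta_j)\, dt,
\]
and hence
\[
|A_0(z_j) - A_0(y_j) - (b-1)(z_j^2 - y_j^2)| \leq C\,|\Delta_j|\,(|y_j|^2 + |\Delta_j|^2),
\]
as soon as the segment $[y_j, z_j]$ lies in $U$.

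Next I would sum over $j$. The Fatou-coordinate identity $\phi_f^\iota(f^j(z_0)) = \phi_f^\iota(z_0) + j$ combined with $\phi_f^\iota(z) \sim -1/z$ yields $|y_j| \leq C/(j+1)$ uniformly for $z_0 \in K$ once $j$ is large enough that $y_j$ lies in the incoming petal; the finitely many earlier indices each contribute $O(1/n^2)$, hence $O(1/n^2)$ in total. Combined with $|\Delta_j| \leq C j/n^2 \leq C/n$ from Lemma \ref{lem:zj-yj}, this also gives $|z_j| = O(1/(j+1))$, so the segment $[y_j, z_j]$ indeed lies in $U$ for $n$ large. The claim then follows from
\[
\sum_{j=0}^{n-1} |y_j|^2 |\Delta_j| \leq \frac{C}{n^2}\sum_{j=0}^{n-1}\frac{1}{j+1} = O\!\left(\frac{\log n}{n^2}\right), \qquad \sum_{j=0}^{n-1} |\Delta_j|^3 \leq \frac{C}{n^6}\sum_{j=0}^{n-1} j^3 = O\!\left(\frac{1}{n^2}\right).
\]

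I do not expect a substantial obstacle: the argument simply combines the Taylor expansion of $A_0$ with estimates already established in Lemmas \ref{lem:translation} and \ref{lem:zj-yj} and the classical parabolic orbit decay. The only mild point of care is ensuring that the whole segment from $y_j$ to $z_j$ stays inside the disk of analyticity of $R$, which is exactly why one needs the bound $|\Delta_j| \lesssim 1/n$ rather than just a qualitative statement.
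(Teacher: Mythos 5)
Your proposal is correct and follows essentially the same route as the paper: both reduce the claim to the second-order Taylor behaviour $A_0(z)=(b-1)z^2+O(z^3)$, the bound $z_j-f^j(z_0)=O(j/n^2)$ from Lemma \ref{lem:zj-yj}, and the parabolic decay $f^j(z_0)=O(1/(j+1))$, yielding error terms summing to $O(\log n/n^2)$. Your packaging via the remainder $R=A_0-(b-1)z^2$ and the integral form of the mean value theorem is only a cosmetic variation of the paper's Taylor expansion of $A_0$ about $y_j$.
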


\begin{proof}
	Recall that $f(z)=z+z^2+z^3+bz^4+O(z^5)$ and $A_0(z)=-\frac{1}{f(z)}+\frac{1}{z}-1$.
	An elementary computation gives $A_0(0)=A_0'(0)=0$, and
	$A_0''(0)=2(b-1).$

	To simplify the notations, let $y_j:=f^j(z_0)$. We have:
	\begin{align*}
	A_0(z_j)-A_0(y_j) &= A_0'(y_j)(z_j-y_j)+\frac{1}{2}A_0''(y_j)(z_j-y_j)^2 + O\left((z_j-y_j)^3 \right)\\
	&=(y_j A_0''(0)+O(y_j^2)) (z_j-y_j)+ \frac{1}{2}(A_0''(0)+O(y_j))(z_j-y_j)^2+ O\left((z_j-y_j)^3 \right).\\
	\end{align*}
	By Lemma \ref{lem:zj-yj} we have $z_j-y_j = O\left(\frac{j}{n^2}\right)$, hence
	\begin{align*}
	A_0(z_j)-A_0(y_j) &=y_j A_0''(0)(z_j-y_j)+\frac{1}{2}A_0''(0) (z_j-y_j)^2+
	O\left(\frac{1}{jn^2}, \frac{j}{n^4}, \frac{j^3}{n^6} \right) \\
	&=(b-1) \left(2  y_j (z_j-y_j) +  (z_j-y_j)^2 \right) + 	O\left(\frac{1}{jn^2}, \frac{j}{n^4}, \frac{j^3}{n^6} \right) \\
	&=(b-1) (z_j^2-y_j^2)  + 	O\left(\frac{1}{jn^2}, \frac{j}{n^4}, \frac{j^3}{n^6} \right) \\
	\end{align*}
	It follows that
	$$\sum_{j=0}^{n-1} A_0(z_j)-A_0(y_j)= (b-1)\sum_{j=0}^{n-1} z_j^2-y_j^2 + O\left(\frac{\log n}{n^2}. \right)
	$$
\end{proof}

\begin{lem}\label{lem:alphaj}
	For $0 \leq j \leq n-1$, let
	\begin{equation}
		\gamma_j:=j+\sum_{k=0}^{j-1} A_0(f^k(z_0)) \text{,\quad  and \quad } x_j:=\sum_{k=0}^{j-1} \eps_k + A(\eps_k,z_k).
	\end{equation}
	Then
	$$x_j = \frac{\gamma_j}{n}+ O\left(\frac{j^2}{n^3} \right) = \frac{j}{n}+O\left(\frac{1}{n}\right).$$
	In particular, there exists $k \in \N$ independent from $n$ such that for all $k \leq j \leq n-k$,
	$$\alpha_j:=\cot\left(\frac{\pi}{2}x_j\right) $$
	is well-defined and strictly positive.
\end{lem}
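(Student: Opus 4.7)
My plan is to prove the estimate $x_j = \gamma_j/n + O(j^2/n^3)$ by splitting $x_j$ into two contributions, $\sum \eps_k$ and $\sum A(\eps_k,z_k)$, and estimating each by combining the asymptotics already derived in this section.

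First I would expand $\sum_{k=0}^{j-1}\eps_k$. Lemma \ref{lem:epsj} gives $\eps_k = \frac{1}{n} + O\!\left(\frac{k}{n^3}\right)$ (with a uniform constant depending only on $K\times K'$), so summing from $0$ to $j-1$ yields $\sum_{k=0}^{j-1}\eps_k = \frac{j}{n} + O\!\left(\frac{j^2}{n^3}\right)$. Next, I would use Proposition \ref{prop:estimateA} to write $A(\eps_k,z_k) = \eps_k A_0(z_k) + O(\eps_k^3)$. Since $|\eps_k| = O(1/n)$ uniformly, the cubic remainder contributes at most $O(j/n^3)$ when summed. The leading part expands as
\[
\sum_{k=0}^{j-1}\eps_k A_0(z_k) = \frac{1}{n}\sum_{k=0}^{j-1} A_0(z_k) + \sum_{k=0}^{j-1}\Bigl(\eps_k-\frac{1}{n}\Bigr)A_0(z_k).
\]
Using again $\eps_k - \frac{1}{n} = O(k/n^3)$ together with the fact that $A_0(z_k) = O(z_k^2)$ is bounded on the orbit (and in fact $O(1/k^2)$ for $k$ in the regime where $z_k$ is close to the origin by Lemma \ref{lem:zj-yj} and the standard parabolic estimate $f^k(z_0)=O(1/k)$), the second sum is an acceptable remainder of order $O(1/n^2)$, which is dominated by $O(j^2/n^3)$ for $j$ not too small (and trivially small otherwise).

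The decisive comparison is then $\frac{1}{n}\sum_{k=0}^{j-1}A_0(z_k)$ versus $\frac{1}{n}\sum_{k=0}^{j-1}A_0(f^k(z_0)) = (\gamma_j - j)/n$. Here I would apply Lemma \ref{lem:sumA} (or rather its proof, which gives the same estimate for every partial sum), yielding a correction $(b-1)\sum(z_k^2 - f^k(z_0)^2) + O(\log n/n^2)$. Expanding $z_k^2 - f^k(z_0)^2 = (z_k-f^k(z_0))(z_k+f^k(z_0))$ and plugging in $z_k - f^k(z_0) = O(k/n^2)$ with $f^k(z_0) = O(1/k)$ shows that once divided by $n$, all these correction terms fit inside $O(j^2/n^3)$, modulo a uniformly small $O(\log n / n^3)$ contribution that is harmless.

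Combining the two expansions gives the first equality. For the weaker statement $x_j = j/n + O(1/n)$, I would simply note that $\sum_{k=0}^{j-1} A_0(f^k(z_0))$ is uniformly bounded in $j$: this is immediate from $A_0(z) = O(z^2)$ and the parabolic estimate $f^k(z_0) = O(1/k)$, so $\gamma_j/n = j/n + O(1/n)$. Finally, for the statement about $\alpha_j$, from $x_j = j/n + O(1/n)$ there exists a constant $c > 0$ such that $x_j \in [c, 1-c]$ whenever $k \le j \le n-k$ for $k$ large enough (independent of $n$); in particular $\frac{\pi}{2}x_j$ lies in a compact subset of $(0,\pi/2)$, so $\cot(\frac{\pi}{2}x_j)$ is well-defined and bounded below by a positive constant. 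The main obstacle will be bookkeeping all the error terms and making sure the estimates are genuinely uniform in $(z_0,w_0)\in K\times K'$ and $j \le n-1$; this requires using the parabolic asymptotics $f^k(z_0)\sim 1/k$ together with Lemma \ref{lem:zj-yj} to control $A_0(z_k)$ across the full range of $k$.
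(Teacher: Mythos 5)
Your overall strategy is the same as the paper's: split $x_j$ into $\sum\eps_k$ and $\sum A(\eps_k,z_k)$, plug in Lemma \ref{lem:epsj} and Proposition \ref{prop:estimateA}, and then reduce $\frac{1}{n}\sum A_0(z_k)$ to $\frac{1}{n}\sum A_0(f^k(z_0))=(\gamma_j-j)/n$ using Lemma \ref{lem:zj-yj}. Where you genuinely diverge is in that last reduction: the paper applies a first-order Taylor expansion directly, writing $A_0(z_k)-A_0(f^k(z_0))=O\left(A_0'(f^k(z_0))(z_k-f^k(z_0))\right)$ and bounding this by $O\left(\frac{1}{k}\cdot\frac{k}{n^2}\right)=O\left(\frac{1}{n^2}\right)$ per term, hence $O(j/n^3)$ after summing and dividing by $n$; you instead route through the proof of Lemma \ref{lem:sumA}, rewriting each difference as $(b-1)(z_k^2-f^k(z_0)^2)$ plus a remainder, and then re-estimate $z_k^2-f^k(z_0)^2$ by the very same $O(k/n^2)\cdot O(1/k)$ argument. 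This is more circuitous than necessary (Lemma \ref{lem:sumA}'s specific algebraic form is needed for Proposition \ref{prop:c1} but not here), and more importantly it imports an error you do not fully control.

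Concretely: you claim the partial-sum version of Lemma \ref{lem:sumA} carries an $O(\log n/n^2)$ remainder, giving a leftover $O(\log n/n^3)$ in $x_j-\gamma_j/n$ which you call ``harmless''. But it is not contained in $O(j^2/n^3)$ when $j$ is smaller than roughly $\sqrt{\log n}$, so as stated you have only proved the weaker bound $x_j=\gamma_j/n+O(j^2/n^3+\log n/n^3)$. The gap is fixable: if you re-run the proof of Lemma \ref{lem:sumA} only up to index $j-1$, the per-term errors are $O\left(\frac{1}{kn^2},\frac{k}{n^4},\frac{k^3}{n^6}\right)$ (and the $k=0$ term vanishes identically), so the accumulated remainder is really $O(\log j/n^2)$, and since $\log j\leq j^2$ the bound closes. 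Similarly, your $O(1/n^2)$ estimate for $\sum_k(\eps_k-\tfrac1n)A_0(z_k)$ is loose for moderate $j$; the cleaner route is to use just $A_0(z_k)=O(1)$ and $\eps_k-\tfrac1n=O(k/n^3)$ to get $O(j^2/n^3)$ term-by-term, which is what the paper implicitly does. Your argument for the last assertion about $\alpha_j$ is fine, modulo the fact (glossed over by the paper as well) that $x_j$ is complex, so ``$x_j\in[c,1-c]$'' should be interpreted as $\re(x_j)\in[c,1-c]$ with $\im(x_j)$ uniformly small.
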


\begin{proof}
	According to Lemma \ref{lem:zj-yj}, for $0 \leq j \leq n-1$ we have
	that $z_j - f^j(z_0)=O\left(\frac{j}{n^2} \right)$. In particular, $z_j = O(1)$.
	By Proposition \ref{prop:estimateA}, we have for every $0 \leq k \leq n-1$:
	\begin{equation}
	A(\eps_k,z_k) = \eps_k A_0(z_k) + O\left(\eps_k^3, z_k \eps_k^3\right) = \eps_k A_0(z_k) + O\left(\frac{1}{n^3}\right)
	\end{equation}
	(indeed, by Lemma \ref{lem:zj-yj}, $z_k = f^k(z_0)+O(\frac{k}{n^2})$, so in particular
	$z_k=O(1)$).
	By Lemma \ref{lem:epsj}, $\eps_k=\frac{1}{n}+O\left(\frac{k}{n^3}\right)$, hence
	\begin{align*}
	x_j&=\sum_{k=0}^{j-1} \eps_k + A(\eps_k,z_k)\\
	&= \sum_{k=0}^{j-1} \frac{1}{n}+ \frac{1}{n} A_0(z_k)+O\left(\frac{k}{n^3}\right)\\
	&=\frac{j}{n}+\frac{1}{n}\sum_{k=0}^{j-1} A_0(f^k(z_0)) + O\left( \frac{k}{n^{\black 2}},A_0'(f^k(z_0)) (z_k-f^k(z_0) \right) \\
	&=\frac{j}{n}+\frac{1}{n}\sum_{k=0}^{j-1} A_0(f^k(z_0)) + O\left(\frac{k}{n^{\black 2}},\frac{1}{k}\cdot \frac{k}{n^{\black 2}} \right) \\
	&=\frac{\gamma_j}{n}+O\left(\frac{j^2}{n^3}\right).
	\end{align*}
	Since $\gamma_j=j+O(1)$, we also have
	$$\frac{\gamma_j}{n}=\frac{j}{n}+O\left(\frac{1}{n}\right).$$

	Finally, the last assertion follows from the preceding equality and the fact that for $x \in (0,\frac{\pi}{2})$,
	$\cot(x)>0$.
\end{proof}

\begin{lem}\label{lem:riemannsum}
	Let $u(x):=\frac{2}{\pi}\tan(\frac{\pi}{2}x)$,
	$\Phi(x)=\frac{x^2-u(x)^2}{x^2 u(x)^2}$ and $\beta_j:=\frac{2n}{\pi\alpha_j}$.
	We have :
	$$\frac{\gamma_j^2-\beta_j^2}{\gamma_j^2 \beta_j^2} = \frac{1}{n^2}  \Phi(x_j) +O\left(\frac{1}{jn^2 }\right).$$
\end{lem}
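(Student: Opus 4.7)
The plan is to rewrite both sides of the claimed identity as differences of reciprocals, reducing the statement to an estimate of $\gamma_j^{-2}-(nx_j)^{-2}$ that follows directly from Lemma~\ref{lem:alphaj}.

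First I observe that since $\alpha_j=\cot(\pi x_j/2)$, one has $\beta_j=\frac{2n}{\pi\alpha_j}=\frac{2n}{\pi}\tan(\pi x_j/2)=n\,u(x_j)$, so that $\beta_j^2=n^2 u(x_j)^2$. Therefore
\[
\frac{1}{n^2}\Phi(x_j)=\frac{1}{n^2 u(x_j)^2}-\frac{1}{n^2x_j^2}=\frac{1}{\beta_j^2}-\frac{1}{n^2x_j^2},
\]
while the left-hand side of the claim equals $\frac{1}{\beta_j^2}-\frac{1}{\gamma_j^2}$. Subtracting, the lemma reduces to proving
\[
\frac{1}{n^2x_j^2}-\frac{1}{\gamma_j^2}=\frac{\gamma_j^2-n^2x_j^2}{n^2 x_j^2\gamma_j^2}=O\!\left(\frac{1}{jn^2}\right).
\]

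The numerator is handled by Lemma~\ref{lem:alphaj}: that lemma gives $nx_j-\gamma_j=O(j^2/n^2)$ and, since $A_0$ vanishes to second order at the origin so that $\sum_k A_0(f^k(z_0))$ converges absolutely on $K$, one also has $\gamma_j=j+O(1)$. Factoring a difference of squares,
\[
\gamma_j^2-n^2x_j^2=(\gamma_j-nx_j)(\gamma_j+nx_j)=O\!\left(\tfrac{j^2}{n^2}\right)\cdot O(j)=O\!\left(\tfrac{j^3}{n^2}\right).
\]

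For the denominator, I use that in the range $k\le j\le n-k$ both $\gamma_j$ and $nx_j$ are of size at least $c j$: indeed $\gamma_j=j+O(1)\ge c j$ once $k$ is large enough, and the same bound transfers to $nx_j=\gamma_j+O(j^2/n^2)$. This gives $n^2x_j^2\gamma_j^2\gtrsim j^4$, which combined with the numerator estimate yields the required $O(1/(jn^2))$. The only point that requires care is this lower bound on $\gamma_j$ and $nx_j$: it is precisely the reason for the restriction $j\ge k$ appearing in Lemma~\ref{lem:alphaj}, and, given that restriction, all the constants absorbed in the $O$-notation depend only on the compact $K\times K'$ fixed at the start of the subsection.
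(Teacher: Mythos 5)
Your proof is correct, and it takes a genuinely cleaner route than the paper's. The paper's argument substitutes $\gamma_j^2/n^2=x_j^2+O(j^3/n^4)$ directly into the nested fraction and then propagates the errors through numerator and denominator, which ultimately requires invoking that $\Phi$ is bounded near its removable singularities at $0$ and $1$ (so that $\Phi(x_j)=O(1)$ controls the multiplicative error $(1+O(j/n^2))$). Your observation that $\beta_j=n\,u(x_j)$ turns both sides of the identity into differences of reciprocals sharing the common term $1/\beta_j^2$, so that the lemma collapses to the single estimate $\frac{1}{n^2x_j^2}-\frac{1}{\gamma_j^2}=O(1/(jn^2))$. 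This sidesteps the error propagation through the fraction entirely, never needs the boundedness of $\Phi$, and makes the role of Lemma~\ref{lem:alphaj} (the input $nx_j-\gamma_j=O(j^2/n^2)$ and the lower bounds $\gamma_j,\,nx_j\gtrsim j$ on the range $k\le j\le n-k$) completely transparent. The one small economy you missed is that $\gamma_j=j+O(1)$ is already asserted in the statement and proof of Lemma~\ref{lem:alphaj}, so you need not re-derive it from the second-order vanishing of $A_0$; but that is a cosmetic point and your derivation is valid.
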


\begin{proof}
	We have
	\begin{align*}
		\frac{\gamma_j^2-\beta_j^2}{\gamma_j^2 \beta_j^2} &=\frac{n^2}{n^4} \frac{\frac{\gamma_j^2}{n^2}-u(x_j)^2}{u(x_j)^2 \frac{\gamma_j^2}{n^2}} \\
	\end{align*}
	Now recall that by Lemma \ref{lem:alphaj}, $\frac{\gamma_j}{n}=x_j+O(\frac{j^2}{n^3})$, so that
	$\frac{\gamma_j^2}{n^2}=x_j^2+O\left(\frac{j^3}{n^4} \right)$. So:
	\begin{align*}
		\frac{\gamma_j^2-\beta_j^2}{\gamma_j^2 \beta_j^2} &=\frac{1}{n^2}
		\frac{x_j^2 - u(x_j)^2+ O(\frac{j^3}{n^4})}{u(x_j^2)(x_j^2+O(\frac{j^3}{n^4}))}\\
		&=\frac{1}{n^2} \frac{x_j^2 - u(x_j)^2}{u(x_j^2)(x_j^2+O(\frac{j^3}{n^4}))}
		+ O\left(\frac{j^3}{x_j^2 u(x_j)^2 n^6}  \right)
	\end{align*}
	and note that
	\begin{align*}
		\frac{1}{x_j^2 u(x_j)^2}=O\left(\frac{1}{x_j^4}\right)=O\left(\frac{n^4}{j^4} \right).
	\end{align*}
	Therefore
	\begin{align*}
		\frac{\gamma_j^2-\beta_j^2}{\gamma_j^2 \beta_j^2} &=\frac{1}{n^2}\frac{x_j^2 - u(x_j)^2}{u(x_j^2)(x_j^2+O(\frac{j^3}{n^4}))} + O\left(\frac{1}{jn^2} \right)\\
		&=\frac{1}{n^2} \frac{x_j^2 - u(x_j)^2}{u(x_j^2) x_j^2 (1+O(\frac{j^3}{n^4 x_j^2}))}
		+ O\left(\frac{1}{jn^2} \right)\\
		&=\frac{\Phi(x_j)}{n^2} \left(1+O\left(\frac{j}{n^2}\right)\right)+ O\left(\frac{1}{jn^2} \right)\\
		&=\frac{\Phi(x_j)}{n^2}+O\left(\frac{1}{jn^2}\right).
	\end{align*}
	Note that in the last line, we used the fact that $\Phi$ has only removable singularities at $x=0$
	and $x=1$, so that $\Phi(x_j)=O(1)$.
\end{proof}

\begin{prop}\label{prop:c1}
	There exists a universal constant $C_1 \in \R$ such that
	$$\sum_{j=0}^{n-1} A_0(z_j)-A_0(f^j(z)) = \frac{C_1(b-1)}{n}+O\left(\frac{\black \log n}{n^2}\right).$$
	More precisely,
	$C_1:=\int_0^{1} \Phi(x)dx=\frac{1}{4}(4-\pi^2).$
\end{prop}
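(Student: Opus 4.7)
The plan is to combine Lemma~\ref{lem:sumA} with Lemma~\ref{lem:riemannsum}, reducing the sum to a Riemann-sum approximation of $\int_0^1 \Phi(x)\,dx$, and then to evaluate the integral explicitly.

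By Lemma~\ref{lem:sumA} it is enough to show that
\[
\sum_{j=0}^{n-1}\bigl(z_j^2 - f^j(z_0)^2\bigr) = \frac{C_1}{n} + O\!\left(\frac{\log n}{n^2}\right).
\]
Write $y_j := f^j(z_0)$ and $c := 1/z_0$. Iterating the identity $-1/f(z)+1/z = 1 + A_0(z)$ yields the exact formula $y_j = -1/(\gamma_j - c)$. On the perturbed side, I will combine \eqref{eq:psiinv} with the identity $\psi^{\iota}_{\epsilon_j}(z_j) = Z_0 + x_j$ and Proposition~\ref{prop:error1} ($Z_0 = -c\epsilon_0 + O(1/n^3)$) to expand $z_j$ as a function of $x_j$ and $\epsilon_j$. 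Squaring and expanding carefully, the goal is to establish
\[
z_j^2 - y_j^2 \;=\; \frac{1}{\beta_j^2} - \frac{1}{\gamma_j^2} + R_j \;=\; \frac{\Phi(x_j)}{n^2} + O\!\left(\frac{1}{jn^2}\right) + R_j,
\]
where the second equality is Lemma~\ref{lem:riemannsum} and the remainder satisfies $\sum_j |R_j| = O(\log n/n^2)$.

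The next step is a Riemann-sum argument. By Lemma~\ref{lem:alphaj} the step sizes $x_{j+1}-x_j = 1/n + O(j/n^3)$ are nearly uniform, and $\{x_j\}$ covers $[0,1]$ up to $O(1/n)$ at the endpoints. Since $\Phi$ has only removable singularities at $0$ and $1$ (with $\Phi(0) = -\pi^2/6$ and $\Phi(1) = -1$), it extends to a smooth function on $[0,1]$, and a standard Riemann-sum estimate gives $\tfrac{1}{n^2}\sum_{j=0}^{n-1}\Phi(x_j) = \tfrac{1}{n}\int_0^1 \Phi + O(1/n^2)$. For the explicit evaluation, using $\Phi(x) = u(x)^{-2} - x^{-2}$ and the substitution $t = \pi x/2$ one reduces to $C_1 = \tfrac{\pi}{2}\int_0^{\pi/2}(\cot^2 t - t^{-2})\,dt$. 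The antiderivative $-\cot t - t + t^{-1}$, together with $\lim_{t\to 0}(\cot t - t^{-1}) = 0$, then yields $C_1 = \tfrac{\pi}{2}\bigl(\tfrac{2}{\pi} - \tfrac{\pi}{2}\bigr) = \tfrac{4-\pi^2}{4}$, as claimed.

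The main obstacle is the control of the remainder $R_j$. A naive expansion $y_j^2 = 1/\gamma_j^2 + 2c/\gamma_j^3 + O(1/j^4)$ produces a term $\sum 2c/\gamma_j^3 = O(1)$, which is far larger than the $O(\log n /n^2)$ budget; an analogous $O(1)$ discrepancy arises on the $z_j$ side from the nonzero shift $Z_0 \neq 0$ in the near-Fatou coordinate. The point is that these two large contributions must cancel, because Lemma~\ref{lem:zj-yj} forces $z_j - y_j = O(j/n^2)$ and hence agreement of the $c$-dependent terms to leading order. I expect to verify this cancellation either by expanding $z_j$ to one extra order and matching the $-c/n$ shift in $Z_0$ against the $-c$ shift in $\gamma_j - c$, or equivalently by writing $z_j^2 - y_j^2 = (z_j - y_j)(z_j + y_j)$ and extracting the Riemann-sum structure from a refinement of Lemma~\ref{lem:zj-yj}; a separate treatment will be needed for $j$ close to $n$, where the error term $O(\epsilon_j^3/(1-e^{i\pi Z_j}))$ in the inversion of $\psi^\iota_{\epsilon_j}$ contributes an additional $O(1/(n^2(n-j)))$, summing precisely to the $\log n$ factor in the final bound.
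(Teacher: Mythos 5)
Your outline is essentially the paper's: reduce via Lemma~\ref{lem:sumA} to $\sum_j z_j^2 - y_j^2$, pivot through Lemma~\ref{lem:riemannsum}, perform a Riemann-sum approximation, and your evaluation $\int_0^1 \Phi = 1-\pi^2/4$ (including the removable-singularity observation and the antiderivative $-\cot t - t + t^{-1}$) is correct. You also correctly identify the central difficulty --- that expanding $z_j^2$ and $y_j^2$ separately in powers of $1/\beta_j$ and $1/\gamma_j$ produces $O(1)$ contributions from $\sum 2c/\gamma_j^3$ which must cancel against the $z_j$ side --- and you correctly guess that the factorization $(z_j-y_j)(z_j+y_j)$ is the right device.

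The gap is that the crucial step is left as ``expected to verify'' rather than carried out: you never establish the compact form of $z_j$ that makes the cancellation manifest. The paper shows
$$z_j = -\frac{1}{-\tfrac{1}{z_0}+\beta_j}+O\!\left(\tfrac{1}{n^2}\right), \qquad \beta_j:=\frac{2n}{\pi}\tan\!\left(\tfrac{\pi}{2}x_j\right),$$
in perfect parallel with the exact identity $y_j=-1/(-\tfrac{1}{z_0}+\gamma_j)$. This is obtained by inverting $\psi_{\eps_j}^\iota$ via \eqref{eq:psiinv} to write $z_j=-\tfrac{\pi}{2n}\cot(\tfrac{\pi}{2}Z_j^\iota)+O(1/n^2)$, then applying the cotangent addition formula $\cot(a+b)=\tfrac{\cot a\cot b-1}{\cot a+\cot b}$ with $a=\tfrac{\pi}{2}Z_0^\iota$ and $b=\tfrac{\pi}{2}x_j$, together with $\cot(\tfrac{\pi}{2}Z_0^\iota)=-\tfrac{2n}{\pi}z_0+O(1/n)$. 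The $Z_0^\iota$-shift (your ``$-c/n$ shift'') is thereby absorbed into the same $-1/z_0$ that appears in $\gamma_j-c$. Once this is in hand, $z_j-y_j = \tfrac{\beta_j-\gamma_j}{(\beta_j-1/z_0)(\gamma_j-1/z_0)}+O(1/n^2)$, and the product $(z_j-y_j)(z_j+y_j)$ collapses to $\tfrac{\gamma_j^2-\beta_j^2}{\beta_j^2\gamma_j^2}+O(1/(jn^2))$ using only $j=O(\beta_j)$ and $j=O(\gamma_j)$; no refinement of Lemma~\ref{lem:zj-yj} is required. Without this derivation, your claimed bound $\sum_j|R_j|=O(\log n/n^2)$ is asserted, not proved, so the proposal has a genuine gap precisely at the point you flag as the obstacle --- though you do correctly point to the factorization as the route to close it.
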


\begin{proof}
	We have, for $0 \leq j \leq n-1$:
	\begin{equation*}
	z_j = \psi_{\epsilon_j}^{-1}(Z_j^\iota) = -\frac{\pi}{2n} \cot(\frac{\pi}{2}Z_j^\iota)+O\left(\frac{1}{n^2}\right)
	\end{equation*}
	and
	\begin{equation*}
	Z_j^\iota = Z_0^\iota + \sum_{k=0}^{j-1} \eps_k + A(\eps_k,z_k).
	\end{equation*}
	Recalling the notation $x_j:=\sum_{k=0}^{j-1} \eps_k + A(\eps_k,z_k) $ and
	$\alpha_j:=\cot(\frac{\pi}{2}x_j)$ from Lemma \ref{lem:alphaj}, and
	using the trigonometry formula $\cot(a+b)=\frac{\cot a \cot b - 1}{\cot a + \cot b}$, we therefore obtain:
	\begin{equation}
	z_j=-\frac{\pi}{2n} \frac{\cot(\frac{\pi}{2}Z_0^\iota) \alpha_j-1}{\alpha_j+\cot({\black \frac{\pi}{2}}Z_0^\iota)} + O\left(\frac{1}{n^2}\right)
	\end{equation}
	Let $k$ be as in Lemma \ref{lem:alphaj}, so that $\alpha_j>0$ for $k \leq j \leq n-k$.
	We have $\cot(\frac{\pi}{2}Z_0^\iota) = -\frac{2n}{\pi}z_0+ O\left(\frac{1}{n}\right) $, so that
	\begin{equation}
	z_j=-\frac{\pi}{2n} \frac{\cot(\frac{\pi}{2}Z_0^\iota) \alpha_j}{\alpha_j+\cot({\black \frac{\pi}{2}}Z_0^\iota)}
	+ O\left(\frac{1}{n^2}\right) = \frac{z_0 \alpha_j}{\alpha_j - z_0 \frac{2n}{\pi}}+ O\left(\frac{1}{n^2}\right)
	\end{equation}
	Finally, with $\beta_j:=\frac{2n}{\pi\alpha_j}$, we get
	\begin{equation}
	z_j = -\frac{1}{-\frac{1}{z_0}+\beta_j}+ O\left(\frac{1}{n^2}\right).
	\end{equation}
	On the other hand, from the definition of $A_0$ it follows that
	$\sum_{k=0}^{j-1} A_0(f^k(z_0))=\frac{1}{z_0}-\frac{1}{f^j(z_0)}-j$, which we may rewrite as
	\begin{equation}
		f^j(z_0) = -\frac{1}{-\frac{1}{z_0}+\gamma_j}.
	\end{equation}
	Therefore:
	\begin{equation}
	z_j-f^j(z_0) = \frac{\beta_j-\gamma_j}{\left(-\frac{1}{z_0}+\beta_j\right)\left(-\frac{1}{z_0}+\gamma_j\right)} + O\left(\frac{1}{n^2}\right)
	\end{equation}

	Now note that
	$j=O(\beta_j)$: indeed,  $\frac{1}{\beta_j}=O\left(\frac{\cot \left( \frac{j\pi}{n}+O(\frac{1}{n}) \right)}{n}\right)=O\left(\frac{n/j}{n}\right)=O(\frac{1}{j})$. Therefore:
	\begin{align*}
	\frac{1}{(-\frac{1}{z_0}+\gamma_j)(-\frac{1}{z_0}+\beta_j)}&= \frac{1}{(\gamma_j+O(1))(\beta_j+O(1))}\\
	&= \frac{1}{\gamma_j\beta_j + O(\beta_j)}\\
	&= \frac{1}{\gamma_j\beta_j} + O\left( \frac{1}{\gamma_j^2\beta_j}\right). \\
	\end{align*}

	Therefore, setting $y_j:=f^j(z_0)$:
	\begin{align*}
		z_j^2 - y_j^2 &= (z_j-y_j)(z_j+y_j)\\
		&= \left(\frac{\beta_j-\gamma_j}{\beta_j \gamma_j}+O\left(\frac{\beta_j-\gamma_j}{\gamma_j^2 \beta_j} \right) \right) \left(-\frac{\beta_j+\gamma_j}{\beta_j \gamma_j}+O\left(\frac{\beta_j+\gamma_j}{\gamma_j^2 \beta_j} \right) \right)\\
		&= \frac{\gamma_j^2-\beta_j^2}{\beta_j^2 \gamma_j^2} + O\left(\frac{\beta_j^2 - \gamma_j^2}{\beta_j^2 \gamma_j^3} \right)\\
		&=\frac{1}{n^2} \Phi(x_j) + O\left(\frac{1}{jn^2} \right) \text{ by Lemma \ref{lem:riemannsum}}.
	\end{align*}
	Therefore by Lemma \ref{lem:sumA},
	\begin{align*}
		\sum_{j=0}^{n-1} A_0(z_j)-A_0(y_j)&=\left(\frac{b-1}{n^2} \sum_{j=0}^{n-1} \Phi(x_j) \right) +
		O\left(\frac{\black \log n}{n^2} \right) \\
		&=\frac{b-1}{n} \int_0^1 \Phi(x) dx+ O\left(\frac{\black \log n}{n^2}\right).
	\end{align*}
	In the last equality, we recognize a Riemann sum with subdivision $(x_j)_{0 \leq j \leq n-1}$.
	Finally, we have
	
\begin{equation*}
	\int_0^1 \Phi(x) dx=\frac{\pi}{2}\int_0^{\frac{\pi}{2}} \cot^2{t}-\frac{1}{t^2} dt=-\frac{\pi}{2}\left[ \left(\cot{t}-\frac{1}{t}\right)+t\right]_{0}^{\frac{\pi}{2}} = 1-\frac{\pi^2}{4}.
\end{equation*}	

\end{proof}

\subsubsection{Incoming part}

The following error estimate is one of the two crucial estimates that we will obtain in this section:
it measures accurately how close $\phi_n^\iota$ is to the incoming Fatou coordinate $\phi^\iota_{\black f}$.
\black  {This estimate differs from those obtained in \cite{ABDPR}, in that we compare $\phi_n^\iota$ with
	$\phi_f^\iota$ on a definite  region of $\bcal_f$ (independent from $n$), instead of comparing the two at small scale near the origin, compare with \cite[Property 1 p. 10]{ABDPR}. Moreover, the point of Proposition \ref{prop:incoming} is to push the precision of the estimate further and obtain the first error term $\frac{E^\iota(z_0)}{n}$, which cannot be easily obtained from the computations in \cite{ABDPR}.}

\begin{prop}\label{prop:incoming}
	We have
	$$\phi_n^{\iota}(z_0)=\phi^{\iota}_{\black f}(z_0)+\frac{E^{\iota}(z_0)}{n}+O\left(\frac{\black \log n}{n^2}\right)$$
	where $E^\iota(z) :=  C_0 + (C_1-1)(b-1)+\frac{1}{2}\phi^\iota(z_0)$.
\end{prop}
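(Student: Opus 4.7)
My plan starts from the telescoping identity coming straight from the definition of $A$:
\[
Z_{j+1}^\iota - Z_j^\iota = \epsilon_j + A(\epsilon_j, z_j),
\]
which gives the compact formula
\[
\phi_n^\iota(z_0) = \frac{1}{\epsilon_n}\left(Z_0^\iota + \epsilon_0 + \sum_{j=0}^{n-1} A(\epsilon_j, z_j)\right).
\]
Proposition \ref{prop:error1} immediately converts the boundary term into $Z_0^\iota + \epsilon_0 = \epsilon_0(1 - 1/z_0) + O(1/n^3)$. The whole problem is then reduced to evaluating $\sum A(\epsilon_j, z_j)$ up to order $1/n^2$ and multiplying by $1/\epsilon_n$ while keeping the sub-leading correction $1/\epsilon_n = n + \tfrac12 + O(1/n)$ that follows from Lemma \ref{lem:epsj} with $j=n$.

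Into the sum I would substitute the expansion of Proposition \ref{prop:estimateA}, $A(\epsilon,z) = \epsilon A_0(z) + C_0\epsilon^3 + O(\epsilon^4 + \epsilon^3|z|)$. Lemma \ref{lem:epsj} gives $\sum_{j=0}^{n-1}\epsilon_j^3 = 1/n^2 + O(1/n^3)$, so the cubic piece contributes $C_0/n^2 + O(1/n^3)$, while the remainders are $O(\log n/n^3)$ because $|z_j| = O(1/j)$ for $j$ large. In the main piece $\epsilon_j$ can be replaced by $1/n$ at cost $O(\log n/n^3)$, so that $\sum \epsilon_j A_0(z_j) = \tfrac{1}{n}\sum A_0(z_j) + O(\log n/n^3)$. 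Now Proposition \ref{prop:c1} converts this into $\sum A_0(z_j) = \sum A_0(f^j(z_0)) + C_1(b-1)/n + O(\log n/n^2)$, and the remaining telescoping sum is just $\tfrac{1}{z_0} - \tfrac{1}{f^n(z_0)} - n$ by definition of $A_0$. Inverting the local expansion of the Fatou coordinate $\phi_f^\iota(z) = -1/z + 1 + (1-b)z + O(z^2)$ near $0$ (which is legitimate since $f$ has $a=1$, so no $\log$ term appears), together with the asymptotic $f^n(z_0) = -1/n + O(1/n^2)$ for orbits in $\mathcal{B}_f$, yields
\[
\sum_{j=0}^{n-1} A_0(f^j(z_0)) = \tfrac{1}{z_0} + \phi_f^\iota(z_0) - 1 - \tfrac{b-1}{n} + O(1/n^2).
\]
The $-\tfrac{b-1}{n}$ term is the critical one: it combines with the $C_1(b-1)/n$ correction from Proposition \ref{prop:c1} to produce exactly the prefactor $(C_1-1)(b-1)$ appearing in $E^\iota$.

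Finally I would multiply through by $1/\epsilon_n = n + \tfrac12 + O(1/n)$, using also $\epsilon_0/\epsilon_n = 1 + \tfrac{1}{2n} + O(1/n^2)$ (again from Lemma \ref{lem:epsj}). The boundary contribution becomes $(1-1/z_0) + \tfrac{1-1/z_0}{2n} + O(1/n^2)$, while the sum contribution becomes
\[
\left(\tfrac{1}{z_0} + \phi_f^\iota(z_0) - 1\right) + \frac{C_0 + (C_1-1)(b-1)}{n} + \frac{\tfrac{1}{z_0} + \phi_f^\iota(z_0) - 1}{2n} + O\!\left(\tfrac{\log n}{n^2}\right).
\]
Adding, the $\pm 1/z_0$ and $\pm 1$ terms cancel at order one to leave $\phi_f^\iota(z_0)$, and the two $\tfrac{1}{2n}$ contributions combine (with the same cancellations) into $\tfrac{1}{2n}\phi_f^\iota(z_0)$, giving precisely $E^\iota(z_0) = C_0 + (C_1-1)(b-1) + \tfrac12 \phi_f^\iota(z_0)$. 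The main technical obstacle is pure bookkeeping: the $\tfrac12 \phi_f^\iota(z_0)$ term in $E^\iota$ is produced solely by the sub-leading $\tfrac12$ in $1/\epsilon_n$, so the crude asymptotic $\epsilon_n \sim 1/n$ is insufficient, and one must carry Lemma \ref{lem:epsj} at its full precision throughout the telescoping.
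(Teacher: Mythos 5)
Your proof is structurally the same as the paper's and relies on the same key inputs: Proposition \ref{prop:error1} for the boundary, Lemma \ref{lem:epsj} for the $\epsilon_j$-asymptotics, Proposition \ref{prop:estimateA} for the expansion of $A(\epsilon,z)$, Proposition \ref{prop:c1} for $\sum A_0(z_j)-A_0(f^j(z_0))$, and the quadratic behaviour $A_0(z)=(b-1)z^2+O(z^3)$ for the tail. Your reorganization — replacing $\epsilon_j$ by $1/n$ inside the sum and postponing the multiplication by $1/\epsilon_n=n+\tfrac12+O(1/n)$ to the very end, instead of carrying the ratio $\epsilon_j/\epsilon_n$ through the sum as the paper does when splitting into $E_1+E_2+E_3$ — is legitimate bookkeeping and produces the same cancellations; the observation that the $\tfrac12\phi_f^\iota(z_0)$ term comes entirely from the sub-leading $\tfrac12$ in $1/\epsilon_n$ is exactly right.

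There are, however, two compensating inaccuracies in the intermediate steps that you should fix. First, with the normalization used in the paper, $\phi_f^\iota(z)=\lim_{n}\bigl(-\tfrac{1}{f^n(z)}-n\bigr)$, the constant term in the expansion near $0$ is $0$, not $1$: since $\phi_f^\iota(z)+\tfrac1z=\sum_{j\ge0}A_0(f^j(z))\to 0$ as $z\to0$ in the petal, one has $\phi_f^\iota(z)=-\tfrac1z+(1-b)z+\cdots$, and correspondingly
$\sum_{j=0}^{n-1}A_0(f^j(z_0))=\tfrac{1}{z_0}+\phi_f^\iota(z_0)-\tfrac{b-1}{n}+O(1/n^2)$,
without the $-1$ you wrote. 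Second, your boundary term retains an extra $\epsilon_0$ because you read the definition of $\phi_n^\iota$ literally with $\sum_{j=1}^{n-1}\epsilon_j$; this is evidently a typo for $\sum_{j=0}^{n-1}\epsilon_j$, which is what the paper's own proof uses, and what is used consistently in Proposition \ref{prop:z2n+1bounded} and the proof of Theorem \ref{thm:lav} (with $\sum_{j=1}^{n-1}$ the statement of the Proposition would be off by $1+\tfrac{1}{2n}$). Your extra $\epsilon_0/\epsilon_n=1+\tfrac{1}{2n}+O(1/n^2)$ exactly cancels the spurious $-1-\tfrac{1}{2n}$ produced by the wrong Fatou-coordinate constant, which is why your final $E^\iota$ matches the paper; but the two intermediate identities are individually incorrect and should be repaired.
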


\begin{proof}
	Recall that by definition,
	$$\phi^\iota_{\black f}(z)=\lim_{n \to \infty} -\frac{1}{f^n(z)}-n =\limn -\frac{1}{z}+\sum_{j=0}^{n-1} A_0(f^j(z)).$$
	
	Similarly, we have:
	\begin{align*}
		\sum_{j=0}^{n-1} A(\epsilon_j, z_j) = \sum_{j=0}^{n-1} Z_{j+1}^{\iota}-Z_j^\iota- \epsilon_j = Z_n^\iota - Z_0^\iota - \sum_{j=0}^{n-1} \epsilon_j,
	\end{align*}
	and thus
	$$\phi_n^\iota(z_0)=\frac{Z_n^\iota}{\epsilon_n}- \frac{1}{\epsilon_n} \sum_{j=0}^{n-1} \epsilon_j= \frac{Z_0^\iota}{\epsilon_n} + \frac{1}{\epsilon_n}\sum_{j=0}^{n-1} A(\epsilon_j,z_j). $$
	Therefore:
	\begin{equation}
		\phi_n^\iota(z_0)-\phi^\iota_{\black f}(z_0) = E_1 + E_2 + E_3,
	\end{equation}
	where
	\begin{align*}
		E_1&:=\frac{Z_0^\iota}{\epsilon_n} + \frac{1}{z_0}\\
		E_2&:=\frac{1}{\epsilon_n}\sum_{j=0}^{n-1} A(\epsilon_j,z_j) - \sum_{j=0}^{n-1} A_0(f^j(z_0))\\
		E_3&:=-\sum_{j=n}^\infty A_0(f^j(z_0))\\
	\end{align*}

	We will now estimate each of the error terms $E_i$ separately. For $j \in \N$, we set
	$y_j:=f^j(z_0)$.

\begin{lem}
	We have $E_1=-\frac{1}{2nz_0}+O\left(\frac{1}{n^2}\right)$.
\end{lem}

\begin{proof}[Proof of Lemma]
	We have
	\begin{align*}
		\frac{Z_0^\iota}{\epsilon_n}&=\frac{1}{\epsilon_n}\psi_{\epsilon_0}(z_0)\\
		&=-\frac{\epsilon_0}{\epsilon_n z_0} + O\left(\frac{\epsilon_0^3}{\epsilon_n}\right) \quad \text{ by Prop. \ref{prop:error1}}\\
		&=-\frac{1}{z_0}\sqrt{ \frac{n^2+n+O(1)}{n^2+O(1)} } + O\left(\frac{1}{n^2}\right)\\
		&=-\frac{1}{z_0}-\frac{1}{2nz_0}+O\left(\frac{1}{n^2}\right).
	\end{align*}

\end{proof}

\begin{lem}
	We have $E_2=\frac{1}{n} \left(\frac{1}{2z_0}+\frac{1}{2}\phi^\iota_{\black f}(z_0)+ C_0+C_1(b-1) \right)+ O\left( \frac{\black \log n}{n^2}\right).$
\end{lem}

\begin{proof}[Proof of Lemma]
	Recall that we have
	$$A(\epsilon,z)=\epsilon A_0(z) + C_0 \epsilon^3 + O(z\epsilon^3, \epsilon^4),$$
	so that
	\begin{align*}
		E_2 &= \frac{1}{\epsilon_n}\sum_{j=0}^{n-1} A(\epsilon_j,z_j) - \sum_{j=0}^{n-1} A_0(y_j)\\
		&= \frac{1}{\epsilon_n}\sum_{j=0}^{n-1} \epsilon_j A_0(z_j) + C_0 \epsilon_j^3 + O(\frac{z_j}{n^3})
		- \epsilon_n A_0(y_j).
	\end{align*}	
	Therefore:
	\begin{align*}
		E_2&=\left( \sum_{j=0}^{n-1} \frac{\epsilon_j}{\epsilon_n} A_0(z_j) -  A_0(y_j) \right)
		+ \left(\sum_{j=0}^{n-1} C_0 \frac{\epsilon_j^3}{\epsilon_n} + O(\frac{z_j}{n^2})   \right),\\
	\end{align*}
	
	and
	\begin{align*}
		\sum_{j=0}^{n-1} C_0 \frac{\epsilon_j^3}{\epsilon_n} + O\left(\frac{z_j}{n^2}\right)&= C_0 \sum_{j=0}^{n-1} \frac{1}{n^2}+O\left(\frac{1}{n^2 j}\right)\\
		&=\frac{C_0}{n}+O\left(\frac{\log n}{n^2}\right).
	\end{align*}

	On the other hand, we have
	\begin{equation}\label{eq:e21}
	  \sum_{j=0}^{n-1} \frac{\epsilon_j}{\epsilon_n} A_0(z_j) -  A_0(y_j) = \sum_{j=0}^{n-1} \left(\frac{\epsilon_j}{\epsilon_n}-1\right) A_0(z_j) +\sum_{j=0}^{n-1} A_0(z_j)-A_0(y_j).
	\end{equation}	
	Now note that
	\begin{align*}
	\sum_{j=0}^{n-1} \left(\frac{\epsilon_j}{\epsilon_n}-1\right) A_0(z_j)&=
	\sum_{j=0}^{n-1} \left(\frac{\epsilon_j}{\epsilon_n}-1\right) A_0(y_j)
	+ \sum_{j=0}^{n-1} \left(\frac{\epsilon_j}{\epsilon_n}-1\right) \left(A_0(z_j)-A_0(y_j)\right),
	\end{align*}
	and that
	\begin{align*}
	\left| \sum_{j=0}^{n-1}\left(\frac{\epsilon_j}{\epsilon_n}-1\right) (A_0(z_j)-A_0(y_j)) \right|
	&\leq \max_{0 \leq j \leq n-1} \left|1-\frac{\eps_j}{\eps_n} \right| \cdot \sum_{j=0}^{n-1} \left| A_0(z_j)-A_0(y_j)\right|\\
	&=O\left(\frac{1}{n^2} \right),
	\end{align*}
	by Lemmas  \ref{lem:epsj} and Proposition \ref{prop:c1}.
	Another consequence of Lemma \ref{lem:epsj} is that 	
	\begin{equation}\label{eq:e22}
		\frac{\eps_j}{\eps_n}-1 = \frac{1}{2n} \left(1-\frac{j}{n}+O\left(\frac{1}{n}\right)\right).
	\end{equation}
	Therefore, by \eqref{eq:e21}, \eqref{eq:e22} and Proposition \ref{prop:c1}:
	\begin{align*}
	 \sum_{j=0}^{n-1} \frac{\epsilon_j}{\epsilon_n} A_0(z_j) -  A_0(y_j)
	 &=\frac{C_1(b-1)}{n}+\frac{1}{2n} \sum_{j=0}^{n-1} A_0(y_j) + O\left(\frac{\black \log n}{n^2}\right)\\
	&=\frac{C_1 (b-1)}{n}+\left( \dfrac{1/z_0+\phi_{{\black f}}^{{\black \iota}}(z_0)}{2n}\right) + O\left(\frac{\black \log n}{n^2}\right).\\
	\end{align*}

	Therefore, as announced, we have:
	$$E_2=\frac{1}{n} \left(\frac{1}{2z_0}+\frac{1}{2}\phi^\iota_f(z_0)+  C_0 + C_1 (b-1) \right)+ O\left( \frac{\black \log n}{n^2}\right).$$
\end{proof}

\begin{lem}
	We have $E_3=\frac{1-b}{n}+O(\frac{1}{n^2})$.
\end{lem}

\begin{proof}
	By explicit computations we have $A_0(z)=(b-1)z^2+O(z^3)$,
	so that $A_0(y_j) = (b-1)j^{-2}+O(j^{-3})$.
	Therefore:
	$$E_3 = (1-b)\sum_{j=n}^\infty j^{-2}+O(j^{-3})$$
	and $\sum_{j=n}^\infty j^{-3} =O( \int_n^\infty \frac{dx}{x^3})=O(\frac{1}{n^2})$.
	Similarly, $\sum_{j=n}^\infty j^{-2} \sim \int_n^\infty \frac{dx}{x^2} = \frac{1}{n}$,
	so that $E_3=\frac{1-b}{n} + O(\frac{1}{n^2})$.
\end{proof}

Finally, putting together the three preceding lemmas, the proof of Proposition \ref{prop:incoming} is finished.

\end{proof}

\subsubsection{Outgoing part}

We will now work to obtain estimates for the outgoing part of the orbit, that is,
for $n \leq j \leq 2n+1$. The method is largely similar to the incoming case.
Recall that the estimates we obtain only depend on the chosen compact set $K\subset \mathcal{B}_f$.

We will first need a rough preliminary estimate on boundedness of $z_{2n+1}$. Of course, by \cite{ABDPR}, we know that $z_{2n+1}$ converges
to $\mathcal{L}(z_0)$, and we could deduce this preliminary estimate from there. However, we prefer to present here a direct argument, so that the proof of Theorem \ref{thm:lav} remains self-contained.

\begin{prop}\label{prop:z2n+1bounded}
	There exists $k \in \N$ (independent from $n$) such that $z_{2n+1-k}$ belongs to a repelling petal
	$\D(r,r)$ for $f$. In particular,
	$z_{2n+1}=O(1)$.
\end{prop}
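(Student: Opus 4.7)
The plan is to mirror the analysis of Lemma \ref{lem:translation} on the outgoing side, tracking the orbit via the outgoing approximate coordinate $Z^o_j := \psi^o_{\epsilon_j}(z_j)$. From the proof of Lemma \ref{lem:translation}, at time $j = n$ we already have $z_n \in D_{\epsilon_n}$ with $\mathrm{Re}(\psi^\iota_{\epsilon_n}(z_n))$ essentially in $[\tfrac{5}{6},\tfrac{7}{6}]$ and $|\mathrm{Im}(\psi^\iota_{\epsilon_n}(z_n))| < 1$. Since $\psi^o_\epsilon = \psi^\iota_\epsilon - 2$ on compatible branches, this translates into $\mathrm{Re}(Z^o_n) \in [-\tfrac{7}{6}, -\tfrac{5}{6}]$ and $|\mathrm{Im}(Z^o_n)| < 1$.

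Next I will iterate the defining functional equation $Z^o_{j+1} = Z^o_j + \epsilon_j + A(\epsilon_j, z_j)$, using the bound $|A(\epsilon_j, z_j)| \le |\epsilon_j|/12$ which holds whenever $z_j \in \mathbb{D}(0,R)$. A verbatim copy of the estimates in Lemma \ref{lem:translation} yields, as long as $z_k \in \mathbb{D}(0,R)$ for all $n \le k < j$,
$$\mathrm{Re}(Z^o_n) + \tfrac{5}{6}\sum_{k=n}^{j-1}\mathrm{Re}(\epsilon_k) \;\le\; \mathrm{Re}(Z^o_j) \;\le\; \mathrm{Re}(Z^o_n) + \tfrac{7}{6}\sum_{k=n}^{j-1}\mathrm{Re}(\epsilon_k),$$
together with $|\mathrm{Im}(Z^o_j)| < 2$. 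Since $\mathrm{Re}(\epsilon_k) = 1/n + O(k/n^3)$, the real part of $Z^o_j$ grows by about $1/n$ per step, ascending from approximately $-1$ at $j = n$ toward $0$ as $j$ approaches $2n+1$.

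Using the inverse formula \eqref{eq:psiinv} and the expansion $\cot(\pi Z/2) = 2/(\pi Z) + O(Z)$ for small $Z$, one computes that when $Z^o_j = -c/n + O(1/n^2)$ with small imaginary part, $z_j = 1/c + O(1/n)$ is a positive real of size $\sim 1/c$. Fixing an integer $k$ (independent of $n$) large enough that $1/k \in (0, 2r)$ (so $1/k$ lies well inside $\mathbb{D}(r,r)$), and choosing $R \ge 2r$ so that $\mathbb{D}(r,r) \subset \mathbb{D}(0,R)$ and hence the bound on $A$ continues to apply, the real-part control produces, for $n$ large enough, an index in the range $j = 2n+1-k + O(1)$ at which $z_j \in \mathbb{D}(r,r)$. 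Up to shifting $k$ by $O(1)$, we conclude that $z_{2n+1-k} \in \mathbb{D}(r,r)$ for a universal $k$. The bound $z_{2n+1} = O(1)$ then follows from continuity: $k$ further iterations of $f_j = f + O(1/n^2)$, applied to a point of the bounded set $\mathbb{D}(r,r)$, stay in a uniformly bounded region independent of $n$.

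The main obstacle is twofold: ensuring that $z_j$ remains in $\mathbb{D}(0,R)$ throughout $n \le j \le 2n + 1 - k$ (so that the bound on $A$ persists), and verifying that the transition from the small-scale regime $z_j = O(1/n)$ to the bounded-scale regime $z_j \in \mathbb{D}(r,r)$ is captured correctly by the cotangent expansion (rather than, say, the orbit drifting along the line $L_{\epsilon_j}$ where $\psi^o$ becomes singular). Both issues are handled by combining the above real-part and imaginary-part estimates on $Z^o_j$ with the observation that $\mathcal{S}^o(\epsilon_j, r) \to \mathbb{D}(r, r)$ as $\epsilon_j \to 0$.
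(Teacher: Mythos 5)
The crucial gap is quantitative: the coarse bound $|A(\epsilon_j,z_j)| \le |\epsilon_j|/12$ accumulated over the $n$ iterations of the outgoing phase gives an $O(1)$ uncertainty in $Z_j^o$, not $O(1/n)$. Concretely, your sandwich
$$\mathrm{Re}(Z^o_n) + \tfrac{5}{6}\sum_{k=n}^{j-1}\mathrm{Re}(\epsilon_k) \;\le\; \mathrm{Re}(Z^o_j) \;\le\; \mathrm{Re}(Z^o_n) + \tfrac{7}{6}\sum_{k=n}^{j-1}\mathrm{Re}(\epsilon_k)$$
already places $\mathrm{Re}(Z_{2n}^o)$ somewhere in an interval of width roughly $2/3$, i.e.\ roughly $[-1/3,1/3]$. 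This means the time $j$ at which $Z_j^o$ reaches the scale $-c/n$ — the moment when $z_j$ transitions from $O(1/n)$ to the $O(1)$ scale — is pinned down only to within $O(n)$ iterations, not $O(1)$. The orbit could in principle exit the disk $\mathbb{D}(0,R)$ anywhere between $j \approx 1.7n$ and $j = 2n+1$, or never at all. "Up to shifting $k$ by $O(1)$" is therefore exactly what remains to be proven, and the coarse drift bound does not supply it: you cannot conclude that $z_{2n+1-k}$, for a \emph{universal} $k$, lands in $\D(r,r)$.

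The paper's proof resolves this by improving both the initial data and the drift estimate. First, it uses Proposition \ref{prop:incoming} (not Lemma \ref{lem:translation}) to obtain $Z_n^o = -1 + O(1/n)$, which is precise to order $1/n$ rather than order $1$. Second, it replaces the uniform bound $|A| \le |\epsilon_j|/12$ by the self-improving estimate $|A(\epsilon_j,z_j)| \le C/(|Z_j^o|^2 n^3)$, valid for as long as $Z_j^o$ stays in the rectangle $R_n$ (defined precisely to stop before $Z_j^o$ can touch $0$). Since $|Z_j^o| \gtrsim (2n-j)/n$ on the relevant range, these drift terms sum to $O(1/n)$ rather than $O(1)$. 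This gives $Z_j^o = Z_n^o + \sum_{k=n}^{j-1}\epsilon_k + O(1/n)$, hence the stopping time $j_n$ (last index with $Z_j^o \in R_n$) satisfies $j_n = 2n + O(1)$, and from there $z_{2n+1-k} = \frac{1}{k+O(1)} \in \D(r,r)$ follows. Your plan correctly identifies the structural shape of the argument and the domain-of-validity concern, but the tools you deploy (the $|\epsilon|/12$ bound and the $\tfrac{5}{6}$--$\tfrac{7}{6}$ sandwich, which are the right tools for Lemma \ref{lem:translation} where only $O(1)$ precision is needed) are an order of magnitude too lossy for this step.
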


\begin{proof}
	Recall that by Proposition \ref{prop:incoming}, we have that
	$$\phi_n^\iota(z):=\frac{Z_n^\iota}{\eps_n}-\frac{1}{\eps_n} \sum_{j=0}^{n-1} \eps_j = \phi^\iota(z_0)+o(1)=O(1).$$
	In particular,
	$$Z_n^\iota = \left(\sum_{j=0}^{n-1} \eps_j \right) + O(\eps_n)=1+O\left(\frac{1}{n}\right)$$
	and therefore
	$Z_n^o=-1+O\left(\frac{1}{n}\right).$
	
	Let $R_n$ denote the rectangle defined by the conditions $-1-\frac{C}{n}\leq \re(Z) \leq -\frac{3}{n}$
	and $-1 \leq \im(Z) \leq 1$, where $C>0$ is a constant chosen large enough that $Z_n^o \in R_n$.
	Let
	\begin{equation}
		j_n:=\max\{k \leq 2n+1 : Z_k^o \in R_n  \}.
	\end{equation}
	
	Recall that for $j \leq 2n$, we have $Z_{j+1}^o = Z_j^o + A(\eps_j,z_j)$, and
	that
	by Proposition \ref{prop:estimateA}, we have
	\begin{equation}
		A(\eps_k, z_k) = \eps_k A_0(z_k)+O\left(\eps_k^3, \eps_k^3 z_k\right) = O\left( \eps_k z_k^2 \right).
	\end{equation}
	Moreover, for $n \leq j \leq j_n$, we have $Z_j^o = -\frac{\pi}{2n} \cot\left(\frac{\pi}{2}Z_j^o\right)
	+O\left( \frac{1}{n^2}\right)$, and therefore there exists a constant $C>0$ such that for all $n \leq j \leq j_n$,
	\begin{equation}
		|A(\eps_j, z_j)| \leq \frac{C'}{n^3} \left| \cot \left(\frac{\pi}{2}Z_j^o\right)\right|^2 \leq \frac{C}{|Z_j|^2 n^3},
	\end{equation}
	and thus
	\begin{equation}\label{eq:controleZo}
		\left|Z_j^o - Z_n^o - \sum_{k=n}^{j-1} \eps_k \right| \leq \frac{C}{n^3} \sum_{k=n}^{j-1} \frac{1}{|Z_k|^2}.
	\end{equation}
	From \eqref{eq:controleZo}, we can prove inductively on $j$ that for $n \leq j \leq j_n$,
	$\left|Z_j^o - Z_n^o - \sum_{k=n}^{j-1} \eps_k \right| = O\left(\frac{1}{n}\right)$ and hence
	$j_n=2n+O(1)$.

	Let $r>0$ small enough such that $\D(r,r)$ is a repelling petal for $f$.
	By the argument above and the definition of $R_n$, we have that $Z_{j_n}^o = O\left(\frac{1}{n}\right)$, so that
	$$z_{2n+1-k} = -\frac{\pi}{2} \cot\left(\frac{\pi}{2} Z_{2n+1-k}^o\right) + O\left(\frac{1}{n^2}\right)  = \frac{1}{k+O(1)}.$$
	Therefore, we can find some $k$ bounded independently from $n$ such that $z_{2n+1-k} \in \D(r,r)$.

\end{proof}

We now introduce approximate outgoing Fatou coordinates:

\begin{defi}
	For $n \leq m \leq 2n+1$, let
	$$\phi_n^o(z_{m}):=\frac{Z_n^o}{\epsilon_n} + \frac{1}{\epsilon_n} \sum_{j=n}^{m-1} \epsilon_j.$$
\end{defi}

\begin{lem}
	We have
	$$\phi_n^o(z_{m})=\frac{Z_{m}^o}{\epsilon_n} - \frac{1}{\epsilon_n}\sum_{j=n}^{m-1} A(\epsilon_j, z_j).$$
\end{lem}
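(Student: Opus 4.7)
The plan is to observe that this lemma is a direct algebraic rearrangement using the definition of $A(\epsilon_j, z_j)$. Recall that $A(\epsilon_j, z_j) = Z_{j+1}^o - Z_j^o - \epsilon_j$, which gives the telescoping identity
$$ Z_m^o - Z_n^o = \sum_{j=n}^{m-1}\left(Z_{j+1}^o - Z_j^o\right) = \sum_{j=n}^{m-1} \epsilon_j + \sum_{j=n}^{m-1} A(\epsilon_j, z_j). $$

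Solving for $\sum_{j=n}^{m-1}\epsilon_j$ and substituting into the definition $\phi_n^o(z_m) = \frac{Z_n^o}{\epsilon_n} + \frac{1}{\epsilon_n}\sum_{j=n}^{m-1}\epsilon_j$ immediately yields the claimed expression
$$ \phi_n^o(z_m) = \frac{Z_n^o}{\epsilon_n} + \frac{1}{\epsilon_n}\left(Z_m^o - Z_n^o - \sum_{j=n}^{m-1} A(\epsilon_j, z_j)\right) = \frac{Z_m^o}{\epsilon_n} - \frac{1}{\epsilon_n}\sum_{j=n}^{m-1} A(\epsilon_j, z_j). $$

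There is no real obstacle here; the lemma serves as a bookkeeping identity, analogous to the one used earlier in the proof of Proposition \ref{prop:incoming}, to rewrite the approximate outgoing Fatou coordinate in a form where the deviation from a true Fatou coordinate is captured by the sum of the ``error'' terms $A(\epsilon_j, z_j)$. This alternative form will presumably be the one used in the subsequent analysis, where one estimates the size of each $A(\epsilon_j, z_j)$ via Proposition \ref{prop:estimateA} in order to compare $\phi_n^o(z_{2n+1})$ with the true outgoing Fatou coordinate $\phi_f^o(z_{2n+1})$, in parallel with what was done for the incoming part.
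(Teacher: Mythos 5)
Your proof is correct and matches the paper's argument exactly: both use the identity $A(\epsilon_j,z_j)=Z_{j+1}^o-Z_j^o-\epsilon_j$, telescope the sum from $j=n$ to $m-1$, and substitute into the definition of $\phi_n^o(z_m)$. Your closing remark about the role of this identity in the subsequent comparison with $\phi_f^o$ is also an accurate reading of what follows.
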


\begin{proof}
	We have
	\begin{align*}
		\sum_{j=n}^{m-1} A(\epsilon_j,z_j)&= \sum_{j=n}^{m-1} Z_{j+1}^o - Z_j^o - \epsilon_j\\
		&=Z_{m}^o - Z_n^o - \sum_{j=n}^{m-1} \epsilon_j
	\end{align*}
	so that
	$$\frac{Z_n^o}{\epsilon_n}+\frac{1}{\epsilon_n} \sum_{j=n}^{m-1} \epsilon_j= \phi_n^o(z_{m})
	= \frac{Z_{m}^o}{\epsilon_n} - \frac{1}{\epsilon_n}\sum_{j=n}^{m-1} A(\epsilon_j, z_j).$$
\end{proof}

\begin{prop}\label{prop:c1o}
	Let $k \in \N$ be the integer from Prop. \ref{prop:z2n+1bounded}.
	Let $y_{2n+1-k}:=z_{2n+1-k}$ and $y_{2n+1}=f^k(y_{2n+1-k})$.  For $n \leq j \leq 2n$ we define:
	$$y_j:=f^{-(2n+1-j)}(y_{2n+1}),$$
	where $f^{-1}$ is the local inverse of $f$ fixing $0$: $f^{-1}(z)=z-z^2+z^3-bz^4+O(z^5)$.
	We have
	$$\sum_{j=n}^{2n} A_0(z_j)-A_0(y_j) = \frac{C_1 (b-1)}{n}+O\left( \frac{\black \log n}{n^2}\right).$$
\end{prop}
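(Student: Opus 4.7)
The plan is to mirror closely the proof of Proposition \ref{prop:c1}, replacing the forward orbit $(f^j(z_0))_{j=0}^{n-1}$ of $f$ on the incoming side by the pseudo-orbit $(y_j)_{j=n}^{2n+1}$ on the outgoing side, which by construction satisfies the forward relation $y_{j+1}=f(y_j)$ and lies in a neighborhood of the origin (in the repelling petal near $j=2n+1$, and much closer to $0$ for $j$ near $n$). Throughout, the outgoing Fatou coordinates $Z_j^o$ will play the role that $Z_j^\iota$ played in the incoming argument.

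First, one establishes the outgoing analog of Lemma \ref{lem:sumA}: a Taylor expansion of $A_0$ around $y_j$, combined with $A_0(0)=A_0'(0)=0$ and $A_0''(0)=2(b-1)$, gives
$$A_0(z_j)-A_0(y_j)=(b-1)(z_j^2-y_j^2)+O\bigl(y_j(z_j-y_j)^2,(z_j-y_j)^3\bigr),$$
provided one has the estimate $z_j-y_j=O((2n+1-j)/n^2)$. This latter estimate is the outgoing analog of Lemma \ref{lem:zj-yj}: since $z_{2n+1-k}=y_{2n+1-k}$ by construction, and since the difference of the defining recursions is $z_{j+1}-y_{j+1}=f(z_j)-f(y_j)+\tfrac{\pi^2}{4}\epsilon_j^2$, a Lipschitz argument along the orbit (running backwards from $j=2n+1-k$ down to $j=n$) together with $\epsilon_j^2=O(1/n^2)$ gives the claim.

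Second, one expresses $z_j^2-y_j^2$ in closed form using the outgoing Fatou coordinates. From \eqref{eq:psiinv},
$z_j=\tfrac{\pi}{2}\epsilon_j \cot\!\bigl(\tfrac{\pi}{2}Z_j^o\bigr)+O(1/n^2)$, with $Z_j^o=Z_n^o+\sum_{k=n}^{j-1}(\epsilon_k+A(\epsilon_k,z_k))$. For $y_j$, telescoping the identity $\sum_{k=j}^{2n}A_0(y_k)=-1/y_{2n+1}+1/y_j-(2n+1-j)$ produces the analog of the closed-form expression for $f^j(z_0)$ used in Proposition \ref{prop:c1}. Introducing $\tilde x_j:=1-\tfrac{1}{\epsilon_n}\sum_{k=j}^{2n}(\epsilon_k+A(\epsilon_k,z_k))$, $\tilde\alpha_j:=\cot(\tfrac{\pi}{2}\tilde x_j)$, $\tilde\beta_j:=2n/(\pi\tilde\alpha_j)$ and $\tilde\gamma_j:=(2n+1-j)+\sum_{k=j}^{2n}A_0(y_k)$, the argument of Lemma \ref{lem:riemannsum} transposes to yield
$$z_j^2-y_j^2=\frac{\Phi(\tilde x_j)}{n^2}+O\!\left(\frac{1}{(2n+1-j)n^2}\right).$$

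Third, summing over $n\leq j\leq 2n$ yields a Riemann sum for $\int_0^1 \Phi(x)\,dx=C_1$, giving $\sum_{j=n}^{2n}(z_j^2-y_j^2)=C_1/n+O(\log n/n^2)$; combined with the first step this yields the announced formula. The main obstacle will be controlling the error near $j\simeq 2n+1-k$, where $\tilde x_j\to 1$ and $\cot$ blows up: here one uses Proposition \ref{prop:z2n+1bounded} together with the fact that $k$ is a constant independent from $n$, so that the finitely many boundary terms $j=2n+1-k,\dots,2n$ contribute only $O(1/n^2)$, and the uniform bound $\Phi(x)=O(1)$ on $(0,1)$ (since the singularities of $\Phi$ are removable) makes the Riemann sum well-behaved at both endpoints.
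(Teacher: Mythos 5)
Your proposal follows precisely the strategy of the paper's proof: establish the outgoing analogs of Lemmas \ref{lem:zj-yj} and \ref{lem:sumA} (using the pseudo-orbit $(y_j)$ built by pulling $z_{2n+1}$ back through $f^{-1}$), then express $z_j^2-y_j^2$ via outgoing Fatou coordinates and recognize a Riemann sum for $\int_0^1\Phi\,dx=C_1$; the handling of the boundary indices $j\simeq 2n+1-k$ via Proposition~\ref{prop:z2n+1bounded} is also as the paper does it. Two small points are worth correcting before you carry out the computation. First, the argument of $\Phi$ should be $x_j:=\sum_{k=j}^{2n}(\eps_k+A(\eps_k,z_k))$ (which behaves like $(2n-j)/n\in[0,1]$), not $\tilde x_j:=1-\tfrac{1}{\eps_n}\sum_{k=j}^{2n}(\cdots)$: the factor $1/\eps_n\approx n$ makes $\tilde x_j$ of order $n$, and even if one drops it, $\Phi(1-x)\ne\Phi(x)$, so the ``reversed'' parametrization does not give $z_j^2-y_j^2=\Phi(\tilde x_j)/n^2$. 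The paper's anchoring at $Z_{2n+1}^o$ (which is $O(1/n)$) rather than at $Z_n^o$ (which is $-1+O(1/n)$, as in your write-up) is what produces this cleaner expression directly. Second, your Taylor remainder for $A_0(z_j)-A_0(y_j)$ omits the term $O\bigl(y_j^2(z_j-y_j)\bigr)$ coming from the $O(y_j^2)$ error in $A_0'(y_j)$; this is exactly the term that, upon summing, contributes the $O(\log n/n^2)$ in the statement, while the two terms you kept only give $O(1/n^2)$. Neither issue affects the overall structure, which matches the paper's proof.
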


\begin{proof}
	The proof mirrors the incoming case, so we will only sktech it and leave the details to the reader.
	Recall that $y_{2n+1}                                                                                                                                                                                                                                                                                                                                                                                                                                                                                                                                                                                                                                                                                                                                                                                                                                                                                                                                                                                                                                                                                                                                                                                                                                                                                                                                                                                                                                                                                                                                                                                                                                                                                                                                                                                                                                                                                                                                                                                                                                                                                                                                                                                                                                                                                                                                                                                                                                                                                                                                                                                                                                                                                                                                                                                                                                                                                                                                                                                                                                                                                                                                                                                                                                                                                                                                                                                                                                                                                                                                                                                                                                                                                                                                                                                                                                                                                                                                                                                                                                                                                                                                                                                                                                                                                                                                                                                                                                                                                                                                                                                                                                                                                                                                                                                                                                                                                                                                                                                                                                                                                                                                                                                                                                                                                                                                                                                                                                                                                                                                                                                                                                                                                                                                                                                                                                                                                                                                                                                                                                                                                                                                                                                                                                                                                                                                                                                                                                                                                                                                                                                                                                                                                                                                                                                                                                                                                                                                                                                                                                                                                                                                                                                                                                                                                                                                                                                                                                                                                                                                                                                                                                                                                                                                                                                                                                                                                                                                                                                                                                                                                                                                                                                                                                                                                                                                                                                                                                                                                                                                                                                                                                                                                                                                                                                                                                                                                                                                                                                                                                                                                                                                                                                                                                                                                                                                                                                                                                                                                                                                                                                                                                                                                                                                                                                                                                                                                                                                                                                                                                                                                                                                                                                                                                                                                                                                                                                                                                                                                                                                                                                                                                                                                                                                                                                                                                                                                                                                                                                                                                                                                                                                                                                                                                                                                                                                                                                                                                                                                                                                                                                                                                                                                                                                                                                                                                                                                                                                                                                                                                                                                                                                                                                                                                                                                                                                                                                                                                                                                                                                                                                                                                                                                                                                                                                                                                                                                                                                                                                                                                                                                                                                                                                                                                                                                                                                                                                                                                                                                                                                                                                                                                                                                                                                                                                                                                                                                                                                                                                                                                                                                                                                                                                                                                                                                                                                                                                                                                                                                                                                                                                                                                                                                                                                                                                                                                                                                                                                                                                                                        =O(1)$ by Proposition \ref{prop:z2n+1bounded}, and that $z_{2n+1-k}$
	belongs to a repelling petal for $f$ for some $k \in \N$ independent from $n$, so that the $(y_j)_{n \leq j \leq 2n+1}$
	are well-defined.
	
	By a straightforward adaptation of Lemma \ref{lem:zj-yj}, $z_j - y_j=O\left(\frac{2n+1-j}{n^2}\right)$ for $n \leq j \leq 2n+1$. More precisely, this applies for $n \leq j \leq 2n+1-k$; but it is clear from the
	definition of the $y_j$ that for $2n+1-k \leq j \leq 2n+1$, we have $z_j-y_j=O(\frac{1}{n^2})$.
	Therefore the proof of Lemma \ref{lem:sumA} can be repeated to yield that
	\begin{equation}
	\sum_{j=n}^{2n} A_0(z_j)-A_0(y_j)  = (b-1) \sum_{j=n}^{2n} z_j^2-y_j^2 + O\left( \frac{\black \log n}{n^2}\right).
	\end{equation}
	Next, we have, for $n \leq j \leq 2n$:
	\begin{align*}
	z_j &= \left( \psi_{\eps_j}^o\right)^{-1}(Z_j^o)=   \left( \psi_{\eps_j}^o\right)^{-1}\left(Z_{2n+1}^o- \sum_{k=j}^{2n} \eps_k+ A(\eps_k,z_k) \right) \\
	&= -\frac{\pi}{2n} \cot\left(\frac{\pi}{2}Z_{2n+1}^o -\frac{\pi}{2} \sum_{k=j}^{2n} \eps_k+ A(\eps_k,z_k)  \right) + O\left(\frac{1}{n^2}\right).
	\end{align*}
	Through similar computations as those appearing in the proof of Proposition \ref{prop:c1},
	we deduce that
	\begin{equation}
	z_j=-\frac{1}{-\frac{1}{z_{2n+1}}-\beta_j} + O\left(\frac{1}{n^2}\right),
	\end{equation}
	with $\beta_j:=\frac{2n}{\pi}\tan(\frac{\pi}{2}x_j)=\frac{2n}{\pi}\tan(\frac{\pi}{2}  \sum_{k=j}^{2n} \eps_k+A(\eps_k,z_k)) $. On the other hand,
	$$-\frac{1}{y_{j}}=-\frac{1}{y_{2n+1}}-\sum_{k=j}^{2n} A_0(y_j),$$
	from which it follows that
	$
	y_j = -\frac{1}{-\frac{1}{y_{2n+1}}-\gamma_j},
	$
	with $\gamma_j:=\sum_{k=j}^{2n} A_0(y_j)$. Then, again, similar computations show that
	$$z_j^2-y_j^2 = \frac{1}{n^2} \Phi(x_j)+O\left(\frac{1}{n^2(2n+1-j)}\right),$$
	and $x_j = \frac{2n-j+O(1)}{n}$ for $n \leq j \leq 2n$.
	Therefore, we finally obtain:
	$$\sum_{j=n}^{2n} A_0(z_j)-A_0(y_j) =\frac{b-1}{n} \int_0^1 \Phi(x) dx+O\left( \frac{\black \log n}{n^2}\right)
	=\frac{C_1 (b-1)}{n}+O\left( \frac{\black \log n}{n^2}\right). $$
\end{proof}

In what follows, a slight technical complication comes from the fact that
the expected endpoint of the orbit, $z_{2n+1}$, needs not lie in a small enough repelling petal in which $\phi^o_f$ is well-defined. In order to overcome this issue,
we stop a few iterations short and work instead with $z_{2n+1-k}$.

We now come to the main proposition of this subsection:

\begin{prop}\label{prop:outgoing}
	We have:
	$$\phi_n^o(z_{2n+1-k})=\phi^o_{\black f}(z_{2n+1-k})+\frac{E^o(z_{2n+1-k})}{n}+O\left(\frac{\log n}{n^2}\right),$$
	where $E^o(z)=-\frac{1}{2} \phi^o_{\black f}(z) - C_0 - (C_1-1)(b-1)$.
\end{prop}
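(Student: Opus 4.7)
The plan is to mirror the proof of Proposition \ref{prop:incoming}, with the same three-term decomposition of $\phi_n^o(z_{2n+1-k})-\phi^o_f(z_{2n+1-k})$, adapted to the outgoing regime. First, extend the sequence $(y_j)_{n\leq j\leq 2n+1}$ from Proposition \ref{prop:c1o} to all integers $j\leq 2n+1$ by setting $y_{j-1}:=f^{-1}(y_j)$; telescoping $A_0(y_j)=1/y_j-1/y_{j+1}-1$ in the backward direction yields
$$
\phi^o_f(z_{2n+1-k})=-\frac{1}{z_{2n+1-k}}-\sum_{j=-\infty}^{2n-k}A_0(y_j).
$$
Combined with the definition of $\phi_n^o$, this produces a decomposition $\phi_n^o(z_{2n+1-k})-\phi^o_f(z_{2n+1-k})=E_1'+E_2'+E_3'$, where $E_1':=Z_{2n+1-k}^o/\epsilon_n+1/z_{2n+1-k}$ is a boundary term, $E_2':=-\frac{1}{\epsilon_n}\sum_{j=n}^{2n-k}A(\epsilon_j,z_j)+\sum_{j=n}^{2n-k}A_0(y_j)$ is the main sum, and $E_3':=\sum_{j=-\infty}^{n-1}A_0(y_j)$ is the tail beyond the outgoing range.

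The two easy terms go through by direct adaptation. Using $\psi_\epsilon^o(z)=-\epsilon/z+O(\epsilon^3)$ together with $\epsilon_{2n+1-k}/\epsilon_n=1-\frac{1}{2n}+O(1/n^2)$ from Lemma \ref{lem:epsj}, one gets $E_1'=\frac{1}{2nz_{2n+1-k}}+O(1/n^2)$. For the tail, $A_0(z)=(b-1)z^2+O(z^3)$ together with the backward-orbit asymptotic $y_{n-m}\sim 1/(n+m)$ gives $E_3'=(b-1)\sum_{\ell\geq n}\ell^{-2}+O(1/n^2)=\frac{b-1}{n}+O(1/n^2)$, exactly as for the incoming $E_3$.

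The main work is $E_2'$. Using $A(\epsilon_j,z_j)=\epsilon_j A_0(z_j)+C_0\epsilon_j^3+O(\epsilon_j^4,\epsilon_j^3 z_j)$ and $\frac{1}{\epsilon_n}\sum\epsilon_j^3=\frac{1}{n}+O(1/n^2)$, I split
$$
E_2'=-\sum_{j=n}^{2n-k}[A_0(z_j)-A_0(y_j)]-\sum_{j=n}^{2n-k}\Bigl(\tfrac{\epsilon_j}{\epsilon_n}-1\Bigr)A_0(z_j)-\tfrac{C_0}{n}+O(\log n/n^2).
$$
The first sum contributes $\frac{C_1(b-1)}{n}+O(\log n/n^2)$ by Proposition \ref{prop:c1o}. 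In the second, the replacement $A_0(z_j)\to A_0(y_j)$ costs only $O(1/n^2)$ (using $|z_j-y_j|=O((2n+1-j)/n^2)$ together with $A_0'(z)=O(z)$), reducing matters to estimating $S:=\sum_{j=n}^{2n-k}\bigl(\frac{\epsilon_j}{\epsilon_n}-1\bigr)A_0(y_j)$.

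Computing $S$ is the step I expect to be the main obstacle. Expanding $\frac{\epsilon_j}{\epsilon_n}-1=\frac{n-j}{2n^2}+O(1/n^2)$ gives $S=\frac{1}{2n^2}\sum(n-j)A_0(y_j)+O(1/n^2)$. In the incoming case the corresponding weighted correction was $O(\log n/n^2)$ thanks to $A_0(f^j(z_0))=O(1/j^2)$ and could simply be absorbed; here $A_0(y_j)\sim(b-1)/(2n+1-j)^2$ is peaked near $j=2n+1-k$, so $\sum(n-j)A_0(y_j)$ is a priori of size $O(n)$ and must be computed exactly. I will apply summation by parts using $A_0(y_j)=1/y_j-1/y_{j+1}-1$, then substitute the Fatou-coordinate expansion $1/y_j=-\phi^o_f(y_j)+(1-b)y_j+O(y_j^2)$, which comes from the normalization $\phi^o_f(z)=-1/z+o(1)$ combined with $\phi^o_f\circ f=\phi^o_f+1$ forcing the next coefficient to equal $1-b$. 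Two $O(n^2)$ pieces then cancel identically, leaving $\sum(n-j)A_0(y_j)=(n-k)\bigl[\frac{1}{z_{2n+1-k}}+\phi^o_f(z_{2n+1-k})\bigr]+O(\log n)$ and hence $S=\frac{1}{2n}\bigl[\frac{1}{z_{2n+1-k}}+\phi^o_f(z_{2n+1-k})\bigr]+O(\log n/n^2)$. Assembling $E_1'+E_2'+E_3'$, the $\frac{1}{2nz_{2n+1-k}}$ contributions from $E_1'$ and from $S$ cancel, and the remaining terms collect to $\frac{1}{n}\bigl[-\tfrac{1}{2}\phi^o_f(z_{2n+1-k})-C_0-(C_1-1)(b-1)\bigr]=\frac{E^o(z_{2n+1-k})}{n}$, up to $O(\log n/n^2)$.
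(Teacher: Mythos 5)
Your proposal is correct and uses the same three-term decomposition ($E_1'$, $E_2'$, $E_3'$) that the paper uses, with identical estimates for $E_1'$ and $E_3'$. Where you deviate is inside $E_2'$: the paper handles the weighted sum $S:=\sum_{j=n}^{2n-k}\bigl(\tfrac{\epsilon_j}{\epsilon_n}-1\bigr)A_0(y_j)$ by a terse ``as in the proof of Proposition~\ref{prop:incoming}'' and asserts that it equals $-\frac{1}{2n}\sum_{j=n}^{2n-k}A_0(y_j)+O(\log n/n^2)$. Your observation is sharp and correct: that step is \emph{not} the same as in the incoming case, because $A_0(y_j)\sim (b-1)/(2n+1-j)^2$ is peaked near $j=2n$ rather than near $j=0$, so the naive subleading correction $\frac{1}{2n^2}\sum j\,A_0(y_j)$ is $O(1/n)$, not $O(\log n/n^2)$. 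The paper's claim is still true, but only after the implicit re-centering $\frac{n-j}{2n^2}=-\frac{1}{2n}+\frac{2n-j}{2n^2}$, under which the second piece sums to $O(\log n/n^2)$ because $\sum(2n-j)A_0(y_j)=O(\log n)$. You instead compute $S$ explicitly by summation by parts using the telescoping identity $A_0(y_j)=\frac{1}{y_j}-\frac{1}{y_{j+1}}-1$ together with the coordinate expansion $\frac{1}{y_j}=-\phi_f^o(y_j)+(1-b)y_j+O(y_j^2)$ (which is correct: from $\phi_f^o\circ f=\phi_f^o+1$ and $\phi_f^o(z)=-1/z+o(1)$ one indeed gets next coefficient $1-b$); the two quadratic terms $\frac{(n-k)(n-k+1)}{2}$ cancel exactly as you say, and the residual $(1-b)\sum y_j=O(\log n)$. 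Both routes give the same $S=\frac{1}{2n}\bigl[\frac{1}{z_{2n+1-k}}+\phi_f^o(z_{2n+1-k})\bigr]+O(\log n/n^2)$, and the signs track through to $E^o$ correctly. Your approach is more explicit and self-contained where the paper appeals to an analogy that, read literally, does not transfer; the paper's is shorter once the re-centering trick is recognized. Both are valid.
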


\begin{proof}
	We proceed similarly to the proof of Proposition \ref{prop:incoming}.
	We have, for $z$ in a small enough repelling petal:
	\begin{equation}
		\phi^o_{\black f}(z)=-\frac{1}{z}-\sum_{j=1}^\infty A_0( f^{-j}(z) ),
	\end{equation}
	where $f^{-1}$ is the inverse branch of $f$ fixing $0$. With the same notations as in
	Proposition \ref{prop:c1o}, we set:
	$y_j:=f^{j-(2n+1-k)}(z_{2n+1-k})$.

	We have:
	\begin{align}
		\phi_n^o(z_{2n+1-k}) - \phi^o_{\black f}(z_{2n+1-k}) &= \frac{Z_{2n+1-k}^o}{\epsilon_n} + \frac{1}{z_{2n+1-k}}
		+ \sum_{j=n}^{2n-k} -\frac{1}{\epsilon_n}A(\epsilon_j,z_j) + A_0(y_j)
		+\sum_{j=-\infty}^{n-1} A_0(y_j) \\
		&= E_1 + E_2+ E_3,
	\end{align}
	where
	\begin{align*}
		E_1 &= \frac{Z_{2n+1-k}^o}{\epsilon_n} + \frac{1}{z_{2n+1-k}}\\
		E_2&= \sum_{j=n}^{2n-k}- \frac{1}{\epsilon_n}A(\epsilon_j,z_j) + A_0(y_j)\\
		E_3&=  \sum_{j=-\infty}^{n-1} A_0(y_j)
	\end{align*}

\begin{lem}
	We have $E_1=\frac{1}{n} \frac{1}{2z_{2n+1-k}} + O\left(\frac{1}{n^2}\right).$
\end{lem}	
	
\begin{proof}[Proof of the lemma]
	By Proposition \ref{prop:error1}, we have
	$Z_{2n+1-k}^o = -\frac{\epsilon_{2n+1-k}}{z_{2n+1-k}}+O\left(\frac{1}{n^3}\right)$
	so that
	\begin{align*}
		E_1 &= \frac{1}{z_{2n+1-k}} - \frac{\epsilon_{2n+1-k}}{\epsilon_n} \frac{1}{z_{2n+1-k}} + O\left(\frac{1}{n^2}\right)\\
		&= \frac{1}{z_{2n+1-k}} \left(1-\sqrt{\frac{n^2+n+O(1)}{n^2+2n+O(1)}}\right)+O\left(\frac{1}{n^2}\right)\\
		&=  \frac{1}{n} \frac{1}{2z_{2n+1-k}} + O\left(\frac{1}{n^2}\right).
	\end{align*}
\end{proof}

\begin{lem}
	We have
	$$E_2=\frac{1}{n} \left(-\frac{1}{2z_{2n+1-k}}-\frac{1}{2}\phi^o_{\black f}(z_{2n+1-k})-  C_0 - C_1(b-1) \right)+ O\left( \frac{\black \log n}{n^2}\right).$$
\end{lem}

\begin{proof}[Proof of the lemma]
	We have
	\begin{align*}
		E_2&= \sum_{j=n}^{2n-k} A_0(y_j) -\frac{1}{\epsilon_n} A(\epsilon_j,z_j)\\
		&= \left( \sum_{j=n}^{2n-k} A_0(y_j) - \frac{\epsilon_j}{\epsilon_n} A_0(z_j) \right)
		- \left( \sum_{j=n}^{2n-k} C_0 \epsilon_j^3 + O(z_j \epsilon_j^3) \right).
	\end{align*}
	Same as before, we have $\frac{1}{\epsilon_n}\sum_{j=n}^{2n-k} C_0 \epsilon_j^3 + O(z_j \epsilon_j^3) = \frac{C_0}{n}+O\left(\frac{1}{n^2}\right)$. On the other hand, we have
	\begin{align*}
		\sum_{j=n}^{2n-k} A_0(y_j) -\frac{\epsilon_j}{\epsilon_n}A_0(z_j) &= \sum_{j=n}^{2n-k} (1-\frac{\epsilon_j}{\epsilon_n}) A_0(z_j) +\sum_{j=n}^{2n-k} A_0(y_j)-A_0(z_j). \\
	\end{align*}	
	Now note that
	\begin{align*}
		 \sum_{j=n}^{2n-k} (1-\frac{\epsilon_j}{\epsilon_n}) A_0(z_j)&=
		  \sum_{j=n}^{2n-k} (1-\frac{\epsilon_j}{\epsilon_n}) A_0(y_j)
		  + \sum_{j=n}^{2n-k} (1-\frac{\epsilon_j}{\epsilon_n}) (A_0(z_j)-A_0(y_j)),
	\end{align*}
	and that
	\begin{align*}
		\left| \sum_{j=n}^{2n-k} (1-\frac{\epsilon_j}{\epsilon_n}) (A_0(z_j)-A_0(y_j)) \right|
		&\leq \max_{n \leq j \leq 2n-k} \left|1-\frac{\eps_j}{\eps_n} \right| \cdot \sum_{j=n}^{2n-k} \left| A_0(z_j)-A_0(y_j)\right|\\
		&=O\left(\frac{1}{n^2} \right),
	\end{align*}
	by Proposition \ref{prop:c1o}.	
	Therefore, as in the proof of Proposition \ref{prop:incoming}:
	\begin{align*}
	\sum_{j=n}^{2n-k} A_0(y_j) - \frac{\eps_j}{\eps_n} A_0(z_j)&=-\frac{C_1(b-1)}{n}+\frac{1}{2n} \sum_{j=n}^{2n-k} A_0(y_j) + O\left(\frac{\black \log n}{n^2}\right)\\
	&=- \frac{1}{n} \left(C_1(b-1)+\frac{1}{2z_{2n+1-k}} + \frac{1}{2}\phi^o_{\black f}(z_{2n+1-k}) \right)+O\left(\frac{\black \log n}{n^2}\right)
	\end{align*}
	from which the lemma follows.
\end{proof}

\begin{lem}
	We have $E_3 = \frac{b-1}{n}+O\left(\frac{1}{n^2}\right)$.
\end{lem}	

\begin{proof}[Proof of the lemma]
	The proof is the same as in the incoming case: it follows from the fact that $A_0(y) = (b-1)y^2+O(y^3)$ and $y_j=\frac{1}{2n-j}+O\left(\frac{1}{(2n-j)^2}\right)$.
\end{proof}

\end{proof}

\subsubsection{Conclusion}

\begin{prop}\label{prop:sumeps}
	We have:
	$$\frac{1}{\epsilon_n}\left( \left( \sum_{j=0}^{2n} \epsilon_j \right) -2 \right)= - \frac{1}{n} \left(\frac{1}{2}+\phi_g^{{\black \iota}}(w_{\black 0})\right)+O\left(\frac{1}{n^2}\right).$$
\end{prop}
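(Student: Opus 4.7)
The proof is a direct asymptotic computation based on Lemma \ref{lem:epsj}. Writing $\phi := \phi_g^{\iota}(w_0)$, the plan is to insert the expansion $\eps_j = \frac{1}{n} - \frac{j+\phi}{2n^3} + o(1/n^3)$ into the sum, perform the summation using elementary identities, and then multiply by the expansion of $1/\eps_n$ obtained by specializing the same lemma at $j = n$.

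Using $\sum_{j=0}^{2n} 1 = 2n+1$ and $\sum_{j=0}^{2n}(j+\phi) = (2n+1)(n+\phi)$, I would obtain
$$
\sum_{j=0}^{2n} \eps_j = \frac{2n+1}{n} - \frac{(2n+1)(n+\phi)}{2n^3} + o\!\left(\frac{1}{n^2}\right) = 2 + \frac{1}{n} - \frac{1}{n} - \frac{1}{2n^2} - \frac{\phi}{n^2} + o\!\left(\frac{1}{n^2}\right),
$$
so that the $\pm 1/n$ terms cancel and $\sum_{j=0}^{2n} \eps_j - 2 = -\frac{1}{2n^2} - \frac{\phi}{n^2} + o(1/n^2)$.

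Applying Lemma \ref{lem:epsj} at $j=n$ next gives $\eps_n = \frac{1}{n}\bigl(1 - \frac{1}{2n} + O(1/n^2)\bigr)$, whence $1/\eps_n = n + \frac{1}{2} + O(1/n)$. Multiplying the two expansions yields
$$
\frac{1}{\eps_n}\left(\sum_{j=0}^{2n} \eps_j - 2\right) = \left(n + \tfrac{1}{2} + O(1/n)\right)\left(-\tfrac{1}{2n^2} - \tfrac{\phi}{n^2} + o(1/n^2)\right) = -\frac{1}{n}\left(\frac{1}{2}+\phi\right) + O\!\left(\frac{1}{n^2}\right),
$$
which is the desired identity.

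The only point requiring care is that Lemma \ref{lem:epsj} is stated for $1 \le j \le n$, while the sum here ranges up to $j = 2n$. This is not a real obstacle: the proof of that lemma applies verbatim on $0 \le j \le 2n$, using only that $j + \phi = O(n)$ so that the relevant Taylor expansion remains valid. No substantial difficulty arises; the proposition is essentially an arithmetic exercise once Lemma \ref{lem:epsj} is in hand.
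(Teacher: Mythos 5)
Your computation reproduces the paper's argument almost line for line: the paper also Taylor-expands $\eps_j = (n^2 + j + \phi_g^\iota(w_0) + o(1))^{-1/2}$ to first order, sums, and multiplies by $1/\eps_n = n + \frac{1}{2} + O(1/n)$. The decomposition, the bookkeeping, and the cancellation of the $\pm 1/n$ terms all match the paper's.

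There is, however, a shared gap, located exactly at the point you wave away in your last paragraph. Extending Lemma \ref{lem:epsj} to the range $0 \le j \le 2n$ does not keep the per-term error uniformly $o(1/n^3)$: with $x = (j+\phi_g^\iota(w_0)+o(1))/n^2$, the first dropped Taylor term of $(1+x)^{-1/2}$ contributes $\frac{3(j+\phi_g^\iota(w_0))^2}{8n^5}$ to $\eps_j$, which for $j$ of order $n$ is $\Theta(1/n^3)$ rather than $o(1/n^3)$. Summed over $j=0,\dots,2n$ this gives
$$\frac{3}{8n^5}\sum_{j=0}^{2n}\bigl(j+\phi_g^\iota(w_0)\bigr)^2 = \frac{1}{n^2} + O\left(\frac{1}{n^3}\right),$$
which is of the same order as the terms you retained. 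Including it flips the sign of the $\frac{1}{2n^2}$ contribution, so that $\sum_{j=0}^{2n}\eps_j - 2 = -\frac{\phi_g^\iota(w_0)}{n^2} + \frac{1}{2n^2} + o(1/n^2)$ and the constant in the conclusion should read $-\frac{1}{2}$ rather than $+\frac{1}{2}$. (A quick numerical check with $\phi_g^\iota(w_0)=0$, $n=100$ gives $\sum_{j=0}^{200}(10^4+j)^{-1/2}-2 \approx +5\times 10^{-5}\approx +\frac{1}{2n^2}$.) The paper's own proof drops the same term, so this is not a misunderstanding on your part, and the sign of this additive constant is ultimately harmless for the wandering-domain construction since only the coefficient of $\phi_g^\iota(w_0)$ in the index $\kappa$ matters; but the step you dismissed as ``not a real obstacle'' is precisely the one place in this proposition that requires more than arithmetic.
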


\begin{proof}
	We have:
	\begin{align*}
		 \sum_{j=0}^{2n} \epsilon_j &=  \sum_{j=0}^{2n} \frac{1}{\sqrt{n^2+j+\phi_g^{{\black \iota}}(w_{\black 0})+o(1)}}\\
		 &= \sum_{j=0}^{2n} \frac{1}{n} \left( 1- \frac{j}{2n^2}-\frac{\phi_g^\iota(w_0)}{2n^2}+o(\frac{1}{n^2})\right)\\
		 &=2+\frac{1}{n} - \frac{\phi_g^{{\black \iota}}(w_{\black 0})}{n^2}+o(\frac{1}{n^2})- \frac{1}{2n^3}\sum_{j=0}^{2n} j\\
		 &=2+\frac{1}{n} - \frac{\phi_g^{{\black \iota}}(w_{\black 0})}{n^2}+o(\frac{1}{n^2})- \frac{1}{2n^3} \frac{2n(2n+1)}{2}\\
		 &=2 - \frac{\phi_g^{{\black \iota}}(w_{\black 0})}{n^2} - \frac{1}{2n^2}+o(\frac{1}{n^2}).
	\end{align*}
	On the other hand:
	\begin{align*}
		\frac{1}{\epsilon_n} &= \sqrt{n^2+n+O(1)}\\
		&=n \left(1 + \frac{1}{2n}+ O\left(\frac{1}{n^2}\right) \right)\\
		&=n + \frac{1}{2}+O\left(\frac{1}{n}\right),
	\end{align*}
	and therefore:
	\begin{align*}
		\frac{1}{\epsilon_n}\left( \left( \sum_{j=0}^{2n} \epsilon_j \right) -2 \right)
		&= \left( - \frac{\phi_g^{{\black \iota}}(w_{\black 0})}{n^2} - \frac{1}{2n^2}+o(\frac{1}{n^2}) \right)
		\left(n + \frac{1}{2}+O\left(\frac{1}{n}\right) \right)\\
		&=- \frac{1}{n} \left(\frac{1}{2}+\phi_g^{{\black \iota}}(w_{\black 0})\right)+O\left(\frac{1}{n^2}\right).
	\end{align*}
\end{proof}

We are now finally ready to prove the following theorem:

\begin{thm}[Lavaurs' theorem with an error estimate]\label{thm:lav} {\black Let $K\subset \mathcal{B}_f\times \mathcal{B}_g$ be a compact set. For all $(z_0,w_0)\in K$ and all sufficiently large $n$ we have:
	$$z_{2n+1}=\mathcal{L}_f(z_0) + \frac{h(z_0,w_0)}{n} 	+ O\left(\frac{\log n}{n^2}\right),$$
	where
	$$
	h(z,w)=\frac{\mathcal{L}'_f(z)}{(\phi^\iota_f)'(z)}     \left(
	2C_0+2(C_1-1)(b-1)- \frac{1}{2} +\phi^\iota_f(z) -\phi_g^\iota(w) \right)
	$$
	is holomorphic on $\mathcal{B}_f\times \mathcal{B}_g$ 	and the constant in $O\left(\frac{\log n}{n^2}\right)$ is independent of the point $(z_0,w_0)$ and the integer $n$.}
\end{thm}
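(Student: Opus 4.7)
My plan is to combine Propositions \ref{prop:incoming}, \ref{prop:outgoing} and \ref{prop:sumeps} into a precise estimate for $\phi_f^o(z_{2n+1-k})-\phi_f^\iota(z_0)$, and then transport this via a first-order Taylor expansion of $(\phi_f^o)^{-1}$ into an estimate for $z_{2n+1}-\mathcal{L}_f(z_0)$. The reason for working with $z_{2n+1-k}$ rather than $z_{2n+1}$ is precisely the content of Proposition \ref{prop:z2n+1bounded}: the point $z_{2n+1-k}$ lies in the repelling petal $\D(r,r)$, where $\phi_f^o$ is defined with bounds uniform in $(z_0,w_0) \in K$.

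The key algebraic computation proceeds as follows. Since $\psi_\epsilon^o-\psi_\epsilon^\iota\equiv -2$, one has $Z_n^o-Z_n^\iota=-2$, and the definitions of $\phi_n^{\iota/o}$ give
\begin{equation*}
\phi_n^o(z_{2n+1-k})-\phi_n^\iota(z_0) = \frac{1}{\epsilon_n}\left(\sum_{j=0}^{2n}\epsilon_j-2\right)-\frac{1}{\epsilon_n}\sum_{j=2n-k+1}^{2n}\epsilon_j.
\end{equation*}
The first summand is handled directly by Proposition \ref{prop:sumeps}. For the second, I would extend Lemma \ref{lem:epsj} to $1\leq j\leq 2n$ to obtain $\epsilon_j=\frac{1}{n}-\frac{j}{2n^3}+O(1/n^3)$ uniformly, from which a short calculation yields $\frac{1}{\epsilon_n}\sum_{j=2n-k+1}^{2n}\epsilon_j=k-\frac{k}{2n}+O(1/n^2)$. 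Propositions \ref{prop:incoming} and \ref{prop:outgoing} then allow replacing the approximate coordinates by the true Fatou coordinates up to corrections $E^{\iota/o}/n$. Setting $W_n:=\phi_f^o(z_{2n+1-k})+k-\phi_f^\iota(z_0)$ and using the explicit formulas for $E^\iota(z_0)$ and for $E^o(z_{2n+1-k})=-\frac{1}{2}(\phi_f^\iota(z_0)-k+W_n)-C_0-(C_1-1)(b-1)$, I get a linear equation for $W_n$ in which all $k$-dependent terms cancel; solving yields
\begin{equation*}
W_n=\frac{1}{n}\left(2C_0+2(C_1-1)(b-1)-\tfrac{1}{2}+\phi_f^\iota(z_0)-\phi_g^\iota(w_0)\right)+O(\log n/n^2).
\end{equation*}

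To conclude, set $y_{2n+1}:=f^k(z_{2n+1-k})$. Since the maps $f_j$ differ from $f$ by $O(1/n^2)$, $k$ is uniformly bounded in $n$, and $y_j$ remains in a compact set on which $f$ is Lipschitz, an elementary inductive bound gives $z_{2n+1}=y_{2n+1}+O(1/n^2)$. By the functional equation of $\phi_f^o$, $\phi_f^o(y_{2n+1})=\phi_f^o(z_{2n+1-k})+k=\phi_f^\iota(z_0)+W_n$, so applying $(\phi_f^o)^{-1}$ and expanding to first order about $\phi_f^\iota(z_0)$ with $((\phi_f^o)^{-1})'(\phi_f^\iota(z_0))=\mathcal{L}_f'(z_0)/(\phi_f^\iota)'(z_0)$ gives
\begin{equation*}
z_{2n+1}=\mathcal{L}_f(z_0)+\frac{\mathcal{L}_f'(z_0)}{(\phi_f^\iota)'(z_0)}\,W_n+O(1/n^2),
\end{equation*}
which is the desired formula for $h$. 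The main subtlety is verifying the cancellation of all $k$-dependent contributions in the equation for $W_n$; this cancellation is forced by the fact that $k$ depends only on the size of the repelling petal and not on the dynamical data, and the bookkeeping above makes it explicit. Uniformity of the error terms on the compact set $K$ is inherited from the uniformity already established in Propositions \ref{prop:incoming}, \ref{prop:outgoing}, and \ref{prop:sumeps}, together with the fact that $\phi_f^o$ and $\mathcal{L}_f'$ are bounded with bounded derivatives on the relevant compacta.
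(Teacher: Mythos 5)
Your proposal is correct and follows essentially the same route as the paper's proof: combine Propositions \ref{prop:incoming}, \ref{prop:outgoing} and \ref{prop:sumeps} via the telescoping identity relating $\phi_n^o(z_{2n+1-k})$ to $\phi_n^\iota(z_0)$, estimate the leftover tail $\frac{1}{\eps_n}\sum_{j=2n-k+1}^{2n}\eps_j$, and then transfer the resulting estimate through a first-order Taylor expansion of $(\phi_f^o)^{-1}$ at $\phi_f^\iota(z_0)$, using $((\phi_f^o)^{-1})'(\phi_f^\iota(z_0))=\mathcal{L}_f'(z_0)/(\phi_f^\iota)'(z_0)$. The only difference is cosmetic: you solve an explicit linear equation for $W_n:=\phi_f^o(z_{2n+1-k})+k-\phi_f^\iota(z_0)$, which makes the cancellation of the $k$-dependent terms fully transparent, whereas the paper sets $E^o(z_{2n+1}):=E^o(z_{2n+1-k})-k/2$ and then substitutes the a priori estimate $\phi_f^o(z_{2n+1-k})+k=\phi_f^\iota(z_0)+O(1/n)$ back in — the two bookkeepings are equivalent.
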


\begin{proof}
	We have, by definition:
	\begin{align*}
		\phi_n^o(z_{2n+1-k})&=\frac{1}{\epsilon_n} Z_n^o + \frac{1}{\epsilon_n} \sum_{j=n}^{2n-k} \epsilon_j\\
		&=\frac{Z_n^\iota}{\epsilon_n}- \frac{2}{\epsilon_n}+  \frac{1}{\epsilon_n} \sum_{j=n}^{2n-k} \epsilon_j\ \\
		&=\phi_n^\iota(z_0)-\frac{2}{\epsilon_n}+ \frac{1}{\epsilon_n} \sum_{j=0}^{2n-k} \epsilon_j, \\
	\end{align*}
	and therefore
	$$\phi^o_{\black f}(z_{2n+1-k}) + \frac{E^o(z_{2n+1-k})}{n} = \phi^\iota_{\black f}(z_0) + \frac{E^\iota(z_0)}{n} - \frac{\phi_g^{{\black \iota}}(w_0)+1/2}{n} - \frac{1}{\eps_n} \sum_{j=2n-k}^{2n} \eps_j+  O\left(\frac{\black \log n}{n^2}\right),$$
	by Propositions \ref{prop:incoming}, \ref{prop:outgoing} and \ref{prop:sumeps}.

	On the other hand, we have:
	\begin{align*}
		\frac{1}{\eps_n} \sum_{j=2n-k}^{2n} \eps_j &=
		\frac{1}{\frac{1}{n}-\frac{1}{2n^2}+O(\frac{1}{n^3})} \left(\frac{k}{n}- k\frac{2n}{2n^3} + O(\frac{1}{n^3}) \right) \text{ by Lemma \ref{lem:epsj}}\\
		&=\left(1+\frac{1}{2n}+O(\frac{1}{n^2})\right) \left(k-\frac{k}{n}+O(\frac{1}{n^2})\right) \\
		&=k-\frac{k}{2n}+O(\frac{1}{n^2}).
	\end{align*}
	
	Therefore:
	$$
	\phi^o_{\black f}(z_{2n+1-k})+k + \frac{E^o(z_{2n+1-k})-k/2}{n} = \phi^\iota_{\black f}(z_0) + \frac{E^\iota(z_0)}{n} - \frac{\phi_g^{{\black \iota}}(w_0)+1/2}{n}+  O\left(\frac{\black \log n}{n^2}\right).
	$$
	Recall that the outgoing Fatou coordinate $\phi^o_{\black f}$ has a well-defined inverse $\psi_f: \C \to \C$
	satisfying the functional equation $\psi_f(Z+1)=f \circ \psi_f(Z)$.
	Observe that since $k=O(1)$, we have
	$$
	\psi_f \left(\phi^o_{\black f}(z_{2n+1-k})+k \right) = f^k(z_{2n+1-k}) + O\left(\frac{1}{n^2}\right) = z_{2n+1} + O\left(\frac{1}{n^2}\right).
	$$
	Therefore, composing on both sides by $\psi_f$ and setting $E^o(z_{2n+1}):=E^o(z_{2n+1-k})-\frac{k}{2}$, we get:

	\begin{align*}
		z_{2n+1} &= (\phi^o_{\black f})^{-1} \left(   \phi^\iota_{\black f}(z_0) +   \frac{E^\iota(z_0)- E^o(z_{2n+1})-1/2-\phi_g^{{\black \iota}}(w_0)}{n} + O\left(\frac{\black \log n}{n^2}\right)      \right)\\
		&= \mathcal{L}_{\black f}(z_0) +  {((\phi^o_{\black f})^{-1})}'(\phi^\iota_{\black f}(z_0)) \left(     \frac{E^\iota(z_0)- E^o(z_{2n+1})-1/2-\phi_g^{{\black \iota}}(w_0)}{n}    \right) + O\left(\frac{\black \log n}{n^2}\right) \\
		&=  \mathcal{L}_{\black f}(z_0) + \frac{\mathcal{L}_{{\black f}}'(z_0)}{(\phi^\iota_{\black f})'(z_0)}   \left(    \frac{E^\iota(z_0)- E^o(z_{2n+1})-1/2-\phi_g^{{\black \iota}}(w_0)}{n}   \right) + O\left(\frac{\black \log n}{n^2}\right).
	\end{align*}
	In particular, we have proved that $z_{2n+1}=\mathcal{L}_{\black f}(z_0)+O\left(\frac{1}{n}\right)$.
	From there, we deduce that $\phi^o_{\black f}(z_{2n+1-k})+k=\phi^o_{\black f}(z_0)+O\left(\frac{1}{n}\right)$.
	Plugging this into the expression for $E^o(z_{2n+1})$, we finally obtain:
	$$z_{2n+1}=\mathcal{L}_{\black f}(z_0) + \frac{1}{n} \frac{\mathcal{L}_{{\black f}}'(z_0)}{(\phi^\iota_{\black f})'(z_0)}     \left(
	2C_0+2(C_1-1)(b-1) - \frac{1}{2}+ \phi^\iota_{\black f}(z_0)-\phi_g^{{\black \iota}}(w_0) \right)
	+ O\left(\frac{\black \log n}{n^2}\right).$$
\end{proof}

\subsection{Choice of index}\label{subsection:choice of index}

Assume that ${\black z_0}$ is a Siegel fixed point for the Lavaurs map $\lcal_{\black f}$, and
let $\lambda$ be its multiplier. Denote by $\kappa_{\black z_0}$ the index from Theorem \ref{thm:siegel-simple}: it is given by the
formula
$$\kappa_{\black z_0} = \frac{2b_2 c_0}{\lambda(1-\lambda)}+\frac{c_1}{\lambda},$$ with $2b_2 = \lcal_{{\black f}}''({\black z_0})$,
$c_0=h({\black z_0})$, $c_1 = h'({\black z_0})$, and
\begin{equation}
	h(z):=\frac{\mathcal L_{{\black f}}'(z)}{(\phi^\iota_{\black f})'(z)}     \left(
	2C_0+2(C_1-1)(b-1)- \frac{1}{2} +\phi^\iota_{\black f}(z) -\phi_g^{{\black \iota}}(w_0) \right).
\end{equation}
The function $h$ is the error term computed in the previous section.

A straightforward computation gives us that
\begin{equation}
\kappa_{\black z_0}=1+\frac{
	C + \phi^\iota_{\black f}({\black z_0})-\phi_g^{{\black \iota}}(w_0) }{((\phi^\iota_{\black f})'({\black z_0}))^2}\left(\frac{\mathcal{L}_{{\black f}}''({\black z_0})(\phi^\iota_{\black f})'({\black z_0})}{\lambda(1-\lambda)}-(\phi^\iota_{\black f})''({\black z_0})\right),
\end{equation}
for some universal constant $C \in \R$.

Observe that $\re(\kappa_{\black z_0})$ is independent from $w_0$ if and only if
\begin{equation}\label{eq:conditionkappa}
\frac{\mathcal{L}_{{\black f}}''({\black z_0})(\phi^\iota_{\black f})'({\black z_0})}{\lambda(1-\lambda)}-(\phi^\iota_{\black f})''({\black z_0})=0.
\end{equation}

If condition \eqref{eq:conditionkappa} is satisfied, then $\kappa_{\black z_0}=1$, and accordingly, Theorem \ref{thm:siegel-simple}
implies that there are no wandering domains for $P$ converging to the bi-infinite orbit of $({\black z_0},0)$, since we are then in the expulsion scenario of the trichotomy.

On the other hand, if the equality \eqref{eq:conditionkappa} is not satisfied, then $w_0 \mapsto \kappa_{\black z_0}(w_0)$ is a non-constant holomorphic function (defined on the parabolic basin $\mathcal{B}_g$),
of the form $w_0 \mapsto a \phi_g^{{\black \iota}}(w_0)+b$, with $a,b \in \C$ (independent from $w_0$) and $a \neq 0$.
Therefore, the condition for $\re(\kappa_{\black z_0}(w_0))$ to be negative is equivalent to $\phi_g^{{\black \iota}}(w_0)$ belonging to some
half-plane; but $\phi_g^{{\black \iota}}(\mathcal{B}_g)$ contains a domain of the form $U:=\{ W \in \C :  \re(W) > R - k |\im(W)|   \}$, for some
$R>0$ and $k \in (0,1)$, see e.g. \cite[Proposition 2.2.1 p.330]{Shishikura}. Since $U$ intersects any open half-plane, if condition \eqref{eq:conditionkappa} is {\black not} satisfied, then there exists some open subset $U_0 \subset U$ for which
$\re(\kappa_{\black z_0}(w_0))<0$, and so by Theorem \ref{thm:siegel-simple} there is a wandering domain accumulating on $({\black z_0},0)$.

\section{A Lavaurs map with a Siegel disk}\label{section: proposition B'}

The goal of this section is to construct a polynomial $f$ of the form $f(z)=z+z^2+z^3+O(z^4)$
whose Lavaurs map has a Siegel fixed point with diophantine multiplier $\lambda$, which does not
satisfy equality \eqref{eq:conditionkappa}.
The outline of the argument is as follows:
\begin{itemize}
	\item We start by finding a degree 7 real polynomial whose Lavaurs map has a superattracting fixed point, and for which a suitable reformulation of \eqref{eq:conditionkappa} does not hold
	\item We perturb that polynomial to get an attracting but not superattracting fixed point, in a way
	that equality \eqref{eq:conditionkappa} still does not hold
	\item  We apply quasiconformal surgery to get a multiplier arbitrarily close to $1$
	\item We show that in the limit, we get a polynomial whose Lavaurs map has a parabolic fixed point
	that does not exit the parabolic basin
	\item We perturb that last polynomial to get a Siegel fixed point, leaving the family of real polynomials;
	we prove that condition \eqref{eq:conditionkappa} does not hold for that last polynomial.
\end{itemize}

Recall that in \cite{ABDPR}, there are two constructions of a Lavaurs map with an attracting fixed point.
One is based on a residue computation near infinity in the Ecalle cylinder, and makes use of the fact
that in the family $f_a(z):=z+z^2+az^3$, the multiplier of the horn map $e_a$ of $f_a$ at the ends of the Ecalle cylinder is a non-constant holomorphic function of $a$.
This method cannot be used in a family of polynomials of the form $f(z)=z+z^2+z^3+O(z^4)$, where
those fixed points for the horn map are persistently parabolic.
This is why we adapt the second strategy for the first two steps described above.
\medskip

{\black
{\bf Remark:} From now on we will be using slightly different notations then in previous sections. Namely we will drop the subscript $f$ from the Fatou coordinates and the Lavaurs map in order to have space for other indexes in the subscript. It will be clear from the context to which function the Fatou coordinates or Lavaurs maps correspond.}

\medskip

Let $\phi^\iota$ be the incoming Fatou coordinate, and $\psi^o$ the outgoing Fatou parametrization.
Recall that the Lavaurs map is given by $\lcal = \psi^o \circ \phi^\iota : \mathcal{B}_f \to \C$, the lifted horn map is
$\ecal = \phi^\iota \circ \psi^o : V \to \C$, with $V \subset \C$ containing $\{Z : |\im(Z)|>R  \}$ for $R$ large enough. We have $\ecal \circ \phi^\iota = \phi^\iota \circ \lcal$, and
$\ecal(Z+1)=\ecal(Z)+1$, so $\ecal$ descends to a self-map of $\C/\Z$. Conjugating by the isomorphism $Z \mapsto e^{2i\pi Z}$, we
obtain a map $e: U - \{0,\infty\} \to \C^*$, where $U$ is an open set containing $0$ and $\infty$.
The map extends to $U$, and fixes $0$ and $\infty$. Since we consider polynomials with
$f(z)=z+z^2+z^3+O(z^4)$, both of those fixed points have multiplier 1.

\subsection{Construction a polynomial}
%

{\black 
Let $f(z)=z+z^2+O(z^3)$ be a polynomial and $\zeta = \lcal(\zeta)$ a fixed point of its Lavaurs map, with multiplier $\lambda$.

\begin{defi}\label{star} If  $\lambda \notin \{0,1\}$ we say that the pair $(f,\zeta)$ is degenerate if and only if
\begin{equation}
\frac{\lcal''(\zeta) (\phi^\iota)'(\zeta)}{\lambda(1- \lambda)} - (\phi^\iota)''(\zeta) = 0.
\end{equation}
\end{defi}

}

\vspace{1cm}


\begin{lem}
	We have:
	\begin{equation}
		\frac{\lcal''(\zeta) (\phi^\iota)'(\zeta)}{\lambda(1- \lambda)} - (\phi^\iota)''(\zeta)
		=\frac{\lambda}{1-\lambda} \left[\frac{(\psi^o)''(\phi^\iota(\zeta)) (\phi^\iota)'(\zeta)}{(\psi^o)'(\phi^\iota(\zeta))^2}  + (\phi^\iota)''(\zeta)     \right].
	\end{equation}
	
\end{lem}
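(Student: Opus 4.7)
The plan is a direct chain-rule computation; there is no genuine obstacle beyond bookkeeping. Setting $Z_0 := \phi^\iota(\zeta)$, I would first differentiate $\lcal = \psi^o \circ \phi^\iota$ twice at $\zeta$ to obtain
\begin{align*}
\lcal'(\zeta) &= (\psi^o)'(Z_0)\,(\phi^\iota)'(\zeta) = \lambda,\\
\lcal''(\zeta) &= (\psi^o)''(Z_0)\,((\phi^\iota)'(\zeta))^2 + (\psi^o)'(Z_0)\,(\phi^\iota)''(\zeta).
\end{align*}
Since $\phi^\iota$ is a local biholomorphism at $\zeta$ we have $(\phi^\iota)'(\zeta) \neq 0$, and the first identity yields both $(\psi^o)'(Z_0) = \lambda/(\phi^\iota)'(\zeta)$ and $(\psi^o)'(Z_0)^2 = \lambda^2/((\phi^\iota)'(\zeta))^2$. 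The hypothesis $\lambda \notin \{0,1\}$ (from the definition of non-degeneracy) ensures all denominators appearing below are nonzero.

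Next I would substitute the formula for $\lcal''(\zeta)$ into the left-hand side of the claimed identity. Multiplying through by $(\phi^\iota)'(\zeta)/(\lambda(1-\lambda))$ splits the expression into a ``pure'' term and a ``mixed'' term:
\begin{align*}
\frac{\lcal''(\zeta)\,(\phi^\iota)'(\zeta)}{\lambda(1-\lambda)} - (\phi^\iota)''(\zeta)
&= \frac{(\psi^o)''(Z_0)\,((\phi^\iota)'(\zeta))^3}{\lambda(1-\lambda)}
+ \frac{(\psi^o)'(Z_0)\,(\phi^\iota)''(\zeta)\,(\phi^\iota)'(\zeta)}{\lambda(1-\lambda)} - (\phi^\iota)''(\zeta).
\end{align*}
Using $(\psi^o)'(Z_0)(\phi^\iota)'(\zeta)=\lambda$ the mixed term simplifies to $(\phi^\iota)''(\zeta)/(1-\lambda)$, and combining with the $-(\phi^\iota)''(\zeta)$ correction produces $\frac{\lambda}{1-\lambda}(\phi^\iota)''(\zeta)$.

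Finally I would expand the right-hand side using $(\psi^o)'(Z_0)^2=\lambda^2/((\phi^\iota)'(\zeta))^2$:
\begin{align*}
\frac{\lambda}{1-\lambda}\left[\frac{(\psi^o)''(Z_0)(\phi^\iota)'(\zeta)}{(\psi^o)'(Z_0)^2}+(\phi^\iota)''(\zeta)\right]
&= \frac{(\psi^o)''(Z_0)\,((\phi^\iota)'(\zeta))^3}{\lambda(1-\lambda)} + \frac{\lambda}{1-\lambda}(\phi^\iota)''(\zeta),
\end{align*}
which matches the simplified left-hand side term by term, proving the lemma.
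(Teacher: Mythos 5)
Your proof is correct and follows essentially the same route as the paper's: differentiate $\lcal = \psi^o \circ \phi^\iota$ twice by the chain rule, use $(\psi^o)'(\phi^\iota(\zeta))(\phi^\iota)'(\zeta)=\lambda$ to eliminate $(\psi^o)'(\phi^\iota(\zeta))$, and rearrange. The only cosmetic difference is that you normalize both sides to a common form in $((\phi^\iota)'(\zeta))^3$, whereas the paper factors out $\frac{1}{\lambda}(\phi^\iota)'(\zeta)$ first and then divides by $1-\lambda$; the algebra is identical.
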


\begin{proof}
	Since $\lcal = \psi^o \circ \phi^\iota$ we obtain
	$$\lcal'(z)=(\psi^o)'(\phi^\iota(z)) \phi'(z)$$ and
	$$\lcal''(z) = (\psi^o)''(\phi^\iota(z)) \phi'(z)^2+(\psi^o)'(\phi^\iota(z)) (\phi^\iota)''(z).$$
	Recalling that $\lcal'(\zeta)=\lambda$ it follows that
	$$\frac{\phi'(\zeta)}{\lambda} = \frac{1}{(\psi^o)'(\phi^\iota(\zeta))},$$
	and so
	\begin{align*}
		\frac{\lcal''(\zeta)\phi'(\zeta)}{\lambda}&= \frac{(\psi^o)''(\phi^\iota(\zeta)) (\phi^\iota)'(\zeta)^2}{(\psi^o)'(\phi^\iota(z))}
		+ (\phi^\iota)''(\zeta).
	\end{align*}
	It follows that
	\begin{align*}
		\frac{\lcal''(\zeta) (\phi^\iota)'(\zeta)}{\lambda(1- \lambda)}-(\phi^\iota)''(\zeta)
		&= \frac{1}{1-\lambda} \left[   \frac{(\psi^o)''(\phi^\iota(\zeta)) (\phi^\iota)'(\zeta)^2}{(\psi^o)'(\phi^\iota(\zeta))}
		+ (\phi^\iota)''(\zeta)    \right] - (\phi^\iota)''(\zeta) \\
		&= \frac{(\psi^o)''(\phi^\iota(\zeta)) (\phi^\iota)'(\zeta)^2}{(1-\lambda)(\psi^o)'(\phi^\iota(\zeta))} + (\phi^\iota)''(\zeta) \frac{\lambda}{1-\lambda}\\
		&=\frac{\lambda}{1-\lambda} \left[\frac{(\psi^o)''(\phi^\iota(\zeta)) (\phi^\iota)'(\zeta)}{(\psi^o)'(\phi^\iota(\zeta))^2}  + (\phi^\iota)''(\zeta)     \right].
	\end{align*}
\end{proof}
For the rest of the paper we shall set
\begin{equation}
	\fcal(f,\zeta):=\frac{(\psi^o)''(\phi^\iota(\zeta)) (\phi^\iota)'(\zeta)}{(\psi^o)'(\phi^\iota(\zeta))^2}  + (\phi^\iota)''(\zeta),
\end{equation}
where $\psi^o$ and $\phi^\iota$ are the Fatou parametrization and coordinates associated to $f$.
Note that for $\lambda \notin \{0,1\}$, {\black the pair $(f,\zeta)$ is degenerate if and only if}
$\fcal(f,\zeta)=0$.

\vspace{1cm}

We record here for later use the following lemma:

\begin{lem}\label{lem:phi''non0}
	Let $f(z)=z+z^2+az^3+O(z^4)$ and let
	$\phi^\iota$ denote its incoming Fatou coordinate. Let $c$ be a critical point in the parabolic basin of $f$.
	Then we have $(\phi^\iota)''(c) =  0$ if and only if either $c$ is multiple critical point of $f$, or if
	the orbit of $c$ meets another critical point of $f$.
\end{lem}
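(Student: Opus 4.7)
The plan is to exploit the functional equation $\phi^\iota \circ f = \phi^\iota + 1$ by differentiating twice and evaluating at a critical point. Differentiating once gives $(\phi^\iota)'(f(z)) f'(z) = (\phi^\iota)'(z)$, so at $z=c$ (where $f'(c)=0$) we immediately get $(\phi^\iota)'(c)=0$. Differentiating a second time yields
\begin{equation*}
(\phi^\iota)''(f(z))\, f'(z)^2 + (\phi^\iota)'(f(z))\, f''(z) = (\phi^\iota)''(z),
\end{equation*}
and plugging in $z=c$ the first term disappears, leaving the key identity
\begin{equation*}
(\phi^\iota)''(c) = (\phi^\iota)'(f(c)) \cdot f''(c).
\end{equation*}

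From this identity, $(\phi^\iota)''(c)=0$ if and only if one of the two factors on the right vanishes. The factor $f''(c)=0$ is precisely the condition that $c$ be a multiple critical point of $f$, which is one of the two alternatives in the lemma. So it remains to identify when $(\phi^\iota)'(f(c))=0$ with the second alternative, namely that the orbit of $c$ meets another critical point.

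For this, I would use the standard description of the zero set of $(\phi^\iota)'$ on $\mathcal{B}_f$: since $\phi^\iota$ is biholomorphic on the incoming petal $P_f^\iota$, and since $\phi^\iota$ is extended to $\mathcal{B}_f$ via $\phi^\iota(z) = \phi^\iota(f^n(z)) - n$ for $n$ large enough that $f^n(z)\in P_f^\iota$, the chain rule gives $(\phi^\iota)'(z) = (\phi^\iota)'(f^n(z)) \cdot (f^n)'(z)$ with $(\phi^\iota)'(f^n(z))\neq 0$. Hence $(\phi^\iota)'(z)=0$ is equivalent to $(f^n)'(z)=0$ for $n$ large, i.e.\ the forward orbit of $z$ contains a critical point of $f$. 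Applied to $z=f(c)$, this means $(\phi^\iota)'(f(c))=0$ iff some $f^k(c)$ with $k\geq 1$ lies in $\crit$. Since the orbit of $c$ lies in $\mathcal{B}_f$ and converges to the parabolic fixed point $0$, such an $f^k(c)$ cannot equal $c$ itself, so it is indeed a different critical point. This completes the equivalence.

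There is no real obstacle here — the only subtle point is the characterization of the critical locus of $\phi^\iota$ on $\mathcal{B}_f$, which follows immediately from the dynamical definition of $\phi^\iota$ on the basin. Everything else is a direct computation from the functional equation.
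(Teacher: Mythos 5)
Your proof is correct, and it is a genuinely different (and cleaner) route than the one the paper takes. The paper works with the explicit representation $\phi^\iota(z)=\lim_n \bigl(-1/f^n(z)-n-(1-a)\log n\bigr)$, computes $\phi_n''(c)=f''(c)\,\prod_{k=1}^{n-1}f'(f^k(c))/[f^n(c)]^2$, and then has to show the non-$f''(c)$ factor has a \emph{nonzero} limit via the asymptotics $f'(f^k(c))=\exp(-2/k+O(\log k/k^2))$ and $f^n(c)\sim 1/n$. You instead differentiate the Abel equation twice and evaluate at $c$, which isolates the identity $(\phi^\iota)''(c)=(\phi^\iota)'(f(c))\cdot f''(c)$ in one line; the vanishing of $(\phi^\iota)'(f(c))$ is then characterized by the standard fact that $(\phi^\iota)'$ vanishes precisely on the backward-orbit of the critical set inside $\mathcal{B}_f$, which you justify correctly from $\phi^\iota=\bigl(\phi^\iota\circ f^n\bigr)-n$ and univalence of $\phi^\iota$ on the petal. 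Your step dismissing $f^k(c)=c$ (so that the critical point met is really ``another'' one) is also fine: a periodic point in the parabolic basin would have to be $0$, which is not critical. What your approach buys is that the nonvanishing-of-the-limit estimate is replaced by a known qualitative fact (univalence of $\phi^\iota$ on the petal), so there is no analytic estimation to do at all; the paper's version, by contrast, yields the explicit value of $(\phi^\iota)''(c)$ as a limit, which it does not actually need for this lemma. Both are valid.
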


\begin{proof} {\black The sequence of functions $$\phi_n(z):=-\frac{1}{f^n(z)}-n-(1-a) \log n$$  converges
 locally uniformly on the parabolic basin  to
	
	$$
	\phi^\iota(z):=\limn \phi_n(z),
	$$}
	
	Therefore $(\phi^\iota)''(c)$  equals $\limn \phi_n''(c)$. Moreover,
	$
	\phi_n'(z) = \dfrac{(f^n)'(z)}{\left[f^n(z) \right]^2}
	$
	and
	\begin{align*}
	\phi_n''(c)&= \frac{d}{dz}_{|z=c} \frac{(f^n)'(z)}{\left[f^n(z) \right]^2} \\
	&= \frac{(f^n)''(c)  \left[f^n(c) \right]^2 -  2\left[ (f^n)'(c)\right]^2 f^n(c) }{\left[f^n(c) \right]^4}\\
	&=\frac{(f^n)''(c)}{\left[f^n(c) \right]^2}\\
	&=f''(c) \frac{\prod_{k=1}^{n-1} f'(f^k(c))}{\left[f^n(c) \right]^2}.
	\end{align*}	
	
	For the third and fourth equalities we used the fact that $f'(c)=0$.
	Since $c$ is in the parabolic basin of $f$, we have $\left[ f^n(c) \right]^2  \sim \frac{1}{n^2}$.
	Moreover, for $k \geq k_0$ with $k_0$ large enough, $f'(f^k(c)) \neq 0$ and
	$$f'(f^k(c)) = 1-\frac{2}{k}+O\left(\frac{\log k}{k^2}\right) = \exp\left(-\frac{2}{k}+O\left(\frac{\log k}{k^2}\right) \right).$$
	Therefore:
	\begin{align*}
	\prod_{k=k_0}^{n-1} f'(f^k(z)) &= \prod_{k=k_0}^{n-1} \exp\left(-\frac{2}{k}+O\left(\frac{\log k}{k^2}\right) \right)=\frac{\exp(O(1))}{n^2}.  \\			
	\end{align*}
	In particular,
	$\limn \dfrac{\prod_{k=k_0}^{n-1} f'(f^k(c))}{\left[f^n(c) \right]^2} \neq 0$, so
	$(\phi^\iota)''(c) = 0$ if and only if $f''(c)=0$ or $(f^k)'(c) = 0$, which concludes
	the proof.
\end{proof}

For $t \in \R$, a real polynomial  $P(z)=z+z^2+z^3+O(z^4)$ and $n > \deg P$ odd, let
$$f_t(z) = P(z) - \frac{P'(t)}{n t^{n-1}}z^n$$
Note that $f_t'(t)=0$: the choice of this family ensures that we have a marked critical point in $\R$. By $\lcal_{t}$ we denote the Lavaurs map of phase $0$ for the polynomial $f_t$.

\begin{prop}\label{prop:extn}
	Assume that there exists $P,n$ and $t_\infty <0$ as above such that :
	\begin{enumerate}
		\item $f_{t_\infty}(t_\infty)=0$
		\item $\frac{d}{dt}_{|t=t_\infty} f_t(t) <0$
		\item  $f_{t_\infty}$ has negative leading coefficient
		\item there exists $x>0$ in the
		repelling petal of $f_{t_\infty}$ that escapes to infinity.
	\end{enumerate}
	Then there is a sequence $t_n \to t_\infty$ such that $\lcal_{t_n}(t_n)=t_n$.
\end{prop}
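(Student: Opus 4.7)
The plan is to reparametrize the family by the incoming Fatou coordinate of the critical point, reduce the fixed-point equation to a real-analytic zero-finding problem on $(T_\ast,\infty)$, and then use hypothesis~(4) together with the sign information from hypothesis~(3) to force infinitely many sign changes of the relevant function.

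By hypothesis~(2), a Taylor expansion gives $f_t(t) = c(t - t_\infty) + O((t - t_\infty)^2)$ with $c < 0$, so for $t \in (t_\infty, t_\infty + \delta]$ and $\delta$ small, $f_t(t)$ is a small negative real lying in the incoming petal of $f_t$. Hence $t \in \bcal_{f_t}$ and $\lcal_t(t)$ is real-analytic in $t$. Since the $3$-jet of $f_t$ is $z + z^2 + z^3$, the incoming Fatou coordinate has no logarithmic correction and satisfies
\[
T(t) := \phi^\iota_t(t) = -\frac{1}{f_t(t)} + O(1) = \frac{1}{|c|(t - t_\infty)} + O(1) \to +\infty \quad \text{as } t \to t_\infty^+.
\]
Thus $T$ is a real-analytic diffeomorphism of $(t_\infty, t_\infty + \delta]$ onto some $[T_\ast, \infty)$, with inverse $t(T) \to t_\infty$ as $T \to \infty$, and the equation $\lcal_t(t) = t$ is equivalent to zeros of $G(T) := \psi^o_{t(T)}(T) - t(T)$; exhibiting zeros $T_n \to \infty$ then yields $t_n := t(T_n) \to t_\infty$ as required.

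To force oscillation, pick $x_\ast > 0$ from hypothesis~(4) and let $Z_\ast := \phi^o_{t_\infty}(x_\ast) \in \R$. Hypothesis~(3), combined with $n$ odd, yields $R>0$ and a neighborhood $U$ of $t_\infty$ such that for every $t \in U$ and every $|z|>R$, $|f_t(z)| \geq 2|z|$ and $\operatorname{sign} f_t(z) = -\operatorname{sign} z$; in particular $\{|z|>R\}$ is forward invariant under every $f_t$ with $t \in U$, and orbits there alternate sign and grow super-exponentially. Choose $k_0$ with $|f_{t_\infty}^{k_0}(x_\ast)| > R$; by continuity of the finite iteration and of $x_\ast(t) := \psi^o_t(Z_\ast)$, shrink to a neighborhood $U' \subseteq U$ on which $f_t^{k_0}(x_\ast(t))$ lies in $\{|z|>R\}$ with sign $s_0 := \operatorname{sign}(f_{t_\infty}^{k_0}(x_\ast))$. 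Then for every $k \geq k_0$ and every $t \in U'$, $\psi^o_t(Z_\ast + k) = f_t^k(x_\ast(t))$ has sign $(-1)^{k - k_0} s_0$ and modulus exceeding $2^{k - k_0} R$.

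Since $t(T) \to t_\infty$, we have $t(Z_\ast + k) \in U'$ for every sufficiently large $k$, so $H(Z_\ast + k) := \psi^o_{t(Z_\ast + k)}(Z_\ast + k)$ and $H(Z_\ast + k + 1)$ have opposite signs and enormous modulus, while $t(T)$ remains bounded near $t_\infty$. Hence $G = H - t$ changes sign on $[Z_\ast + k, Z_\ast + k + 1]$ for each large $k$, and the intermediate value theorem produces a zero $T_k$ there; the corresponding $t_k := t(T_k)$ satisfies $\lcal_{t_k}(t_k) = t_k$, with $t_k \to t_\infty$. The main subtlety sidestepped by this approach is a pointwise comparison of $\psi^o_{t(T)}(T)$ with $\psi^o_{t_\infty}(T)$ for large $T$, which would fail: the perturbation $t(T) - t_\infty = O(1/T)$ is dwarfed by the super-exponential derivative $(f_{t_\infty}^k)'$ along the escaping orbit. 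The argument uses only the qualitative sign and magnitude information in the escape region, which is robust for $t$ ranging in $U'$.
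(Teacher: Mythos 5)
Your proof is correct and follows essentially the same strategy as the paper: use hypotheses (3) and (4) to produce a sequence of parameters accumulating at $t_\infty$ along which $\lcal_t(t)$ takes arbitrarily large values of alternating sign, then conclude by the intermediate value theorem. The only real organizational difference is that you reparametrize by $T = \phi^\iota_t(t)$ and test at the integer translates $Z_\ast + k$, whereas the paper constructs the parameters $\tilde t_n$ satisfying $\lcal_{\tilde t_n}(\tilde t_n)=f^n_{\tilde t_n}(x)$ by an asymptotic expansion of the Fatou coordinate together with an auxiliary IVT to solve for the correction $u_n$ in the ansatz $\tilde t_n = t_\infty - \alpha/(n+u_n)$. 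Your change of variables eliminates that auxiliary IVT step, a genuine (if modest) streamlining; in both proofs the final oscillation comes from the same mechanism, namely that $n$ odd with negative leading coefficient forces sign alternation along the escaping orbit of $x_\ast$, uniformly in the parameter near $t_\infty$. Two minor points to close: (i) you assert but do not verify that $T$ is a diffeomorphism of $(t_\infty, t_\infty+\delta]$ onto $[T_\ast,\infty)$; monotonicity does hold since $T'(t) = \frac{(f_t(t))'}{f_t(t)^2}+O(1) \to -\infty$ by hypothesis (2) and $f_t(t)\to 0$, but this is worth a sentence; (ii) both arguments rely on the real-analytic dependence of $\phi^\iota_t$ and $\psi^o_t$ on $t$ (in particular so that $x_\ast(t)=\psi^o_t(Z_\ast)$ varies continuously), which is standard for analytic parabolic families but should be invoked explicitly, as the paper does.
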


\begin{proof}
	We will rely on the following two claims:
	
	\begin{claim}
		For $t \in (t_\infty,t_\infty+\eps)$  with $\eps>0$ small enough, the critical point $t$ is in the parabolic basin of $f_t$.
	\end{claim}
	
	\begin{proof}[Proof of the claim]
		It is enough to show that there is $r>0$ such that $(-r,0)$ is in the
		parabolic basin of $f_t$ for all $t$ close enough to $t_\infty$.
		Indeed, by (1) and (2), we have
		that for all $r>0$ there exists $\eps>0$ such that $f_t(t) \in (-r,0)$ for all $t \in (t_\infty,t_\infty+\eps)$.
		Let
		$$r_t:=\sup \{r>0 : \forall y \in (-r,0), 0<\frac{f_t(y)}{y}<1 \}.$$
		 For all $y \in (-r_t,0)$,
		$t<f_t(y)<0$ hence $y$ is in the parabolic basin of $f_t$. Finally, $t \mapsto r_t$ is continuous and $r_{t_\infty}>0$.
	\end{proof}

	\begin{claim}
		There exists a sequence $\tilde t_n \to t_\infty$ (with $\tilde t_n>t_\infty$) such that $\lcal_{\tilde t_n}(\tilde t_n) = f_{\tilde t_n}^n(x)$.
	\end{claim}

	\begin{proof}[Proof of the claim]
		We adapt here the argument from \cite{ABDPR}.
		The desired equality $\lcal_{\tilde t_n}(\tilde t_n) = f_{\tilde t_n}^n(x)$ is equivalent to
		$\psi_{\tilde t_n}^o \circ \phi_{\tilde t_n}^\iota(\tilde t_n)= \psi_{\tilde t_n^o}(\phi_{\tilde t_n}^{o}(x)+n)$.

		In particular, it is enough to find $\tilde t_n$ such that $\phi_{\tilde t_n}^\iota(\tilde t_n)=\phi_{\tilde t_n}^{o}(x)+n$. We look for $\tilde t_n$
		under the form $\tilde t_n = t_\infty-\frac{\alpha}{n+u}$, with $\alpha = \frac{1}{\frac{d}{dc}_{|c=t_\infty} f_c(c)}$. By the preceding claim, it is in the parabolic basin
		for $n$ large enough.

		We have
		$\phi_{\tilde t_n}^{o}(x)+n = n+ \phi_{t_\infty}^{o}(x)+o(1)$ since the map $t \mapsto \phi_t^o$ is continuous.
		Additionally,
		\begin{align*}
			\phi_{\tilde t_n}^\iota(\tilde t_n)&=\phi_{\tilde t_n}^\iota(f_{\tilde t_n}(\tilde t_n))-1 \\
			&=-\frac{1}{f_{\tilde t_n}(\tilde t_n)}-1+o(1),\quad ( \text{according to the asymptotic expansion of }\phi^\iota)\\
			&=n+u-1+o(1).
		\end{align*}
	 	Therefore, we have reduced the problem to solving the equation
	 	$u-1+o(1) = \phi_{t_\infty}^{o}(x)$ for $u \in \R$, where the $o(1)$ term is a continuous function of $u$.
	 	By the {\black intermediate value theorem} there is a solution $u=u_n \in (\phi_{t_\infty}^{o}(x), \phi_{t_\infty}^{o}(x)+2)$. We can take $\tilde t_n = t_\infty - \frac{\alpha}{n+u_n}$, and
	 	since $(u_n)_{n \in \N}$ is bounded from below, the sequence $(t_n)$ is well-defined for $n$ large enough
	 	and converges to $t_\infty$.
	\end{proof}
	
	We now come back to the proof of Proposition \ref{prop:extn}. For $n$ large enough, $G_{\tilde t_n}(x)>0$ (by continuity of
	the Green function
	$G$). Therefore $\lcal_{\tilde t_n}(\tilde t_n) = f_{\tilde t_n}^n(x)$ tends to $\infty$, and more precisely, $+\infty$ or $-\infty$ depending on the parity of $n$, thanks to condition (3). Therefore the continuous function
	$F(t):=\lcal_t(t)-t$ alternates sign between two consecutive $\tilde t_n$, so by the intermediate value theorem must have a zero $t_n$ between them.
\end{proof}

\begin{prop}\label{prop:exlc=c}
	Let {\black $P(z):=z+z^2+z^3+\frac{23}{7}z^4+\frac{17}{7}z^5$}, let $t_\infty:=-1$
	and $n:=7$. Then $P$, $n$ and $t_\infty$ satisfy conditions $(1)-(4)$
	in Proposition \ref{prop:extn}.
\end{prop}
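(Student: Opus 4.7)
The plan is to verify the four conditions of Proposition \ref{prop:extn} in turn for the specific data $P(z) = z+z^2+z^3+\tfrac{23}{7}z^4+\tfrac{17}{7}z^5$, $n = 7$, $t_\infty = -1$. I begin by computing the low jets of $P$ at $-1$: $P(-1) = -\tfrac{1}{7}$, $P'(-1) = 1$, $P''(-1) = -\tfrac{92}{7}$. Since $P'(-1) = 1$ and $(-1)^{n-1} = 1$, the definition of $f_t$ reduces here to $f_{-1}(z) = P(z) - \tfrac{1}{7}z^7$, so condition (1), $f_{-1}(-1) = -\tfrac{1}{7}+\tfrac{1}{7} = 0$, and condition (3), that the leading coefficient $-\tfrac{1}{7}$ is negative, are immediate. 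Condition (2) follows by differentiating $f_t(t) = P(t) - P'(t)\, t/n$ in $t$: one obtains $\frac{d}{dt}f_t(t) = \frac{(n-1)P'(t) - t\, P''(t)}{n}$, which at $(t,n) = (-1,7)$ is $-\tfrac{50}{49} < 0$.

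The main obstacle is condition (4), which asks for some $x > 0$ in the repelling petal of $f_{-1}$ whose forward orbit escapes to infinity. My plan is to analyze the real one-dimensional dynamics of $f_{-1}$ on $\R_+$. I factor $f_{-1}(x) - x = \tfrac{x^2}{7}\, h(x)$ with $h(x) := 7+7x+23x^2+17x^3-x^5$. Since $h$ has exactly one sign change in its coefficient sequence, Descartes' rule yields a unique positive real root $x^*$, and the sign check $h(4) > 0 > h(5)$ localizes it in $(4,5)$. On the interval $(0, x^*)$ we have $h > 0$, whence $f_{-1}(x) > x$, so the forward orbit of any $x_0 \in (0, x^*)$ under $f_{-1}$ is strictly monotonically increasing; in particular I can take $x_0 > 0$ small enough to lie in the repelling petal.

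It remains to show that the orbit of such an $x_0$ escapes to infinity. A monotonically increasing bounded sequence in $(0, x^*)$ would converge to a fixed point of $f_{-1}$, whose only positive candidate is $x^*$. I rule this out by computing $f_{-1}'(x^*)$ directly: numerical evaluation gives $f_{-1}'(x^*) \approx -3.5 \times 10^3$, so $x^*$ is strongly repelling and the orbit cannot accumulate there. Hence the orbit must exceed $x^*$ at some iterate. The strong expansion at $x^*$, together with the dominance of the $-z^7/7$ term for large $|z|$ (which yields an elementary bound of the form $|f_{-1}(z)| \geq |z|^7/14 > |z|$ once $|z|$ exceeds an explicit threshold), then guarantees that $|f_{-1}^n(x_0)| \to \infty$ super-exponentially, establishing condition (4) and hence the proposition.
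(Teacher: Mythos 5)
Your verification of conditions (1)--(3) is correct and follows the same route as the paper: compute $P(-1)=-\tfrac{1}{7}$, $P'(-1)=1$, $P''(-1)=-\tfrac{92}{7}$, observe that $f_{t_\infty}(z)=P(z)-\tfrac{1}{7}z^7$, and evaluate $\frac{d}{dt}\big|_{t=-1}f_t(t)=-\tfrac{50}{49}$. Your treatment of condition (4), however, contains a genuine gap. You correctly establish that for small $x_0>0$ the forward orbit is increasing while it remains in $(0,x^*)$, and since $|f_{t_\infty}'(x^*)|\gg 1$ the orbit cannot converge to $x^*$, hence must eventually exceed $x^*$. But the final inference --- that the orbit then tends to infinity --- does not follow from the two facts you invoke. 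The expansion at $x^*$ is a purely local statement, and the bound $|f_{t_\infty}(z)|\gtrsim |z|^7$ is only effective once $|z|$ is already large; neither controls the orbit in the intermediate range just above $x^*$. Worse, since $f_{t_\infty}'(x^*)\approx -3.5\times 10^3$ is large and \emph{negative}, a point $x^*+\delta$ with $\delta$ small is mapped to roughly $x^*-3572\,\delta$, which for $\delta$ tiny lands back inside $(0,x^*)$ and for larger $\delta$ can land far on the negative axis, where the sign of $f_{t_\infty}(y)-y$ changes again. Nothing in your argument rules out the orbit oscillating in a bounded region forever, e.g.\ falling onto a periodic cycle.

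The paper sidesteps this issue entirely by a finite check: it takes $x:=1$, computes $f_{t_\infty}(1)=\tfrac{60}{7}$ and $|f_{t_\infty}^2(1)|>68$, and then uses the elementary estimate $|f_{t_\infty}(z)|\geq |z|^7/68$ for $|z|>68$ to conclude the orbit escapes. To repair your argument you would need to actually land the orbit beyond the escape radius: for instance, show that some backward iterate $f_{t_\infty}^{-k}(1)$ (taken along the positive real axis, where the preimages decrease monotonically to $0$) lies in the repelling petal, and then apply the paper's two-step computation at $1$; or simply revert to the paper's direct verification at a concrete point.
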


\begin{proof}
	{\black Observe} that $f_{t_\infty}(t_\infty)=0$ and $P'(t_\infty)=1$. That second property
	implies that $f_{t_\infty}$ has negative leading coefficient. Therefore, conditions $(1)$ and $(3)$
	are satisfied.
	
	Let us check that condition $(2)$ is also satisfied. We have:
	\begin{align*}
		\frac{d}{dt}_{|t=t_\infty} f_t(t) &= \frac{d}{dt}_{|t=t_\infty} P(t) - \frac{t}{n}P'(t)\\
		&=\frac{n-1}{n} P'(t_\infty)  - \frac{t_\infty}{n}P''(t_\infty)\\
		&=\frac{6}{7} + \frac{1}{7} P''(-1) = -\frac{50}{49}<0.
	\end{align*}
	
Finally, condition $(4)$ is satisfied for $x:=1$. Indeed, {\black recall here that if $f(z)=\sum_{k=0}^n a_k z^k$ is a complex polynomial and $R=\max\{1,\frac{1+|a_0|+\ldots+ |a_{n-1}|}{|a_{n}|}\}$ then for all $|z|>R$ we have $|f(z)|\geq \frac{|z|^n}{R}$ , hence if an orbit at any point leaves the disk of radius $R$, then it must converge to infinity. Observe that for our polynomial $f_{t_\infty}$ we have $R=68$  and that a straightforward computation yields $f_{t_\infty}(1)=\frac{60}{7}$ and $|f^2_{t_\infty}(1)|>68$.}

This proves rigorously that $x:=1$ has unbounded orbit under $f_{t_\infty}$.
\end{proof}

\begin{lem}\label{lem:basepoint}
	For $\eps_0>0$ small enough, there exists $\tbold>-1$ such that the following properties hold
	for $f_{\tbold}$:
	\begin{enumerate}
		\item $\lcal_{\tbold}$ has a fixed point $x_{\tbold}$ 	with multiplier $\eps_0 \neq 0$
		\item $\fcal(f_{\tbold},x_{\tbold}) \neq 0$
		\item $f_{\tbold}$ has 4 real critical points, ordered from left to right : $c_1, c_2, c_3, c_4$, with $\tbold=c_2$; and two non-real
		complex conjugate critical points $c'$ and $\overline{c'}$.
		\item the critical points $c_1$ and $c_4$ lie in the basin of infinity; the critical points $c_2$ and $c_3$
		are in the  parabolic basin.
		\item there is a unique repelling fixed point $\xi \in (c_1, c_2)$, and the intersection of $\R$ and the immediate
		basin of attraction of $0$ is $(\xi, 0)$.
		\item there is a unique $y \in (\xi, c_2)$ such that $f_\tbold(y)=c_2$
	\end{enumerate}
	
\end{lem}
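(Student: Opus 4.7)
The plan is to obtain $\tbold$ as a small real perturbation of one of the parameters $t_n \to -1$ furnished by Propositions \ref{prop:extn} and \ref{prop:exlc=c}, for some large index $n$. At $t = t_n$ the point $t_n$ is simultaneously a marked critical point of $f_{t_n}$ and a fixed point of $\mathcal{L}_{t_n}$, so $t_n$ is a superattracting fixed point of the Lavaurs map, of multiplier $0$. The strategy is to move $t$ slightly along the real axis so that this fixed point persists with a prescribed small nonzero multiplier $\epsilon_0$, while the structural conditions (3)--(6) persist by continuity from $t_\infty = -1$ and the non-degeneracy (2) is deduced from Lemma \ref{lem:phi''non0}.

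First I apply the holomorphic implicit function theorem to $\mathcal{L}_t(x) = x$ near $(t_n, t_n)$: since $\mathcal{L}'_{t_n}(t_n) = 0 \neq 1$, this yields holomorphic maps $t \mapsto x_t$ and $t \mapsto \mu(t) := \mathcal{L}'_t(x_t)$ defined in a neighborhood of $t_n$, with $x_{t_n} = t_n$, $\mu(t_n) = 0$, and real values for real $t$. The main step is to show that $\mu$ is not identically zero. If it were, then $x_t$ would be a critical point of $\mathcal{L}_t = \psi^o_t \circ \phi^\iota_t$ for every $t$ close to $t_n$; differentiating and using the functional equation $\phi^\iota_t \circ f_t = \phi^\iota_t + 1$, this forces $x_t$ either to lie on the forward orbit of a critical point of $f_t$, or to map under $\phi^\iota_t$ to a critical point of $\psi^o_t$, which in turn sits over a critical orbit of $f_t$. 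In either case, since the critical points of $f_t$ move holomorphically and only the marked critical point equals $t_n$ at $t = t_n$, rigidity of holomorphic families of discrete points forces $x_t \equiv t$ and hence $\mathcal{L}_t(t) \equiv t$. But the holomorphic function $t \mapsto \mathcal{L}_t(t) - t$ takes unbounded values along the $\tilde t_n$ of Proposition \ref{prop:extn}, contradicting the identity theorem. By the intermediate value theorem applied to the real-analytic, non-constant $\mu$, for $\epsilon_0 > 0$ small enough there exists a real $\tbold$ close to $t_n$ with $\mu(\tbold) = \epsilon_0$, yielding (1).

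For (2), at $t = t_n$ the first term of $\mathcal{F}(f_{t_n}, t_n)$ vanishes because $(\phi^\iota_{t_n})'(t_n) = 0$ at the critical point $t_n$, leaving $\mathcal{F}(f_{t_n}, t_n) = (\phi^\iota_{t_n})''(t_n)$. Lemma \ref{lem:phi''non0} reduces the non-vanishing of this quantity to checking that $t_n$ is a simple critical point of $f_{t_n}$ and that its forward orbit meets no other critical point. Simplicity follows for $n$ large from uniform proximity of $f_{t_n}$ to $P$, whose critical points are all simple (by inspection of $P'$). For the orbit-avoidance, observe that at $t_\infty = -1$ the marked critical point maps to $0$ in one step, and $0$ is not a critical point of $f_{-1}$; hence in the first step no other critical point is hit. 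Once the orbit enters a small incoming petal of $0$ disjoint from the other critical points it remains there, and continuity in $t$ preserves the property. Continuity of $\mathcal{F}(f_t, x_t)$ in $t$ finishes the argument, and one obtains $\mathcal{F}(f_\tbold, x_\tbold) \neq 0$.

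Items (3)--(6) are verified at $t = -1$ by direct real-analytic computation and then extended to $\tbold$ by continuity. For (3), $f'_{-1}(z) = P'(z) - z^6$ is shown via Sturm's algorithm (or a direct graph analysis) to have exactly four simple real roots, one of them $-1$, and one complex conjugate pair. For (4), $c_1$ and $c_4$ escape after one or two iterations by the same escape-radius comparison used in the proof of Proposition \ref{prop:exlc=c}, while $c_3$ belongs to the real parabolic basin of $0$ by the same kind of monotonicity check as for $c_2 = -1$. Property (5) is read off the graph of $f_{-1}|_\mathbb{R}$ between its real critical points and fixed points. For (6), on $(\xi, c_2)$ there are no critical points of $f_\tbold$, so $f_\tbold$ is strictly monotone there; at $t_\infty = -1$ it takes the value $\xi < c_2$ at the left endpoint and $f_{-1}(c_2) = 0 > c_2$ at the right endpoint, so the intermediate value theorem produces a unique $y$ with $f_{-1}(y) = c_2$, and continuity gives the same for $\tbold$. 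The principal difficulty is the non-constancy of $\mu$ in the second paragraph; the remaining verifications are either routine continuity or elementary real-analytic checks.
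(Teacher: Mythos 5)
Your strategy---perturb a parameter $t_n$ from Proposition \ref{prop:extn} to a nearby $\tbold$ with a small nonzero multiplier, invoke Lemma \ref{lem:phi''non0} for $(\phi^\iota)''(c_2)\neq 0$, and carry (3)--(6) by continuity from $t_\infty$---is essentially the paper's. But your verification of (2) has a genuine gap. You assert that the first term of $\fcal(f_{t_n}, t_n)$ vanishes because $(\phi^\iota_{t_n})'(t_n)=0$. That term is
$$
\frac{(\psi^o)''(\phi^\iota(\zeta))\,(\phi^\iota)'(\zeta)}{(\psi^o)'(\phi^\iota(\zeta))^2},
$$
and the vanishing of the numerator settles nothing unless you also know $(\psi^o)'(\phi^\iota(t_n))\neq 0$: otherwise the quotient is $0/0$, and indeed $\fcal$ is a priori only meromorphic on $F$ and analytic on $\lambda^{-1}(\C^*)$, whereas here $\lambda=0$. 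The paper devotes the bulk of its proof to precisely this point: one must check that no critical orbit of $f_{t_{n_0}}$ lands on $c_2$. The real critical points are dispatched by escape, non-periodicity, and monotonicity in the petal, but the non-real pair $c',\overline{c'}$ requires a separate, nontrivial argument using the horn map $e$, its two parabolic fixed points at the ends of the Ecalle cylinder, and the fact that each must attract a singular value of $e$ distinct from itself. Your proof omits this entirely, and it is the hard content of the lemma. It is also exactly what your ``rigidity'' step needs: to conclude $x_t\equiv t$ from $x_t$ being a critical point of $\lcal_t$, you must know that the marked critical point is the \emph{only} critical point of $\lcal_{t_n}$ equal to $t_n$, which is precisely the critical-relation-free statement being omitted.

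Your expanded argument for the non-constancy of $\mu(t):=\lcal_t'(x_t)$ is a welcome clarification (the paper is terse here and the phrase ``but persistently fixed'' reads like a slip), but as written it leans circularly on the missing fact. The continuity arguments for (3)--(6) are fine in spirit, though note that at $t_\infty=-1$ the critical orbit lands exactly on $0$, so $c_2\notin\bcal_{f_{t_\infty}}$; one must first pass to $t>t_\infty$, as in the first Claim of Proposition \ref{prop:extn}, before speaking of the orbit entering and remaining in a petal.
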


\begin{proof}
	We will find $\tbold$ by taking a perturbation of one of the $t_{n_0}$ constructed above, with $n_0$ large enough.

	First, note that properties (3)-(6) hold for $f:=f_{t_\infty} : z \mapsto z+z^2+z^3 + \frac{23}{7}z^4+\frac{17}{7}z^5-\frac{z^7}{7}$; we leave the details to the reader.
	Therefore, for $n_0$ large enough, properties (3)-(6) still hold for $f_{t_{n_0}}$, as these properties
	are clearly open (in $\R$) near $t=t_\infty$. To lighten the notations, we let $f:=f_{t_{n_0}}$
	and $c_2:=t_{n_0}$.

	We now claim that $\fcal$ is well defined at $(f,c_2)$, and that $\fcal(f,c_2) \neq 0$.
	According to Lemma \ref{lem:phi''non0}, since $f$ satisfies conditions (3)-(6),
	we have $(\phi^\iota)''(c) \neq 0$. Indeed, $c_2$ is a simple critical point of $f$; and we claim that the forward orbit of $c_2$ does not meet any other critical point of $f$.
	To see this, note that the critical point $c_2$ is simple for $f$, and real. Since $c'$ and $\overline{c'}$
	are not real, the orbit of $c_2$ cannot land on either of them.
	Since the critical points $c_1$ and $c_4$ do not belong to the parabolic basin, the orbit of $c_2$ cannot land on them either.
	Finally, since $f(c_2)>c_3$, and since $f(c_2)$ belongs to a small attracting petal in which the
	sequence of iterates $(f^n(c_2))_{n \in \N}$ is increasing, the orbit of $c_2$ cannot land on
	$c_3$ either.
	
	Now that we have proved that $(\phi^\iota)''(c_2) \neq 0$, it is sufficient to prove that $$\frac{(\psi^o)''(\phi^\iota(c_2)) (\phi^\iota)'(c_2)}{(\psi^o)'(\phi(c_2))^2}=0.$$ In fact, since $(\phi^\iota)'(c_2)=0$, it is suffices to prove that
	$(\psi^o)'(\phi^\iota(c_2)) \neq 0$. Recall that for any $Z \in \C$, $(\psi^o)'(Z)=0$ if and only if
	there exists $n \geq 1$ such that $(\psi^o)'(Z-n)$ is a critical point for $f$; here, $Z=\phi^\iota(c_2)$
	and $\psi^o \circ \phi^\iota(c_2)=c_2$,  so we must prove that for all $n \geq 1$ and any critical point
	$c_i$ of $f$, $f^n(c_i) \neq c_2$. Since $c_1$ and $c_4$ escape, neither of their orbit can land on $c_2$; and since $c_2$ is not periodic under $f$, its own orbit cannot land on itself either.
	Since  $c_3$ is in the immediate parabolic basin, the orbit $(f^n(c_3))_{n \in \N}$
	is increasing, and so does not contain $c_2$ since $c_3>c_2$.
	
	Finally, it remains to argue that the orbits of the two non-real critical points $c'$ and $\overline{c'}$
	do not eventually land on $c_2$. To see that it cannot be the case, note that since the horn map $e$ of $f$
	has two parabolic fixed points at $0$ and $\infty$ corresponding to the ends of the Ecalle cylinder,
	each of those fixed points must attract singular values of $e$ distinct from themselves, see \cite{ABDPR}. The singular values of
	$e$ are the fixed points at $0$ and $\infty$, as well as the $\pi(c_i)$, where $c_i$ are the critical points
	of $f$ in the parabolic basin and $\pi(z)=e^{2i\pi \phi^\iota(z)}$. I f$f^n(c')=c_2$ for some $n \geq 1$, then by real symmetry we would also have $f^n(\overline{c'})=c_2$, and so
	$\pi(c')=\pi(\overline{c'})=\pi(c_2)$; but then $\pi(c_3)$ would be the only non-fixed singular value
	of $e$, which is impossible.
	
	Therefore $f$ has no critical relation, and so $(\psi^o)'(\phi^\iota(c_2)) \neq 0$; and
	$\fcal(f,c_2) \neq 0$ as announced.

	To sum things up, we have proved that for $n_0$ large enough, the polynomial $f_{t_{n_0}}$ satisfies
	properties (2)-(6). Since $t_{n_0}$
	is a super-attracting fixed point of $\lcal_{t_{n_0}}$ but persistently fixed, for $\eps_0>0$
	small enough, there exists $\tbold$ close to $t_{n_0}$ such that $f_\tbold$ satisfies (1);
	and by openness, if $\eps_0$ is small enough, $f_\tbold$ still satisfies (2)-(6).
\end{proof}

The next step is to use quasiconformal deformations to construct an immersed disk $D$ in parameter space
passing through $f_\tbold$, made of polynomials $p_u$ whose Lavaurs map has an attracting fixed point
of multiplier $e^{2i\pi u}$, $u \in \H$. We use on purpose the notation $p_u$ instead of $f_t$ to emphasize the
fact that except for $f_\tbold$, the polynomials $p_u$ do not a priori belong to the family $(f_t)_{t \in \R^*}$.

\begin{prop}\label{prop:frho}
	Let $p:=f_\tbold$ and $\eps_0>0$ be as in  Lemma \ref{lem:basepoint}.
	There exists a holomorphic map $\Phi: \H \to \mathcal{P}_{7}$ such that
	\begin{enumerate}
		\item $\Phi(u_0)=p$, for some $u_0 \in \H$ with $e^{2i\pi u_0}=\eps_0$
		\item For all $u \in \H$, the Lavaurs map of  $\Phi(u)=:p_u$ has a fixed point $z_u$ of multiplier
		$e^{2i\pi u} \in \D^*$; and $u \mapsto z_u$ is holomorphic
		\item All the maps $p_u$ are quasiconformally conjugated to $p$, the conjugacy being holomorphic
		outside of the grand orbit under $p$ of the attracting basin of $z_{u_0}:=x_\tbold$
		\item If $e^{2i\pi u} \in (0,1)$, then the conjugacy preserves the real line
		\item The set $\Phi(\H)$ is relatively compact in $\mathcal{P}_7$.
	\end{enumerate}
\end{prop}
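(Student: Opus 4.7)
The plan is a standard quasiconformal surgery supported on the immediate attracting basin $V$ of $x_\tbold$ under the Lavaurs map $\lcal$ of $p = f_\tbold$, followed by transport to an $f$-invariant Beltrami differential via the incoming Fatou coordinate. By Koenigs' theorem, since $x_\tbold$ has real multiplier $\eps_0 \in (0,1)$, there is a linearization $\varphi : V \to \C$ with $\varphi \circ \lcal = \eps_0 \cdot \varphi$. For $u \in \H$, set $\alpha_u := 2\pi i u / \log \eps_0$; since $\log \eps_0 < 0$ one checks that $\re(\alpha_u) > 0$, and the map $H_u(w) := w \, |w|^{\alpha_u - 1}$ is a qc homeomorphism of $\C$ conjugating $w \mapsto \eps_0 w$ to $w \mapsto e^{2\pi i u} w$; a direct computation gives the Beltrami coefficient $\nu_u = \frac{\alpha_u - 1}{\alpha_u + 1}\cdot \frac{w}{\bar w}$, of sup-norm strictly less than $1$ and depending holomorphically on $u$. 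Pulling $\nu_u$ back through $\varphi$ produces a $\lcal$-invariant Beltrami differential $\mu_u^V$ on $V$, with $\mu_{u_0}^V \equiv 0$ because $\alpha_{u_0} = 1$.

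Next, using that $\phi^\iota$ conjugates $f$ to $Z \mapsto Z+1$ on $\bcal_f$ and semi-conjugates $\lcal$ to the horn map $\ecal$ on the incoming Ecalle cylinder (since $\phi^\iota \circ \lcal = \ecal \circ \phi^\iota$), we push $\mu_u^V$ forward by $\phi^\iota$ and spread the resulting $\ecal$-invariant Beltrami differential over the whole cylinder by iterating $\ecal$. Pulling back through $\phi^\iota$ then yields an $f$-invariant Beltrami differential $\mu_u$ on $\bcal_f$ supported in the grand orbit of $V$ under $f$. Extending by zero outside $\bcal_f$ and solving the Beltrami equation with the normalization fixing $0$ and $\infty$ and preserving the parabolic jet $z + z^2 + z^3$, we obtain a qc homeomorphism $\Psi_u : \C \to \C$, holomorphic off the grand orbit of $V$, depending holomorphically on $u$ by the Ahlfors--Bers theorem. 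The conjugate $p_u := \Psi_u \circ f \circ \Psi_u^{-1}$ is then a degree-$7$ polynomial of the form $z + z^2 + z^3 + O(z^4)$, i.e.\ an element of $\mathcal{P}_7$; setting $\Phi(u) := p_u$ establishes (1)--(3), with $z_u := \Psi_u(x_\tbold)$ fixed by the Lavaurs map of $p_u$ with multiplier $e^{2\pi i u}$ by construction, and $\Phi(u_0) = p$ because $\mu_{u_0} \equiv 0$.

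Property (4) is inherited from a reflection symmetry: when $u \in i\R_{>0}$, one has $\alpha_u \in \R_{>0}$ so $H_u$ commutes with $w \mapsto \bar w$; since $p$, $V$, $\eps_0$, $\varphi$ and $\phi^\iota$ are all compatible with complex conjugation, $\mu_u$ satisfies $\overline{\mu_u(\bar z)} = \mu_u(z)$, and uniqueness of the normalized solution forces $\Psi_u$ to commute with conjugation and thus preserve $\R$. Property (5) is, as usual in this kind of surgery, the main technical obstacle: as $u \to \partial \H$, $\|\mu_u\|_\infty \to 1$, so compactness of $\Phi(\H)$ does \emph{not} follow from compactness of the associated family of qc maps. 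The argument will instead use that the critical points of $p_u$ are $\Psi_u$-images of those of $p$ and the Böttcher coordinate at infinity is preserved by $\Psi_u$ (which is conformal there); this pins down the escape radius uniformly in $u$, and together with the degree and the fixed $3$-jet at $0$, forces the coefficients of $p_u$ to remain in a bounded, hence relatively compact, subset of $\mathcal{P}_7$.
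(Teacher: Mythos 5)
Your surgery is set up the same way as the paper's: the Beltrami coefficient $\frac{\alpha_u-1}{\alpha_u+1}\frac{w}{\overline w}$ in the linearizing coordinate (with $\alpha_u=u/u_0$ this is exactly the paper's $\frac{u-u_0}{u+u_0}\frac{z}{\overline z}$), transported to an $f$-invariant and $\lcal$-invariant form via the Fatou coordinate and the horn map, then straightened by Ahlfors--Bers with holomorphic dependence on $u$. Items (1)--(4) and the multiplier computation are handled correctly in spirit, and your observation for (5) that compactness of the qc maps fails but conformality on the basin of infinity saves the day is the same mechanism the paper uses (it invokes boundedness of the set of degree-$7$ polynomials with prescribed critical Green values).

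There is, however, a genuine gap at the step where you claim to solve the Beltrami equation ``with the normalization fixing $0$ and $\infty$ and preserving the parabolic jet $z+z^2+z^3$.'' No such normalization exists: the solution of the Beltrami equation on the sphere is unique only up to post-composition by a M\"obius map, and fixing $\infty$, placing the parabolic point at $0$, and scaling so that $p_u(z)=z+z^2+O(z^3)$ exhausts that freedom. The coefficient of $z^3$ is then \emph{determined} by the surgery, and it is a formal conjugacy invariant (the r\'esidu it\'eratif) that is not preserved by general quasiconformal conjugacies — so membership of $p_u$ in $\mathcal{P}_7$ is precisely what must be proved, not normalized away. For the same reason, your statement (2) is incomplete: the straightened map $h_\sigma\circ\lcal\circ h_\sigma^{-1}$ is a priori a Lavaurs map of $p_u$ of some unknown phase $\varphi$, whereas the proposition concerns the phase-$0$ Lavaurs map. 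The paper closes both gaps simultaneously by descending to the horn map $e_\mu$ on $\C^*$ and using the explicit multipliers $e^{2\pi^2(1-a)\pm 2i\pi\varphi}$ at the two ends of the Ecalle cylinder: since $e_\mu$ is quasiconformally conjugate to $e$, both ends remain parabolic with multiplier $1$, which forces $a=1$ and $\varphi=0$. Some version of this argument (or another computation of the r\'esidu it\'eratif and the phase after surgery) is indispensable here, since the entire point of Section 6 is to remain inside the family $z+z^2+z^3+O(z^4)$.
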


\begin{figure}
	\begin{center}
		\includegraphics[scale=0.6]{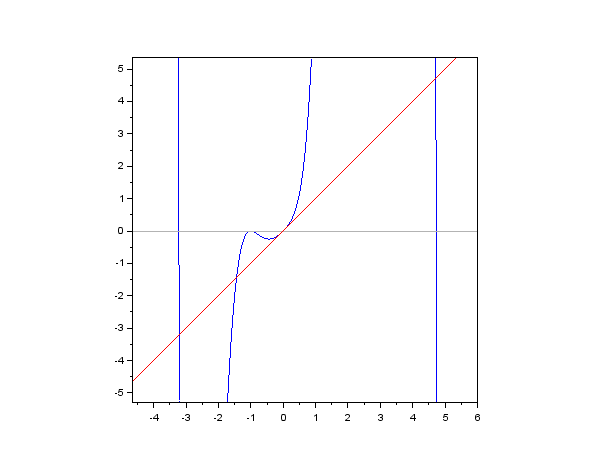}
		\caption{The graph of $f:=f_{t_\infty}$ (blue), with the line $y=x$ in red. We have $c_1 \approx -2.8$, $c_2=-1$, $c_3 \approx -0.4$,
			and $c_4 \approx 4$.
			The critical values $f(c_1)$ and $f(c_4)$ are out of the picture. }
		
	\end{center}
\end{figure}

\begin{proof}	
	Let $e: U \to \rs$ be the horn map of $g$; since $\lcal$ has an attracting fixed point $z_{u_0}:=x_\tbold$, so does
	$e$ (since they are semi-conjugated). Denote this attracting fixed point by  $x$.
	
	Let $u \in \H$, and $\mu$ be a Beltrami form invariant by $e$  (i.e. $e^*\mu=\mu$) such that the corresponding quasiconformal homeomorphism $h_\mu$
	conjugates $e$ to some holomophic map $e_\mu$ with an attracting fixed point of multiplier $e^{2i\pi u}$ :  $h_\mu \circ e = e_\mu \circ h_\mu$ and $e_\mu'(h_\mu(x) ) = e^{2i\pi u}$.
	We recall here briefly how to construct such a Beltrami form, and refer the reader to \cite{BF2014}
	for more details.
	If $\tau$ is a linearizing coordinate for the horn map $e$ near $x$, i.e. a holomorphic map
	defined near $p$ satisfying the functional equation $\tau \circ e = \eps_0 \tau$,
	we set:
	\begin{equation}
		\mu=\mu(u):=\tau^*\left(\frac{u-u_0}{u+u_0} \frac{z}{\overline{z}} \frac{d\overline{z}}{dz}\right)
	\end{equation}
	where $u_0 \in \H$ is any point such that $e^{2i\pi u_0}=\eps_0$. Notice that $u \mapsto \mu(u)$
	is holomorphic. In the rest of the proof, we fix $u \in \H$
	and just use the notation $\mu$ instead of $\mu(u)$.

	We choose the normalization
	of $h_\mu$ so that it fixes $0,1$ and $\infty$.
	Let $E(z):=e^{2i\pi z}$ and $T_1(z):=z+1$.
	We define:
	\begin{enumerate}
		\item $\nu:=E^* \mu$: so that $\nu = T_1^*\nu$, and $\nu = \ecal^*\nu$
		\item $\sigma:=\phi^*\nu$: so that $\sigma = g^*\sigma$ and $\sigma = \lcal^*\sigma$
		\item The quasiconformal homeomorphisms $h_\nu$ and $h_\sigma$  associated to $\nu, \sigma$
		respectively.
	\end{enumerate}
	Since $\nu=T_1^*\nu$, the map $h_\nu \circ T_1 \circ h_\nu^{-1} : \C \to \C$ is holomorphic;
	since it is conjugated to $T_1$, it is also a translation (distinct from the identity), and we choose the normalization of $h_\nu$
	so that  $h_\nu \circ T_1 \circ h_\nu^{-1} = T_1$ and $h_\nu(0)=0$. Similarly, since $\sigma=g^*\sigma$,
	the map $p_u:=h_\sigma \circ p \circ h_\sigma^{-1}$ is holomorphic, hence a polynomial (since it has same topological degree as $f$); it also
	has a parabolic fixed point with one attracting petal at the origin. We choose the unique normalization
	of $h_\sigma$ such that $p_u(z)=z+z^2+O(z^3)$. We set $\Phi(u):=p_u$; the holomorphic dependance
	$u \mapsto \mu(u)$ and the parametric version of Alhfors-Bers' Theorem imply that $\Phi$ is holomorphic
	on $\H$.
	
	We now define :
	\begin{enumerate}
		\item $\phi_\sigma:= h_\nu \circ \phi \circ h_{\sigma}^{-1} : h_\sigma(B) \to \C$,  where $B$ is
		the parabolic basin of $f$
		\item $\psi_{\nu}:=h_\sigma \circ \psi \circ h_\nu^{-1} : \C \to \C$
	\end{enumerate}

	\begin{lem}
		The map $\phi_{\sigma}$ is an incoming Fatou coordinate for $p_u$; and the map
		$\psi_\nu$ is an outgoing Fatou parametrization for $p_u$.
	\end{lem}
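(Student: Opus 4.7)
The plan is to verify, for each of the two claimed coordinates, both the dynamical functional equation and holomorphicity. The functional equations are a direct symbolic computation. Unfolding $p_u = h_\sigma \circ p \circ h_\sigma^{-1}$, using the normalization $h_\nu \circ T_1 = T_1 \circ h_\nu$ and the relation $\phi \circ p = \phi + 1$, one obtains
\begin{equation*}
\phi_\sigma \circ p_u = h_\nu \circ \phi \circ p \circ h_\sigma^{-1} = h_\nu \circ T_1 \circ \phi \circ h_\sigma^{-1} = T_1 \circ \phi_\sigma = \phi_\sigma + 1,
\end{equation*}
and an entirely analogous computation, using $p \circ \psi = \psi \circ T_1$, yields $p_u \circ \psi_\nu = \psi_\nu \circ T_1$.

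The substantive content of the lemma is holomorphicity. I would approach this via Beltrami pullbacks, using the standard convention that if $h_\mu$ is the normalized quasiconformal solution of $\overline{\partial} h_\mu = \mu \, \partial h_\mu$, then $h_\mu^*(0) = \mu$ and $(h_\mu^{-1})^* \mu = 0$ (where $0$ denotes the trivial Beltrami form). The definition $\sigma = \phi^* \nu$ then yields
\begin{equation*}
\phi_\sigma^*(0) = (h_\sigma^{-1})^* \phi^* h_\nu^*(0) = (h_\sigma^{-1})^* \phi^* \nu = (h_\sigma^{-1})^* \sigma = 0,
\end{equation*}
so $\phi_\sigma$ is holomorphic on $h_\sigma(B)$; since $h_\sigma$ conjugates $p$ to $p_u$, this domain is exactly the parabolic basin of $p_u$, and so $\phi_\sigma$ is an incoming Fatou coordinate for $p_u$. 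The analogous computation for $\psi_\nu$ reduces to establishing the identity $\psi^* \sigma = \nu$ on all of $\C$.

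I expect this last identity to be the main point requiring attention. It is built into the construction on the domain $V$ of the lifted horn map $\ecal = \phi \circ \psi$, since by assumption $\nu = \ecal^* \nu = \psi^* \phi^* \nu = \psi^* \sigma$ on $V$. To extend the identity globally I would argue by $T_1$-invariance: $\nu$ is $T_1$-invariant by construction, and $\psi^* \sigma$ is $T_1$-invariant because $\sigma$ is $p$-invariant and $p \circ \psi = \psi \circ T_1$ gives
\begin{equation*}
T_1^* \psi^* \sigma = (\psi \circ T_1)^* \sigma = (p \circ \psi)^* \sigma = \psi^* p^* \sigma = \psi^* \sigma.
\end{equation*}
Since $V$ contains a left half-plane and hence a fundamental domain for the $T_1$-action on $\C$, the two $T_1$-invariant Beltrami forms $\psi^* \sigma$ and $\nu$ must agree everywhere. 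The Beltrami-pullback argument for $\psi_\nu$ then closes in the same manner as for $\phi_\sigma$, giving $\psi_\nu^*(0) = 0$, so $\psi_\nu$ is an entire holomorphic map satisfying the outgoing intertwining, hence an outgoing Fatou parametrization for $p_u$.
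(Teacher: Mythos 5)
Your argument follows the paper's proof very closely: the functional equations are checked by the same direct symbolic manipulation, and holomorphicity of $\phi_\sigma$ and $\psi_\nu$ is traced to the Beltrami pullback identities $\sigma = \phi^*\nu$ and $\nu = \psi^*\sigma$, exactly as in the paper. You in fact go further than the paper in one respect: you correctly observe that the chain $\nu = \ecal^*\nu = \psi^*\phi^*\nu = \psi^*\sigma$ is a priori valid only on the domain $V$ of the lifted horn map, whereas holomorphicity of $\psi_\nu$ on all of $\C$ requires $\nu = \psi^*\sigma$ globally. The paper does not address this extension.

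However, your justification of the extension is wrong. You claim $V$ contains a left half-plane; it does not. The map $\psi=\psi^o$ sends a left half-plane onto the repelling (outgoing) petal of $p$, which largely lies outside the parabolic basin $B$ (for the polynomials built in Section 6 the positive real axis near $0$ escapes to infinity), and $V = \psi^{-1}(B)$. The paper only guarantees $V \supset \{|\im(Z)| > R\}$, a $T_1$-invariant set whose translates do not cover $\C$, so your ``fundamental domain'' argument does not close. The correct way to finish is a support argument: $\sigma$ is supported in the grand orbit under $p$ of the attracting basin of $z_{u_0}$, which lies in $B$, so $\psi^*\sigma$ is supported in $\psi^{-1}(B)=V$; likewise $\nu=E^*\mu$ is supported in $E^{-1}(\mathrm{supp}\,\mu)$, which is contained in $V$ since $\mathrm{supp}\,\mu$ lies in the basin of $x$ under $e$ inside $U\setminus\{0,\infty\} = E(V)$. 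Since $\nu$ and $\psi^*\sigma$ agree on $V$ and both vanish outside it, they agree on all of $\C$, and $\psi_\nu^*(0)=0$ follows.
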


	\begin{proof}[Proof of the lemma]
	  	We start with $\phi_{\sigma}$. First, note that since $\sigma = \phi^*\nu$,
	  	the map $\phi_{\sigma}$ is holomorphic on $B_\sigma:=h_\sigma(B)$, which is exactly the parabolic
	  	basin of $p_u$. Then, note that :
	  	\begin{align*}
	  		\phi_{\sigma}\circ p_u &= h_\nu \circ \phi \circ h_{\sigma}^{-1}  \circ p_u \\
	  		&= h_\nu \circ \phi \circ g \circ h_\sigma^{-1} \\
	  		&=  h_\nu \circ T_1 \circ \phi \circ h_\sigma^{-1}\\
	  		&= T_1 \circ h_\nu  \circ \phi \circ h_\sigma^{-1} = T_1 \circ \phi_{\sigma}.\\
	  	\end{align*}
	  	So $\phi_\sigma$ conjugates $p_u$ on the whole parabolic basin to a translation, which
	  	means it is a Fatou coordinate.
	  	
	  	The proof is completely analoguous for $\psi_{\nu}$: first, to prove that $\psi_{\nu}$ is holomorphic,
	  	note that $\nu = \psi^*\sigma$. Indeed, $\nu = \ecal^*\nu = \psi^* \phi^* \nu = \psi^* \sigma$.
	  	To conclude, one can check directly that $\psi_{\nu} \circ T_1 = p_u \circ \psi_{\nu}$.

	\end{proof}

	As a consequence of the lemma, $\ecal_\nu:=h_\nu \circ \ecal \circ h_\nu^{-1}$ is a lifted horn map of $p_u$,
	and $\lcal_{\sigma}:=h_\sigma \circ \lcal \circ h_\sigma^{-1}$ is a Lavaurs map of $p_u$; and they have
	the same phase.  The phase could a priori be a non-zero, but we will prove that it is not the case. In order to do that, first we will prove that $E \circ \ecal_\nu = e_\mu \circ E$, i.e. that
	$e_\mu$ is a horn map that lifts to $\ecal_\nu$.
	
	Since $\nu = E^*\mu$, the map $E_\nu:=h_\mu \circ E \circ h_\nu^{-1} : \C \to \C^*$ is holomorphic.
	Moreover, since $E: \C \to \C^*$ is a universal cover, so is $E_\nu$. So $E_\nu$ is of the form
	$E_\nu(z)=\lambda e^{\alpha z}$, and with our choices of normalizations we find $E_\nu(z) = e^{2i\pi z} = E(z)$. So $E \circ h_\nu = h_\mu \circ E$.
	
	From this, we deduce the following:
	\begin{align*}
		E \circ \ecal_\nu &= E \circ h_\nu \circ \ecal \circ h_\nu^{-1} \\
		&= h_\mu \circ E \circ \ecal \circ h_\nu^{-1} \\
		&= h_\mu \circ e \circ E \circ h_\nu^{-1} \\
		&= h_\mu \circ e \circ h_\mu^{-1} \circ E \\
		&= e_\mu \circ E.
	\end{align*}
	Finally, it remains to observe that since $e_\mu$ is topologically conjugated to $e$, it also has
	two parabolic fixed points at $0$ and $\infty$ respectively, each of multiplier 1. Recall that
	the horn map of phase $0$ of a parabolic polynomial $f(z)=z+z^2+az^3+O(z^4)$ has multipliers at $0$ and
	$\infty$ both equal to $e^{2\pi^2(1-a)}$, and that the horn map  of phase $\varphi \in \C/\Z$ is obtained from the
	horn map $e$ of phase $0$ by multiplication by $e^{2i\pi \varphi}$. In
	particular, its multipliers at $0$ and $\infty$ are respectively $e^{2\pi^2(1-a)+2i\pi \varphi}$ and
	$e^{2\pi^2(1-a)-2i\pi \varphi}$. In this case, since both multipliers are equal to $1$, we must
	have $a=1$ and $\varphi=0$. Therefore, $\lcal_{\sigma}$ is the Lavaurs map of phase $0$ of $p_u$,
	and  $p_u(z)=z+z^2+z^3+O(z^4)$.
	
	Finally, if $\pi_\sigma(z):=e^{2i\pi \phi_\sigma(z)}$, then $\pi_\sigma \circ \lcal_\sigma = e_\mu \circ \pi_\sigma$, and
	$\pi_\sigma$ is locally invertible near $z_u:=h_\sigma(z_{u_0})$, and $\pi_\sigma(z_u) = h_\mu(x)$.
	Therefore, $z_u$ as a fixed point of $\lcal_{\sigma}$ has the same multiplier $e^{2i\pi u}$ as $h_\mu(x)$. This proves claims (1)-(3) of the proposition.

	To prove claim (4), note that if $e^{2i\pi u} \in (0,1)$ then the Beltrami form
	$\frac{u-u_0}{u+u_0} \frac{z}{\overline{z}} \frac{d\overline{z}}{dz}$ has real symmetry
	(since then $\frac{u-u_0}{u+u_0} \in \R$). We
	claim that this implies that $\sigma$ has real symmetry. Indeed,
	since $g(\R)=\R$, its Lavaurs map $\lcal$ maps a small interval $I \subset \R$ centered at $x_\tbold$ into itself.
	Moreover, the map $\tau \circ \pi$ semi-conjugates $\lcal$ to the multiplication by $\eps_0>0$; so
	$\tau \circ \pi$ maps $I$ into $\R$, which means that the holomorphic map $\tau \circ \pi$ is real: $\tau \circ \pi(\overline{z})=\overline{\tau \circ \pi(z)}$ for all $z$ in the parabolic basin of $g$.
	Therefore $\sigma = (\tau \circ \pi)^*\left(  \frac{u-u_0}{u+u_0} \frac{z}{\overline{z}} \frac{d\overline{z}}{dz} \right)$ has real symmetry, hence $h_\sigma$ restricts to a real homeomorphism.

Finally, $\Phi: \H \to \mathcal{P}_7$ is bounded in the space of polynomials of degree $7$. Indeed, by \cite[Prop. 4.4]{bassanelli2011distribution} the set of polynomials of given degree with given values of the Green function at the critical points is bounded, and since the conjugacy between the $p_u$ and $p$ is analytic outside of the parabolic basin, their Green functions have the same values at critical points.
\end{proof}

\begin{prop}\label{prop:existpar}
	With the same notations as before, there exists $p_0$ in the closure of $\Phi(\H)$
	such that the Lavaurs map of $p_0$ has a parabolic fixed point of multiplier 1.
\end{prop}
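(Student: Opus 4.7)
The plan is to take a sequence $u_n \in \H$ with $e^{2i\pi u_n} \to 1$ and extract a subsequential limit. Concretely, choose $u_n := i/n$, so that the multipliers $e^{2i\pi u_n} = e^{-2\pi/n}$ form an increasing sequence of positive reals tending to $1$. By property (5) of Proposition \ref{prop:frho}, $\Phi(\H)$ is relatively compact in $\psev$, so after passing to a subsequence we may assume $p_n := \Phi(u_n) \to p_0 \in \overline{\Phi(\H)}$. Since each $p_n$ is of the form $z+z^2+z^3+O(z^4)$, so is $p_0$; in particular $p_0$ admits a Lavaurs map $\lcal_{p_0}$. By property (4), the fixed points $z_n := z_{u_n}$ of $\lcal_{p_n}$ lie on the real line, and they are uniformly bounded since all $p_n$ share a common escape radius. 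After a further extraction $z_n \to z_\infty \in \R$.

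The conclusion would follow from continuity: since the incoming Fatou coordinate $\phi^\iota_p$ and outgoing parametrization $\psi^o_p$ depend continuously on $p$ on compact subsets of their respective domains, $\lcal_{p_n}$ converges locally uniformly to $\lcal_{p_0}$ on $\mathcal{B}_{p_0}$, so that, provided $z_\infty \in \mathcal{B}_{p_0}$ and $z_\infty \neq 0$, passing to the limit in $\lcal_{p_n}(z_n)=z_n$ and $\lcal_{p_n}'(z_n) = e^{-2\pi/n}$ gives the desired parabolic fixed point with multiplier $1$.

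The main obstacle is ruling out that $z_\infty$ escapes: that it falls onto $\partial \mathcal{B}_{p_0}$ or merges with the parabolic fixed point $0$. I would argue this in the Ecalle-cylinder picture. Via the projection $\pi_p : z \mapsto e^{2i\pi \phi^\iota_p(z)}$, the attracting fixed point $z_n$ corresponds to an attracting fixed point $y_n := \pi_{p_n}(z_n) \in \C^*$ of the horn map $e_{p_n}$, with multiplier $e^{2i\pi u_n}$. Because each $p_n$ is quasi-conformally conjugate to $p$ via a conjugacy which semi-conjugates horn maps (by construction in the proof of Proposition \ref{prop:frho}, $e_{p_n} = h_{\mu(u_n)} \circ e \circ h_{\mu(u_n)}^{-1}$), the combinatorial position of $y_n$ within $\C^*$ relative to the other singular values is independent of $n$. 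Combined with the fact that $0$ and $\infty$ are the only fixed points of $e_{p_n}$ having multiplier $1$, a normal-families argument shows $y_n$ stays in a fixed compact subset of $\C^*$; the limit $y_\infty \in \C^*$ then lifts via $\pi_{p_0}$ to the desired parabolic fixed point of $\lcal_{p_0}$ in $\mathcal{B}_{p_0} \setminus \{0\}$.

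The principal technical difficulty I expect is making rigorous the claim that $y_n$ remains bounded away from the ends $0, \infty$ of the Ecalle cylinder, since the conjugacies $h_{\sigma(u_n)}$ lose their uniform quasi-conformal bound as $u_n \to 0$. A concrete route is to follow the critical point $c_2 = \tbold$, which by Lemma \ref{lem:basepoint}(6) lies in the immediate basin of the attracting fixed point of $\lcal_p$: under quasi-conformal conjugacy, the corresponding critical point of $p_n$ lies in the immediate basin of $z_n$ under $\lcal_{p_n}$, and converges as $n \to \infty$ to the $c_2$-critical point of $p_0$, which lies in $\mathcal{B}_{p_0}$ bounded away from $0$. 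A Koebe-type distortion estimate applied to the immediate basin of $y_n$ in the Ecalle cylinder then forces $y_n$ to remain in a fixed compact subset of $\C^*$, completing the argument.
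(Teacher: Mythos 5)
Your setup is the same as the paper's: take $u_n=i/n$, use item (5) of Proposition \ref{prop:frho} to extract a limit $p_0\in\overline{\Phi(\H)}$, and observe that the only real issue is to prevent the attracting fixed points $x_n$ of $\lcal_{p_n}$ from degenerating onto $\partial\mathcal{B}_{p_0}$ or onto the parabolic point $0$. But your resolution of that issue is a sketch, not a proof, and I do not think it goes through as stated. The assertion that ``a normal-families argument shows $y_n$ stays in a fixed compact subset of $\C^*$'' is exactly the point in question: as $e^{2i\pi u_n}\to 1$ the linearizing neighborhoods of $y_n$ and the dilatation of the conjugacies $h_{\mu(u_n)}$ both blow up, the immediate basin of $y_n$ in the cylinder may shrink, and the fact that the ``combinatorial position'' of $y_n$ relative to the other singular values is constant in $n$ does not prevent the whole configuration from drifting to the ends $0,\infty$ of the cylinder. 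The Koebe step is also unsupported: you would need a univalent map on a disk of definite size around $y_n$, which is precisely what degenerates. Finally, even granting $y_n\to y_\infty\in\C^*$, lifting a fixed point of the horn map back to a fixed point of $\lcal_{p_0}$ (rather than of $f^{-k}\circ\lcal_{p_0}$ for some integer translation $k$, i.e.\ a Lavaurs map of a different phase) requires an argument you do not give.

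The paper avoids all of this by staying on the real line. Since $e^{2i\pi u_n}\in(0,1)$, item (4) of Proposition \ref{prop:frho} makes the conjugacy real, so each $p_{u_n}$ inherits the real configuration (3)--(6) of Lemma \ref{lem:basepoint}: a repelling fixed point $\xi_n$, the point $y_n\in(\xi_n,c_{2,n})$ with $p_{u_n}(y_n)=c_{2,n}$, and the ordering $\xi_n<y_n<x_n<c_{2,n}<0$, where $(\xi_n,0)$ is the trace of the immediate parabolic basin on $\R$. Passing to the limit gives $\xi\le y\le x\le c_2<0$; the strictness $\xi<y$ follows because $\xi$, as a limit of repelling fixed points, satisfies $|p_0'(\xi)|\ge 1$, whereas $y=\xi$ would force $\xi=p_0(y)=c_2$ and hence $p_0'(\xi)=0$. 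This traps $x$ in $(\xi,0)$, hence in the parabolic basin of $p_0$, and then $\lcal_{p_0}(x)=x$, $\lcal_{p_0}'(x)=1$ follow by continuity. I would recommend replacing your Ecalle-cylinder argument with this order-theoretic one, or else supplying genuine uniform estimates for the horn maps near the ends of the cylinder, which is a substantially harder task.
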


\begin{proof}
	Applying Proposition \ref{prop:frho}
	with $u_n = \frac{i}{n}$, we get a sequence of polynomials $p_{u_n}$ such that $p_{u_n}(z)=z+z^2+z^3+O(z^4)$,
	and the Lavaurs map $\lcal_n$ of $p_{u_n}$ has a fixed point $x_n$ of multiplier $e^{-2\pi/n}$.

	Each of the $p_{u_n}$ are quasiconformally conjugate to the real polynomial $f_\tbold$ from Lemma \ref{lem:basepoint} by a homeomorphism whose restriction to the real line is real and increasing, so the $p_{u_n}$ still satisfy the properties (3)-(6) from Lemma \ref{lem:basepoint}.
	
	By item (5) in Proposition \ref{prop:frho}, the sequence $(p_{u_n})_{n \in \N}$ is bounded
	in the space of degree 7 polynomials.
	So up to extracting, we may assume that
	\begin{enumerate}
		\item $p_{u_n}$ converges to a degree 7 polynomial $p_0$
		\item the critical points $c_{i,n}$ of $p_{u_n}$ converge to critical points $c_i$ of $p_0$
		\item the repelling fixed point $\xi_n$ converges
		to a non-attracting fixed point $\xi$ of $p_0$
		\item $x_n$ converges to $x \in \R$ and $y_n$ to  $y \in \R$.	
	\end{enumerate}
	
	We denote by $\lcal$ the Lavaurs map of $p_0$.
	If we can prove that $x$ lies in the parabolic basin of $p_0$, then we will get that $\lcal(x)=x$ and
	$\lcal'(x)=1$. To do that, it is enough to prove that $x \in (\xi, 0)$. But for all $n$, we have:
	$$\xi_n < y_n < x_n < c_{2,n} <0;$$
	hence 	$\xi < y \leq  x \leq  c_2 <0.$ The inequality $\xi < y$ is strict because as a limit of repelling fixed points,
	we have $|f'(\xi) | \geq 1$, so we can not have $y=\xi$ for otherwise we would have $\xi=f(\xi) = f(y)=c_2$ and so
	$f'(\xi)=0$, a contradiction. Similarly, we can not have $c_2=0$ since $p_0'(0)=1 \neq 0$. So $x \in (\xi,0)$ and
	$\xi$ is in the parabolic basin of $f$, and so $\lcal'(x)=1$ and $\lcal(x)=x$.
	Therefore $p_0$ has the desired property.
\end{proof}

\begin{prop}
	There exists a polynomial $g(z)=z+z^2+z^3+O(z^4)$ of degree 7 such that:
	\begin{enumerate}
		\item $\lcal$ has a Siegel fixed point $\zeta$ with diophantine multiplier, and
		\item {\black the pair $(g,\zeta)$  is non-degenerate.} 
	\end{enumerate}
\end{prop}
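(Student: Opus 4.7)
The strategy is to perturb the polynomial $p_0$ produced by Proposition \ref{prop:existpar} within the four-dimensional complex parameter space $\mathcal{P}_7$, leaving the real locus, so as to displace its parabolic Lavaurs fixed point $x$ (of multiplier $1$) to a Siegel fixed point with Diophantine multiplier while arranging the non-degeneracy condition $\mathcal{F}(g, \zeta) \neq 0$. Because $p_0$ is approached by polynomials $\Phi(i/n)$ whose Lavaurs maps carry attracting fixed points $x_n$ of multiplier $e^{-2\pi/n}$ converging to $x$, the point $x$ is a simple parabolic fixed point of $\mathcal{L}_0$, and $\mathcal{L}_0(z)-z$ vanishes to exact order $2$ at $x$. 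Consequently, the set $\{(p,z) \in \mathcal{P}_7 \times \mathbb{C} : \mathcal{L}_p(z) = z\}$ is a ramified double cover of $\mathcal{P}_7$ near $(p_0, x)$, on which one can introduce a local uniformizing parameter $\epsilon$ such that both the fixed point $\zeta(\epsilon)$ and its multiplier $\lambda(\epsilon) := \mathcal{L}'_{p(\epsilon)}(\zeta(\epsilon))$ are holomorphic, with $\lambda(0)=1$.

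The first main step is to show that $\lambda$ is non-constant in $\epsilon$. Lifting a subsequence of $\Phi(i/n)$ convergent to $p_0$ to the double cover, we obtain $\epsilon_n \to 0$ with $\lambda(\epsilon_n) = e^{-2\pi/n}$ pairwise distinct, so $\lambda$ cannot be constant. By the open mapping theorem, $\lambda$ maps every small disk around $0$ onto an open neighborhood of $1$, which meets the unit circle along an open arc. Since Diophantine numbers form a full-measure subset of any open arc, we can choose $\epsilon_\ast$ with $\lambda(\epsilon_\ast) = e^{2\pi i \zeta_\ast}$ for some Diophantine $\zeta_\ast$, producing a polynomial $g := p(\epsilon_\ast) \in \mathcal{P}_7$ whose Lavaurs map has $\zeta := \zeta(\epsilon_\ast)$ as a Siegel fixed point with Diophantine multiplier, fulfilling item (1).

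The second step is to ensure $\mathcal{F}(g,\zeta) \neq 0$. The function $\epsilon \mapsto \mathcal{F}(p(\epsilon), \zeta(\epsilon))$ is holomorphic near $\epsilon = 0$ and not identically zero: lifting the family $\Phi$ to the cover, it restricts to the holomorphic function $u \mapsto \mathcal{F}(\Phi(u), z_u)$ on $\mathbb{H}$, which is nonzero at $u = u_0$ since $\mathcal{F}(f_\tbold, x_\tbold) \neq 0$ by Lemma \ref{lem:basepoint}. Hence the zero set of $\mathcal{F}$ is discrete on the cover, and one can adjust $\epsilon_\ast$ slightly within $\lambda^{-1}(S^1)$ so as to keep both the Siegel and Diophantine properties while arranging $\mathcal{F}(g,\zeta) \neq 0$, completing the construction.

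The principal technical obstacle is to rigorously relate the quasiconformal family $\Phi$ (parametrized by $u \in \mathbb{H}$ and converging to $p_0$ only along subsequences) to the local branched cover at $p_0$ parametrized by $\epsilon$, so as to justify the lifts used in the two non-constancy arguments. This reduces to a Puiseux-type analysis of the parabolic bifurcation at $(p_0,x)$ identifying $\epsilon$ with a local branch of $\sqrt{1-\lambda}$, and thus with a branch of $\sqrt{1 - e^{2\pi i u}}$ along $\Phi$; once this is done, the remainder of the proof is a routine application of the open mapping theorem combined with a measure-density argument.
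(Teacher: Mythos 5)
Your strategy follows the same skeleton as the paper (perturb $p_0$, exploit the quasiconformal family $\Phi$ and the base point $f_\tbold$, invoke density of Diophantine arguments), but the local Puiseux/double-cover reformulation introduces both unnecessary and unsupported claims, and the final step has a genuine gap.

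First, the claim that $\mathcal{L}_0-\mathrm{id}$ vanishes to exact order $2$ at $x$ is not established by the argument you give (the existence of a single sequence of nearby attracting fixed points with multipliers tending to $1$ does not preclude higher-order vanishing), and it is in any case not needed: the paper never appeals to a Puiseux parametrization.

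Second, and more seriously, the step ``the zero set of $\mathcal{F}$ is discrete on the cover, so one can adjust $\epsilon_*$ slightly within $\lambda^{-1}(S^1)$'' does not close the argument. The fixed-point variety $F$ is four-complex-dimensional over $\mathcal{P}_7$, so the zero set of a holomorphic $\mathcal{F}$ is a complex hypersurface, not discrete, and $\lambda^{-1}(S^1)$ is a real hypersurface. It is not automatic that the set of points on $\lambda^{-1}(S^1)$ with Diophantine argument avoids $\{\mathcal{F}=0\}$: \emph{a priori} $\mathcal{F}$ could vanish on all of $\lambda^{-1}(S^1)$. The paper resolves exactly this point by running a contradiction argument on the irreducible component $F_0$ of $F$ containing $\tilde\Phi(\H)$: if $\mathcal{F}$ vanished at every Siegel--Diophantine pair in $F_0$, then by density of Diophantine numbers it would vanish on the real-analytic hypersurface $\lambda^{-1}(S^1)\cap F_0$, hence identically on $F_0$, contradicting $\mathcal{F}(f_\tbold,x_\tbold)\neq 0$. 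Relatedly, your justification that $\epsilon\mapsto\mathcal{F}(p(\epsilon),\zeta(\epsilon))$ is not identically zero conflates the local picture near $(p_0,x)$ with the global family $\tilde\Phi(\H)$; to transport the non-vanishing from $(f_\tbold,x_\tbold)$ to a neighborhood of $(p_0,x)$ one must pass through the global irreducible-component argument, which you should make explicit rather than implicit.
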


\begin{proof}
	Recall that $\mathcal{P}_7$ denotes the space of degree 7 polynomials of the form $f(z)=z+z^2+z^3+O(z^4)$,
	and let  $V = \{(f,\zeta) \in \psev \times \C : \zeta \in \mathcal{B}_f \}$: $V$ may be identified with an open set in
	$\C^5$. Finally, we consider $F:=\{(f,\zeta) \in V : \lcal(\zeta)=\zeta \}$, which
	is an analytic hypersurface of $V$.

	We consider the functions
	$\lambda : F \to \C$ and $\fcal: F \to \C$ defined as
	$\lambda(f,\zeta) = \lcal'(\zeta)$ and $\fcal(f,\zeta) =\frac{(\psi^o)''(\phi^\iota(\zeta)) (\phi^\iota)'(\zeta)}{(\psi^o)'(\phi^\iota(\zeta))^2}  + (\phi^\iota)''(\zeta)$, where $\phi^\iota$ and $\psi^o$ are the Fatou coordinate and parametrization of $f$. The function $\lambda$ is analytic on $F$,
	and $\fcal$ is meromorphic on $F$ and analytic on $\lambda^{-1}(\C^*)$, since $(\psi^o)'(\phi^\iota(z))=0$ implies
	that $\lcal'(z)=0$.

	Let $\Phi: \H \to \mathcal{P}_7$ be the map defined in Proposition \ref{prop:frho},
	and let $\tilde \Phi: \H \to F$ be the map given by $\Phi(u)=(p_u, z_u)$, where $z_u$
	is the fixed point of the Lavaurs map of $p_u$ with multiplier $e^{2i\pi u}$.
	Then $D:=\tilde \Phi(\H)$ is contained in one irreducible component $F_0$ of $F$.
	
		Let $p_0$ be the polynomial given by Proposition \ref{prop:existpar} such that its Lavaurs map
	has a parabolic fixed point $z_0$.
	By Proposition \ref{prop:existpar},
	 $(p_0, z_0)$ is in the closure of $D$ in $V$; therefore $(p_0, z_0) \in F_0$.

{\black Assume for a contradiction that all pairs $(f,\zeta) \in F_0$ for which
	$\lcal'(\zeta)$ has modulus one and diophantine argument are degenerate.}
	Then by {\black the} density of diophantine numbers on the real line, we must have $\fcal(f,\zeta)=0$ on
	$\lambda^{-1}(S^1) \cap F_0$.
	Since for all $u \in \H$, $\lambda \circ \tilde \Phi(u)=e^{2i\pi u}$, the analytic map $\lambda$ is non-constant on
	$F_0$. In particular, $\lambda^{-1}(S^1)$ is a real analytic subset of $F_0$ of real codimension 1,
	non-empty since $\lambda(p_0,z_0)=1$.
	By Proposition \ref{prop:frho}, $D$ contains $(f_\tbold, x_\tbold)$, where $f_\tbold$ is the polynomial
	given by Lemma \ref{lem:basepoint}, and such that
	$\fcal(f_\tbold, z_\tbold) \neq 0$. So the analytic map $\fcal$ is not identically zero on
	$F_0$, and therefore it cannot vanish identically on $\lambda^{-1}(S^1) \cap F_0$, a contradiction.

\end{proof}

\bibliographystyle{abbrv}
\bibliography{Bibliography}

\end{document}